\documentclass[pdflatex,sn-mathphys-ay]{sn-jnl}

\usepackage{amsthm,amsmath,amsfonts,amssymb}
\usepackage{graphicx}
\usepackage{subcaption}
\usepackage{multirow,multicol}
\usepackage{enumitem}
\usepackage{accents}
\usepackage{bbm,bm}
\usepackage{float}
\usepackage{scalerel}
\makeatletter
\providecommand{\cline}[1]{\@cline#1\@nil}
\makeatother

\theoremstyle{thmstyleone}      
\newtheorem{theorem}{Theorem}
\newtheorem{proposition}{Proposition}
\newtheorem{lemma}{Lemma}
\newtheorem{corollary}{Corollary}

\theoremstyle{thmstyletwo}      
\newtheorem{definition}{Definition}
\newtheorem{property}{Property}
\newtheorem{remark}{Remark}

\def \a {\alpha}
\def \b {\beta}
\def\Var{{\mathrm{Var}}}
\newcommand\ddfrac[2]{\frac{\displaystyle #1}{\displaystyle #2}}
\newcommand\wh[1]{\hstretch{2}{\hat{\hstretch{.5}{#1}}}}
\def \hp {\wh{p}}
\def \hpi {\wh{\pi}}
\def \Sz {\boldsymbol{\Omega}}
\def \Sr {\boldsymbol{\Gamma}}
\def \Sx {\boldsymbol{\Omega}_{\square}}
\def \nmin {n_{\scriptscriptstyle\blacktriangledown}}
\def \nmax {n_{\scriptscriptstyle\blacktriangle}}
\newcommand{\EO}{\textup{EO}}
\newcommand{\EOlong}{elliptically optimal}
\def \Lone {L_1}  
\def \Ltwo {L_2}  
\def \Lthree {L_3}
\def \Uone {U_1}  
\def \Utwo {U_2}  
\def \Uthree {U_3}
\def \Rone {R_1}  
\def \Rtwo {R_2}
\def \Lsone {L_1^*} 
\def \Lstwo {L_2^*}
\def \Usone {U_1^*} 
\def \Ustwo {U_2^*}

\begin{document}

\title[Elliptically Optimal Confidence Interval]{The Elliptically Optimal
Confidence Interval: A Bivariate Extension of Wilson's Score Method}

\author*{\fnm{Nawaf} \sur{Mohammed}}\email{nawaf.mohammed.ac@gmail.com}

\affil*{\orgname{Independent Researcher}, \state{Ontario}, \country{Canada}}

\abstract{Constructing a confidence interval for the difference between two
independent binomial proportions involves a nuisance direction that is not
identified by the estimand. The one-sample Wilson score interval inverts a
scalar score test, but has no direct bivariate analogue isolating the
difference: inverting the joint normal approximation yields an elliptical region
in the unit square, whereas the estimand \(p_1-p_2\) is one-dimensional. We
define the \EOlong{} (\EO) confidence interval as the range of \(p_1-p_2\) over
this region and solve the resulting optimization problem in closed form,
obtaining explicit bounds in six mutually exclusive and exhaustive cases. The
solution admits a compact characterization: the \EO{} interval is the score
interval obtained by maximizing over the nuisance variance rather than
estimating it. It is therefore the shortest interval obtained by projecting the
elliptical region, and inherits its coverage guarantee. We derive the exact
coverage excess, \(2[\Phi(z\mathcal{R})-\Phi(z)]\), where \(\mathcal{R}\) is the
ratio of the least-favourable to the true standard deviation. The excess
vanishes on an explicit line through the parameter space, is bounded by \(\a\),
and is invariant under proportional scaling of the sample sizes. Exact
enumeration of the binomial coverage shows that the Wald interval, whose
variance estimator is downward biased by a factor \(1-1/n\) under balanced
allocation, falls below nominal coverage almost everywhere. The \EO{} interval
never under-covers under the normal approximation and always yields admissible,
non-degenerate bounds. Its price is over-coverage when both proportions are
extreme, which we quantify exactly.
}

\keywords{Wilson score interval, Binomial proportions, Difference of
proportions, Coverage probability}          

\pacs[MSC Classification]{62F25, 62F30}

\maketitle

\section{Introduction}
\label{sec:intro}

Comparing two independent binomial proportions is among the most frequently
performed inferential tasks in the empirical sciences, and in most applications
the quantity of substantive interest is not a $p$-value but an interval estimate
for the difference between the two underlying success probabilities. Despite the
elementary appearance of the problem, the construction of such an interval is
subtle: the sampling variance of the estimated difference is itself a function of
the unknown parameters, so any interval procedure must decide how to handle a
nuisance quantity that cannot be observed. Almost all of the difficulty --- and
almost all of the difference between competing methods --- can be traced back to
that single decision.

The oldest resolution simply replaces the unknown parameters by their sample
counterparts inside the standard error, which yields the familiar Wald
interval.\footnote{Wald's method estimates the unknown standard error by
substituting the observed sample proportions for the parameters, and then centres
a symmetric interval at the point estimate. Its simplicity has made it a staple of
introductory courses, but the two layers of approximation on which it rests are
known to produce erratic coverage, and bounds that may fall outside the parameter
space, for small samples or extreme proportions \citep{Agresti2000,Newcombe1998}.} \citet{Wilson1927} declined to make that substitution. For a single
population he instead retained the unknown parameter inside the variance and
treated the standardised normal-approximation inequality as a relation to be
\emph{solved} for the parameter itself. Squaring the standardised deviation and
collecting terms converts the approximation into a quadratic inequality in the
unknown proportion whose leading coefficient is strictly positive; its solution
set is therefore the closed interval lying between the two roots of the
associated quadratic, and both of those roots can be shown to lie inside the unit
interval \citep{ONeill2021}. The resulting score interval is never degenerate,
never leaves the parameter space, and behaves far more reliably than Wald's
across the whole range of sample sizes and parameter values. Just as importantly
for what follows, its endpoints admit an equivalent \emph{variational}
description: they are the smallest and the largest values of the parameter that
are compatible with the quadratic constraint and with the parameter space. The
endpoints of Wilson's interval are thus not algebraic accidents but genuine
extremal bounds --- and it is precisely this characterisation, rather than the
closed-form root expressions, that generalises.

When the same programme is carried out for two independent populations, the
algebra proceeds identically but the geometry does not. Retaining both unknown
parameters inside the variance of the estimated difference and squaring produces
a quadratic inequality in \emph{two} variables; its discriminant is negative for
every admissible sample size and confidence level, so the associated conic is
always an ellipse, and the solution set is that ellipse together with its
interior, intersected with the unit square. This bivariate region --- rather than
an interval --- is the exact two-population analogue of Wilson's quadratic
constraint. Herein lies the essential obstruction. The object we wish to report
is one-dimensional, an interval for the difference of the two proportions,
whereas the object the normal approximation actually delivers is two-dimensional.
Because of this mismatch of dimensions there is no exact bivariate counterpart of
Wilson's inversion, and some form of reduction from the plane to the line is
unavoidable.

The literature contains a substantial number of alternatives to the Wald interval
for the difference of two proportions; see, among others,
\citep{Agresti2000,Beal1987,Fagerland2011,Mee1984,Miettinen1985,Pan2002}. Closest in spirit to the present work is
Method~10 of \citet{Newcombe1998}, which addresses the two-population
problem by combining the univariate Wilson bounds computed separately for each
proportion. That construction is simple, well studied and performs creditably in
practice, but it is a recombination of two one-dimensional solutions rather than a
genuine inversion of the two-dimensional constraint; the bivariate region itself
plays no direct role in it. What appears to be missing from the literature is a
method that takes Wilson's variational characterisation seriously and applies it,
unmodified, in the bivariate setting.

That is the approach adopted here. We keep Wilson's optimisation formulation
verbatim and simply enlarge its feasible set: the endpoints of our interval are
defined as the minimum and the maximum of the difference of the two proportions
taken over the elliptical region intersected with the unit square. This reduction
from the plane to the line is not arbitrary. The elliptical region is compact and
convex, being the intersection of the sublevel set of a positive-definite
quadratic form with the unit square, and it is never empty, since the observed
pair of sample proportions always belongs to it. Consequently its image under the
linear map that sends a pair of proportions to their difference is exactly the
closed interval between the two optimal values. Our interval is therefore the
\emph{shortest possible} interval with the property that membership of the
elliptical region implies membership of the interval: no shorter interval can
inherit the coverage guarantee carried by that region. It is in this precise, and
deliberately circumscribed, sense that we call the resulting procedure the
\emph{\EOlong} (\EO) confidence interval --- optimal \emph{given} the elliptical
constraint, rather than optimal among all conceivable procedures for this problem.

Two consequences of this construction should be flagged at the outset. First, the
set of parameter pairs whose difference falls inside the \EO{} interval contains
the elliptical region, so the interval is conservative: its coverage is
at least the nominal level, up to the error of the underlying normal
approximation. Conservatism is the price of the dimensional reduction, and it is
not a defect to be concealed but a quantity to be measured; accordingly, a
substantial part of this paper is devoted to characterising the discrepancy
between nominal and true coverage explicitly, and to doing so before, rather than
after, comparing the interval with its competitors. Second, the construction
turns out to be remarkably parsimonious: the resulting bounds depend on the two
observed proportions only through their difference, a structural feature we
establish and discuss in due course.

The remainder of the paper is organised as follows. Section~\ref{sec:quadratic}
derives the quadratic inequalities underlying the univariate and bivariate Wilson
score constructions, and introduces the elliptical region together with the
optimisation problem that defines the \EO{} interval. In
Section~\ref{sec:properties_of_omega_lambda} we establish the general geometric
properties of the unrestricted region $\Sz$ and of its restriction $\Sx$ to the
unit square, and illustrate their behaviour graphically. This leads to
Section~\ref{sec:the_EO_CI}, where we solve the optimisation problem in closed
form and record a range of properties and remarks. Section~\ref{sec:error_analysis} derives an exact closed-form expression for the
discrepancy between the nominal and the true coverage probability of the
interval. Armed with these results,
Section~\ref{sec:comparison} compares the \EO{} interval with the Wald and
Newcombe intervals on the basis of realised coverage rather than width alone.
Finally, Section~\ref{sec:conclusions} summarises the main findings and their
implications.

\section{Quadratic inequalities for the univariate and bivariate Wilson score intervals}
\label{sec:quadratic}

\subsection{The univariate case}
\label{subsec:univariate}

Consider a population with parameter $p\in[0,1]$, representing the proportion of
individuals possessing a certain characteristic. To construct a confidence
interval (CI) for $p$, we draw a sample of size $n>0$ and compute the sample
proportion $\hp\in[0,1]$. For a chosen confidence level $(1-\a)100\%$, the normal
approximation to the sampling distribution of $\hp$ gives
\begin{equation}
\label{eq:mainprob}
\mathbb{P}\left(-z\le \ddfrac{\hp-p}{\sqrt{\Var[\hp]}}\le z\right)\approx 1-\a,
\end{equation}
where $z=z_{\a/2}>0$ is the $z$-score for a significance level $\a\in(0,1)$ and
$\Var[\hp]=\ddfrac{p(1-p)}{n}$ is the variance of $\hp$.

The decisive feature of \eqref{eq:mainprob} is that the unknown parameter $p$
appears both in the numerator and, through the variance, in the denominator of the
standardised quantity. Wilson's insight \citep{Wilson1927} was that this is an
asset rather than an obstacle: instead of estimating the denominator, one may
retain it and solve the resulting relation for $p$. Since the two inequalities in
\eqref{eq:mainprob} are equivalent to a single inequality on the absolute
deviation,
\begin{equation}
\label{eq:mainprob2}
\mathbb{P}\left(\ddfrac{|\hp-p|}{\sqrt{\Var[\hp]}}\le z\right)\approx 1-\a,
\end{equation}
and both sides of the inner inequality are non-negative, squaring is a reversible
operation and yields
\begin{equation}
\label{eq:mainprob3}
\mathbb{P}\left(\ddfrac{(\hp-p)^2}{{\Var[\hp]}}\le z^2\right)\approx 1-\a.
\end{equation}
Clearing the denominator and collecting like powers of $p$, the event in
\eqref{eq:mainprob3} becomes a quadratic inequality in the unknown parameter:
\begin{equation}
\label{eq:parabolaprob}
\mathbb{P}\left(ap^2+bp+c\le 0\right)\approx 1-\a,
\end{equation}
with $a=1+\ddfrac{z^2}{n}$, $b=-2\hp-\ddfrac{z^2}{n}$ and $c=\hp^2$. The leading
coefficient is strictly positive, so the associated parabola is convex and the
solution set of the inequality is precisely the closed interval bounded by the two
roots of the quadratic. Solving explicitly gives the Wilson score interval,
\begin{equation}
\label{eq:WilsonCIOnePop}
\frac{n\hp+z^2/2}{n+z^2}-\frac{z\sqrt{z^2+4 n \hp(1-\hp)}}{2 \left(n+z^2\right)}\le p\le \frac{n\hp+z^2/2}{n+z^2}+\frac{z\sqrt{z^2+4 n \hp(1-\hp)}}{2 \left(n+z^2\right)}.
\end{equation}

Both roots in \eqref{eq:WilsonCIOnePop} are real and lie within the unit interval,
so that Wilson's interval is always well defined and admissible, irrespective of
the sample size or of how extreme the observed proportion may be; a detailed
account of this and of many further properties of the interval is given by
\citet{ONeill2021}.\footnote{The argument is elementary and worth recording.
Writing $q(p)=ap^{2}+bp+c$, one has the pleasing identities $q(0)=\hp^{2}\ge0$ and
$q(1)=a+b+c=(1-\hp)^{2}\ge0$, while the discriminant
$b^{2}-4ac=z^{2}\bigl(z^{2}+4n\hp(1-\hp)\bigr)/n^{2}\ge0$ guarantees that the roots
are real, and the vertex $-b/(2a)=(2n\hp+z^{2})/\bigl(2(n+z^{2})\bigr)$ lies in
$[0,1]$. A convex parabola that is non-negative at both endpoints of $[0,1]$ and
attains its minimum inside $[0,1]$ must have both of its real roots in $[0,1]$.}

Beyond their closed form, the bounds in \eqref{eq:WilsonCIOnePop} admit an
equivalent variational description that will be the point of departure for the
bivariate construction. Writing
$\mathcal{F}=\bigl\{q\in[0,1]:aq^{2}+bq+c\le0\bigr\}$ for the feasible set induced
by \eqref{eq:parabolaprob} together with the parameter space, the Wilson bounds
solve
\begin{equation}
\label{eq:WilsonproblemCI}
\min_{q\in\mathcal{F}}\,q\;\le\;p\;\le\;\max_{q\in\mathcal{F}}\,q .
\end{equation}
Thus the endpoints of Wilson's interval are not merely algebraic artefacts but
arise as the optimal values of a constrained minimisation and maximisation problem
over the set of parameter values compatible with the data. This characterisation
establishes them as true extremal bounds, and, unlike the explicit root formulae,
it transfers verbatim to higher dimensions.

\subsection{The bivariate case}
\label{subsec:bivariate}

Now suppose we have two independent populations with unknown parameters $p_1$ and
$p_2$, each representing the proportion of individuals possessing a certain
characteristic. Both parameters lie in the interval $[0,1]$ --- although this
restriction is relaxed when discussing more general theoretical properties later
on. Our goal is to construct a CI for their difference, $p_1-p_2$. To do so, we
collect two samples of sizes $n_1>0$ and $n_2>0$, yielding the respective sample
proportions $\hp_1$ and $\hp_2$, each lying in $[0,1]$.

For notational convenience, from here onward we set $p=p_1-p_2$ and
$\hp=\hp_1-\hp_2$, with $p,\hp\in[-1,1]$, and define
\begin{equation*}
\nmin\equiv\min(n_1,n_2),
\qquad
\nmax\equiv\max(n_1,n_2),
\end{equation*}
the downward- and upward-pointing markers serving as a mnemonic for the smaller
and the larger of the two sample sizes; as before, $z=z_{\a/2}>0$ for a
significance level $\a\in(0,1)$. Under this notation the classical normal
approximation takes the same form as \eqref{eq:mainprob}, with the variance of the
difference given by
$\Var[\hp]=\ddfrac{p_1(1-p_1)}{n_1}+\ddfrac{p_2(1-p_2)}{n_2}$.

Retaining both unknown parameters inside this variance and repeating the steps of
Section~\ref{subsec:univariate} --- rewriting \eqref{eq:mainprob} as
\eqref{eq:mainprob2}, squaring as in \eqref{eq:mainprob3}, and grouping like terms
--- the event of interest takes the equivalent form
\begin{equation}
\label{eq:ellipseprob}
\mathbb{P}\left(Ap_1^2+Bp_1p_2+Cp_2^2+Dp_1+Ep_2+F\le0\right)\approx 1-\a,
\end{equation}
where $A=1+\ddfrac{z^2}{n_1}$, $B=-2$, $C=1+\ddfrac{z^2}{n_2}$,
$D=-2\hp-\ddfrac{z^2}{n_1}$, $E=2\hp-\ddfrac{z^2}{n_2}$, and $F=\hp^2$. The
inequality in \eqref{eq:ellipseprob} defines a quadratic curve in two variables
together with its interior --- that is, a conic section and its enclosed region.
To determine the nature of that conic we examine the discriminant,
\begin{gather*}
B^2-4AC=-\frac{4 z^2 \left(n_1+n_2+z^2\right)}{n_1n_2}<0,
\end{gather*}
which is strictly negative for every admissible pair of sample sizes and every
confidence level, so that the curve is always an ellipse; equivalently, the
quadratic form is positive definite, and the region it bounds is compact and
convex. Let us denote the (unrestricted) ellipse together with its interior by
\begin{equation*}
\Sz=\left\{(p_1,p_2)\in\mathbb{R}^2:Ap_1^2+Bp_1p_2+Cp_2^2+Dp_1+Ep_2+F\le0\right\},
\end{equation*}
and define its restriction to the unit square, namely the subset
\begin{equation*}
\Sx=\left\{(p_1,p_2)\in[0,1]^2:Ap_1^2+Bp_1p_2+Cp_2^2+Dp_1+Ep_2+F\le0\right\},
\end{equation*}
which is the intersection of $\Sz$ with the parameter space, $\Sx=\Sz\cap[0,1]^2$.
The mnemonic subscript is intended to remind the reader that $\Sx$ differs from
$\Sz$ only in being confined to the square. With this notation, \eqref{eq:ellipseprob}
may be written compactly as
\begin{equation}
\label{eq:ellipseprob_new}
\mathbb{P}((p_1,p_2)\in\Sx)\approx 1-\a.
\end{equation}
We note in passing that $\Sx$ is never empty: substituting $(p_1,p_2)=(\hp_1,\hp_2)$
annihilates the squared deviation and leaves $-z^{2}\widehat{\Var}[\hp]\le0$, so the
observed pair of sample proportions always belongs to $\Sx$. Being a non-empty,
bounded, closed and convex set, $\Sx$ is exactly the kind of object
over which the extremal programme \eqref{eq:WilsonproblemCI} is well posed.

Our aim is to establish a Wilson-type CI for $p$ that preserves the structure of
the set $\Sx$ in \eqref{eq:ellipseprob_new}. In the single-population case,
\eqref{eq:parabolaprob} defines a subset of the real line, which matches exactly
the dimensionality of the object we wish to report. In the two-population case,
however, the main difficulty arises from a mismatch between dimensions: our
objective is a one-dimensional solution --- an interval for the difference $p$ ---
whereas the underlying formulation is inherently two-dimensional, represented by
the set $\Sx$. Consequently, unlike in the univariate case, an exact CI for $p$ is
not attainable, and a reduction from the plane to the line must be made explicit
and defended.

To perform that reduction we adopt the optimisation-based formulation of
\eqref{eq:WilsonproblemCI}, applied without modification to the bivariate feasible
set. Our proposed interval is therefore defined by the principle
\begin{equation}
\label{eq:mainproblemCI}
\min_{(q_1,q_2)\in\Sx}\,(q_1-q_2)\;\le\;p\;\le\;\max_{(q_1,q_2)\in\Sx}\,(q_1-q_2),
\end{equation}
which extends Wilson's method to the bivariate setting directly and intuitively.
The interval \eqref{eq:mainproblemCI} induces a set of parameter pairs
$(p_1,p_2)\in[0,1]^2$ whose difference lies between the two extreme values,
\begin{equation*}
\Sr=\Bigl\{(p_1,p_2)\in[0,1]^{2}:\ \min_{(q_1,q_2)\in\Sx}(q_1-q_2)\ \le\ p_1-p_2\ \le\ \max_{(q_1,q_2)\in\Sx}(q_1-q_2)\Bigr\},
\end{equation*}
and by construction $\Sx\subseteq\Sr$. The interval is therefore conservative: the
event that $p$ falls inside it is implied by, but not equivalent to, the event that
$(p_1,p_2)$ falls inside the elliptical region, so its coverage is at least the
nominal level up to the error of the normal approximation. We shall refer to this
novel interval as the \emph{\EOlong} (\EO) confidence interval.

The qualifier deserves an immediate and precise justification. Since $\Sx$ is
non-empty, compact and convex, and since $(q_1,q_2)\mapsto q_1-q_2$ is linear, the
image of $\Sx$ under this map is exactly the closed interval appearing in
\eqref{eq:mainproblemCI}. Every point of that interval is attained by some
parameter pair compatible with the data, and no proper subinterval retains the
containment $\Sx\subseteq\Sr$. The \EO{} interval is thus the shortest interval
that inherits the coverage guarantee carried by the elliptical region: it is
optimal \emph{relative to} the elliptical constraint, which is the only sense in
which the term is used in this paper. Conservatism and this form of optimality are
not in tension --- the excess coverage is attributable entirely to the inclusion
$\Sx\subseteq\Sr$ forced by the reduction in dimension, and is not an artefact of
a slack choice of endpoints. Quantifying that excess exactly is the object of
Section~\ref{sec:error_analysis}, and it is on the basis of those results, rather
than on interval width alone, that Section~\ref{sec:comparison} draws its
comparisons with competing procedures.

\section{Properties of the ellipse \texorpdfstring{$\Sz$}{Omega} and its unit-square subset \texorpdfstring{$\Sx$}{Omega-square}}
\label{sec:properties_of_omega_lambda}

The reduction carried out in Section~\ref{sec:quadratic} replaced a
one-dimensional constraint by a planar one, and in doing so it transferred the
whole inferential problem onto the geometry of a single convex region. Before the
extremal programme \eqref{eq:mainproblemCI} can be solved --- and before the
conservatism it entails can be quantified --- we must understand what that region
looks like: whether it is convex, whether it is ever empty, how much of it escapes
the parameter space, where it sits, how elongated it is and in which direction it
leans. This section answers each question in turn. The properties established
here are not merely descriptive. Every one of them is used either in the
derivation of the interval in Section~\ref{sec:the_EO_CI} or in the coverage
analysis of Section~\ref{sec:error_analysis}, and one object introduced below ---
the line $\ell$ of \eqref{eq:centreline} --- turns out to organise the entire
remainder of the paper.

We begin with an observation that governs everything which follows, and which is
best made before any picture is drawn.

\begin{remark}
\label{rem:difference_only}
Each of the six coefficients of the quadratic form in \eqref{eq:ellipseprob}
depends on the observed data only through the difference $\hp=\hp_1-\hp_2$;
neither sample proportion enters separately. Consequently the sets $\Sz$ and
$\Sx$ --- and hence every quantity derived from them, including the bounds of the
interval defined by \eqref{eq:mainproblemCI} --- are functions of
$(n_1,n_2,z,\hp)$ alone. Two experiments with markedly different individual
proportions but a common difference generate exactly the same elliptical region.
\end{remark}

Throughout this section it will be convenient to write
\begin{equation}
\label{eq:Vfunction}
V(p_1,p_2)=\ddfrac{p_1(1-p_1)}{n_1}+\ddfrac{p_2(1-p_2)}{n_2},
\end{equation}
so that, directly from the derivation preceding \eqref{eq:ellipseprob},
\begin{equation}
\label{eq:Omega_variance_form}
\Sz=\Bigl\{(p_1,p_2)\in\mathbb{R}^2:\ \bigl(\hp-p_1+p_2\bigr)^2\le z^2V(p_1,p_2)\Bigr\},
\end{equation}
with $\Sx=\Sz\cap[0,1]^2$. Form \eqref{eq:Omega_variance_form} is equivalent to
the coefficient form of \eqref{eq:ellipseprob} but is frequently more transparent,
since it separates the discrepancy between the observed and the hypothesised
difference from the variance that calibrates it. We shall move between the two
forms freely.

Two symmetries of \eqref{eq:Omega_variance_form} will be used repeatedly, and both
are immediate from that display. The first is a reflection across the main
diagonal accompanied by an interchange of the two populations,
\begin{equation}
\label{eq:mirror}
(p_1,p_2)\in\Sz(\hp;n_1,n_2)
\iff
(p_2,p_1)\in\Sz(-\hp;n_2,n_1),
\end{equation}
which allows every statement below to be proved for $\hp\ge0$ without loss of
generality. The second is a point reflection through the centre of the unit
square, with the sample sizes left untouched,
\begin{equation}
\label{eq:pointreflection}
(p_1,p_2)\in\Sz(\hp;n_1,n_2)
\iff
(1-p_1,1-p_2)\in\Sz(-\hp;n_1,n_2),
\end{equation}
which holds because $V(1-p_1,1-p_2)=V(p_1,p_2)$ while the squared discrepancy is
unchanged. Symmetry \eqref{eq:pointreflection} is the source of the central
symmetry of the coverage-error surface recorded in
Corollary~\ref{cor:ErrorProperties}$(c)$.

Before proceeding to the analysis, we visualise both sets for selected input
values.
\begin{figure}[htbp]
\centering
    \begin{subfigure}[t]{0.48\textwidth}
        \centering
        \includegraphics[width=\linewidth]{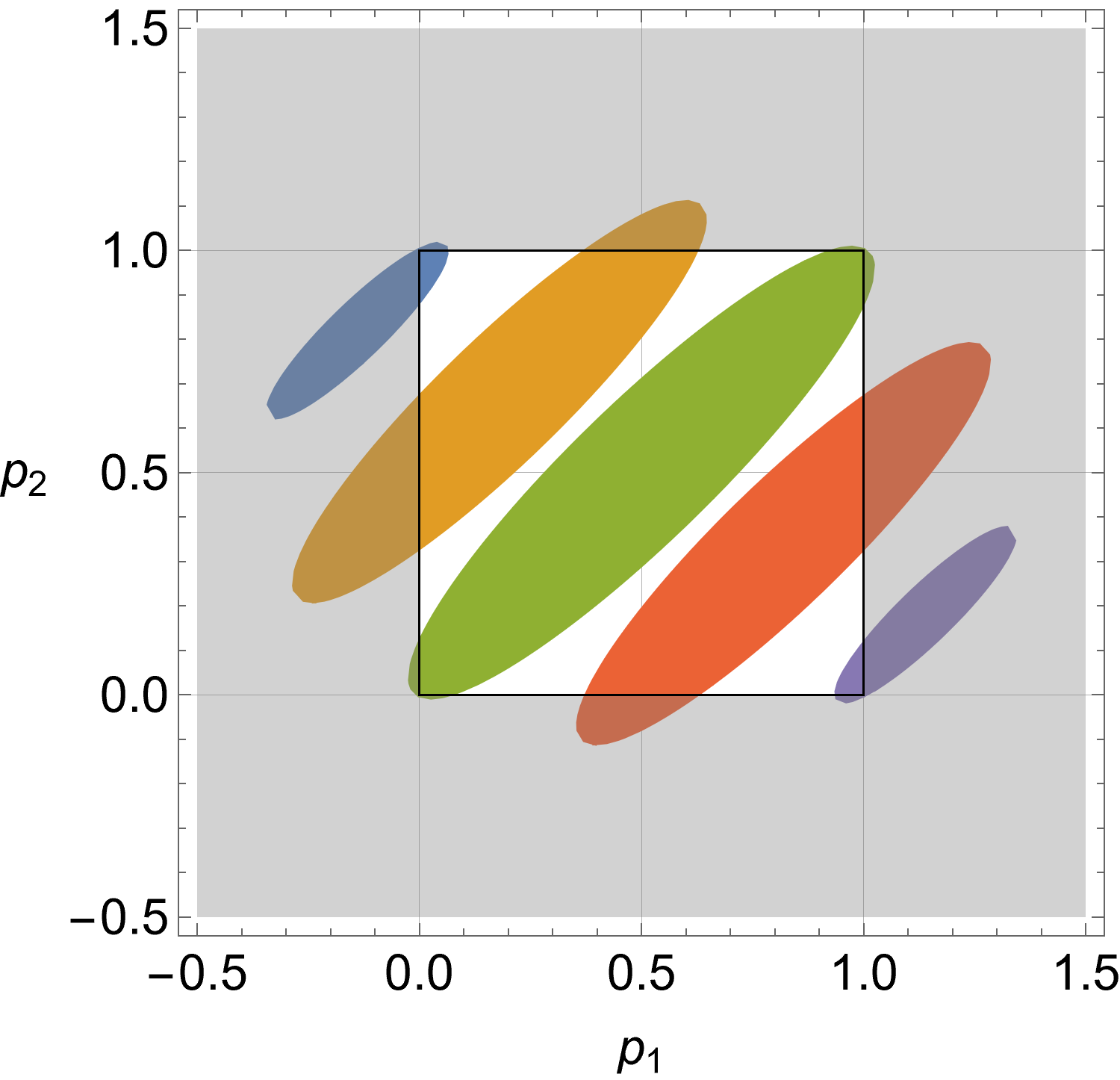}
        \caption{$n_1>n_2$ $(n_1=60,\ n_2=30)$}
        \label{subfig:setLF-ngm}\label{subfig:setLS-ngm}
    \end{subfigure}
    \hfill
    \begin{subfigure}[t]{0.48\textwidth}
        \centering
        \includegraphics[width=\linewidth]{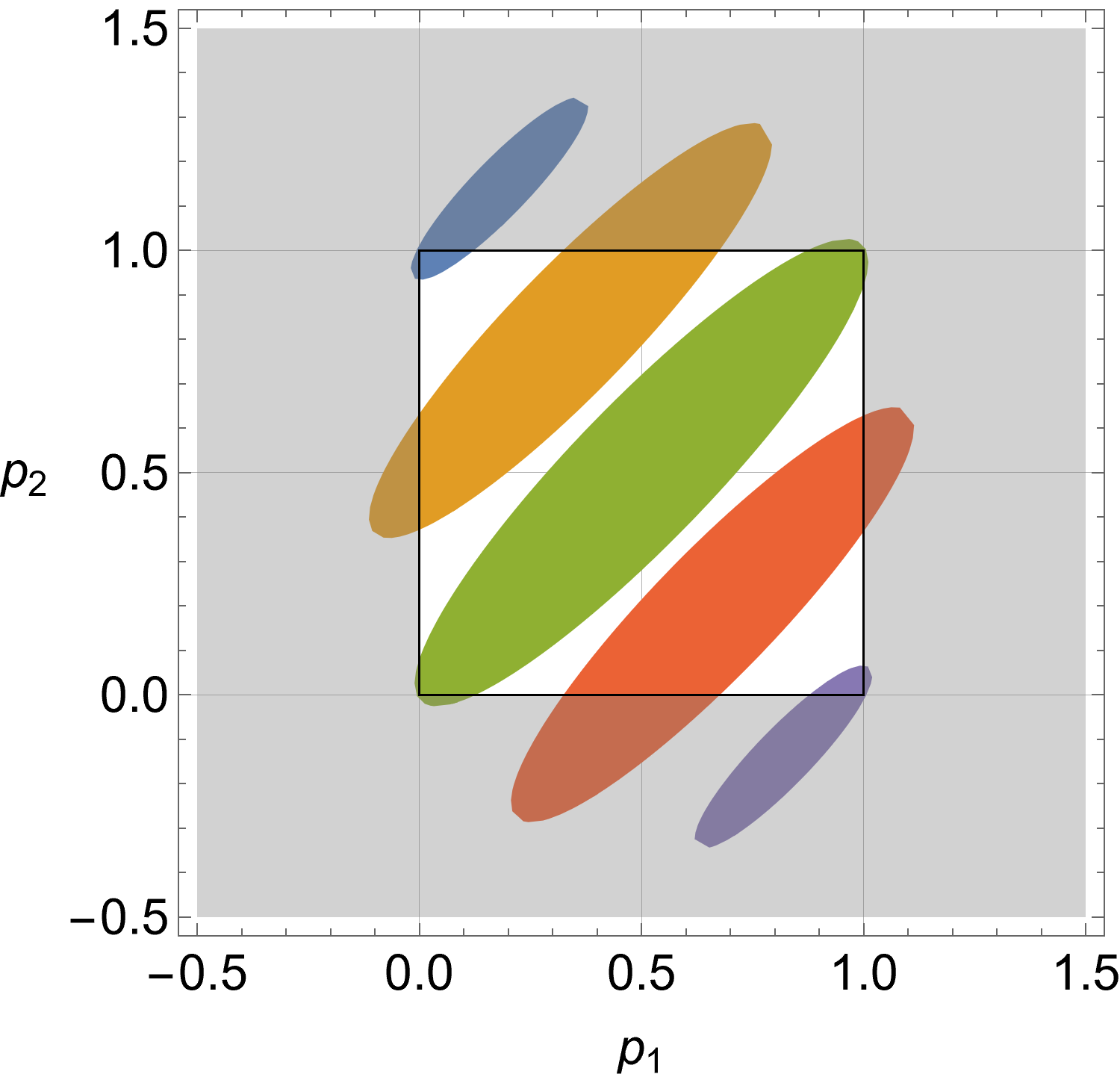}
        \caption{$n_1<n_2$ $(n_1=30,\ n_2=60)$}
        \label{subfig:setLF-mgn}\label{subfig:setLS-mgn}
    \end{subfigure}

    \vspace{0.8\baselineskip}

    \begin{subfigure}[t]{0.48\textwidth}
        \centering
        \includegraphics[width=\linewidth]{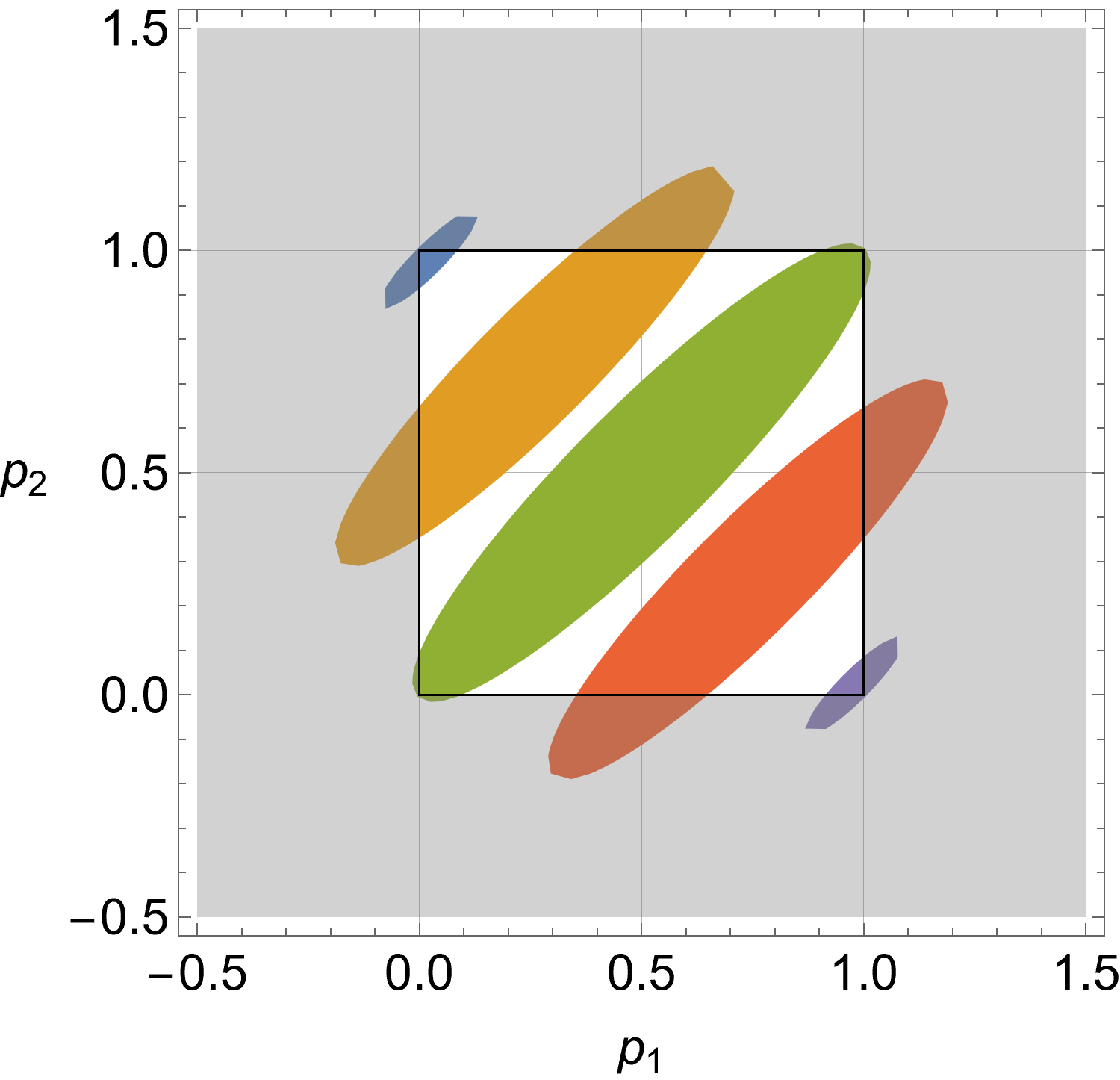}
        \caption{$n_1=n_2$ $(n_1=45,\ n_2=45)$}
        \label{subfig:setLF-nem}\label{subfig:setLS-nem}
    \end{subfigure}

    \vspace{0.8\baselineskip}

    \includegraphics[width=0.45\textwidth]{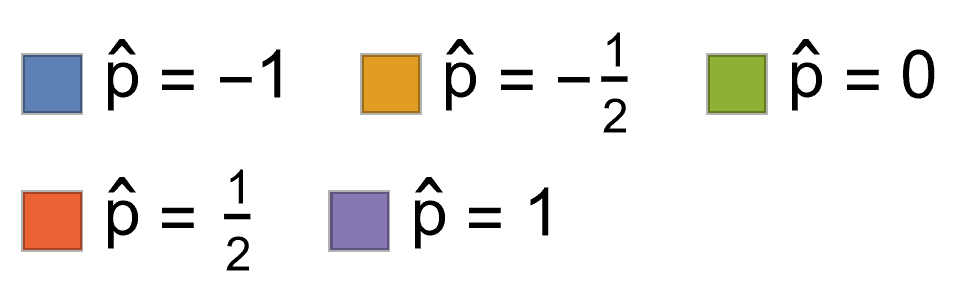}

\caption[The sets $\Sz$ and $\Sx$ for various sample sizes and proportions]{The
set $\Sz$ for three orderings of the sample sizes and the five values
$\hp\in\left\{-1,-\tfrac12,0,\tfrac12,1\right\}$, with $\a$ fixed at $5\%$. In
each panel the region outside the unit square is masked in grey: the unshaded
portion of each ellipse is therefore the corresponding subset
$\Sx=\Sz\cap[0,1]^2$. The colour key applies to all three panels.}
\label{fig:setLF}\label{fig:setLS}
\end{figure}

Figure~\ref{fig:setLF} displays, for each ordering of the two sample sizes, five
instances of $\Sz$, corresponding to the values
$\hp\in\left\{-1,-\tfrac12,0,\tfrac12,1\right\}$; by
Remark~\ref{rem:difference_only} no further indexing is needed, since $\hp$ alone
determines the set once $n_1$, $n_2$ and $z$ are fixed. Three features are
apparent, and each is confirmed analytically below. The ellipses are markedly
elongated, their major axes leaning counter-clockwise from the horizontal by an
angle strictly between $0$ and $\pi/2$; they drift diagonally downward as $\hp$
increases; and each of them escapes the unit square, although --- as
Section~\ref{subsec:properness} makes precise --- the escape may be visually
imperceptible when $\hp$ is near zero and pronounced when $\left|\hp\right|$ is
large. The elongation has a simple statistical reading: the data pin down the
\emph{difference} of the two proportions far more sharply than their common level,
so the region is thin transverse to the lines of constant difference and long
along them.

A convenient way to begin the analysis is to intersect $\Sz$ with the two
diagonals of the unit square. Remarkably, along either diagonal the bivariate
constraint collapses to a univariate Wilson problem, governed by a single
effective sample size.

\begin{lemma}
\label{lem:diagonal_sections}
Define the \emph{effective sample size}
\begin{equation}
\label{eq:nstar}
n_{\ast}=\frac{4n_1n_2}{n_1+n_2},
\end{equation}
that is, twice the harmonic mean of $n_1$ and $n_2$; it satisfies
$n_{\ast}\le n_1+n_2$, with equality if and only if $n_1=n_2$. Then:
\begin{itemize}
\item[$(a)$] On the opposite diagonal $\{(p_1,p_2)\in\mathbb{R}^2:p_1+p_2=1\}$,
membership of $\Sz$ is equivalent to
\begin{equation*}
\left(\frac{1+\hp}{2}-p_1\right)^{2}\le\frac{z^2\,p_1(1-p_1)}{n_{\ast}},
\end{equation*}
which is precisely the quadratic \eqref{eq:parabolaprob} of Wilson's univariate
method for an observed proportion $\hpi=(1+\hp)/2\in[0,1]$ and a sample of size
$n_{\ast}$. Consequently the section is a closed interval of strictly positive
length whose endpoints are given by \eqref{eq:WilsonCIOnePop} with $n$ replaced by
$n_{\ast}$ and $\hp$ by $\hpi$, and both endpoints lie in $[0,1]$. Its relative
interior is a non-empty open subinterval of $(0,1)$.
\item[$(b)$] On the main diagonal $\{(q,q):q\in\mathbb{R}\}$, membership of $\Sz$
is equivalent to $q(1-q)\ge \hp^{2}n_{\ast}/(4z^{2})$. This section is non-empty
if and only if
\begin{equation*}
\left|\hp\right|\le\frac{z}{\sqrt{n_{\ast}}},
\end{equation*}
in which case it is the closed interval centred at $q=1/2$ of half-length
$\sqrt{1/4-\hp^{2}n_{\ast}/(4z^{2})}$, and it is contained in $[0,1]$.
\end{itemize}
\end{lemma}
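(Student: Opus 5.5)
The plan is to work throughout with the variance form \eqref{eq:Omega_variance_form} and substitute each diagonal directly. First the preliminary claim about $n_\ast$: the inequality $n_\ast\le n_1+n_2$ is equivalent to $4n_1n_2\le(n_1+n_2)^2$, i.e.\ $(n_1-n_2)^2\ge0$. Equality therefore holds exactly when $n_1=n_2$.

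For part $(a)$ we put $p_2=1-p_1$. Then $p_2(1-p_2)=p_1(1-p_1)$, so
\[
V=p_1(1-p_1)\Bigl(\tfrac1{n_1}+\tfrac1{n_2}\Bigr)=\frac{4p_1(1-p_1)}{n_\ast}.
\]
The discrepancy becomes $\hp-p_1+p_2=\hp+1-2p_1=2\bigl(\tfrac{1+\hp}{2}-p_1\bigr)$. Squaring and dividing by $4$ gives exactly the displayed inequality. This is \eqref{eq:mainprob3} with $n=n_\ast$ and observed proportion $\hpi=(1+\hp)/2$, and $\hpi\in[0,1]$ because $\hp\in[-1,1]$. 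Hence it is equivalent to the quadratic \eqref{eq:parabolaprob} with these substitutions.

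Everything else in $(a)$ is then imported from the univariate theory. The solution set is the interval between the roots in \eqref{eq:WilsonCIOnePop}, and by the footnote argument ($q(0),q(1)\ge0$, vertex in $[0,1]$, nonnegative discriminant) both roots lie in $[0,1]$. Strictly positive length follows because the discriminant $z^2(z^2+4n_\ast\hpi(1-\hpi))/n_\ast^2\ge z^4/n_\ast^2>0$. So the roots are distinct, and the relative interior is a nonempty open interval inside $[0,1]$, hence inside $(0,1)$.

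For part $(b)$ we put $p_1=p_2=q$. The discrepancy reduces to $\hp$, and
\[
V=q(1-q)\Bigl(\tfrac1{n_1}+\tfrac1{n_2}\Bigr)=\frac{4q(1-q)}{n_\ast},
\]
so membership reads $\hp^2\le 4z^2q(1-q)/n_\ast$, which is the stated condition. Writing $q(1-q)=\tfrac14-(q-\tfrac12)^2$ turns this into
\[
(q-\tfrac12)^2\le \tfrac14-\frac{\hp^2n_\ast}{4z^2}.
\]
This set is nonempty iff the right side is nonnegative, i.e.\ $|\hp|\le z/\sqrt{n_\ast}$. In that case it is the closed interval centred at $\tfrac12$ with the stated half-length. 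That half-length is at most $\tfrac12$, so the interval lies in $[0,1]$.

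There is no genuine obstacle here. The one point needing care is the justification in $(a)$ that the endpoints lie in $[0,1]$ and are distinct, which relies on the footnote's univariate argument being valid for the non-integer "sample size" $n_\ast>0$. That argument uses only $n>0$, so it applies directly.
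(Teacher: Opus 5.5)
Your proof is correct and follows essentially the paper's argument: in part $(a)$ you substitute each diagonal into the variance form and recognise Wilson's inequality with effective sample size $n_{\ast}$ and observed proportion $(1+\hp)/2$, and in part $(b)$ you complete the square about $q=1/2$. The only differences from the paper are cosmetic: for part $(a)$ it cites its separate root-location lemma, which is stated for arbitrary real $\nu>0$ and so removes the non-integer $n_{\ast}$ concern you flag, and for part $(b)$ it works from the coefficient form rather than the variance form.
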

\begin{proof}
The proof of Lemma~\ref{lem:diagonal_sections} is relegated to
Appendix~\ref{app:diagonal_sections}.
\end{proof}

Part $(a)$ is the exact sense in which our bivariate construction contains
Wilson's univariate one: along the anti-diagonal the two-population problem
\emph{is} a one-population Wilson problem with the two samples pooled
harmonically. Part $(b)$ is equally suggestive. The main diagonal is the locus
$p_1=p_2$, on which the difference vanishes; the condition
$\left|\hp\right|\le z/\sqrt{n_{\ast}}$ is exactly the acceptance region of a
score test of $H_0:p_1=p_2$ evaluated at the least favourable common value $1/2$.
The geometry therefore already encodes when a null difference remains tenable, a
point taken up again in Section~\ref{sec:the_EO_CI}. One special case is worth
recording: when $\hp=0$ the condition in $(b)$ reduces to $q\in[0,1]$ for every
$n_1$ and $n_2$, so the boundary ellipse passes exactly through the corners
$(0,0)$ and $(1,1)$ of the unit square, and the main-diagonal chord is the full
diagonal of that square.

\subsection{Convexity, compactness and properness}
\label{subsec:properness}

We first record the structural facts on which the whole of
Sections~\ref{sec:the_EO_CI}--\ref{sec:comparison} rests.

\begin{property}
\label{prty:convex_compact}
The set $\Sz$ is a compact, convex subset of $\mathbb{R}^2$ with non-empty
interior, and it is centrally symmetric about its centre. Consequently
$\Sx=\Sz\cap[0,1]^2$ is compact and convex, hence connected.
\end{property}
\begin{proof}
The proof of Property~\ref{prty:convex_compact} is relegated to
Appendix~\ref{app:convex_compact}.
\end{proof}

Compactness and convexity are exactly what make the extremal programme
\eqref{eq:mainproblemCI} well posed: they guarantee that both optima are attained,
that the optimisers may be located by a support-function calculation, and --- since
a convex set is connected and the objective is continuous --- that the image of
$\Sx$ under $(p_1,p_2)\mapsto p_1-p_2$ is the \emph{whole} closed interval between
the two optima rather than some proper subset of it. That last consequence is the
step on which the profiled-variance representation of Lemma~\ref{lem:profile}
turns, and hence on which the entire coverage analysis of
Section~\ref{sec:error_analysis} depends.

The next result records that the passage from $\Sz$ to $\Sx$ never destroys the
region, and never leaves it intact either.

\begin{property}
\label{prty:nonempty_proper}
The subset $\Sx$ is non-empty with non-empty interior, and it is always a proper
subset of $\Sz$. Precisely, for all admissible $n_1,n_2,z$ and $\hp$:
\begin{itemize}
\item[$(a)$] $\Sx\neq\emptyset$;
\item[$(b)$] $\Sx$ has non-empty interior; indeed it contains an open segment of
the opposite diagonal of the unit square;
\item[$(c)$] $\Sx\subset\Sz$, the inclusion being strict.
\end{itemize}
\end{property}
\begin{proof}
The proof of Property~\ref{prty:nonempty_proper} is relegated to
Appendix~\ref{app:Sx_non-empty_propersubset}.
\end{proof}

All three parts carry consequences elsewhere. Part $(a)$ guarantees that the
programme \eqref{eq:mainproblemCI} is always feasible, so that the interval we
construct is never vacuous, however extreme the data. Part $(b)$ is what supplies
Slater's condition in the first-order analysis of
Appendix~\ref{app:mainCI}; it is not a triviality, since the natural witness for
part $(a)$ --- the observed pair $(\hp_1,\hp_2)$ --- satisfies the defining
inequality with \emph{zero} slack whenever both samples are homogeneous, that is
whenever $\hp_1,\hp_2\in\{0,1\}$, and is then a boundary point rather than an
interior one. Part $(c)$ guarantees that the restriction to the parameter space is
always binding, so that the unit square can never be dispensed with in the
optimisation. The argument in Appendix~\ref{app:Sx_non-empty_propersubset} yields
somewhat more than the statement itself, and it is worth isolating what it
delivers.

\begin{corollary}
\label{cor:Sz_geometry}
In addition to Property~\ref{prty:nonempty_proper}, the following
hold for all admissible $n_1,n_2,z$ and $\hp$:
\begin{itemize}
    \item[$\bullet$] The subset $\Sx$ always intersects the opposite diagonal of
    the unit square, that is,
    \begin{equation*}
        \Sx \cap \left\{ (p_1, p_2) \in [0,1]^2 : p_1 + p_2 = 1 \right\} \neq \emptyset.
    \end{equation*}

    \item[$\bullet$] The set $\Sz$ always intersects the region located either to
    the left of or below the unit square. More precisely,
    \begin{equation*}
        \Sz \cap \Big( (0,1) \times (-\infty,0) \;\cup\; (-\infty,0) \times (0,1) \Big) \neq \emptyset,
    \end{equation*}
    the first alternative occurring when $\hp\ge0$ and the second when $\hp\le0$.

    \item[$\bullet$] The set $\Sz$ also intersects the region located either to
    the right of or above the unit square. Specifically,
    \begin{equation*}
        \Sz \cap \Big( (0,1) \times (1,\infty) \;\cup\; (1,\infty) \times (0,1) \Big) \neq \emptyset,
    \end{equation*}
    the second alternative occurring when $\hp\ge0$ and the first when $\hp\le0$.

    \item[$\bullet$] Consequently, the admissible domain of $\Sz$ is contained
    within
    \begin{equation*}
        \Sz \subseteq [0,1]^2 \;\cup\;
        \big( (0,1)\times(-\infty,0) \big) \;\cup\;
        \big( (-\infty,0)\times(0,1) \big) \;\cup\;
        \big( (0,1)\times(1,\infty) \big) \;\cup\;
        \big( (1,\infty)\times(0,1) \big).
    \end{equation*}
\end{itemize}
\end{corollary}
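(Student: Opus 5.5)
The plan is to derive each bullet from Lemma~\ref{lem:diagonal_sections}, from the variance form \eqref{eq:Omega_variance_form}, and from the two symmetries \eqref{eq:mirror} and \eqref{eq:pointreflection}.

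\textbf{First bullet.} By Lemma~\ref{lem:diagonal_sections}$(a)$, the section of $\Sz$ by the line $p_1+p_2=1$ is a closed interval of positive length. In the coordinate $p_1$, its endpoints are Wilson bounds lying in $[0,1]$. Hence every point $(p_1,1-p_1)$ of that section lies in $[0,1]^2$, and therefore in $\Sx$.

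\textbf{Fourth bullet.} I prove this next, since it only uses signs. Take $(p_1,p_2)\in\Sz$ outside $[0,1]^2$. The left side of \eqref{eq:Omega_variance_form} is non-negative, so $V(p_1,p_2)\ge 0$. For real $t$, the quantity $t(1-t)$ is negative when $t\notin[0,1]$ and zero when $t\in\{0,1\}$.
\begin{itemize}
\item If both coordinates lie outside $[0,1]$, then $V<0$, which is impossible.
\item If one coordinate lies outside $[0,1]$ and the other lies in $\{0,1\}$, then again $V<0$.
\end{itemize}
So exactly one coordinate lies outside $[0,1]$, and the other lies in the open interval $(0,1)$. This is precisely the stated union of four strips.

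\textbf{Second bullet, interior case.} Assume $\hp\ge0$; the case $\hp\le0$ follows from the mirror symmetry \eqref{eq:mirror}. If $0<\hp<1$, take $p_1=\hp$ and $p_2=-\varepsilon$. The left side of \eqref{eq:Omega_variance_form} equals $\varepsilon^2$. The right side equals $z^2\bigl[\hp(1-\hp)/n_1-\varepsilon(1+\varepsilon)/n_2\bigr]$, which tends to the positive constant $z^2\hp(1-\hp)/n_1$. So the inequality holds for all sufficiently small $\varepsilon>0$.

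\textbf{Second bullet, degenerate cases.} The main obstacle is $\hp\in\{0,1\}$, where the point $(\hp,0)$ is a boundary point of $\Sz$ and the previous construction fails. Here I use a first-order expansion along a ray.
\begin{itemize}
\item For $\hp=0$, take $(p_1,p_2)=(t,-ct)$ with $0<c<n_2/n_1$ and small $t>0$. The left side is $t^2(1+c)^2$. The right side is $z^2t(1/n_1-c/n_2)+O(t^2)$, whose leading term is linear in $t$ with positive coefficient, so it dominates for small $t$.
\item For $\hp=1$, take $(p_1,p_2)=(1-t,-ct)$ with the same $c$. The left side is $t^2(1-c)^2$, and the right side is again $z^2t(1/n_1-c/n_2)+O(t^2)$, so the same domination argument applies.
\end{itemize}
In both cases $p_1\in(0,1)$ and $p_2<0$, giving the first alternative.

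\textbf{Third bullet.} This follows from the second by the point reflection \eqref{eq:pointreflection}. That map sends $\hp$ to $-\hp$, sends the strip $(0,1)\times(-\infty,0)$ to $(0,1)\times(1,\infty)$, and sends $(-\infty,0)\times(0,1)$ to $(1,\infty)\times(0,1)$. Hence the cases $\hp\ge0$ and $\hp\le0$ are attached to the alternatives exactly as stated.
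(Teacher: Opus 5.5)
Your proof is correct. Your first and fourth bullets match the paper's argument: the anti-diagonal section of Lemma~\ref{lem:diagonal_sections}$(a)$, and the sign of $V$ forced by \eqref{eq:Omega_variance_form}.

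For the two protrusion bullets you take a different route. The paper reuses the construction from the proof of Property~\ref{prty:nonempty_proper}$(c)$. Lemma~\ref{lem:rootlocation}$(b)$ with $\nu=\nmax$ gives, uniformly for every $\hp\in[0,1]$, some $s\in(0,1)$ with $(\hp-s)^2<z^2s(1-s)/\nmax$. Hence $(s,0)$ and $(1,1-s)$ are interior points of $\Sz$ lying on the bottom and right edges of the square. Pushing slightly outward from the first gives the second bullet, and from the second gives the third bullet directly; $\hp\le0$ is handled by \eqref{eq:mirror}.

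You instead build explicit witnesses, which forces a case split. Your point $(\hp,-\varepsilon)$ is the paper's argument with the particular choice $s=\hp$. That choice degenerates exactly at $\hp\in\{0,1\}$, which you repair with a first-order ray expansion. The paper never meets this obstacle, because the open Wilson interval is non-empty even at those endpoints; for instance, any $s\in\bigl(0,z^2/(\nmax+z^2)\bigr)$ works at $\hp=0$.

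Deriving the third bullet from the second via the point reflection \eqref{eq:pointreflection} is a neat economy that the paper does not use. Because that map leaves $n_1,n_2$ fixed, it attaches the two alternatives to the signs of $\hp$ correctly.

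Your version is self-contained, gives explicit witnesses, and avoids the root-location lemma. The paper's version is uniform in $\hp$, and a single pair of edge points serves both Property~\ref{prty:nonempty_proper}$(c)$ and the two protrusion bullets.
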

\begin{proof}
The proof of Corollary~\ref{cor:Sz_geometry} is relegated to
Appendix~\ref{app:Sz_geometry}.
\end{proof}

The last bullet deserves emphasis, since it is the strongest of the four: the
ellipse may escape the unit square across any one of its four edges, but it can
never reach a region in which \emph{both} coordinates are inadmissible. The reason
is transparent in the form \eqref{eq:Omega_variance_form}, where membership forces
$V(p_1,p_2)\ge0$, and $V$ is strictly negative as soon as both arguments leave
$[0,1]$. It should be added that the escape guaranteed by the two middle bullets
may be very small. At $n_1=n_2=45$, $\a=5\%$ and $\hp=0$, for example, the
boundary ellipse touches the corners $(0,0)$ and $(1,1)$ exactly and protrudes
below the bottom edge only for $p_1$ between $0$ and $z^2/(\nmax+z^2)\approx0.079$;
the protrusion is real, as Property~\ref{prty:nonempty_proper}$(c)$
requires, but it is not visible at the resolution of
Figure~\ref{fig:setLF}. The escape grows with $\left|\hp\right|$, and for
$\left|\hp\right|=1$ a substantial part of the region lies outside the square.
The admissible domain described by the fourth bullet is illustrated in
Figure~\ref{fig:Sz_domain}.

\begin{figure}[htbp]
    \centering
    \includegraphics[width=0.60\textwidth]{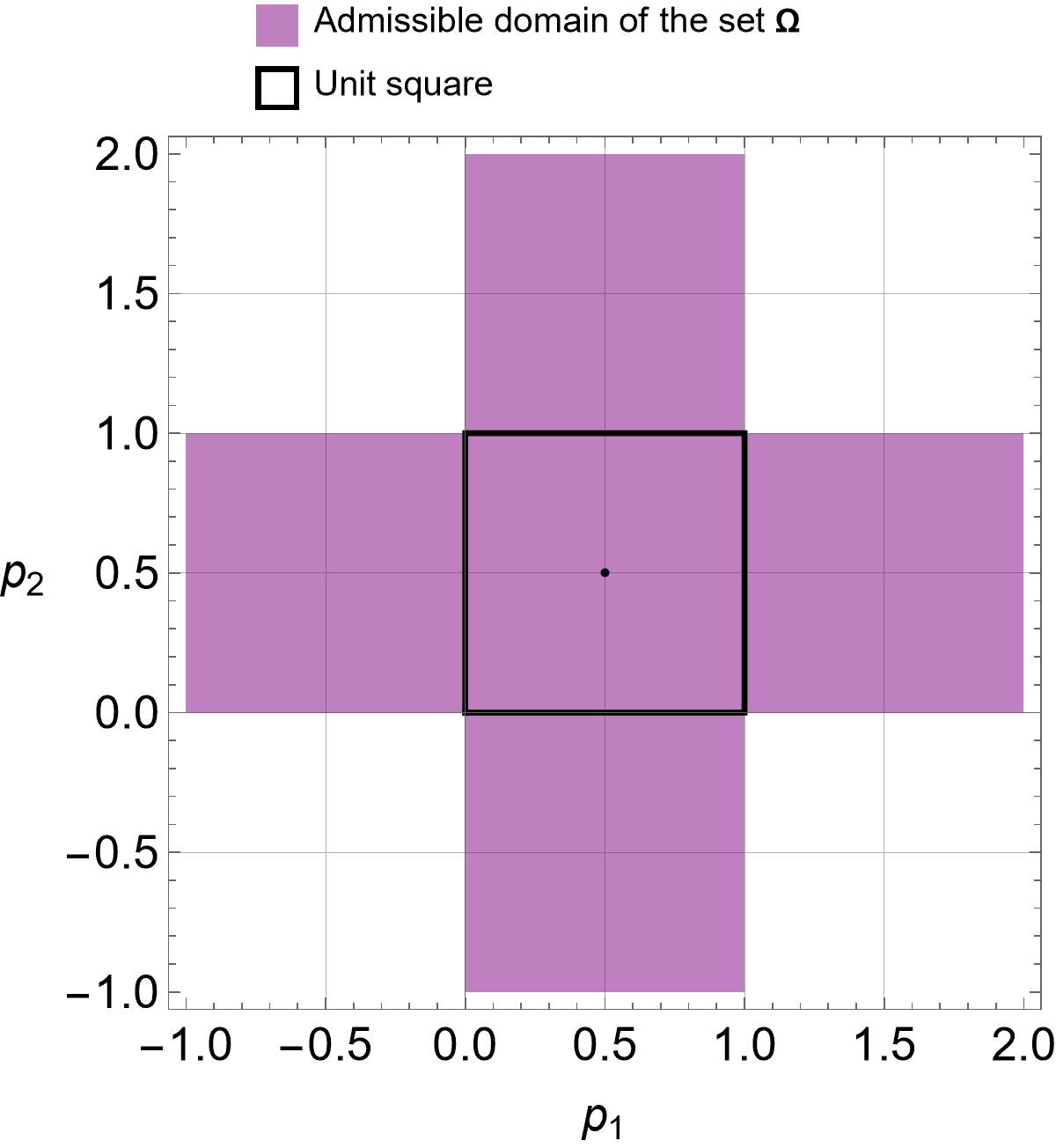} 
    \caption[Admissible domain of the unrestricted ellipse]{Visualisation of the
    admissible domain of $\Sz$, as delineated in Corollary~\ref{cor:Sz_geometry},
    over the region $[-1,2]^2$.}
    \label{fig:Sz_domain}
\end{figure}

Having settled that $\Sz$ is genuinely two-dimensional relative to the square, we
confirm that it never degenerates as a conic.

\begin{property}
\label{prty:determinant}
The determinant $\Delta$ of the matrix of the conic associated with
\eqref{eq:ellipseprob} is given by
\begin{equation}
\label{eq:Delta}
\Delta=-\ddfrac{z^4 \left((n_1+n_2)(n_1+n_2+z^2)-4 n_1n_2 \hp^2\right)}{4 n_1^2 n_2^2},
\end{equation}
and satisfies
\begin{equation}
\label{eq:Delta_bound}
\Delta\le -\ddfrac{z^4 \left((n_1-n_2)^2+(n_1+n_2)z^2\right)}{4 n_1^2 n_2^2}<0 .
\end{equation}
Together with the strictly negative discriminant $B^2-4AC$ computed in
Section~\ref{subsec:bivariate}, this implies that $\Sz$ is always a \emph{real},
non-degenerate ellipse: it never collapses to a single point or to a pair of
lines, and it is never the empty (imaginary) conic.
\end{property}
\begin{proof}
The proof of Property~\ref{prty:determinant} is relegated to
Appendix~\ref{app:determinant}.
\end{proof}

The bound \eqref{eq:Delta_bound} is obtained by setting $\hp^2=1$ and remains
strictly negative even in that most extreme case; degeneracy is therefore not
merely avoided for typical data but excluded uniformly over the entire range of
the observable. The bracket appearing in \eqref{eq:Delta} will reappear as the
quantity $\Xi$ of \eqref{eq:Xi}, where its positivity is what guarantees that the
bounds $\Lthree$ and $\Uthree$ of Definition~\ref{def:CIboundsLU} are real for
every possible sample.

\subsection{Location of the region}
\label{subsec:location}

\begin{property}
\label{prty:centre}
The centre of $\Sz$ is given by
\begin{equation}
\label{eq:centre}
(p_1^c,\ p_2^c)=
\left(\ddfrac{1}{2}+\ddfrac{n_1 \hp}{n_1+n_2+z^2},\ \ddfrac{1}{2}-\ddfrac{n_2 \hp}{n_1+n_2+z^2}\right).
\end{equation}
\end{property}
\begin{proof}
The proof of Property~\ref{prty:centre} is relegated to
Appendix~\ref{app:conic_reductions}.
\end{proof}

Expression \eqref{eq:centre} makes clear how the inputs govern the position of the
region, and it repays closer inspection than its simplicity might suggest. Three
observations stand out.

First, and most notably, the centre depends on the two sample proportions
\emph{only through their difference} $\hp$ --- a manifestation, at the level of
the most informative single summary of the ellipse, of the data reduction recorded
in Remark~\ref{rem:difference_only}. The location of the region is therefore
invariant under any change of the individual proportions that preserves their
difference, a structural feature that will reappear, in a much stronger form, in
the bounds of the interval themselves.

Second, the dependence is a shrinkage. Subtracting the two coordinates of
\eqref{eq:centre} gives the identity
\begin{equation}
\label{eq:centre_difference}
p_1^c-p_2^c=\hp\cdot\frac{n_1+n_2}{n_1+n_2+z^2},
\end{equation}
so that the difference at the centre of the ellipse is the observed difference
pulled toward zero by the factor $(n_1+n_2)/(n_1+n_2+z^2)\in(0,1)$. This is the
exact bivariate counterpart of the familiar univariate phenomenon whereby the
centre $(n\hp+z^2/2)/(n+z^2)$ of Wilson's interval \eqref{eq:WilsonCIOnePop} is
the observed proportion pulled toward $1/2$: in one dimension the score method
shrinks toward the least informative proportion, in two dimensions it shrinks
toward the null of equal proportions. The strength of the shrinkage is governed by
$z^2$ relative to the total sample size and vanishes as $n_1+n_2\to\infty$, as one
would expect of a correction of order $O\bigl(n^{-1}\bigr)$.

Third, the motion of the centre is rectilinear, and the line it traces deserves a
name. As $\hp$ increases from $-1$ to $1$ the point \eqref{eq:centre} traverses a
straight segment through $\left(\tfrac12,\tfrac12\right)$ with direction vector
$(n_1,-n_2)$, so that $p_1^c$ increases while $p_2^c$ decreases along a line of
slope $-n_2/n_1$. Writing that line in implicit form,
\begin{equation}
\label{eq:centreline}
\ell=\left\{(p_1,p_2)\in\mathbb{R}^{2}:\ n_2p_1+n_1p_2=\frac{n_1+n_2}{2}\right\}
=\left\{\left(\tfrac12+n_1t,\ \tfrac12-n_2t\right):t\in\mathbb{R}\right\},
\end{equation}
we obtain the diagonally downward drift observed in Figure~\ref{fig:setLF}, and an
explanation of why that drift is steeper when $n_2$ exceeds $n_1$. The line $\ell$
is invariant under the point reflection \eqref{eq:pointreflection}, and it passes
through the centre of the unit square.

\begin{remark}
\label{rem:centreline}
The line $\ell$ of \eqref{eq:centreline} is the distinguished direction of this
entire paper, and it is worth flagging here that it will be met three further
times. In Section~\ref{sec:the_EO_CI} the two extreme points of $\Sz$ in the direction
$(1,-1)$ are shown to lie on $\ell$, because the support direction
$A_2^{-1}(1,-1)^{\top}$ is proportional to $(n_1,-n_2)$ and the centre lies on
$\ell$ by construction; these are the optimisers of \eqref{eq:mainproblemCI}
precisely when the unit square is inactive, that is in case $(6)$ of
Table~\ref{tab:CIresults}. In Section~\ref{sec:error_analysis} the same line arises
twice more: it is the set of least-favourable parameter pairs attaining the
profiled variance $V^{\ast}$ of Lemma~\ref{lem:profile}, and it is the zero-error
locus \eqref{eq:valleyline} on which the coverage of the \EO{} interval is exactly
nominal. Finally, parametrising $\ell$ as in \eqref{eq:centreline}, membership of
the unit square amounts to $\left|t\right|\le1/(2\nmax)$, so that the difference
$p_1-p_2=(n_1+n_2)t$ ranges over precisely
$\left[-\hp_{\square},\hp_{\square}\right]$ with
$\hp_{\square}=(\nmax+\nmin)/(2\nmax)$ as in \eqref{eq:pbox}. The threshold
governing the six-case split of Table~\ref{tab:CIresults} is thus nothing other
than the range of the difference along the portion of $\ell$ inside the parameter
space.
\end{remark}

The following proposition characterises the region swept out by the centre.

\begin{proposition}
\label{prop:SzCenterDomain}
Let $T$ denote the union of the two open triangles
\begin{equation*}
T=\left\{(x,y)\in\mathbb{R}^2:\ \left(x-\tfrac12\right)\left(y-\tfrac12\right)<0
\ \ \text{and}\ \ \left|x-y\right|<1\right\},
\end{equation*}
that is, the triangle with vertices
$\left(\tfrac12,\tfrac12\right),\left(\tfrac32,\tfrac12\right),\left(\tfrac12,-\tfrac12\right)$
together with its reflection across the main diagonal, having vertices
$\left(\tfrac12,\tfrac12\right),\left(-\tfrac12,\tfrac12\right),\left(\tfrac12,\tfrac32\right)$.
Then the centre of $\Sz$ satisfies
\begin{equation*}
(p_1^c,\ p_2^c)\in T\cup\left\{\left(\tfrac12,\tfrac12\right)\right\},
\end{equation*}
with $(p_1^c,p_2^c)=\left(\tfrac12,\tfrac12\right)$ if and only if $\hp=0$.
The characterisation is sharp: if $n_1$ and $n_2$ are allowed to range over the
positive reals, then \emph{every} point of $T\cup\{(\tfrac12,\tfrac12)\}$ is
attained for a suitable admissible choice of $(n_1,n_2,z,\hp)$. If they are
restricted to the positive integers, the set of attainable centres is a dense
subset of $T\cup\{(\tfrac12,\tfrac12)\}$.
\end{proposition}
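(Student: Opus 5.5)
Let $x=p_1^c-\tfrac12$ and $y=p_2^c-\tfrac12$. By Property~\ref{prty:centre} and \eqref{eq:centre}, writing $S=n_1+n_2+z^2>0$, we have
\begin{equation*}
x=\frac{n_1\hp}{S},\qquad y=-\frac{n_2\hp}{S},
\end{equation*}
so the proof reduces to studying the map $(n_1,n_2,z,\hp)\mapsto(x,y)$. The plan has three parts: containment, sharpness over the positive reals, and density over the integers.

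\textbf{Containment.} If $\hp=0$, then $x=y=0$, so the centre is $(\tfrac12,\tfrac12)$. Conversely, $x=0$ forces $\hp=0$, because $n_1>0$. If $\hp\ne0$, then $x$ and $y$ have strictly opposite signs, so $(x)(y)<0$, which is the first condition defining $T$. For the second, $|x-y|=|\hp|(n_1+n_2)/S$. Since $|\hp|\le1$ and $(n_1+n_2)/S<1$ (because $z^2>0$), we get $|p_1^c-p_2^c|<1$; this is just \eqref{eq:centre_difference}. Hence the centre lies in $T$ whenever $\hp\neq0$.

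\textbf{Sharpness over the reals.} Fix a target $(x,y)$ with $xy<0$ and $|x-y|<1$. By the mirror symmetry \eqref{eq:mirror} we may assume $x>0>y$, which corresponds to $\hp>0$. Set $r=x/(-y)=n_1/n_2$ and $d=x-y=\hp(n_1+n_2)/S\in(0,1)$. I would choose $\hp=1$ for simplicity, so the only remaining requirement is
\begin{equation*}
\frac{n_1+n_2}{n_1+n_2+z^2}=d .
\end{equation*}
Given any $z>0$, this is solved by taking $n_1+n_2=dz^2/(1-d)>0$ and then splitting the total in the ratio $n_1:n_2=r$. This produces every point of $T$, and $(\tfrac12,\tfrac12)$ is produced by $\hp=0$.

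One point to check is what counts as admissible. If $z$ is allowed to range freely over $(0,\infty)$ (since $\a\in(0,1)$), the construction above works directly. If instead $z$ is held fixed, the scaling freedom is lost, but the same equations still determine $n_1+n_2=dz^2/(1-d)$, so the argument goes through with $\hp=1$ and the given $z$.

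\textbf{Density over the integers.} This is the main obstacle. With $n_1,n_2$ restricted to positive integers, the construction above cannot in general be met exactly. The plan is to use the remaining continuous parameters to absorb the integrality constraint. Keep $\hp=1$ and choose integers with $n_1/n_2$ close to $r$. Then choose $z$ so that $(n_1+n_2)/(n_1+n_2+z^2)=d$, which is possible for any integer total because $z$ is continuous; with this choice the difference $d$ is hit exactly.

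The only approximation error is then in the ratio $r$. Since the rationals are dense, it can be made arbitrarily small. The map from $(r,d)$ to the point $(x,y)$ is continuous; explicitly, $x=dr/(1+r)$ and $y=-d/(1+r)$. So the attained centres are dense in $T$.

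Should $z$ need to be fixed, I would instead vary $\hp$ continuously in $(0,1]$ and take large multiples $kn_1,kn_2$ of the integer pair. As $k$ grows, the factor $(n_1+n_2)/S$ tends to $1$, so $d$ can be matched by adjusting $\hp$ while the ratio $n_1/n_2$ is preserved.

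Realistically $\hp$ is itself a rational number determined by the sample sizes; if that constraint is imposed, density still follows by the same approximation argument. Finally, the open-set issue at the boundary $|x-y|\to1$ needs no special treatment, because every point of $T$ satisfies $|x-y|<1$ strictly, so the target $d$ always lies strictly inside $(0,1)$.
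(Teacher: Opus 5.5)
Your proof is correct and follows the paper's argument essentially step for step: the same centred coordinates, containment from the opposite signs of $x$ and $y$ together with the shrinkage factor $(n_1+n_2)/(n_1+n_2+z^2)<1$, and sharpness by taking $\hp=1$ and letting $z$ absorb the overall scale. The only cosmetic difference is in the integer case: the paper attains exactly every target whose ratio $x/(-y)$ is rational (via $n_1=kp$, $n_2=kq$), whereas you approximate the ratio and match $d$ through $z$.
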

\begin{proof}
The proof of Proposition~\ref{prop:SzCenterDomain} is relegated to
Appendix~\ref{app:SzCenterDomain}.
\end{proof}

The two conditions defining $T$ have direct interpretations. The first states that
the coordinates of the centre always straddle $1/2$: the ellipse leans to one side
of the ``equal proportions at one half'' point according to the sign of $\hp$. The
second is exactly the shrinkage identity \eqref{eq:centre_difference}, which
confines the centre to the open strip between the lines $p_1-p_2=\pm1$ on which a
difference of proportions can legitimately live. The admissible region is shaded
in Figure~\ref{fig:centerDomain}.
\begin{figure}[htbp]
\centering
\includegraphics[width=0.60\textwidth]{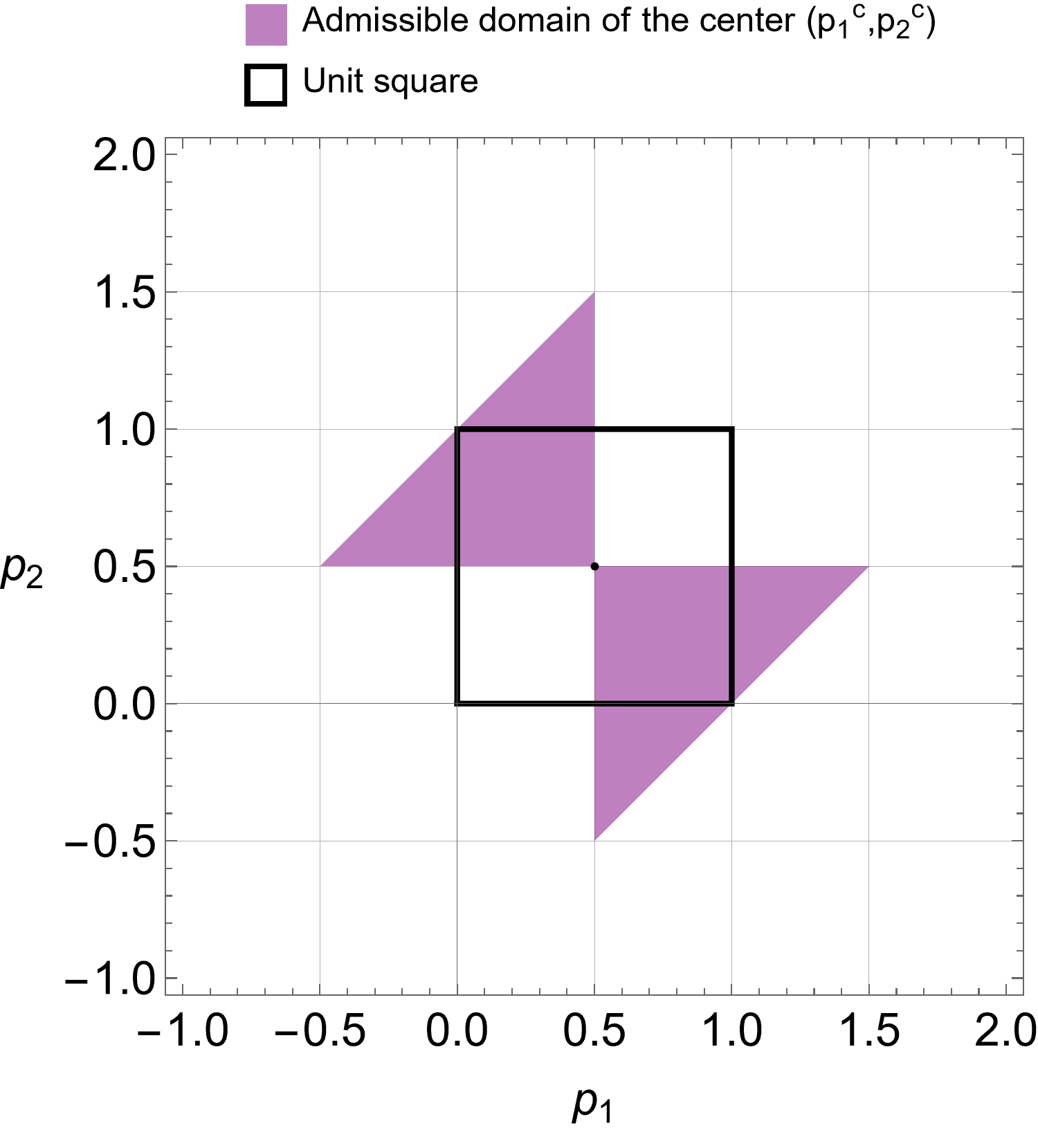}
\caption[Domain of the ellipse centre]{The domain of the centre of the set $\Sz$,
as delineated in Proposition~\ref{prop:SzCenterDomain}: two open triangles meeting
at $\left(\tfrac12,\tfrac12\right)$, shown together with the unit square.}
\label{fig:centerDomain}
\end{figure}

\begin{corollary}
\label{cor:centre_in_square}
The centre of $\Sz$ always belongs to $\Sz$; consequently it belongs to $\Sx$ if
and only if it belongs to the unit square, which occurs precisely when
\begin{equation*}
\left(p_1^c,p_2^c\right)\in
\left(\left(\left[0,\tfrac12\right)\times\left(\tfrac12,1\right]\right)\setminus\left\{(0,1)\right\}\right)
\;\cup\;\left\{\left(\tfrac12,\tfrac12\right)\right\}\;\cup\;
\left(\left(\left(\tfrac12,1\right]\times\left[0,\tfrac12\right)\right)\setminus\left\{(1,0)\right\}\right).
\end{equation*}
In particular the centre can never lie in the closed lower-left or upper-right
quadrant of the unit square other than at its midpoint.
\end{corollary}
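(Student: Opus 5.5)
The corollary has three parts: the centre lies in $\Sz$; membership in $\Sx$ reduces to membership in $[0,1]^2$; and an explicit description of when the centre lies in the square.

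\emph{Centre lies in $\Sz$.} This follows directly from Property~\ref{prty:convex_compact}. $\Sz$ is compact, convex, has non-empty interior, and is centrally symmetric about its centre. Pick any point $x\in\Sz$. Its reflection $2c-x$ also lies in $\Sz$, and by convexity so does the midpoint $c$. Non-emptiness is guaranteed by Property~\ref{prty:nonempty_proper}$(a)$.

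As a cross-check, I would also verify this directly from \eqref{eq:Omega_variance_form}. Substituting \eqref{eq:centre} gives
\[
\hp-p_1^c+p_2^c=\hp\,\frac{z^2}{n_1+n_2+z^2}
\]
by \eqref{eq:centre_difference}. Next, $p_1^c(1-p_1^c)=\tfrac14-n_1^2\hp^2/(n_1+n_2+z^2)^2$, and similarly for $p_2^c$. Writing $N=n_1+n_2+z^2$, the membership inequality becomes
\[
\hp^2z^4/N^2\le z^2\Bigl[\tfrac{n_1+n_2}{4n_1n_2}-\tfrac{(n_1+n_2)\hp^2}{N^2}\Bigr].
\]
This is equivalent to $4n_1n_2\hp^2\le (n_1+n_2)N$, which is the positivity of the bracket in \eqref{eq:Delta}, already shown in Property~\ref{prty:determinant}. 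The second claim is then immediate, since $\Sx=\Sz\cap[0,1]^2$.

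\emph{When the centre lies in the square.} Here I use Proposition~\ref{prop:SzCenterDomain}: the centre lies in $T\cup\{(\tfrac12,\tfrac12)\}$, and equals $(\tfrac12,\tfrac12)$ iff $\hp=0$. I intersect $T$ with $[0,1]^2$ one triangle at a time.

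For the triangle with $x>\tfrac12$, $y<\tfrac12$, membership in the square requires $x\le1$ and $y\ge0$. The condition $|x-y|<1$ removes only the corner $(1,0)$, because for $x\le 1$ and $y\ge 0$ we have $x-y\le1$, with equality only at $(1,0)$. This gives $\bigl((\tfrac12,1]\times[0,\tfrac12)\bigr)\setminus\{(1,0)\}$. The mirror triangle gives the other set in the same way.

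There is a subtlety about the direction of the claim. The corollary says "precisely when the centre lies in this set", which is an equivalence given where the centre actually lies, so no sharpness argument is needed. If one wants to confirm that the set is actually attained, the sharpness statement of Proposition~\ref{prop:SzCenterDomain} supplies it.

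The final sentence follows because the closed quadrants $[0,\tfrac12]^2$ and $[\tfrac12,1]^2$ meet the described set only at $(\tfrac12,\tfrac12)$. In the listed pieces, one coordinate is strictly above $\tfrac12$ and the other strictly below.

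The main obstacle is minor: keeping the edge cases straight. One needs the excluded corners $(1,0)$ and $(0,1)$, and the boundary segments $x=1$ and $y=0$ correctly included. I would check that, for example, $(1,\tfrac14)$ satisfies $|x-y|<1$ and so belongs to $T$.
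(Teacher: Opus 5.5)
Your proposal is correct and follows the paper's proof. The centre lies in $\Sz$, so membership of $\Sx$ reduces to membership of the square, and intersecting the triangles $T$ of Proposition~\ref{prop:SzCenterDomain} with $[0,1]^2$ removes only the corners $(1,0)$ and $(0,1)$. The paper simply cites the fact that a non-degenerate ellipse contains its centre; your justification via central symmetry and convexity, and your substitution check reducing membership to $4n_1n_2\hp^2\le(n_1+n_2)(n_1+n_2+z^2)$, are correct elaborations of that step.
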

\begin{proof}
The proof of Corollary~\ref{cor:centre_in_square} is relegated to
Appendix~\ref{app:SzCenterDomain}.
\end{proof}

\subsection{Size and orientation}
\label{subsec:shape}

Having located the region, we turn to its size and shape.

\begin{property}
\label{prty:axes}
The lengths of the major and minor axes of $\Sz$ are given, respectively, by
\begin{gather*}
\mathrm{Major}=\frac{\sqrt{\left((n_1+n_2)^2-4 n_1n_2 \hp^2+(n_1+n_2)z^2\right)\left((n_1+n_2)z^2 +2 n_1 n_2+\sqrt{4 n_1^2 n_2^2+(n_1-n_2)^2z^4}\right) }}{ \sqrt{2n_1n_2} \left(n_1+n_2+z^2\right)},
\\
\mathrm{Minor}=\frac{\sqrt{\left((n_1+n_2)^2-4 n_1n_2 \hp^2+(n_1+n_2)z^2\right)\left((n_1+n_2)z^2 +2 n_1 n_2-\sqrt{4 n_1^2 n_2^2+(n_1-n_2)^2z^4}\right) }}{ \sqrt{2n_1n_2} \left(n_1+n_2+z^2\right)}.
\end{gather*}
\end{property}
\begin{proof}
The proof of Property~\ref{prty:axes} is relegated to
Appendix~\ref{app:conic_reductions}.
\end{proof}

Although these expressions appear cumbersome, they simplify considerably in the
balanced design $n_2=n_1$:
\begin{gather*}
\mathrm{Major}=\sqrt{\frac{4n_1\left(1- \hp^2\right)+2z^2}{2 n_1+z^2}},
\qquad
\mathrm{Minor}=\frac{z\sqrt{4n_1\left(1- \hp^2\right)+2z^2}}{2 n_1+z^2},
\end{gather*}
and in this form the geometry is immediate. In the balanced design the major axis
cannot exceed $\sqrt{2}$, the bound being attained precisely at $\hp=0$; since
$\sqrt2$ is the length of the diagonal of the unit square, and since the ellipse
passes through $(0,0)$ and $(1,1)$ when $\hp=0$, the extremal configuration is the
one in which the major axis coincides exactly with the main diagonal of the
parameter space, in agreement with Lemma~\ref{lem:diagonal_sections}$(b)$. It
should be stressed that this bound is specific to the balanced case and does not
persist under unequal allocation: at $n_1=60$, $n_2=30$, $\hp=0$ and $\a=5\%$ the
major axis has length $1.4180$, and at $n_1=1000$, $n_2=10$ it has length
$1.4844$, both exceeding $\sqrt2$. This is not paradoxical, since by
Corollary~\ref{cor:Sz_geometry} the ellipse is not confined to the unit square.

The minor axis obeys the same bound in the balanced design but is typically far
smaller: their ratio is
\begin{equation*}
\frac{\mathrm{Minor}}{\mathrm{Major}}=\frac{z}{\sqrt{2n_1+z^2}},
\end{equation*}
which is $O\bigl(n_1^{-1/2}\bigr)$, so that for conventional confidence levels and
realistic sample sizes the region is a thin sliver rather than a rounded oval.
This is the quantitative counterpart of the elongation noted in
Figure~\ref{fig:setLF}, and it is the geometric source of the conservatism of our
interval: a thin sliver leaning across the square is poorly approximated by the
band of constant difference that contains it. The exact price of that
approximation is computed in Section~\ref{sec:error_analysis}.

Finally we determine the direction in which the sliver leans.

\begin{property}
\label{prty:rotation}
The rotation angle $\theta$ of the major axis of $\Sz$, measured
counter-clockwise from the horizontal axis, is given by
\begin{gather*}
\theta=
\begin{cases}
\ \dfrac{1}{2}\,\tan^{-1}\!\left(\dfrac{2n_1n_2}{(n_1-n_2)z^2}\right),
& \text{if } n_1>n_2,
\\[2.2ex]
\ \dfrac{1}{2}\left(\tan^{-1}\!\left(\dfrac{2n_1n_2}{(n_1-n_2)z^2}\right)+\pi\right),
& \text{if } n_2>n_1,
\\[2.2ex]
\ \dfrac{\pi}{4},
& \text{if } n_1=n_2,
\end{cases}
\end{gather*}
where $\tan^{-1}$ denotes the principal branch, with values in
$\left(-\pi/2,\pi/2\right)$.
\end{property}
\begin{proof}
The proof of Property~\ref{prty:rotation} is relegated to
Appendix~\ref{app:conic_reductions}.
\end{proof}

\begin{corollary}
\label{cor:rotation_range}
The rotation angle satisfies
\begin{equation*}
0<\theta<\frac{\pi}{4}\ \ \text{when }n_1>n_2,
\qquad
\theta=\frac{\pi}{4}\ \ \text{when }n_1=n_2,
\qquad
\frac{\pi}{4}<\theta<\frac{\pi}{2}\ \ \text{when }n_2>n_1 .
\end{equation*}
In particular $\theta\in(0,\pi/2)$ for all admissible inputs, so that $\Sz$ is
always rotated counter-clockwise from the horizontal axis by an angle strictly
between $0^{\circ}$ and $90^{\circ}$, confirming the behaviour observed in
Figure~\ref{fig:setLF}.
\end{corollary}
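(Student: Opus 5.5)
The plan is to read the bounds directly off the closed form of Property~\ref{prty:rotation}, case by case. Throughout, write
\[
\phi=\tan^{-1}\!\left(\frac{2n_1n_2}{(n_1-n_2)z^2}\right).
\]
The numerator $2n_1n_2$ is strictly positive. The denominator therefore fixes the sign of the argument: positive when $n_1>n_2$, negative when $n_2>n_1$.

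\emph{Case $n_1>n_2$.} The argument is a finite positive number, so the principal branch gives $\phi\in(0,\pi/2)$. Hence $\theta=\phi/2\in(0,\pi/4)$, with both inequalities strict because $\tan^{-1}$ never attains $0$ or $\pi/2$ at a finite positive argument.

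\emph{Case $n_2>n_1$.} The argument is finite and negative, so $\phi\in(-\pi/2,0)$. Then $\theta=(\phi+\pi)/2\in(\pi/4,\pi/2)$, again strictly.

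\emph{Case $n_1=n_2$.} Here $\theta=\pi/4$ is stated outright in Property~\ref{prty:rotation}.

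Taking the union of the three cases gives $\theta\in(0,\pi/2)$ for all admissible inputs. This is the claimed counter-clockwise rotation strictly between $0^\circ$ and $90^\circ$.

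As a sanity check, and to guard against a branch or sign error in the cited formula, I would confirm the case assignment independently. With coefficients $A,B,C$ from \eqref{eq:ellipseprob}, the angle satisfies $\tan 2\theta = B/(A-C)$. Here
\[
A-C=z^2\left(\frac{1}{n_1}-\frac{1}{n_2}\right)=\frac{z^2(n_2-n_1)}{n_1n_2}
\qquad\text{and}\qquad B=-2,
\]
so
\[
\tan 2\theta=\frac{2n_1n_2}{(n_1-n_2)z^2},
\]
which matches the argument of $\phi$.

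The remaining point is that $\theta$ is the \emph{major}-axis direction rather than the minor one. For that, check that the eigenvector of the smaller eigenvalue of $\bigl(\begin{smallmatrix}A & -1\\ -1 & C\end{smallmatrix}\bigr)$ has components of the same sign. This holds because the off-diagonal entry is negative: for a symmetric $2\times2$ matrix with negative off-diagonal entry, the eigenvector of the smaller eigenvalue has components of equal sign. So that eigenvector points into the first quadrant, and its angle lies in $(0,\pi/2)$.

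The comparison of $A$ and $C$ then decides which side of $\pi/4$ the angle falls on. The main obstacle is only this branch bookkeeping, that is, matching major versus minor axis. It is already settled by Property~\ref{prty:rotation}, so the corollary follows essentially immediately.
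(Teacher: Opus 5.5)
Your proposal is correct and is essentially the paper's argument. The paper proves Property~\ref{prty:rotation} and this corollary together: it first shows that the major-axis eigendirection of $\lambda_{-}$ has slope $\tan\theta=A-\lambda_{-}>0$ (your same-sign eigenvector check), so $\theta\in(0,\pi/2)$, and then uses the sign of $\tan(2\theta)=2n_1n_2/\bigl((n_1-n_2)z^2\bigr)$ to place $2\theta$ in $(0,\pi/2)$ or $(\pi/2,\pi)$, which is exactly your case split. The only difference is the order: the paper uses $\theta\in(0,\pi/2)$ to justify the branch choice in the closed form, whereas you read the ranges off that closed form and use the eigenvector argument as confirmation.
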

\begin{proof}
The proof of Corollary~\ref{cor:rotation_range} is relegated to
Appendix~\ref{app:conic_reductions}.
\end{proof}

Corollary~\ref{cor:rotation_range} completes the qualitative picture. In the
balanced design the major axis is parallel to the main diagonal, that is, to the
lines along which the difference $p_1-p_2$ is constant: the region is at its most
elongated in exactly the direction to which the parameter of interest is blind,
and at its narrowest transverse to it. Unequal sample sizes tilt the axis toward
the coordinate direction associated with the \emph{larger} sample, reflecting the
greater precision with which that proportion is resolved. It should be said that
the magnitude of the tilt is slight at conventional inputs: at the sample sizes of
Figure~\ref{fig:setLF} the angle $\theta$ takes the values $44.08^{\circ}$,
$45^{\circ}$ and $45.92^{\circ}$, so the three panels are practically
indistinguishable in orientation and differ visibly only in the placement of the
regions. The tilt becomes pronounced only for severely unbalanced designs, in
which $\tan(2\theta)=2n_1n_2/\bigl((n_1-n_2)z^2\bigr)$ approaches zero.

Taken together with the axis lengths of Property~\ref{prty:axes}, this explains
both why a Wilson-type bivariate region is so informative about $p_1-p_2$ and why
reducing it to an interval, as \eqref{eq:mainproblemCI} requires, must cost
something. Quantifying that cost exactly is the task of
Section~\ref{sec:error_analysis}; solving the programme itself is the task of the
section that follows.
\section{The elliptically optimal confidence interval}
\label{sec:the_EO_CI}

Section~\ref{sec:properties_of_omega_lambda} established everything we need about
the feasible set of the programme \eqref{eq:mainproblemCI}. The region
$\Sx$ is non-empty, compact and convex; it is always a proper subset of the
ellipse $\Sz$, so the unit square is always a binding constraint; its centre is
the observed difference shrunk toward zero; and it is a thin sliver whose major
axis leans across the square at an angle strictly between $0$ and $\pi/2$. In this
section we solve the programme.

Minimising and maximising a linear functional over a convex body is a
support-function calculation, and the geometry dictates in advance the form the
answer must take. Either the extreme point of the ellipse in the direction
$(\pm1,\mp1)$ falls inside the unit square, in which case the square is inactive
and the bound is a property of the ellipse alone; or that extreme point is cut off
by an edge, in which case the optimum migrates onto the edge, one proportion is
pinned at a known value, and the bivariate problem degenerates to a univariate
one. The three pairs of bounds introduced below are precisely these two
possibilities and their mirror image, and the content of the main theorem is the
determination of which possibility obtains for which data.

Before any of this, however, there is a reduction which makes the entire section
transparent, and which we therefore place first. It shows that the bivariate
programme \eqref{eq:mainproblemCI} is equivalent to a genuinely one-dimensional
inequality of exactly Wilson's form, in which the unknown nuisance direction has
been eliminated by maximisation.

\subsection{A profiled-variance representation}
\label{subsec:profile}

Recall from \eqref{eq:Vfunction} the variance function
$V(p_1,p_2)=p_1(1-p_1)/n_1+p_2(1-p_2)/n_2$, and from
\eqref{eq:Omega_variance_form} that $(p_1,p_2)\in\Sz$ if and only if
$\left(\hp-p_1+p_2\right)^{2}\le z^{2}V(p_1,p_2)$. Define the
\emph{least-favourable}, or \emph{profiled}, variance of the difference,
\begin{equation}
\label{eq:Vstardef}
V^{\ast}(p)=\max\Bigl\{V(p_1,p_2)\ :\ (p_1,p_2)\in[0,1]^{2},\ p_1-p_2=p\Bigr\},
\qquad p\in[-1,1],
\end{equation}
that is, the largest sampling variance that the observed difference could have
had, over all pairs of admissible proportions consistent with a hypothesised
difference $p$. Set also
\begin{equation}
\label{eq:pbox}
\hp_{\square}=\frac{\nmax+\nmin}{2\nmax}\in\left(\tfrac12,1\right],
\end{equation}
a threshold which will recur throughout the section and which equals unity exactly
in the balanced design $n_1=n_2$.

\begin{lemma}
\label{lem:profile}
The profiled variance \eqref{eq:Vstardef} is given in closed form by
\begin{equation}
\label{eq:Vstar}
V^{\ast}(p)=
\begin{cases}
\ \dfrac{\nmax+\nmin}{4\nmax\nmin}-\dfrac{p^{2}}{\nmax+\nmin},
& \text{if } |p|\le\hp_{\square},
\\[3ex]
\ \dfrac{|p|\left(1-|p|\right)}{\nmin},
& \text{if } |p|>\hp_{\square},
\end{cases}
\end{equation}
the maximum in \eqref{eq:Vstardef} being attained on the first branch at the
interior pair $\left(\tfrac12+\tfrac{n_1p}{n_1+n_2},\ \tfrac12-\tfrac{n_2p}{n_1+n_2}\right)$
and on the second branch at a pair lying on the edge of the unit square associated
with the larger sample size. The function $V^{\ast}$ is even, concave and
continuously differentiable on $[-1,1]$, it is strictly positive on $(-1,1)$, and
it vanishes at $p=\pm1$. Moreover the \EO{} interval defined by
\eqref{eq:mainproblemCI} admits the representation
\begin{equation}
\label{eq:EOprofile}
\Bigl[\min_{(p_1,p_2)\in\Sx}(p_1-p_2),\ \max_{(p_1,p_2)\in\Sx}(p_1-p_2)\Bigr]
=\Bigl\{p\in[-1,1]:\ \left(\hp-p\right)^{2}\le z^{2}V^{\ast}(p)\Bigr\}.
\end{equation}
\end{lemma}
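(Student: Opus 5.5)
The plan is to compute $V^{\ast}$ by a one-variable concave maximisation and then deduce \eqref{eq:EOprofile} from the fibre structure of $\Sx$.

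\textbf{Closed form for $V^{\ast}$.} Fix $p\in[-1,1]$ and parametrise the fibre $\{p_1-p_2=p\}\cap[0,1]^2$ by $p_2=s$, $p_1=s+p$. The admissible range is $s\in[\max(0,-p),\min(1,1-p)]$, a nonempty interval. Along the fibre,
\[
g(s)=V(s+p,s)=\frac{(s+p)(1-s-p)}{n_1}+\frac{s(1-s)}{n_2},
\]
which is a strictly concave quadratic in $s$. Setting $g'(s)=0$ gives the unconstrained maximiser $s^{\ast}=\tfrac12-\tfrac{n_2p}{n_1+n_2}$, hence $p_1=\tfrac12+\tfrac{n_1p}{n_1+n_2}$. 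This is exactly the point of $\ell$ at difference $p$, matching Remark~\ref{rem:centreline}.
\begin{itemize}
\item[$(i)$] Both coordinates of this pair lie in $[0,1]$ iff $|p|\le(n_1+n_2)/(2\nmax)=\hp_{\square}$. Substituting it into $V$ and simplifying should give $\frac{\nmax+\nmin}{4\nmax\nmin}-\frac{p^2}{\nmax+\nmin}$.
\item[$(ii)$] Otherwise the vertex falls outside the admissible $s$-range, and by concavity the maximum sits at the nearer endpoint. Since $\nmax$ controls the vertex, that endpoint is the one where the coordinate belonging to the larger sample is pinned at $0$ or $1$. For instance, take $p>0$ and $n_1\ge n_2$: the vertex has $p_1>1$, so we set $p_1=1$, $p_2=1-p$, and $V=p(1-p)/n_2=p(1-p)/\nmin$.
\end{itemize}
The remaining sign and size orderings follow from the symmetries \eqref{eq:mirror} and \eqref{eq:pointreflection}, which also give evenness.

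\textbf{Qualitative properties of $V^{\ast}$.} Both branches are concave. Concavity on all of $[-1,1]$ then follows from continuity and $C^1$ matching at $|p|=\hp_{\square}$, so I will check that values and derivatives of the two branches agree there. Direct evaluation should confirm this: both values equal $(\hp_{\square})(1-\hp_{\square})/\nmin$, and both slopes equal $-2\hp_{\square}/(\nmax+\nmin)=-1/\nmax$. Positivity on $(-1,1)$ and vanishing at $\pm1$ are immediate from the formulas.

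\textbf{The representation \eqref{eq:EOprofile}.} For fixed $p$, the fibre of $\Sx$ over $p$ is nonempty iff some admissible $(p_1,p_2)$ with $p_1-p_2=p$ satisfies $(\hp-p)^2\le z^2V(p_1,p_2)$. On the fibre the left side is constant, so this holds iff $(\hp-p)^2\le z^2V^{\ast}(p)$. Hence the right side of \eqref{eq:EOprofile} is exactly the image of $\Sx$ under $(p_1,p_2)\mapsto p_1-p_2$. By Property~\ref{prty:convex_compact} and Property~\ref{prty:nonempty_proper}(a) that image is a nonempty compact interval, namely $[\min,\max]$.

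As a consistency check, independent of the set equality, the map $p\mapsto(\hp-p)^2-z^2V^{\ast}(p)$ is convex, because $V^{\ast}$ is concave. So the right side is an interval for this reason too.

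\textbf{Main obstacle.} I expect the hardest part to be the edge-case bookkeeping in $(ii)$: confirming that the maximiser always lands on the edge of the larger sample for all four combinations of sign of $p$ and ordering of $n_1,n_2$, including the balanced case where $\hp_{\square}=1$. I will handle it by proving one case and transporting it with the two symmetries.
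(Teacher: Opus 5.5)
Your proposal is correct and follows essentially the same route as the paper. Both arguments run a strictly concave one-variable maximisation along the fibre $p_1-p_2=p$, letting the position of the vertex on $\ell$ decide the branch. Both then transport one edge case to the others by symmetry, match values and slopes ($-1/\nmax$) at $\pm\hp_{\square}$, and obtain \eqref{eq:EOprofile} from the constancy of $(\hp-p)^2$ on each fibre together with connectedness of $\Sx$.

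The only substantive difference is how concavity of $V^{\ast}$ is obtained. You glue concave pieces whose derivatives match at the junctions, whereas the paper uses the structural fact that a partial maximum of the jointly concave $V$ over an affinely varying convex fibre is concave. Both are valid: yours depends on the explicit formula, while the paper's argument does not need it.
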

\begin{proof}
The proof of Lemma~\ref{lem:profile} is relegated to Appendix~\ref{app:profile}.
\end{proof}

Representation \eqref{eq:EOprofile} is the conceptual centre of this paper, and it
deserves to be read slowly. Wilson's univariate interval is, by
\eqref{eq:parabolaprob}, the set of parameter values $p$ satisfying
$(\hp-p)^{2}\le z^{2}\Var_p[\hp]$, in which the variance is evaluated at the
hypothesised value rather than at the estimate. In two populations the hypothesised
difference $p$ does not determine the variance, because it does not determine the
pair $(p_1,p_2)$; a nuisance direction survives, and it must be dealt with somehow.
The classical resolutions are to estimate it, as Wald does, or to replace it by a
constrained maximum-likelihood estimate, as in the score intervals of \cite{Miettinen1985}. The programme
\eqref{eq:mainproblemCI} instead eliminates it by \emph{maximisation}: the
resulting interval is precisely Wilson's inequality with the variance replaced by
the largest value compatible with the hypothesis. In other words, the \EO{}
interval is the exact inversion of the least-favourable score test.

Three points follow immediately, and we shall not need to argue them again.
First, the procedure is exactly as conservative as the gap
$V^{\ast}(p)-V(p_1,p_2)$ is wide, and no more: conservatism is not a vague defect
of the construction but a quantity with a formula, which
Section~\ref{sec:error_analysis} will exploit. Second, the qualifier
\emph{optimal} in the name of the interval acquires a precise and limited
meaning: among all intervals whose defining inequality uses a variance surrogate
that is valid for every admissible pair consistent with $p$, the choice
$V^{\ast}$ is the smallest such surrogate, so \eqref{eq:EOprofile} is the shortest
interval retaining the guarantee. It is not optimal in any larger sense, and a
reader who prefers to read \EO{} as \emph{elliptically extended} will lose nothing
by doing so. Third, since $V^{\ast}$ is piecewise quadratic with three pieces, the
inequality in \eqref{eq:EOprofile} is a piecewise quadratic inequality with three
branches --- and the three branches are exactly the three pairs of bounds that the
manuscript's original derivation produced by other means.

\subsection{The three pairs of bounds}
\label{subsec:bounds}

\begin{definition}
\label{def:CIboundsLU}
Let $\Lone,\ \Uone,\ \Ltwo,\ \Utwo,\ \Lthree$ and $\Uthree$ be the scalars
determined by the inputs $n_1,n_2,\hp$ and $z$ through
\begin{gather*}
\Lone=\ddfrac{\nmin \hp+z^2/2}{\nmin+z^2}-\ddfrac{z\sqrt{z^2+4 \nmin\hp \left(1-\hp\right)}}{2\left(\nmin+z^2\right)},
\qquad
\Uone=\ddfrac{\nmin \hp+z^2/2}{\nmin+z^2}+\ddfrac{z\sqrt{z^2+4 \nmin\hp \left(1-\hp\right)}}{2\left(\nmin+z^2\right)},
\\[1.4ex]
\Ltwo=\ddfrac{\nmin \hp-z^2/2}{\nmin+z^2}-\ddfrac{z\sqrt{z^2-4 \nmin\hp \left(1+\hp\right)}}{2\left(\nmin+z^2\right)},
\qquad
\Utwo=\ddfrac{\nmin \hp-z^2/2}{\nmin+z^2}+\ddfrac{z\sqrt{z^2-4 \nmin\hp \left(1+\hp\right)}}{2\left(\nmin+z^2\right)},
\\[1.4ex]
\Lthree=\ddfrac{\left(\nmax+\nmin\right)\hp}{\nmax+\nmin+z^2}
-\ddfrac{z}{2\left(\nmax+\nmin+z^2\right)}\sqrt{\ddfrac{\left(\nmax+\nmin\right)\Xi}{\nmax\nmin}},
\\[1.4ex]
\Uthree=\ddfrac{\left(\nmax+\nmin\right)\hp}{\nmax+\nmin+z^2}
+\ddfrac{z}{2\left(\nmax+\nmin+z^2\right)}\sqrt{\ddfrac{\left(\nmax+\nmin\right)\Xi}{\nmax\nmin}},
\end{gather*}
where
\begin{equation}
\label{eq:Xi}
\Xi=\left(\nmax+\nmin\right)\left(\nmax+\nmin+z^2\right)-4\nmax\nmin\hp^{2}.
\end{equation}
\end{definition}

Two observations on this definition should be made before it is used.

The quantity $\Xi$ is not new. It is exactly the bracket appearing in the
determinant \eqref{eq:Delta} of Property~\ref{prty:determinant}, where the bound
\eqref{eq:Delta_bound} showed that
$\Xi\ge\left(\nmax-\nmin\right)^{2}+\left(\nmax+\nmin\right)z^{2}>0$ uniformly in
the data. Hence $\Lthree$ and $\Uthree$ are always real. The remaining four
quantities are not: $\Lone$ and $\Uone$ are real precisely when
$z^{2}+4\nmin\hp\left(1-\hp\right)\ge0$, which can fail only for $\hp<0$, and
$\Ltwo$ and $\Utwo$ precisely when $z^{2}-4\nmin\hp\left(1+\hp\right)\ge0$, which
can fail only for $\hp>0$. In particular the two conditions are never violated
simultaneously, so at least one of the two outer pairs is always available; we
shall see that this is exactly what the theorem requires. Every statement below
involving these four quantities is to be read as conditional on the relevant
radicand being non-negative, and we verify in each instance that this holds
wherever the quantity is actually used.

The second observation identifies all six quantities with objects that are already
familiar, and explains the otherwise mysterious appearance of the smaller sample
size $\nmin$ in four of them.

\begin{proposition}
\label{prop:WilsonIdentification}
Write
$W(\hpi,\nu)=\left\{q\in\mathbb{R}:\left(\hpi-q\right)^{2}\le z^{2}q(1-q)/\nu\right\}$
for the univariate Wilson region of Section~\ref{subsec:univariate}, and let $-W$
denote its reflection through the origin. Then, whenever the sets concerned are
non-empty,
\begin{equation*}
\left[\Lone,\Uone\right]=W\!\left(\hp,\nmin\right)\subseteq[0,1],
\qquad
\left[\Ltwo,\Utwo\right]=-W\!\left(-\hp,\nmin\right)\subseteq[-1,0],
\end{equation*}
and $\left[\Lthree,\Uthree\right]$ is the range of $p_1-p_2$ over the whole
ellipse $\Sz$, with midpoint equal to the shrunken difference $p_1^c-p_2^c$ of
\eqref{eq:centre_difference}. Equivalently, in the notation of
Lemma~\ref{lem:profile}, $\left[\Lthree,\Uthree\right]$ is the solution set of
$\left(\hp-p\right)^{2}\le z^{2}V^{\ast}(p)$ when $V^{\ast}$ is replaced
throughout by its first branch, and $\left[\Lone,\Uone\right]$ and
$\left[\Ltwo,\Utwo\right]$ are the solution sets when $V^{\ast}$ is replaced
throughout by its second branch on $p>0$ and on $p<0$ respectively. Furthermore
$\left[\Lone,\Uone\right]$ and $\left[\Ltwo,\Utwo\right]$ are the ranges of
$p_1-p_2$ over the sections of $\Sz$ by the edges $\{p_1=1\},\{p_2=0\}$ and by the
edges $\{p_1=0\},\{p_2=1\}$ of the unit square, respectively.
\end{proposition}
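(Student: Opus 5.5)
The plan is to prove the three identifications by reducing each to a scalar quadratic inequality and matching its roots with the formulas of Definition~\ref{def:CIboundsLU}.

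\emph{Step 1: the outer pairs as Wilson regions.} For $[\Lone,\Uone]$, expand $(\hp-q)^2\le z^2q(1-q)/\nmin$ into
\[
(1+z^2/\nmin)q^2-(2\hp+z^2/\nmin)q+\hp^2\le0 .
\]
This is \eqref{eq:parabolaprob} with $n=\nmin$. Its roots, computed exactly as in \eqref{eq:WilsonCIOnePop}, are $\Lone$ and $\Uone$. The leading coefficient is positive, so the solution set is $[\Lone,\Uone]$ whenever the discriminant $z^2(z^2+4\nmin\hp(1-\hp))/\nmin^2$ is non-negative.

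For containment in $[0,1]$, note that $\hp$ may now be negative, so the footnote's argument must be rechecked. Write $f(q)$ for the quadratic above. Then $f(0)=\hp^2\ge0$ and $f(1)=(1-\hp)^2\ge0$. The vertex $(2\nmin\hp+z^2)/(2(\nmin+z^2))$ lies in $[0,1]$ as soon as the region is non-empty. Indeed, on $W(\hp,\nmin)$ we have $q(1-q)\ge0$ automatically, because the left-hand side is a square; so any point of $W$ already lies in $[0,1]$. This argument gives containment directly without using the vertex.

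The pair $[\Ltwo,\Utwo]$ follows by the substitution $q\mapsto-q$, $\hp\mapsto-\hp$. Since $(-\hp-q')^2=(\hp-(-q'))^2$ and $q'(1-q')=(-q')(1+(-q'))\cdot$ sign-adjusted, we have $-W(-\hp,\nmin)=\{p:(\hp-p)^2\le z^2(-p)(1+p)/\nmin\}$. For $p<0$ this is the second branch $|p|(1-|p|)/\nmin$ of \eqref{eq:Vstar}. The roots of the reflected quadratic reproduce the displayed formulas for $\Ltwo$ and $\Utwo$. This also settles the ``second branch'' equivalence: on $p>0$ the second branch equals $p(1-p)/\nmin$, which is exactly the Wilson variance.

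\emph{Step 2: the middle pair.} Substitute the first branch $(\nmax+\nmin)/(4\nmax\nmin)-p^2/(\nmax+\nmin)$ into $(\hp-p)^2\le z^2V^*$. Set $N=\nmax+\nmin$ and multiply through by $N$. This gives
\[
(N+z^2)p^2-2N\hp p+N\hp^2-z^2N^2/(4\nmax\nmin)\le0 .
\]
The roots are
\[
\frac{N\hp\pm\sqrt{N^2\hp^2-(N+z^2)\bigl(N\hp^2-z^2N^2/(4\nmax\nmin)\bigr)}}{N+z^2}.
\]
The radicand simplifies to $z^2N\Xi/(4\nmax\nmin)$ with $\Xi$ as in \eqref{eq:Xi}, so the roots are $\Lthree$ and $\Uthree$. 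The midpoint $N\hp/(N+z^2)$ agrees with \eqref{eq:centre_difference}.

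\emph{Step 3: the geometric identifications.} For the range of $p_1-p_2$ over $\Sz$, I will not use Lagrange multipliers. Instead I use the fact that for fixed $p=p_1-p_2$, the slice of $\Sz$ is non-empty iff $(\hp-p)^2\le z^2\max_{p_1-p_2=p}V$, where the maximum is now unconstrained over $\mathbb{R}^2$. Since $V$ is a concave quadratic, maximising along the line $p_1=p_2+p$ gives the interior maximiser of Lemma~\ref{lem:profile}. Its value is the first-branch formula, valid for all $p$ without the box. Hence the projection of $\Sz$ is $[\Lthree,\Uthree]$.

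For the edge sections, take $p_1=1$. Then $V=p_2(1-p_2)/n_2$ and $p=1-p_2$, so the constraint becomes $(\hp-p)^2\le z^2p(1-p)/n_2$. On the edge $p_2=0$ one gets $p=p_1$ and the sample size $n_1$. The union of these two projections, taken over the edge belonging to the smaller sample size, gives $W(\hp,\nmin)$. A subtle point arises here: the edge paired with $\nmax$ gives a Wilson region of sample size $\nmax$, and $W(\hp,\nmax)\subseteq W(\hp,\nmin)$. The two-edge range is therefore exactly $W(\hp,\nmin)$. The section of $\Sz$ by a line is automatically inside $[0,1]$ along the edge, since $V\ge0$ forces this. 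The edges $\{p_1=0\}$ and $\{p_2=1\}$ follow by the point reflection \eqref{eq:pointreflection}.

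I expect the main obstacle to be the bookkeeping in Step 3. Two things need care there: verifying the monotonicity $W(\hp,\nmax)\subseteq W(\hp,\nmin)$, which holds because the right-hand side of the defining inequality decreases in $\nu$, and handling the ``whenever non-empty'' caveat when $\hp<0$. Step 2's radicand simplification is routine algebra.
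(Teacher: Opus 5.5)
Your proof is correct. For the outer pairs and the edge sections it is essentially the paper's argument. You substitute the second branch, reflect $p\mapsto-p$, read off $W(\hp,n_2)$ and $W(\hp,n_1)$ on $\{p_1=1\}$ and $\{p_2=0\}$, and take the union by monotonicity in $\nu$. There are two small improvements over the paper:
\begin{itemize}
\item You get $W\subseteq[0,1]$ at once, because the left-hand side is a square and so membership forces $q(1-q)\ge0$. The paper instead uses the vertex-and-endpoint argument of Lemma~\ref{lem:rootlocation}$(a)$.
\item You obtain the edges $\{p_1=0\},\{p_2=1\}$ from the point reflection \eqref{eq:pointreflection} rather than by recomputing.
\end{itemize}
Two presentation fixes are needed. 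State that the inclusion $W(\hp,\nmax)\subseteq W(\hp,\nmin)$ relies on $q(1-q)\ge0$ on $W$, since outside $[0,1]$ the right-hand side increases in $\nu$. Also replace the garbled ``sign-adjusted'' sentence by the direct substitution $-p\in W(-\hp,\nu)\iff(\hp-p)^2\le z^2(-p)(1+p)/\nu$. The non-emptiness caveat then needs nothing further: the inclusion covers the case where the $\nmax$-edge section is empty.

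The genuine difference is in showing that $[\Lthree,\Uthree]$ is the range of $p_1-p_2$ over $\Sz$.
\begin{itemize}
\item The paper completes the square, writing $\Sz=\{(x-x_c)^{\top}A_2(x-x_c)\le k\}$ with $k>0$. It then evaluates the support function $c^{\top}x_c\pm\sqrt{k\,c^{\top}A_2^{-1}c}$ at $c=(1,-1)^{\top}$ and checks the result against the first-branch roots.
\item You slice $\Sz$ by lines of constant difference and use the unconstrained maximum of $V$ along each line. This is the argument behind \eqref{eq:EOprofile} with the box removed.
\end{itemize}
Your route needs neither the centre nor $k>0$ (hence no appeal to $\Delta$). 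It also makes the ``Equivalently'' clause an identity of sets rather than a coincidence of two separate computations. The paper's route yields, as a by-product, the extremisers $x^{\pm}$ of \eqref{eq:xpm} in terms of $\tau,\sigma$, which Lemma~\ref{lem:switching} later needs. You can recover these too: on the extremal slices $p=\Lthree,\Uthree$, strict concavity of $V$ along the line leaves only the point with $p_1=\tfrac12+n_1p/(n_1+n_2)$.
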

\begin{proof}
The proof of Proposition~\ref{prop:WilsonIdentification} is relegated to
Appendix~\ref{app:WilsonIdentification}.
\end{proof}

Proposition~\ref{prop:WilsonIdentification} is the precise sense in which the
\EO{} interval extends Wilson's. On any edge of the unit square one proportion is
pinned at a known value, the bivariate constraint degenerates, and what emerges is
\emph{literally} the score interval \eqref{eq:WilsonCIOnePop} evaluated at the
observed difference $\hp$. The smaller sample size appears because
$W(\hpi,\nu)$ widens as $\nu$ decreases, so that of the two edges available to
the optimiser on each side it is always the one governed by $\nmin$ that binds.
The remaining pair $\Lthree,\Uthree$ is genuinely bivariate: it records the extent
of the ellipse itself in the direction $(1,-1)$, and, as
Appendix~\ref{app:WilsonIdentification} shows, the extremising points lie along
the direction $(n_1,-n_2)$ --- the same direction along which the centre of the
ellipse travels as $\hp$ varies, by the third observation following
\eqref{eq:centre}, and the same direction along which the profiled variance
\eqref{eq:Vstar} attains its maximum. That three independent calculations produce
the same direction is not an accident; it reflects the fact that all three are
governed by the metric induced by $A_2$.

The ordering of the six quantities now becomes almost self-evident.

\begin{proposition}
\label{prop:orderLUCI}
Whenever the radicands in Definition~\ref{def:CIboundsLU} are non-negative, so
that all six quantities are real, they obey
\begin{equation*}
\Lthree\le\Ltwo\le\Utwo\le0\le\Lone\le\Uone\le\Uthree
\end{equation*}
for all $n_1,n_2,z>0$ and $\hp\in[-1,1]$. If only one of the two outer pairs is
real, the corresponding sub-chain continues to hold.
\end{proposition}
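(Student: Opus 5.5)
Three groups of inequalities need proof: the inner ones $\Utwo\le0\le\Lone$, the pair orderings $\Ltwo\le\Utwo$ and $\Lone\le\Uone$, and the outer ones $\Uone\le\Uthree$ and $\Lthree\le\Ltwo$. The pair orderings are trivial, since each pair has the form centre $\mp$ a non-negative root. For the inner inequalities I would use Proposition~\ref{prop:WilsonIdentification}, which gives $[\Lone,\Uone]=W(\hp,\nmin)\subseteq[0,1]$ and $[\Ltwo,\Utwo]=-W(-\hp,\nmin)\subseteq[-1,0]$. These are the Wilson roots of the footnote in Section~\ref{subsec:univariate}, now with observed value $\hp$ or $-\hp$, which may lie outside $[0,1]$.

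A direct check is also available. Write $q(p)=(\hp-p)^2-z^2p(1-p)/\nmin$, a convex quadratic with $q(0)=\hp^2\ge0$. If its roots are real, they lie on one side of $0$. The vertex $(2\nmin\hp+z^2)/(2(\nmin+z^2))$ is positive whenever the discriminant $z^2(z^2+4\nmin\hp(1-\hp))$ is non-negative: for $\hp\ge0$ this is obvious, and for $\hp<0$ real roots force $z^2\ge4\nmin|\hp|(1+|\hp|)>2\nmin|\hp|$. So both roots are $\ge0$, which gives $\Lone\ge0$. The statement $\Utwo\le0$ then follows from the mirror symmetry $\hp\mapsto-\hp$, under which $(\Ltwo,\Utwo)=(-\Uone,-\Lone)$ evaluated at $-\hp$. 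This symmetry also reduces $\Lthree\le\Ltwo$ to $\Uone\le\Uthree$, because $\Lthree(\hp)=-\Uthree(-\hp)$.

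For the main step, $\Uone\le\Uthree$, I would argue through the profiled variance rather than through radicals. Let $V_1(p)$ be the first branch of \eqref{eq:Vstar} and $V_2(p)=p(1-p)/\nmin$. By Proposition~\ref{prop:WilsonIdentification}, $[\Lthree,\Uthree]=\{p:(\hp-p)^2\le z^2V_1(p)\}$ and $[\Lone,\Uone]=\{p:(\hp-p)^2\le z^2V_2(p)\}$. It therefore suffices to show $V_2(p)\le V_1(p)$ for every $p\in\mathbb{R}$. Then every $p$ feasible for the second inequality is feasible for the first, so $[\Lone,\Uone]\subseteq[\Lthree,\Uthree]$ and in particular $\Uone\le\Uthree$. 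Writing $N=\nmax+\nmin$, the difference is
\[
V_1(p)-V_2(p)=\frac{N}{4\nmax\nmin}-\frac{p^2}{N}-\frac{p}{\nmin}+\frac{p^2}{\nmin}
=\frac{\nmax}{\nmin N}\Bigl(p-\frac{N}{2\nmax}\Bigr)^2\ge0 .
\]
The last equality is the identity to verify: the coefficient of $p^2$ is $\nmax/(\nmin N)$, the linear terms match, and the constant is $\nmax N^2/(4\nmax^2\nmin N)=N/(4\nmax\nmin)$. This quadratic vanishes exactly at $p=\hp_{\square}$, which confirms that $V^{\ast}$ is $C^1$ there, with the two branches tangent.

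The main obstacle is this tangency identity, together with making sure the containment argument holds on the whole real line and not only on $[0,1]$. It does, because $V_2\le V_1$ globally. If Proposition~\ref{prop:WilsonIdentification} were unavailable, I would instead compare the two quadratics in $p$ directly. Subtracting them gives $z^2(V_1-V_2)\ge0$, so at $p=\Uone$ the $V_1$-quadratic is $\le0$, which again places $\Uone$ inside $[\Lthree,\Uthree]$.
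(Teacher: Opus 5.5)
Your proof is correct. For the pair orderings and the inner inequalities $\Utwo\le0\le\Lone$ it follows the paper, and your direct vertex check is exactly Lemma~\ref{lem:rootlocation}$(a)$. For the outer inequalities you take a different route from the paper.

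\textbf{The paper's route.} It argues at the level of the ellipse. By Proposition~\ref{prop:WilsonIdentification}, $\Uone$ (resp.\ $\Ltwo$) is the maximum (resp.\ minimum) of $p_1-p_2$ over a non-empty edge section of $\Sz$. By the support-function computation, $[\Lthree,\Uthree]$ is the range of $p_1-p_2$ over all of $\Sz$. An extremum over a subset cannot exceed the extremum over the whole set, which gives the ordering.

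\textbf{Your route.} You argue at the level of the variance. The global identity $V_1(p)-V_2(p)=\frac{\nmax}{\nmin(\nmax+\nmin)}\bigl(p-\hp_{\square}\bigr)^{2}$ gives $V_2\le V_1$ on all of $\mathbb{R}$, hence $W(\hp,\nmin)\subseteq[\Lthree,\Uthree]$. The reflections $\Lthree(\hp)=-\Uthree(-\hp)$ and $\Ltwo(\hp)=-\Uone(-\hp)$ then carry this to the lower end, and the radicand conditions match correctly.

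\textbf{Comparison.} The two arguments rest on the same principle. $V_2(p)$ is $V$ evaluated at an edge point of the line $p_1-p_2=p$, and $V_1(p)$ is the unconstrained maximum of $V$ on that line. Your version has several advantages:
\begin{itemize}
\item it is purely one-dimensional;
\item it needs only the branch-substitution (root) identifications in Proposition~\ref{prop:WilsonIdentification}, not the edge-section or support-function parts;
\item it yields the stronger nesting $[\Lone,\Uone]\subseteq[\Lthree,\Uthree]$, including $\Lthree\le\Lone$;
\item it exposes the tangency of the two branches of \eqref{eq:Vstar} at $\hp_{\square}$. This is the same perfect square the paper uses later, in proving Proposition~\ref{prop:ErrorBoundsOrder}.
\end{itemize}
The paper's version, in exchange, ties the ordering directly to the geometry of the feasible set on which the KKT analysis of Theorem~\ref{thm:mainCI} is built.

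\textbf{The sub-chain clause.} You do not address it explicitly, but your argument delivers it, because you prove each link under only its own radicand condition. That is also the right way to read the clause. Literally deleting the missing pair would assert, for example, $\Lthree\le0$. This fails at $n_1=n_2=20$, $z=1.96$, $\hp=0.5$, where $\Ltwo,\Utwo$ are not real but $\Lthree\approx0.196$.
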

\begin{proof}
See Appendix~\ref{app:orderLUCI} for the proof of
Proposition~\ref{prop:orderLUCI}.
\end{proof}

\subsection{The main theorem}
\label{subsec:maintheorem}

By Property~\ref{prty:nonempty_proper} the feasible set is non-empty,
and it is compact, so the programme \eqref{eq:mainproblemCI} attains both optima
and the \EO{} interval is well defined. The representation \eqref{eq:EOprofile}
reduces its determination to a single question: into which branch of $V^{\ast}$
does each endpoint of the unconstrained solution fall? The following lemma answers
it, and is the engine of the theorem.

\begin{lemma}
\label{lem:switching}
Let $\Lthree$ and $\Uthree$ be as in Definition~\ref{def:CIboundsLU} and let
$\hp_{\square}$ be as in \eqref{eq:pbox}. Then the bounds of the \EO{} interval
are
\begin{equation}
\label{eq:switchingrule}
U=
\begin{cases}
\Uone, & \text{if } \Uthree>\hp_{\square},\\
\Uthree, & \text{if } \left|\Uthree\right|\le\hp_{\square},\\
\Utwo, & \text{if } \Uthree<-\hp_{\square},
\end{cases}
\qquad\qquad
L=
\begin{cases}
\Lone, & \text{if } \Lthree>\hp_{\square},\\
\Lthree, & \text{if } \left|\Lthree\right|\le\hp_{\square},\\
\Ltwo, & \text{if } \Lthree<-\hp_{\square},
\end{cases}
\end{equation}
and in each case the bound named on the right is real. The translation of the
three conditions in \eqref{eq:switchingrule} into explicit inequalities in
$(\hp,z)$ is carried out in Appendix~\ref{app:switching}; it involves both
thresholds \eqref{eq:R1} and \eqref{eq:R2}, whose roles are interchanged
according to the sign of $\hp$, and the resulting partition is recorded in
Table~\ref{tab:CIresults}.
\end{lemma}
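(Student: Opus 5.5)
The plan is to work entirely through the profiled representation \eqref{eq:EOprofile} of Lemma~\ref{lem:profile}. Set $g(p)=z^{2}V^{\ast}(p)-(\hp-p)^{2}$ on $[-1,1]$. Since $V^{\ast}$ is concave and $-(\hp-p)^2$ is strictly concave, $g$ is strictly concave. The \EO{} interval is $\{g\ge0\}$, which is nonempty by Property~\ref{prty:nonempty_proper}. Moreover $g(\pm1)=-(\hp\mp1)^2\le0$. Hence $\{g\ge0\}=[L,U]$, with $L$ the unique smaller and $U$ the unique larger root of $g$ in $[-1,1]$; the degenerate case $\hp=\pm1$ is handled by continuity or directly. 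Next write $g_3(p)=z^{2}V_3(p)-(\hp-p)^2$, where $V_3$ is the first-branch quadratic extended to all of $\mathbb{R}$. Similarly write $g_1,g_2$ for the second-branch formula $p(1-p)/\nmin$ on $p>0$ and $-p(1+p)/\nmin$ on $p<0$, each extended as a quadratic. By Proposition~\ref{prop:WilsonIdentification}, the root sets of $g_3,g_1,g_2$ are $[\Lthree,\Uthree]$, $[\Lone,\Uone]$ and $[\Ltwo,\Utwo]$ respectively.

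The key comparison is that $V_3\ge V^{\ast}$ on $[-1,1]$, and also that $V_3\ge$ the second-branch quadratic everywhere. The difference between $V_3$ and $|p|(1-|p|)/\nmin$ is a quadratic in $|p|$ with leading coefficient $1/\nmin-1/(\nmax+\nmin)>0$ and a double root at $|p|=\hp_{\square}$. Indeed, $V^{\ast}$ is $C^1$, so the two branches are tangent at $\pm\hp_{\square}$. This tangency is what I would verify explicitly first. Consequently $g\le g_3$, and $g=g_3$ exactly on $|p|\le\hp_{\square}$. Likewise $g_1\le g_3$, with equality only at $\hp_{\square}$, and $g_2\le g_3$, with equality only at $-\hp_{\square}$.

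With this in hand, the three cases for $U$ go as follows. If $|\Uthree|\le\hp_{\square}$, then $g(\Uthree)=g_3(\Uthree)=0$. For $p>\Uthree$ we have $g\le g_3<0$, so $U=\Uthree$. If $\Uthree>\hp_{\square}$, then $g_3(\hp_{\square})>0$ because $\hp_{\square}$ lies strictly inside $(\Lthree,\Uthree)$ unless $\Lthree>\hp_{\square}$. In the latter sub-case use instead that $g(\Lthree)\le0$ and concavity. On $p\ge\hp_{\square}$ we have $g=g_1$, and $g_1(\hp_{\square})=g_3(\hp_{\square})\ge0$ while $g_1(1)\le0$. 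So $g_1$ has a real root in $[\hp_{\square},1]$. This shows the radicand of $\Uone$ is nonnegative, and the largest root, namely $\Uone$, equals $U$. The case $\Uthree<-\hp_{\square}$ is the mirror image via \eqref{eq:mirror}. In that case the whole interval lies left of $-\hp_{\square}$, so $g=g_2$ near $U$, and the radicand of $\Utwo$ is nonnegative by the same sign argument. The cases for $L$ follow by the same reasoning, or by applying the point reflection $\hp\mapsto-\hp$, under which $V^{\ast}$ is even and $(L,U)\mapsto(-U,-L)$.

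The main obstacle is the careful bookkeeping in the case where $\Uthree>\hp_{\square}$ but the root of $g$ on the upper branch must be located. There I must show $g(\hp_{\square})\ge0$ whenever $\Uthree>\hp_{\square}$, excluding the possibility that $\{g\ge0\}$ lies entirely beyond $\hp_{\square}$. I would handle this by a two-way split. If $\Lthree\le\hp_{\square}<\Uthree$, the claim follows directly from $g_3(\hp_{\square})\ge0$. Otherwise $\{g_3\ge0\}\subset(\hp_{\square},\infty)$, and then $g=g_1$ on the relevant range. In that situation I need $\{g_1\ge0\}\neq\emptyset$, which follows from nonemptiness of $\{g\ge0\}$ and $g\le g_3<0$ on $p\le\hp_{\square}$, together with $\Uone\le\Uthree$ from Proposition~\ref{prop:orderLUCI}. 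Reality of the named bound in each case is then read off from the existence of a root established in that case.
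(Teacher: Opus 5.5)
Your argument is correct and follows the paper's proof in Appendix~\ref{app:switching}. It uses the same three extended quadratics, with root pairs supplied by Proposition~\ref{prop:WilsonIdentification}, and the same case split on the position of $\Uthree$, including the sub-split on whether $\Lthree\le\hp_{\square}$. The one variation is that you propagate signs through the global domination $g\le g_3$ on $[-1,1]$, which is the perfect-square tangency of the two branches of $V^{\ast}$ at $\pm\hp_{\square}$ proved in Proposition~\ref{prop:ErrorBoundsOrder}. The paper instead uses convexity of $(\hp-p)^{2}-z^{2}V^{\ast}(p)$; your version has the small advantage of excluding the opposite outer branch explicitly. Note that your plan does not address the translation of the three conditions into explicit inequalities in $(\hp,z)$ via $\Rone$ and $\Rtwo$, which the paper carries out in the same appendix through \eqref{eq:keyidentity} and steps $(\alpha)$--$(\gamma)$.
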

\begin{proof}
The proof of Lemma~\ref{lem:switching} is relegated to
Appendix~\ref{app:switching}.
\end{proof}

Rule \eqref{eq:switchingrule} already settles a question that the original
formulation could only pose. Of the nine conceivable pairings of a lower with an
upper bound, the pairing $\Lone\le p\le\Utwo$ is excluded by
Proposition~\ref{prop:orderLUCI}, since it would define an empty interval. Of the
remaining eight, two more are excluded by \eqref{eq:switchingrule} on purely
logical grounds: the pairing $\left(\Lone,\Uthree\right)$ would require
$\Uthree\le\hp_{\square}<\Lthree\le\Uthree$, and the pairing
$\left(\Lthree,\Utwo\right)$ would require $\Lthree\le\Uthree<-\hp_{\square}$
together with $\left|\Lthree\right|\le\hp_{\square}$. Both are self-contradictory.
Exactly six pairings survive, and they are the six of the theorem.

\begin{theorem}
\label{thm:mainCI}
The optimisation problem \eqref{eq:mainproblemCI} admits the solution recorded in
Table~\ref{tab:CIresults}, with the thresholds $\Rone$ and $\Rtwo$ defined by
\eqref{eq:R1} and \eqref{eq:R2}. The six condition sets are pairwise disjoint and
exhaust the domain $\left\{(\hp,z):\hp\in[-1,1],\ z>0\right\}$, so that the
interval is uniquely determined by the data; and every bound appearing in the
table is real throughout the region in which it is invoked.
\end{theorem}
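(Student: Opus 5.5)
The proof will be assembled from Lemma~\ref{lem:profile}, Proposition~\ref{prop:orderLUCI} and Lemma~\ref{lem:switching}. The work is to translate the switching rule \eqref{eq:switchingrule} into explicit conditions on $(\hp,z)$ and to check that these conditions tile the domain.

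By the mirror symmetry \eqref{eq:mirror}, or equivalently the point reflection \eqref{eq:pointreflection}, which sends $\hp\mapsto-\hp$ and $[L,U]\mapsto[-U,-L]$, it suffices to treat $\hp\ge0$. Under this reduction $\Lone\leftrightarrow-\Utwo$, $\Uone\leftrightarrow-\Ltwo$ and $\Lthree\leftrightarrow-\Uthree$, so the six cases pair off into three mirror pairs.

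\textbf{Translating the events on $\Uthree$.} For $\hp\ge0$ we have $\Uthree\ge\frac{(n_1+n_2)\hp}{n_1+n_2+z^2}\ge0$, so $\Uthree<-\hp_{\square}$ is impossible. The event $\Uthree>\hp_{\square}$ is equivalent to
\[
\frac{z}{2}\sqrt{\frac{(\nmax+\nmin)\Xi}{\nmax\nmin}}>(\nmax+\nmin+z^2)\hp_{\square}-(\nmax+\nmin)\hp .
\]
When the right-hand side is negative the inequality holds trivially; otherwise I square it. Using \eqref{eq:Xi}, squaring yields a quadratic inequality in $\hp$ whose relevant root I expect to be the threshold $\Rone$ of \eqref{eq:R1}.

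\textbf{Translating the events on $\Lthree$.} Here $\Lthree>-\hp_{\square}$ can fail, and $\Lthree>\hp_{\square}$ can hold for large $\hp$ and small $z$. Both events are handled the same way: isolate the radical, square, and read off the threshold, giving $\Rtwo$ of \eqref{eq:R2}. Monotonicity of $\Uthree$ and $\Lthree$ in $\hp$ (differentiate, using $\Xi>0$) would make each event an interval in $\hp$ for fixed $z$. That would give the conditions their clean threshold form and rule out spurious roots introduced by squaring.

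\textbf{Disjointness, exhaustiveness and reality.} Disjointness and exhaustiveness then follow automatically. For each of $U$ and $L$, the three alternatives in \eqref{eq:switchingrule} partition the real line, and the two pairings excluded after Lemma~\ref{lem:switching} are self-contradictory. So the product partition has exactly six nonempty cells, which are the rows of Table~\ref{tab:CIresults}. Reality of each invoked bound is asserted in Lemma~\ref{lem:switching}. I would still confirm it directly: $\Lthree,\Uthree$ are always real since $\Xi>0$, and $\Lone,\Uone$ are real for $\hp\ge0$ because the radicand $z^2+4\nmin\hp(1-\hp)>0$. The only delicate case is $\Ltwo$ with $\hp>0$. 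It is invoked only when $\Lthree<-\hp_{\square}$, which should force $\hp$ small relative to $z$. I would verify this by comparing that condition with the radicand inequality $z^2\ge4\nmin\hp(1+\hp)$, showing the former implies the latter via the ordering $\Lthree\le\Ltwo$ in Proposition~\ref{prop:orderLUCI} or by direct algebra.

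\textbf{Main obstacle.} The main difficulty is the squaring step. It requires tracking sign conditions so that the resulting thresholds $\Rone,\Rtwo$ are exact, and it requires checking that their roles swap correctly under $\hp\mapsto-\hp$. To keep that bookkeeping honest I would first test the balanced case $\hp_{\square}=1$ explicitly.
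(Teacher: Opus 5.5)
Your route is the paper's: read the interval off Lemma~\ref{lem:switching}, translate its three conditions into inequalities in $(\hp,z)$ by isolating the radical and squaring, and reduce to $\hp\ge0$ via $\hp\mapsto-\hp$. The trouble is that this translation is the entire content of the theorem. Your proposal only predicts its outcome, and part of the prediction is wrong.

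Put $S=\nmax+\nmin$, $P=\nmax\nmin$, $N=S+z^{2}$, $\rho=\frac{z}{2}\sqrt{\Xi/(PS)}$, $c_1=\frac{N}{2\nmax}-\hp$ and $c_2=\frac{N}{2\nmax}+\hp$. Then $\Uthree>\hp_{\square}\iff\rho>c_1$, $\Lthree>\hp_{\square}\iff\rho<-c_1$, and $\Lthree<-\hp_{\square}\iff\rho>c_2$. So the event $\Lthree>\hp_{\square}$ is governed by $c_1$, hence by $\Rone$, not $\Rtwo$; it is what produces the threshold $\Rone$ in row $(1)$.

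Squaring becomes tractable through the identity $4PS\left(c_1^{2}-\rho^{2}\right)=\frac{N}{\nmax}\bigl[\nmin\left(S-2\nmax\hp\right)^{2}-S\left(\nmax-\nmin\right)z^{2}\bigr]$ and its $\hp\mapsto-\hp$ companion for $c_2$. It makes each squared condition linear in $z^{2}$, with threshold $\kappa\left|S\mp2\nmax\hp\right|$ where $\kappa^{2}=\nmin/\left(\nmax^{2}-\nmin^{2}\right)$.

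The sign bookkeeping you defer is genuinely non-trivial for $\hp>\hp_{\square}$. You must show two things: that $c_1\ge0$ forces $z>\Rone$, so that $U=\Uone$ for every $z$ as rows $(1)$, $(2)$ and $(5)$ assert; and that $z<\Rone$ forces $c_1<0$. With $w=2\nmax\hp-S\in\left(0,\nmax-\nmin\right]$ one has $c_1=(z^{2}-w)/(2\nmax)$ and $\Rone=\kappa w$. Both claims then reduce to $\kappa w<\sqrt{w}$, i.e.\ $w<\left(\nmax^{2}-\nmin^{2}\right)/\nmin$, which holds because $w\le\nmax-\nmin$. Whether you organise this through sign conditions or through monotonicity in $\hp$ plus root selection, this inequality or an equivalent must appear, and the proposal does not supply it.

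Two further steps do not go through as written. First, Proposition~\ref{prop:orderLUCI} presupposes non-negative radicands, so the ordering $\Lthree\le\Ltwo$ cannot establish that $\Ltwo$ is real. The direct algebra you need is $\Rtwo^{2}-4\nmin\hp(1+\hp)=\nmin\left(2\nmin\hp+S\right)^{2}/\left(\nmax^{2}-\nmin^{2}\right)>0$ for $\hp\ge0$, which covers every $z\ge\Rtwo$. You need it at $z=\Rtwo$ itself, where the Table invokes $\Ltwo$ although $\Lthree=-\hp_{\square}$ and the switching rule names $\Lthree$.

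That exposes the second problem: the rows of Table~\ref{tab:CIresults} are not literally the cells of the product partition induced by \eqref{eq:switchingrule}, so disjointness and exhaustiveness are not ``automatic''.
\begin{itemize}
\item For $0<\hp<\hp_{\square}$ the switching rule gives the cells $z\le\Rone$, $\Rone<z\le\Rtwo$, $z>\Rtwo$, whereas the Table uses $z<\Rone$, $\Rone\le z<\Rtwo$, $z\ge\Rtwo$. The same mismatch occurs at $z=\Rone,\Rtwo$ when $\hp>\hp_{\square}$.
\item In the balanced design the Table places $\hp=1$ in case $(5)$, although the switching rule yields $(\Lthree,\Uthree)$.
\end{itemize}
You therefore have to verify disjointness and exhaustiveness on the Table's own inequalities, for each fixed $\hp$. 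You must also show that the adjacent candidates coincide on these boundaries. At $z=\Rone$ one has $\Uthree=\hp_{\square}=\Uone$, because the quadratics with root pairs $(\Lthree,\Uthree)$ and $(\Lone,\Uone)$ agree to first order at $\hp_{\square}$ ($V^{\ast}$ is $C^{1}$ by Lemma~\ref{lem:profile}). At $\hp=1$ with $n_1=n_2$ one has $\Uone=\Uthree=1$.
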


\renewcommand{\arraystretch}{2.1}
\begin{table}[htbp]
\centering
\caption[CI bounds for $p$]{Bounds of the \EO{} CI for $p$, with the corresponding
conditions on $\hp$ and $z$. The inequality signs are aligned columnwise, and
$\hp_{\square}=\left(\nmax+\nmin\right)/\left(2\nmax\right)$.}
\label{tab:CIresults}
\begin{tabular}{|c|c@{\;}c@{\;}c|r@{\;}c@{\;}l|r@{\;}c@{\;}l|}
\hline
& \multicolumn{3}{c|}{\rm{CI for $p$}} & \multicolumn{6}{c|}{\rm{Condition}}\\
\cline{2-10}
& \multicolumn{3}{c|}{} & \multicolumn{3}{c|}{$\hp$} & \multicolumn{3}{c|}{$z$}\\
\hline
\textsc{(1)} & $\Lone$ & $\le p\le$ & $\Uone$
& $\hp_{\square}<$ & $\hp$ & $\le 1$
& $0<$ & $z$ & $\le\Rone$\\
\hline
\textsc{(2)} & $\Ltwo$ & $\le p\le$ & $\Uone$
& $-1\le$ & $\hp$ & $\le 1$
& & $z$ & $\ge\Rtwo$\\
\hline
\textsc{(3)} & $\Ltwo$ & $\le p\le$ & $\Utwo$
& $-1\le$ & $\hp$ & $<-\hp_{\square}$
& $0<$ & $z$ & $\le\Rone$\\
\hline
\multirow{2}{*}{\textsc{(4)}} & \multirow{2}{*}{$\Ltwo$} & \multirow{2}{*}{$\le p\le$} & \multirow{2}{*}{$\Uthree$}
& $-1\le$ & $\hp$ & $\le-\hp_{\square}$
& $\Rone<$ & $z$ & $<\Rtwo$\\
\cline{5-10}
& & & & $-\hp_{\square}<$ & $\hp$ & $<0$
& $\Rone\le$ & $z$ & $<\Rtwo$\\
\hline
\multirow{2}{*}{\textsc{(5)}} & \multirow{2}{*}{$\Lthree$} & \multirow{2}{*}{$\le p\le$} & \multirow{2}{*}{$\Uone$}
& $0<$ & $\hp$ & $<\hp_{\square}$
& $\Rone\le$ & $z$ & $<\Rtwo$\\
\cline{5-10}
& & & & $\hp_{\square}\le$ & $\hp$ & $\le 1$
& $\Rone<$ & $z$ & $<\Rtwo$\\
\hline
\textsc{(6)} & $\Lthree$ & $\le p\le$ & $\Uthree$
& $-\hp_{\square}<$ & $\hp$ & $<\hp_{\square}$
& $0<$ & $z$ & $<\Rone$\\
\hline
\end{tabular}
\end{table}
\renewcommand{\arraystretch}{1}

\begin{proof}
The proof of Theorem~\ref{thm:mainCI} is presented in Appendix~\ref{app:mainCI}.
\end{proof}

The thresholds appearing in Table~\ref{tab:CIresults} are
\begin{equation}
\label{eq:R1}
\Rone=\frac{\sqrt{\nmin}\ \Bigl|\,2\nmax\left|\hp\right|-\left(\nmax+\nmin\right)\Bigr|}
            {\sqrt{\nmax^{2}-\nmin^{2}}},
\end{equation}
and
\begin{equation}
\label{eq:R2}
\Rtwo=\frac{\sqrt{\nmin}\ \Bigl(2\nmax\left|\hp\right|+\left(\nmax+\nmin\right)\Bigr)}
            {\sqrt{\nmax^{2}-\nmin^{2}}},
\end{equation}
subject to the conventions
\begin{equation}
\label{eq:Rconventions}
\frac{c}{0}=+\infty \ \ \text{for } c>0,
\qquad\text{and}\qquad
\frac{0}{0}=0 ,
\end{equation}
which are needed only in the balanced design $\nmax=\nmin$. Written out, these
conventions give $\Rtwo=\infty$ whenever $\nmax=\nmin$, and
\begin{equation*}
\Rone=
\begin{cases}
\ \infty, & \text{if } \nmax=\nmin \ \text{ and } \ \left|\hp\right|<1,\\[0.6ex]
\ 0, & \text{if } \nmax=\nmin \ \text{ and } \ \left|\hp\right|=1 .
\end{cases}
\end{equation*}
The second of these is not a cosmetic stipulation: it is forced. When
$\nmax=\nmin$ and $\hp=1$ one has $\hp_{\square}=1$, so cases $(1)$ and $(6)$ of
Table~\ref{tab:CIresults} are vacuous for that value of $\hp$ irrespective of
$\Rone$, and only the convention $\Rone=0$ places the configuration in case $(5)$,
which Appendix~\ref{app:mainCI} shows to be the correct assignment. That the
assignment matters can be seen directly: at $n_1=n_2=20$ and $\a=5\%$ one has
$\Lone=0.8389$ but $\Lthree=0.8248$, and it is $\Lthree$ that solves the
programme. Equivalently, \eqref{eq:Rconventions} is the unique convention under
which the \EO{} bounds are continuous functions of $(\hp,z)$, as recorded in
Corollary~\ref{cor:EOadmissible} below.

Both thresholds are even in $\hp$, as they must be. The reflection $\hp\mapsto-\hp$
interchanges $\Lone\leftrightarrow-\Utwo$, $\Uone\leftrightarrow-\Ltwo$ and
$\Lthree\leftrightarrow-\Uthree$, and correspondingly interchanges the cases
$(1)\leftrightarrow(3)$ and $(4)\leftrightarrow(5)$ while fixing $(2)$ and $(6)$;
the \EO{} interval therefore satisfies the equivariance
$\mathrm{CI}(-\hp)=-\,\mathrm{CI}(\hp)$, which is exactly the statement that
relabelling the two populations reverses the sign of the interval, as any sensible
procedure must. One further consequence of \eqref{eq:R1} and \eqref{eq:R2} closes
the apparent gap between cases $(4)$, $(5)$ and $(6)$ at $\hp=0$: since
$\left|2\nmax\left|\hp\right|-\left(\nmax+\nmin\right)\right|\le2\nmax\left|\hp\right|+\left(\nmax+\nmin\right)$,
one has $\Rone\le\Rtwo$ always, with equality if and only if $\hp=0$, so the band
$\Rone\le z<\Rtwo$ on which cases $(4)$ and $(5)$ live is empty precisely when the
observed difference vanishes.

\subsection{Basic properties of the interval}
\label{subsec:EOproperties}

\begin{corollary}
\label{cor:EOadmissible}
For all admissible inputs the \EO{} confidence interval $[L,U]$ satisfies the
following.
\begin{itemize}
\item[$(a)$] It always contains the point estimate: $L\le\hp\le U$.
\item[$(b)$] It is contained in the parameter space of the estimand,
$[L,U]\subseteq[-1,1]$, and it reaches an endpoint of that space only in the
degenerate case $\hp=\pm1$, in which $U=1$ or $L=-1$ respectively.
\item[$(c)$] Both endpoints are attained by admissible pairs
$(p_1,p_2)\in\Sx\subseteq[0,1]^{2}$, so every value in the interval is the
difference of a pair of genuine proportions compatible with the data.
\item[$(d)$] $L$ and $U$ are continuous functions of $(\hp,z)$ on
$[-1,1]\times(0,\infty)$, non-increasing and non-decreasing in $z$ respectively,
and satisfy $[L,U]\to\{\hp\}$ as $z\downarrow0$ and $[L,U]\to[-1,1]$ as
$z\to\infty$.
\end{itemize}
\end{corollary}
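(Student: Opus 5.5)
The plan is to work almost entirely from the profiled representation \eqref{eq:EOprofile} of Lemma~\ref{lem:profile}, which describes the \EO{} interval as
\[
I=\bigl\{p\in[-1,1]:\ g(p)\le0\bigr\},\qquad g(p)=(\hp-p)^2-z^2V^{\ast}(p).
\]
Since $V^{\ast}$ is concave, $g$ is convex. Hence $I$ is a closed interval, and by Lemma~\ref{lem:profile} its endpoints are the optima of \eqref{eq:mainproblemCI}.

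For part $(a)$, evaluate $g$ at $p=\hp$. This gives $g(\hp)=-z^2V^{\ast}(\hp)\le0$, so $\hp\in I$.

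Part $(c)$ is immediate once existence of optimisers is in hand. The set $\Sx$ is compact (Property~\ref{prty:convex_compact}) and non-empty (Property~\ref{prty:nonempty_proper}), so both optima of \eqref{eq:mainproblemCI} are attained at points of $\Sx\subseteq[0,1]^2$. Convexity then shows that every intermediate value of $p_1-p_2$ is also attained in $\Sx$.

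For part $(b)$, containment $I\subseteq[-1,1]$ holds by construction of \eqref{eq:EOprofile}. The substantive claim is that $U=1$ occurs only when $\hp=1$. At $p=1$ we have $V^{\ast}(1)=0$, so $g(1)=(\hp-1)^2$, which is strictly positive unless $\hp=1$. Therefore $1\in I$ exactly when $\hp=1$, and in that case $U=1$. The statement about $L=-1$ follows by the same argument, or from the equivariance $\mathrm{CI}(-\hp)=-\mathrm{CI}(\hp)$.

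For part $(d)$, I would argue monotonicity in $z$ first. Increasing $z$ decreases $g$ pointwise, because $V^{\ast}\ge0$. So the sets $I_z$ are nested and increasing in $z$, which gives $L$ non-increasing and $U$ non-decreasing.

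For the limits I would reason as follows.
\begin{itemize}
\item As $z\downarrow0$, $g\to(\hp-p)^2$ uniformly on $[-1,1]$. Any $p$ with $|p-\hp|\ge\varepsilon$ is eventually excluded, so $I_z\to\{\hp\}$.
\item As $z\to\infty$, every $p\in(-1,1)$ has $V^{\ast}(p)>0$ and is therefore eventually included. Since $I_z$ is closed and contained in $[-1,1]$, it follows that $L\to-1$ and $U\to1$.
\end{itemize}

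Continuity in $(\hp,z)$ is the main obstacle. The closed-form route through Table~\ref{tab:CIresults} would require checking continuity across every case boundary, including the delicate $R_1$ conventions. I would avoid it and argue directly from $g$. The function $G(p;\hp,z)=g(p)$ is jointly continuous, since $V^{\ast}$ is $C^1$. It is convex in $p$, and by $(a)$ it satisfies $G(\hp)\le0$. I would show that $U$ is the unique root of $G$ to the right of $\hp$ whenever $U<1$, and argue transversality, meaning $G$ changes sign strictly there. This holds because a convex function that is non-positive at $\hp$ and vanishes at $U>\hp$ cannot be flat at $U$ unless it vanishes identically on $[\hp,U]$. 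That in turn would force $V^{\ast}\equiv(\hp-p)^2/z^2$ on an interval, contradicting strict concavity of the respective branches of $V^{\ast}$ against the convex quadratic $(\hp-p)^2$.

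There is one degenerate situation: $U=\hp$. This can only happen when $V^{\ast}(\hp)=0$, that is, when $\hp=\pm1$. There I would use the identity $g(1)=(\hp-1)^2$ and the sign analysis from $(b)$. Given strict sign change at the root, a standard implicit-function or intermediate-value argument yields continuity of $U$, and that of $L$ follows by symmetry. The point $\hp=1$ with $U=1$ needs separate care: there I would show $U(\hp',z')\to1$ as $(\hp',z')\to(1,z)$ by noting that $G(\hp')\le0$ forces $U\ge\hp'\to1$.
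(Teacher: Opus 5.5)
Your proposal is correct and follows essentially the paper's route. Every part is read off the profiled representation \eqref{eq:EOprofile}: you obtain $(c)$ from compactness and convexity of $\Sx$ rather than from the explicit optimisers in the proof of Theorem~\ref{thm:mainCI}, and continuity from convexity of $p\mapsto(\hp-p)^{2}-z^{2}V^{\ast}(p)$. Your sign-change argument is actually more careful than the paper's appeal to a strict interior point at $p=\hp$, since it also covers $\left|\hp\right|=1$. At $\hp=-1$ you still need $U>\hp$; this follows because $V^{\ast}$ vanishes only linearly at $-1$, while $(1+p)^{2}$ vanishes quadratically.
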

\begin{proof}
The proof of Corollary~\ref{cor:EOadmissible} is relegated to
Appendix~\ref{app:EOadmissible}.
\end{proof}

Part $(b)$ is worth isolating, because it is a genuine admissibility guarantee of
the kind that Wald's interval does not possess, and it is established here without
any appeal to simulation or to comparisons of width. Part $(d)$ shows in
particular that the strict and weak inequalities distinguishing adjacent rows of
Table~\ref{tab:CIresults} are immaterial: on the common boundary of two adjacent
regions the two candidate bounds coincide, and the table could be re-punctuated
without changing a single computed interval.

Figure~\ref{fig:setLT} illustrates several instances of the induced set $\Sr$,
computed directly from Table~\ref{tab:CIresults}, together with the corresponding
subsets $\Sx$.
\begin{figure}[htbp]
\centering
    \begin{subfigure}[t]{0.48\textwidth}
        \centering
        \includegraphics[width=\linewidth]{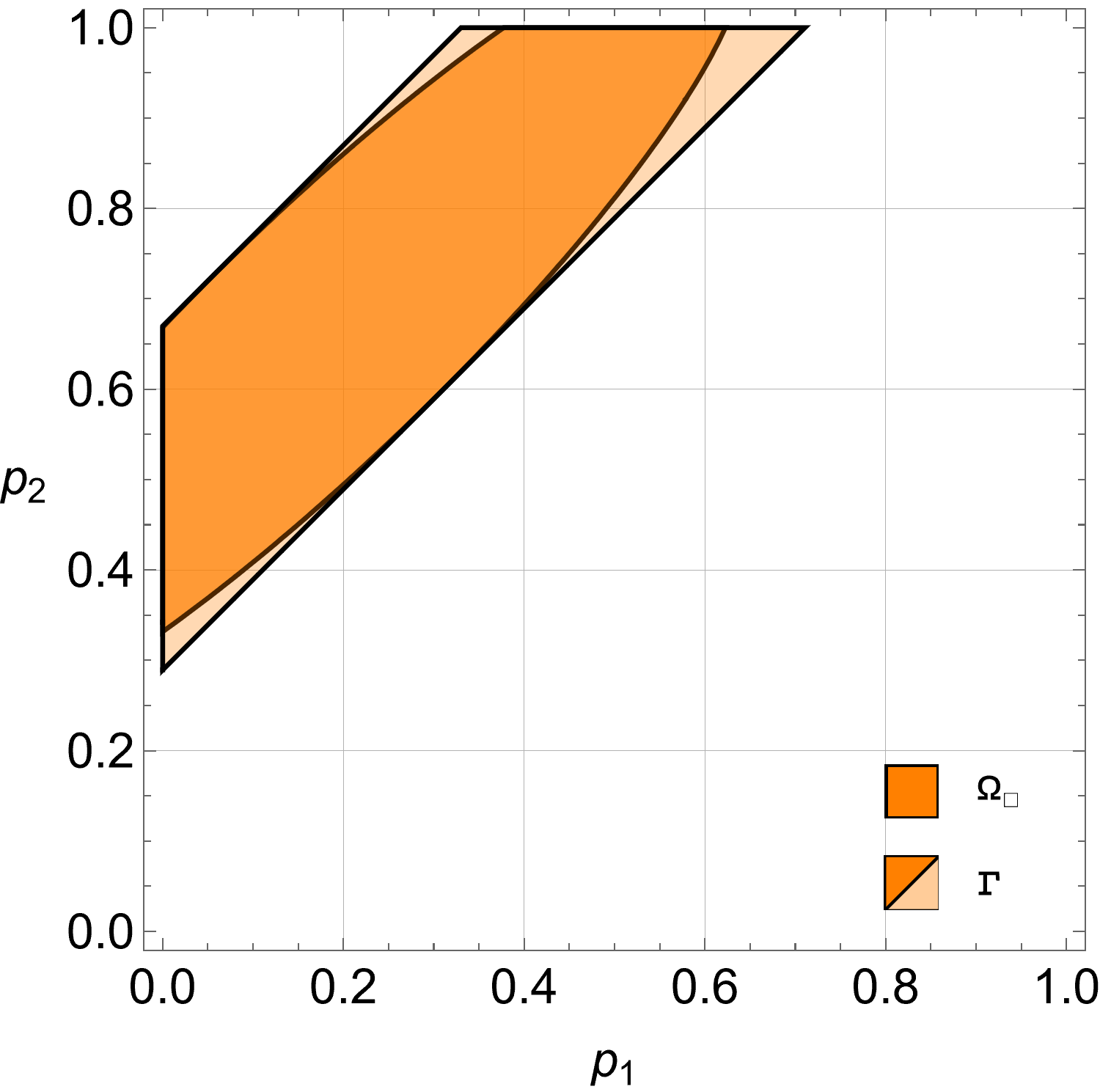}
        \caption{$\hp=-0.5$}
        \label{subfig:setLT-wn}
    \end{subfigure}
    \hfill
    \begin{subfigure}[t]{0.48\textwidth}
        \centering
        \includegraphics[width=\linewidth]{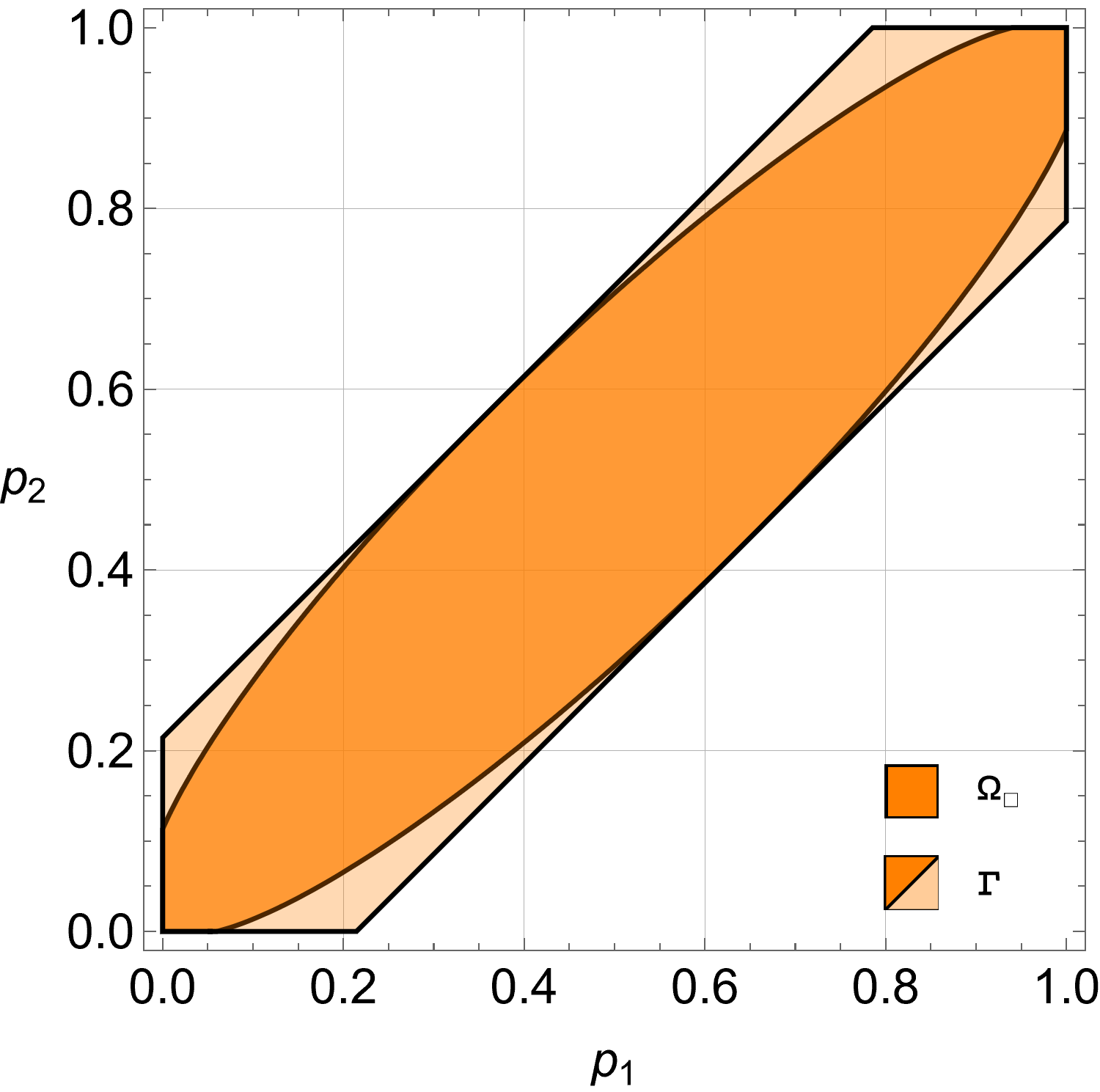}
        \caption{$\hp=0$}
        \label{subfig:setLT-wo}
    \end{subfigure}

    \vspace{0.8\baselineskip}

    \begin{subfigure}[t]{0.48\textwidth}
        \centering
        \includegraphics[width=\linewidth]{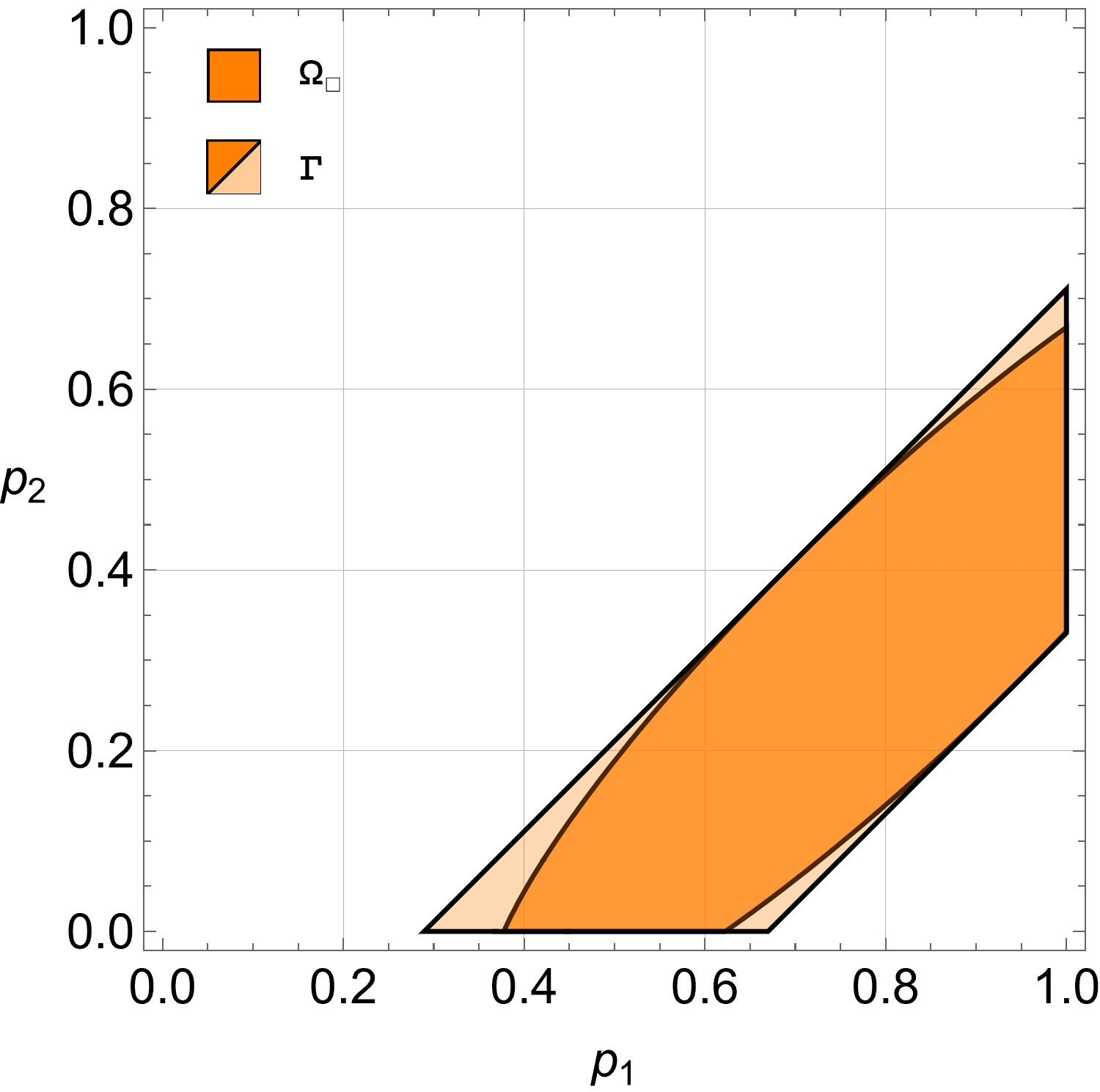}
        \caption{$\hp=0.5$}
        \label{subfig:setLT-wp}
    \end{subfigure}
\caption[The set $\Sx$ enclosed within the optimal values of $p$]{The set $\Sr$
induced by the \EO{} interval, shown together with the subset $\Sx$ that it
contains, for three values of the observed difference. The sample sizes and
significance level are fixed at $n_1=60$, $n_2=30$ and $\a=5\%$; by
Remark~\ref{rem:difference_only} no further inputs are needed.}
\label{fig:setLT}
\end{figure}

The geometry of Figure~\ref{fig:setLT} repays attention, because it displays both
the construction and its cost at a glance. The set $\Sr$ is the intersection of
the unit square with the band lying between the two lines $p_1-p_2=L$ and
$p_1-p_2=U$, and it is therefore a hexagon whose two slanted edges have unit
slope. All three panels fall under case $(6)$ of Table~\ref{tab:CIresults}, since
$\hp_{\square}=0.75$ while $z=1.96$, whereas $\Rone=9.487$ at $\hp=0$ and
$\Rone=3.162$ at $\hp=\pm0.5$; evaluating the
bounds gives $\left[-0.215,\,0.215\right]$ at $\hp=0$ and
$\left[0.289,\,0.670\right]$ at $\hp=0.5$, in agreement with the intercepts
visible in the plots. The two slanted edges of the band are tangent to the ellipse
at the points where the optima are attained, which is the geometric content of
\eqref{eq:mainproblemCI}, and by
Proposition~\ref{prop:WilsonIdentification} the chord joining those two points
runs in the direction $(n_1,-n_2)$. What separates $\Sr$ from $\Sx$ is the pair of
lens-shaped slivers between the ellipse and the band: parameter pairs whose
difference lies inside the interval although the pair itself lies outside the
elliptical region. These slivers are the geometric image of the gap
$V^{\ast}(p)-V(p_1,p_2)$ of Lemma~\ref{lem:profile}, and their area is a visual
proxy for the excess coverage that Section~\ref{sec:error_analysis} will quantify
exactly. They shrink as $\left|\hp\right|$ grows, because the band is then clipped
more aggressively by the corners of the square.

\begin{remark}
\label{rm:CIresultsremmarks}
We record some observations concerning Theorem~\ref{thm:mainCI} and
Table~\ref{tab:CIresults}.
\begin{itemize}
\item[$\bullet$] The \EO{} CI is conservative: the induced set $\Sr$ contains
$\Sx$, so the interval covers at least the nominal proportion of the parameter
pairs compatible with the data, up to the error of the normal approximation. By
Lemma~\ref{lem:profile} the source of the excess is identified exactly, namely the
replacement of $V$ by its least-favourable value $V^{\ast}$ along each line of
constant difference; its magnitude is derived in
Section~\ref{sec:error_analysis}.

\item[$\bullet$] The results employ three types of bound. The scalars
$\Lone,\Uone,\Ltwo$ and $\Utwo$ depend on the sample sizes only through $\nmin$,
whereas $\Lthree$ and $\Uthree$ depend on both;
Proposition~\ref{prop:WilsonIdentification} explains why, by identifying the first
four as univariate Wilson bounds arising on the edges of the unit square and the
last two as the extent of the ellipse itself. Nine pairings of a lower with an
upper bound are conceivable; Proposition~\ref{prop:orderLUCI} eliminates
$\Lone\le p\le\Utwo$, which would define an empty interval, and the switching rule
\eqref{eq:switchingrule} eliminates $\Lone\le p\le\Uthree$ and
$\Lthree\le p\le\Utwo$, leaving exactly the six of Table~\ref{tab:CIresults}.

\item[$\bullet$] Apart from the sample sizes $n_1$ and $n_2$, the \EO{} CI uses
the sample proportions only through their difference $\hp$. This is the
interval-level manifestation of Remark~\ref{rem:difference_only}, and it is best
understood as an invariance property rather than as a virtue of parsimony: the
procedure is invariant under any change of the individual proportions that
preserves their difference, so two studies reporting very different levels but the
same difference, at the same sample sizes, yield identical intervals. It should be
said plainly that this is not costless. The pair $(\hp_1,\hp_2)$ is minimal
sufficient, and the individual levels do carry information about which pairs
$(p_1,p_2)$ are plausible, hence about the variance of $\hp$; declining to use
that information is precisely what the maximisation in \eqref{eq:Vstardef} does,
and it is the reason the interval is conservative rather than exact.

\item[$\bullet$] When $\nmax=\nmin$, that is $n_1=n_2$, one has
$\hp_{\square}=1$ and the only admissible cases are $(4)$, $(5)$ and $(6)$: case
$(4)$ applies when $\hp=-1$ with $z>0$, case $(5)$ when $\hp=1$ with $z>0$, and
case $(6)$ when $-1<\hp<1$ with $z>0$.

\item[$\bullet$] When $p$ is optimised over $\Sz$ rather than over $\Sx$ the only
optima are $\Lthree$ and $\Uthree$, the global minimum and maximum. Accordingly
the \EO{} lower bound coincides with the global minimum in cases $(5)$ and $(6)$,
the upper bound with the global maximum in cases $(4)$ and $(6)$, and in case
$(6)$ both coincide. It follows that when $\nmax=\nmin$ the \EO{} bounds always
coincide with the global optima; in the interior case $(6)$ this is immediate,
while in the two boundary cases the coincidence is exact but degenerate, since
$\Ltwo=\Lthree=-1$ when $\hp=-1$ and $\Uone=\Uthree=1$ when $\hp=1$. In the latter
configuration the optimiser sits exactly at the corner $(1,0)$ of the unit square.

\item[$\bullet$] Selecting the appropriate bounds amounts to identifying which
condition is satisfied. The $(\hp,z)$ domain is partitioned into six disjoint and
exhaustive regions, ensuring a unique choice;
Figure~\ref{fig:CIsregionshpz} depicts this partition.
\end{itemize}
\end{remark}

\begin{figure}[htbp]
\centering
\includegraphics[width=0.70\textwidth]{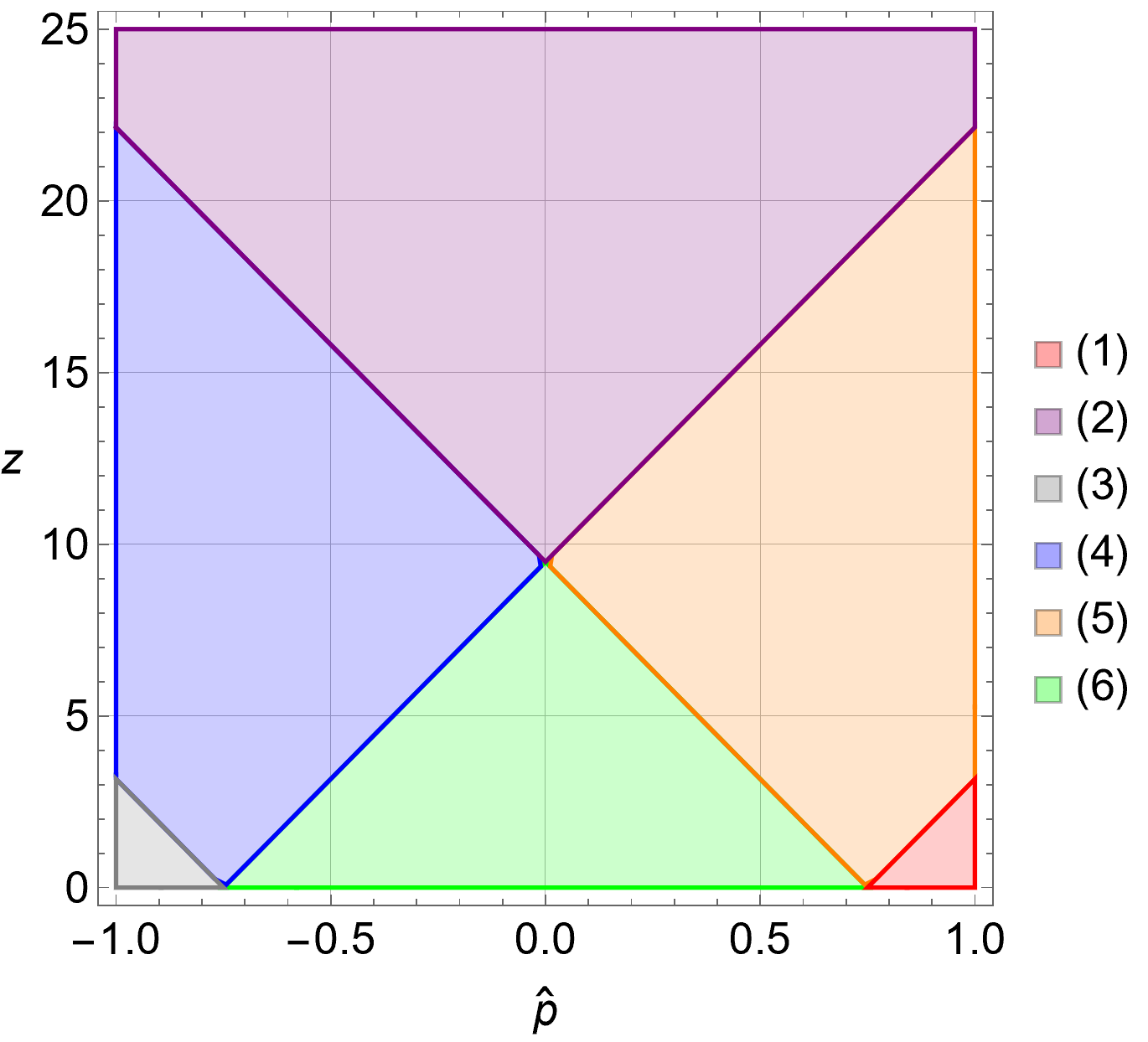}
\caption[$(\widehat{p},z)$ domain partition]{Partition of the $(\hp,z)$ domain
into the six cases of Table~\ref{tab:CIresults}, for $n_1=60$ and $n_2=30$.}
\label{fig:CIsregionshpz}
\end{figure}

Every feature of Figure~\ref{fig:CIsregionshpz} can be read off \eqref{eq:R1} and
\eqref{eq:R2}. The curve $z=\Rone(\hp)$ is piecewise linear in $\left|\hp\right|$
and vanishes exactly at $\hp=\pm\hp_{\square}=\pm0.75$, which is where the
interior region $(6)$ meets the horizontal axis and where the boundary regions
$(1)$ and $(3)$ begin. At $\hp=0$ the two thresholds coincide at
$\Rone=\Rtwo=\sqrt{\nmin\left(\nmax+\nmin\right)/\left(\nmax-\nmin\right)}=\sqrt{90}\approx9.49$,
the apex of the wedge separating region $(6)$ below from region $(2)$ above. At
$\left|\hp\right|=1$ one has
$\Rone=\sqrt{\nmin\left(\nmax-\nmin\right)/\left(\nmax+\nmin\right)}=\sqrt{10}\approx3.16$,
the height of the two small triangles $(1)$ and $(3)$, and $\Rtwo\approx22.14$,
the height at which regions $(4)$ and $(5)$ give way to region $(2)$. The
qualitative reading is that the interior solution $(6)$ governs the practically
relevant regime of moderate $z$ and moderate $\left|\hp\right|$, in which the
square never binds and the interval is a pure property of the ellipse, while the
edges of the square bind only when the confidence level is extreme or the observed
difference approaches its limits --- which is exactly the regime in which a naive
interval would be expected to misbehave.

Two questions remain. The first is how much the conservatism identified in
Lemma~\ref{lem:profile} actually costs in attained coverage; the second is how the
\EO{} interval compares with the established alternatives. These questions are not
independent, and the order in which they are taken matters. A comparison of
interval \emph{widths} carries no evidential weight by itself, since a narrower
interval may be either more accurate or merely overconfident, and only an analysis
of true coverage can distinguish the two; the same caution applies in the opposite
direction, since a conservative interval that is systematically too wide is not
thereby vindicated. We therefore turn first, in Section~\ref{sec:error_analysis},
to an exact characterisation of the discrepancy between nominal and attained
coverage, for which \eqref{eq:EOprofile} supplies the natural starting point, and
defer the comparison with the Wald and Newcombe intervals to
Section~\ref{sec:comparison}, where that characterisation can be brought to bear
on it.

\section{Coverage error of the elliptically optimal interval}
\label{sec:error_analysis}

The \EO{} interval was constructed in Section~\ref{sec:the_EO_CI} as a
conservative surrogate for the elliptical region $\Sx$, and
Lemma~\ref{lem:profile} identified the source of that conservatism exactly: the
unknown nuisance direction is eliminated by replacing the true variance
$V(p_1,p_2)$ with its least-favourable value $V^{\ast}(p)$ along the line of
constant difference. This section converts that identification into a formula. We
shall find that the excess coverage of the interval admits a closed-form
expression involving nothing beyond the standard normal distribution function, and
that this expression explains every qualitative feature of the error surfaces
plotted below --- where the error vanishes, where it is largest, why it is
symmetric, and, importantly, why it does not disappear as the samples grow.

\subsection{The error and its exact form}
\label{subsec:errordefinition}

Throughout this section $(p_1,p_2)$ denotes the true parameter pair, which is
fixed, and $\hp=\hp_1-\hp_2$ is the random quantity; the sets $\Sz$, $\Sx$ and
$\Sr$ are random, since they are determined by $\hp$. We write $p=p_1-p_2$ for the
true difference and
\begin{equation}
\label{eq:Vtrue}
V=V(p_1,p_2)=\ddfrac{p_1\left(1-p_1\right)}{n_1}+\ddfrac{p_2\left(1-p_2\right)}{n_2}
\end{equation}
for its variance, as in \eqref{eq:Vfunction}. Because $\Sx\subseteq\Sr$ by
construction, we have
$\mathbb{P}\left(\left(p_1,p_2\right)\in\Sr\right)\ge\mathbb{P}\left(\left(p_1,p_2\right)\in\Sx\right)$,
and the excess is what we wish to quantify:
\begin{equation}
\label{eq:errordef}
\mathrm{Error}\left(p_1,p_2\right)
=\mathbb{P}\left(\left(p_1,p_2\right)\in\Sr\right)-\mathbb{P}\left(\left(p_1,p_2\right)\in\Sx\right)\ \ge0 .
\end{equation}

Two remarks fix the meaning of \eqref{eq:errordef} precisely. First, the event
$\left(p_1,p_2\right)\in\Sr$ is, by the definition of $\Sr$, exactly the event
that the \EO{} interval covers $p$; the quantity
$\mathbb{P}\left(\left(p_1,p_2\right)\in\Sr\right)$ is therefore the attained
coverage probability of the interval. Second, and less obviously, the event
$\left(p_1,p_2\right)\in\Sx$ is exactly the event
$\left|\hp-p\right|\le z\sqrt{V}$, since membership of $\Sz$ is the inequality
\eqref{eq:Omega_variance_form} and the restriction to the unit square is automatic
for a genuine parameter pair. Under the normal approximation
$\hp\sim N\left(p,V\right)$ this event has probability $2\Phi(z)-1=1-\a$ exactly,
where $\Phi$ denotes the standard normal distribution function. The second term in
\eqref{eq:errordef} is thus the nominal level, and \eqref{eq:errordef} measures
purely the cost of the reduction from the plane to the line --- not the cost of the
normal approximation itself, which is a separate matter to which we return in
Remark~\ref{rm:approximationerror}.

Evaluating \eqref{eq:errordef} therefore reduces to evaluating the attained
coverage. The following lemma performs the inversion; it is immediate from
Lemma~\ref{lem:profile}, and it should be contrasted with the case-by-case
inversion of Table~\ref{tab:CIresults} that a direct approach would require.

\begin{lemma}
\label{lem:ErrorInversion}
Fix $n_1,n_2,z>0$ and $p\in[-1,1]$. The \EO{} interval computed from an observed
difference $\hp$ contains $p$ if and only if
\begin{equation}
\label{eq:acceptance}
\hp\in\Bigl[\,p-z\sqrt{V^{\ast}(p)},\ p+z\sqrt{V^{\ast}(p)}\,\Bigr],
\end{equation}
with $V^{\ast}$ the profiled variance \eqref{eq:Vstar}. In particular the
acceptance region is a single interval, symmetric about $p$, and its endpoints are
the quantities of Definition~\ref{def:ErrorBoundsLU} below: they equal
$\Lstwo$ and $\Ustwo$ when $\left|p\right|\le\hp_{\square}$ and $\Lsone$ and
$\Usone$ when $\left|p\right|>\hp_{\square}$.
\end{lemma}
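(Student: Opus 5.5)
The plan is to invert the representation \eqref{eq:EOprofile} of Lemma~\ref{lem:profile} directly, with the roles of data and parameter exchanged. That lemma says the \EO{} interval computed from $\hp$ is exactly the set of $p\in[-1,1]$ satisfying $(\hp-p)^2\le z^2V^{\ast}(p)$. Hence, for fixed $p\in[-1,1]$, the statement ``the interval contains $p$'' is equivalent to this inequality holding at that fixed $p$, with $\hp$ now the variable. The constraint $p\in[-1,1]$ is automatic, since $p$ is a genuine difference of proportions.

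Since $V^{\ast}(p)\ge0$ on $[-1,1]$ by Lemma~\ref{lem:profile}, both sides of $|\hp-p|\le z\sqrt{V^{\ast}(p)}$ are non-negative, so squaring is reversible. The inequality therefore holds exactly when $\hp$ lies in the closed interval \eqref{eq:acceptance}. This interval is manifestly a single interval symmetric about $p$, because for fixed $p$ the right-hand side $z^2V^{\ast}(p)$ is a constant.

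The one point needing care is that \eqref{eq:EOprofile} is a set identity for each fixed $\hp$. I would therefore state explicitly that it holds for every $\hp\in[-1,1]$ and every $p\in[-1,1]$, so that the pointwise equivalence
\[
p\in[L(\hp),U(\hp)] \iff (\hp-p)^2\le z^2V^{\ast}(p)
\]
holds jointly in $(\hp,p)$. The inversion is then purely logical, and there is no need to revisit the case analysis of Table~\ref{tab:CIresults} or Lemma~\ref{lem:switching}.

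It is also worth remarking that \eqref{eq:acceptance} may extend beyond $[-1,1]$. That is harmless, since $\hp$ takes values in $[-1,1]$ anyway, and it explains why Definition~\ref{def:ErrorBoundsLU} can use the unclipped expressions.

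For the final clause, I would substitute the two branches of \eqref{eq:Vstar}. When $|p|\le\hp_{\square}$ the endpoints are
\[
p\pm z\sqrt{\dfrac{\nmax+\nmin}{4\nmax\nmin}-\dfrac{p^2}{\nmax+\nmin}},
\]
and these are to be identified with $\Lstwo,\Ustwo$. When $|p|>\hp_{\square}$ the endpoints are $p\pm z\sqrt{|p|(1-|p|)/\nmin}$, to be identified with $\Lsone,\Usone$. Since Definition~\ref{def:ErrorBoundsLU} appears only after the statement, this step amounts to checking that its formulas are these expressions, possibly written out separately for $p>0$ and $p<0$ via $|p|$.

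I expect no real obstacle beyond this bookkeeping. The substantive work, namely that the projection of $\Sx$ equals the profiled-variance set, is already contained in Lemma~\ref{lem:profile}.
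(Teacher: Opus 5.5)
Your proposal is correct and follows the paper's proof essentially step for step. You fix $p$, read membership off the profiled-variance representation \eqref{eq:EOprofile} of Lemma~\ref{lem:profile}, and note that $V^{\ast}(p)$ is data-free, so $(\hp-p)^{2}\le z^{2}V^{\ast}(p)$ is a symmetric interval in $\hp$. The paper phrases this as a quadratic in $\hp$ with unit leading coefficient rather than taking square roots, which is equivalent. You then substitute the two branches of \eqref{eq:Vstar}, where the only algebra left implicit, $\frac{\nmax+\nmin}{4\nmax\nmin}-\frac{p^{2}}{\nmax+\nmin}=\frac{(\nmax+\nmin)^{2}-4\nmax\nmin p^{2}}{4\nmax\nmin(\nmax+\nmin)}$, yields $\Lstwo,\Ustwo$ at once.
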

\begin{proof}
The proof of Lemma~\ref{lem:ErrorInversion} is relegated to
Appendix~\ref{app:ErrorInversion}.
\end{proof}

The force of Lemma~\ref{lem:ErrorInversion} is that $V^{\ast}(p)$ depends on the
hypothesised difference and the sample sizes but not on the data. All six cases of
Table~\ref{tab:CIresults} invert simultaneously and collapse to the single
symmetric interval \eqref{eq:acceptance}; there is no need to invert the case
conditions separately from the bounds, and consequently no overlapping regions to
reconcile.

\begin{definition}
\label{def:ErrorBoundsLU}
For $n_1,n_2,z>0$ and $p\in[-1,1]$, define
\begin{gather*}
\Lsone=p-z\sqrt{\ddfrac{\left|p\right|\left(1-\left|p\right|\right)}{\nmin}},
\qquad\qquad
\Usone=p+z\sqrt{\ddfrac{\left|p\right|\left(1-\left|p\right|\right)}{\nmin}},
\\[1.4ex]
\Lstwo=p-\ddfrac{z}{2}\sqrt{\ddfrac{\left(\nmax+\nmin\right)^{2}-4\nmax\nmin p^{2}}{\nmax\nmin\left(\nmax+\nmin\right)}},
\qquad
\Ustwo=p+\ddfrac{z}{2}\sqrt{\ddfrac{\left(\nmax+\nmin\right)^{2}-4\nmax\nmin p^{2}}{\nmax\nmin\left(\nmax+\nmin\right)}} .
\end{gather*}
\end{definition}

These are precisely $p\mp z\sqrt{V^{\ast}(p)}$ evaluated on the two branches of
\eqref{eq:Vstar}: the first pair uses the edge branch
$V^{\ast}(p)=\left|p\right|\left(1-\left|p\right|\right)/\nmin$, the second the
interior branch $V^{\ast}(p)=\left(\nmax+\nmin\right)/\left(4\nmax\nmin\right)-p^{2}/\left(\nmax+\nmin\right)$.
Their ordering follows at once.

\begin{proposition}
\label{prop:ErrorBoundsOrder}
For all $n_1,n_2,z>0$ and $p\in[-1,1]$,
\begin{equation*}
\Lstwo\le\Lsone\le p\le\Usone\le\Ustwo ,
\end{equation*}
with equality throughout if and only if $\left|p\right|=\hp_{\square}$ or $z=0$.
\end{proposition}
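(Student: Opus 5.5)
The plan is to reduce the whole chain to a single comparison between the two branches of the profiled variance \eqref{eq:Vstar}. Write $M=\nmax$, $m=\nmin$, $N=M+m$, and define
\begin{equation*}
V_{\mathrm{int}}(p)=\frac{N}{4Mm}-\frac{p^{2}}{N},\qquad V_{\mathrm{edge}}(p)=\frac{|p|\left(1-|p|\right)}{m}.
\end{equation*}
By Definition~\ref{def:ErrorBoundsLU}, $\Lstwo,\Ustwo=p\mp z\sqrt{V_{\mathrm{int}}(p)}$ and $\Lsone,\Usone=p\mp z\sqrt{V_{\mathrm{edge}}(p)}$. To confirm the first of these, note that $\tfrac{z}{2}\sqrt{(N^{2}-4Mmp^{2})/(MmN)}=z\sqrt{V_{\mathrm{int}}(p)}$, by direct simplification of the radicand. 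With this notation the two middle inequalities $\Lsone\le p\le\Usone$ are immediate, since $z>0$ and $V_{\mathrm{edge}}(p)\ge0$ for $p\in[-1,1]$. The two outer inequalities are each equivalent to $V_{\mathrm{int}}(p)\ge V_{\mathrm{edge}}(p)$, because $z>0$ and the square root is monotone.

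The key step is the pointwise inequality $V_{\mathrm{int}}\ge V_{\mathrm{edge}}$ on $[-1,1]$, and I will prove it by completing a square in $|p|$. Subtracting and using $p^{2}=|p|^{2}$, the coefficient of $p^{2}$ is $1/m-1/N=M/(mN)$. This gives
\begin{equation*}
V_{\mathrm{int}}(p)-V_{\mathrm{edge}}(p)=\frac{M}{mN}p^{2}-\frac{|p|}{m}+\frac{N}{4Mm}=\frac{\left(M|p|-N/2\right)^{2}}{mMN}\ \ge 0 .
\end{equation*}
Equality holds if and only if $|p|=N/(2M)=\hp_{\square}$. Since $\hp_{\square}\le1$, this value lies in the admissible range. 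The outer inequalities $\Lstwo\le\Lsone$ and $\Usone\le\Ustwo$ therefore hold, with equality exactly when $|p|=\hp_{\square}$ or $z=0$, the latter in the limiting sense that all four quantities then collapse to $p$.

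As a consistency check, this identity also explains why $V^{\ast}$ in Lemma~\ref{lem:profile} is $C^{1}$. The difference is a perfect square vanishing at $|p|=\hp_{\square}$, so the two branches are tangent at the switch point.

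The main obstacle is interpretive rather than computational: how to read the equality clause. At $|p|=\hp_{\square}$ the inner quantities $\Lsone,\Usone$ still differ from $p$ unless $V_{\mathrm{edge}}(p)=0$, that is, unless $|p|\in\{0,1\}$. I will therefore prove the equality statement for the two outer inequalities, which is where the condition $|p|=\hp_{\square}$ (or $z=0$) is exactly right. I will add the separate observation that the inner inequalities are equalities if and only if $z=0$ or $|p|\in\{0,1\}$. In the balanced design this is compatible with the claim, since there $\hp_{\square}=1$. Beyond fixing this reading, the argument requires only the square completion above.
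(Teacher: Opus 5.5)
Your proof is correct and is essentially the paper's own argument: the paper also reduces the chain to the branch comparison $V_{\mathrm{edge}}\le V_{\mathrm{int}}$ and verifies it by clearing denominators to the perfect square $\left(\nmax+\nmin-2\nmax\left|p\right|\right)^{2}\ge0$. The paper additionally gives the structural reason, namely that the interior branch is the unconstrained maximum of $V$ along the whole line $q_1-q_2=p$. Your reading of the equality clause also matches what the paper's proof actually establishes: equality holds in the two outer inequalities exactly when $\left|p\right|=\hp_{\square}$.
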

\begin{proof}
The proof of Proposition~\ref{prop:ErrorBoundsOrder} is relegated to
Appendix~\ref{app:ErrorBoundsOrder}.
\end{proof}

We may now state the main result of this section. It replaces a case analysis by a
single expression in $\Phi$.

\begin{theorem}
\label{thm:mainError}
Let $(p_1,p_2)\in(0,1)^{2}$ be the true parameter pair, $p=p_1-p_2$, and let
$V=V(p_1,p_2)>0$ be as in \eqref{eq:Vtrue}. Define the \emph{variance inflation
factor}
\begin{equation}
\label{eq:inflation}
\mathcal{R}\left(p_1,p_2\right)=\sqrt{\frac{V^{\ast}(p)}{V\left(p_1,p_2\right)}}\ \ge\ 1 .
\end{equation}
Then, under the normal approximation $\hp\sim N\left(p,V\right)$, the attained
coverage of the \EO{} interval is
\begin{equation}
\label{eq:coverageEO}
\mathbb{P}\left(\left(p_1,p_2\right)\in\Sr\right)=2\Phi\!\left(z\,\mathcal{R}\left(p_1,p_2\right)\right)-1,
\end{equation}
and consequently
\begin{equation}
\label{eq:errorformula}
\mathrm{Error}\left(p_1,p_2\right)=2\Bigl[\Phi\!\left(z\,\mathcal{R}\left(p_1,p_2\right)\right)-\Phi\left(z\right)\Bigr].
\end{equation}
Moreover, writing
\begin{equation}
\label{eq:deltadef}
\delta\left(p_1,p_2\right)=n_2\left(p_1-\tfrac12\right)+n_1\left(p_2-\tfrac12\right),
\end{equation}
the inflation factor admits, on the interior branch
$\left|p\right|\le\hp_{\square}$, the exact representation
\begin{equation}
\label{eq:excessIdentity}
V^{\ast}(p)-V\left(p_1,p_2\right)=\frac{\delta\left(p_1,p_2\right)^{2}}{n_1n_2\left(n_1+n_2\right)},
\qquad\text{so that}\qquad
\mathcal{R}^{2}=1+\frac{\delta\left(p_1,p_2\right)^{2}}{n_1n_2\left(n_1+n_2\right)V\left(p_1,p_2\right)} .
\end{equation}
\end{theorem}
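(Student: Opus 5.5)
The plan is to read the coverage directly off Lemma~\ref{lem:ErrorInversion}. By that lemma, the event $(p_1,p_2)\in\Sr$ is the event that $\hp$ lies in the symmetric interval $[p-z\sqrt{V^{\ast}(p)},\,p+z\sqrt{V^{\ast}(p)}]$. The centre $p$ and the half-width $z\sqrt{V^{\ast}(p)}$ depend only on $p$, $n_1$, $n_2$ and $z$, not on the data. Under $\hp\sim N(p,V)$ with $V>0$, which holds because $(p_1,p_2)\in(0,1)^2$, I standardise to get
\begin{equation*}
\mathbb{P}\bigl(|\hp-p|\le z\sqrt{V^{\ast}(p)}\bigr)=2\Phi\bigl(z\sqrt{V^{\ast}(p)/V}\bigr)-1=2\Phi(z\mathcal{R})-1 ,
\end{equation*}
which is \eqref{eq:coverageEO}. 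The inequality $\mathcal{R}\ge1$ is immediate from the definition \eqref{eq:Vstardef}: the true pair is admissible and has difference $p$, so $V(p_1,p_2)\le V^{\ast}(p)$. Subtracting the nominal level $2\Phi(z)-1$ then gives \eqref{eq:errorformula}. As argued in the text preceding \eqref{eq:errordef}, this nominal level is the probability of $(p_1,p_2)\in\Sx$, since that event is exactly $|\hp-p|\le z\sqrt V$.

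The remaining content is the algebraic identity \eqref{eq:excessIdentity} on the interior branch, which I would prove by direct expansion. I parametrise the line of constant difference through the true pair by writing $p_1=\tfrac12+u$ and $p_2=\tfrac12+w$, so that $p=u-w$ and $\delta=n_2u+n_1w$. Then $p_i(1-p_i)=\tfrac14-(\cdot)^2$, which gives
\begin{equation*}
V=\frac{n_1+n_2}{4n_1n_2}-\frac{u^2}{n_1}-\frac{w^2}{n_2}.
\end{equation*}
Since $|p|\le\hp_{\square}$, the first branch of \eqref{eq:Vstar} applies, and
\begin{equation*}
V^{\ast}(p)-V=\frac{u^2}{n_1}+\frac{w^2}{n_2}-\frac{(u-w)^2}{n_1+n_2}.
\end{equation*}
Putting this over the common denominator $n_1n_2(n_1+n_2)$, the numerator becomes $n_2(n_1+n_2)u^2+n_1(n_1+n_2)w^2-n_1n_2(u-w)^2=n_2^2u^2+2n_1n_2uw+n_1^2w^2=(n_2u+n_1w)^2=\delta^2$. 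This is a Lagrange-type identity, namely the exact remainder of the quadratic maximisation along the line $p_1-p_2=p$.

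Dividing by $V$ then yields the stated expression for $\mathcal{R}^2$. It also shows that the maximiser is where $\delta=0$, that is, on $\ell$, consistent with Lemma~\ref{lem:profile}.

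The main obstacle is not conceptual but lies in justifying the use of Lemma~\ref{lem:ErrorInversion} for every true $p$. In particular, on the edge branch $|p|>\hp_{\square}$ the acceptance region must still be the full symmetric interval, even though its endpoints could in principle leave $[-1,1]$. This causes no difficulty, because the normal model places mass outside $[-1,1]$ and the probability computation uses only the interval itself. I would check this point rather than assume it, and otherwise cite the lemma.
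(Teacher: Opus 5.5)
Your proposal is correct and follows the paper's proof essentially step for step: the coverage comes from Lemma~\ref{lem:ErrorInversion} plus standardisation, the nominal level from the coincidence of $\Sx$- and $\Sz$-membership for a genuine pair, and the excess identity from the same centred substitution $p_i=\tfrac12+u_i$ expanded to $(n_2u_1+n_1u_2)^2$. Your worry about $\hp$ leaving $[-1,1]$ on the edge branch is the same untruncated-acceptance-region idealisation that the paper adopts and records in Remark~\ref{rm:approximationerror}, so it is not a gap relative to the paper's argument.
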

\begin{proof}
The proof of Theorem~\ref{thm:mainError} is presented in
Appendix~\ref{app:mainError}.
\end{proof}

The structure of \eqref{eq:errorformula} is worth dwelling on. The entire
dependence of the error on the parameters is channelled through the single scalar
$\mathcal{R}$, which measures how much larger the least-favourable variance is
than the true one. If the true pair happens to be the least-favourable pair on its
own difference line, then $\mathcal{R}=1$ and the interval is exact; otherwise the
interval behaves as though it had been computed at an inflated critical value
$z\mathcal{R}$ in place of $z$, and over-covers accordingly. The identity
\eqref{eq:excessIdentity} makes the geometry explicit: the excess variance is a
perfect square, vanishing exactly on a line and growing quadratically in the
signed deviation $\delta$ from it.

\begin{corollary}
\label{cor:ErrorProperties}
Under the hypotheses of Theorem~\ref{thm:mainError} the following hold.
\begin{itemize}
\item[$(a)$] \emph{Range.} $0\le\mathrm{Error}\left(p_1,p_2\right)<\a$ for every
$\left(p_1,p_2\right)\in(0,1)^{2}$, and the upper bound is approached but not
attained: $\mathrm{Error}\to\a$ as $\left(p_1,p_2\right)\to(0,0)$ or
$\left(p_1,p_2\right)\to(1,1)$.
\item[$(b)$] \emph{Zero-error locus.} On the interior branch,
$\mathrm{Error}\left(p_1,p_2\right)=0$ if and only if
$\delta\left(p_1,p_2\right)=0$, that is, if and only if
\begin{equation}
\label{eq:valleyline}
\b_1p_1+\b_2p_2=1,
\qquad
\b_1=\frac{2n_2}{n_1+n_2},
\qquad
\b_2=\frac{2n_1}{n_1+n_2},
\end{equation}
a line of slope $-n_2/n_1$ through the centre $\left(\tfrac12,\tfrac12\right)$ of
the unit square, with $\b_1+\b_2=2$. In the balanced design $n_1=n_2$ this is the
opposite diagonal $p_1+p_2=1$. On the outer branch
$\left|p\right|>\hp_{\square}$ the error vanishes only on the boundary of the unit
square, and is therefore strictly positive throughout the interior.
\item[$(c)$] \emph{Symmetry.} $\mathrm{Error}\left(1-p_1,1-p_2\right)=\mathrm{Error}\left(p_1,p_2\right)$;
that is, the error surface is invariant under the point reflection about
$\left(\tfrac12,\tfrac12\right)$. It is likewise invariant under the simultaneous
interchange of $\left(p_1,p_2\right)$ with $\left(p_2,p_1\right)$ and of $n_1$
with $n_2$.
\item[$(d)$] \emph{Scale invariance.} If $n_1=c_1N$ and $n_2=c_2N$ with
$c_1,c_2>0$ fixed, then $\mathcal{R}\left(p_1,p_2\right)$ does not depend on $N$.
Consequently the coverage error does \emph{not} tend to zero as the sample sizes
grow: for fixed $\left(p_1,p_2\right)$ off the line \eqref{eq:valleyline} it
converges to the strictly positive constant
$2\left[\Phi\left(z\mathcal{R}\right)-\Phi\left(z\right)\right]$. In the balanced
design with $p_1=p_2=q$ one has $\mathcal{R}^{2}=\left[4q(1-q)\right]^{-1}$ for
every $n$.
\end{itemize}
\end{corollary}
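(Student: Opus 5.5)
The plan is to read every part off the closed form \eqref{eq:errorformula} of Theorem~\ref{thm:mainError}. Two facts about that formula do most of the work. First, $\Phi$ is strictly increasing, so $\mathrm{Error}$ is a strictly increasing function of $\mathcal{R}$ and vanishes exactly when $\mathcal{R}=1$. Second, $\mathcal{R}\ge1$, because by \eqref{eq:Vstardef} the profiled variance $V^{\ast}(p)$ is a maximum over a set containing the true pair $(p_1,p_2)$.

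For $(a)$, the lower bound $\mathrm{Error}\ge0$ is immediate from $\mathcal{R}\ge1$. For the strict upper bound, I use that for $(p_1,p_2)\in(0,1)^2$ one has $V>0$ and $V^{\ast}(p)<\infty$, so $\mathcal{R}$ is finite. Hence $\Phi(z\mathcal{R})<1$, and therefore $\mathrm{Error}<2[1-\Phi(z)]=\a$. For the limit, let $(p_1,p_2)\to(0,0)$. Then $p\to0$, so eventually $|p|\le\hp_{\square}$ and the interior branch of \eqref{eq:Vstar} applies, with $V^{\ast}(p)\to(\nmax+\nmin)/(4\nmax\nmin)>0$. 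At the same time $V(p_1,p_2)\to0$, so $\mathcal{R}\to\infty$ and $\mathrm{Error}\to\a$. The case $(1,1)$ follows from the symmetry proved in $(c)$.

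For $(b)$ on the interior branch, identity \eqref{eq:excessIdentity} gives $\mathcal{R}=1$ if and only if $\delta=0$. The equation $n_2p_1+n_1p_2=(n_1+n_2)/2$, divided by $(n_1+n_2)/2$, is exactly \eqref{eq:valleyline}. The claims about its slope, about $\b_1+\b_2=2$, and about the balanced case are direct. The outer branch $|p|>\hp_{\square}$ is the step I expect to be the main obstacle, because no identity is available there and I must show $V<V^{\ast}(p)$ at every interior point. I will restrict $V$ to the segment $\{p_1-p_2=p\}\cap[0,1]^2$, parametrised by $p_1$. On this segment $V$ is a strictly concave quadratic, whose unconstrained maximiser is the interior-branch pair $\bigl(\tfrac12+\tfrac{n_1p}{n_1+n_2},\ \tfrac12-\tfrac{n_2p}{n_1+n_2}\bigr)$ of Lemma~\ref{lem:profile}. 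Remark~\ref{rem:centreline} shows this pair lies outside the square exactly when $|p|>\hp_{\square}$. So the restricted maximum is attained uniquely at an endpoint of the segment, which lies on the boundary of the square. Strict concavity then gives $V<V^{\ast}$ strictly at every interior point, hence $\mathcal{R}>1$ and $\mathrm{Error}>0$ throughout the interior on this branch.

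For $(c)$, under the point reflection $(p_1,p_2)\mapsto(1-p_1,1-p_2)$ we have $V(1-p_1,1-p_2)=V(p_1,p_2)$ and $p\mapsto-p$. Since $V^{\ast}$ is even (Lemma~\ref{lem:profile}), $\mathcal{R}$ is unchanged. For the interchange symmetry, $V$ is invariant under swapping $(p_1,n_1)$ with $(p_2,n_2)$. Moreover $V^{\ast}$ depends on the sample sizes only through $\nmin$ and $\nmax$ and on $p$ only through $|p|$, so it too is invariant.

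For $(d)$, substitute $n_i=c_iN$. Then $\hp_{\square}=(\nmax+\nmin)/(2\nmax)$ is free of $N$, so the branch selected in \eqref{eq:Vstar} does not depend on $N$. Each term of $V^{\ast}$ and of $V$ is homogeneous of degree $-1$ in $N$, so their ratio, and hence $\mathcal{R}$, does not depend on $N$. It follows that $\mathrm{Error}$ is constant in $N$, and it is strictly positive off the line \eqref{eq:valleyline} by $(b)$. Finally, in the balanced case with $p_1=p_2=q$ we have $p=0$, so $V^{\ast}(0)=1/(2n)$ and $V=2q(1-q)/n$. This gives $\mathcal{R}^2=[4q(1-q)]^{-1}$ for every $n$.
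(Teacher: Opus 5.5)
Your proposal is correct and follows the paper's proof essentially step by step. You use $\mathcal{R}\ge1$ and the finiteness of $\mathcal{R}$ on $(0,1)^2$ for the range and the corner limits, the excess identity for the zero-error line, the evenness of $V^{\ast}$ and invariance of $V$ for the symmetries, and homogeneity of degree $-1$ in $N$ for scale invariance. Your strict-concavity argument on the outer branch (a unique maximiser at the nearer endpoint of the segment) spells out the uniqueness that the paper's one-line remark that ``no interior pair attains $V^{\ast}$'' leaves implicit, and your single homogeneity argument in $(d)$ covers both branches without the paper's detour through $\delta$ on the interior branch.
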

\begin{proof}
The proof of Corollary~\ref{cor:ErrorProperties} is relegated to
Appendix~\ref{app:ErrorProperties}.
\end{proof}

Part $(d)$ deserves to be stated without euphemism, since it delimits the claims
that can legitimately be made for the method. The \EO{} interval is conservative
by construction and its conservatism is a fixed function of the parameter pair and
of the ratio of sample sizes; it is not an $O\!\left(n^{-1}\right)$ correction
that washes out asymptotically. Enlarging the samples shrinks the interval at the
usual $O\!\left(n^{-1/2}\right)$ rate, but it shrinks the ideal interval at the
same rate, and the ratio of the two lengths tends to $\mathcal{R}>1$. The \EO{}
interval is therefore a valid conservative procedure, not an asymptotically
efficient one, and any comparison with competing methods must be conducted with
that in view.

\begin{remark}
\label{rm:approximationerror}
Two idealisations underlie \eqref{eq:errorformula} and should be recorded
explicitly. The first is the normal approximation itself: we have used
$\hp\sim N\left(p,V\right)$ both for the attained coverage and for the nominal
level, so that \eqref{eq:errorformula} isolates the \emph{reduction error} --- the
price of passing from $\Sx$ to $\Sr$ --- from the \emph{approximation error} of
the normal limit. To keep the latter small we assume throughout the usual
regularity conditions $n_1p_1,\ n_1(1-p_1),\ n_2p_2,\ n_2(1-p_2)\ge5$, a
requirement sometimes strengthened to $10$. It should be emphasised that these
conditions bound the approximation error only; they do not bound the reduction
error, which by Corollary~\ref{cor:ErrorProperties}$(d)$ is scale-invariant. At
$n_1=n_2=45$ and $\a=5\%$, for instance, the boundary of the admissible region
$p_i\in\left[\tfrac19,\tfrac89\right]$ still yields
$\mathrm{Error}=0.0482$, which is $96\%$ of the theoretical maximum $\a$. The
second idealisation is that we have not truncated the acceptance region
\eqref{eq:acceptance} to $[-1,1]$, although $\hp$ takes values there. Under the
stated regularity the normal law assigns negligible mass outside that range, so
the truncated and untruncated probabilities agree to within the approximation
error already accepted; the exact finite-sample treatment of
Section~\ref{subsec:exactcoverage} dispenses with both idealisations.
\end{remark}

\subsection{Behaviour of the error over the parameter space}
\label{subsec:errorplots}

We now examine the error surface numerically. The sample sizes are fixed at
$n_1=n_2=45$ in Figure~\ref{fig:Error-n1en2}; the complementary unbalanced designs
are deferred to Appendix~\ref{app:Error-n1gn2} and
Appendix~\ref{app:Error-n2gn1}. In each case the domain is the admissible region
determined by the regularity conditions of Remark~\ref{rm:approximationerror}.

\begin{figure}[htbp]
\centering
    \begin{subfigure}[t]{0.48\textwidth}
        \centering
        \includegraphics[width=\linewidth]{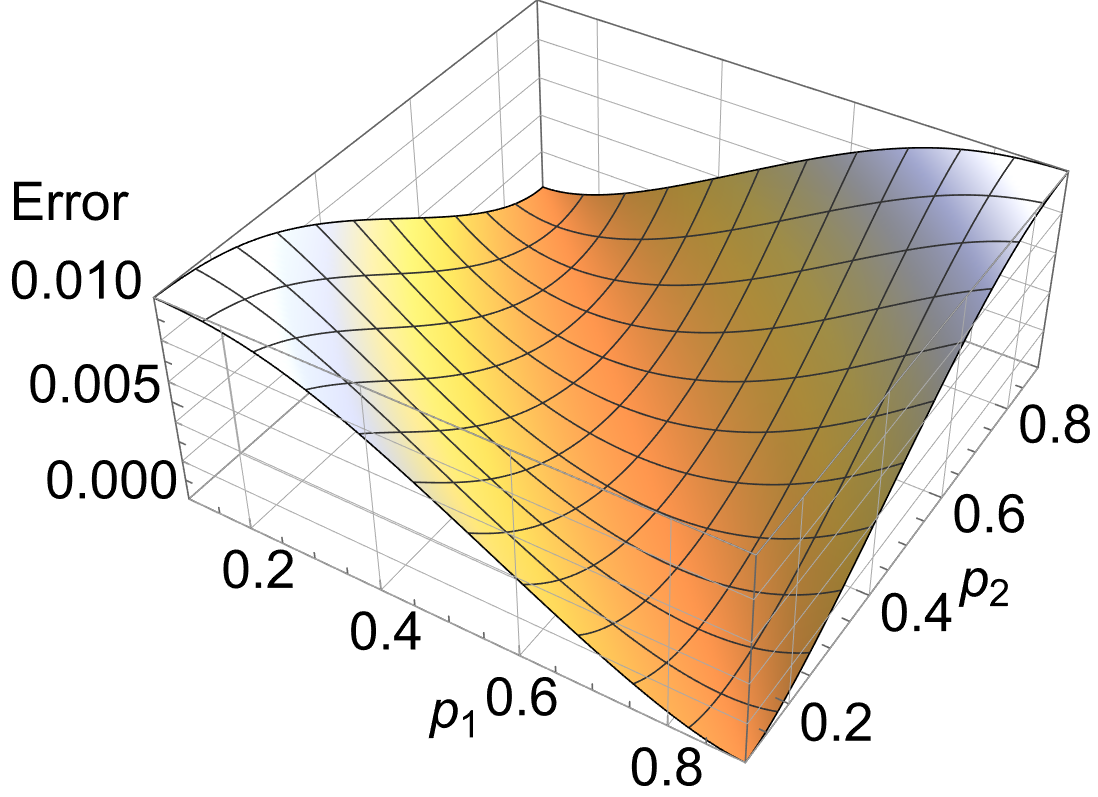}
        \caption{$\a=1\%$}
        \label{subfig:1_n1en2}
    \end{subfigure}
    \hfill
    \begin{subfigure}[t]{0.48\textwidth}
        \centering
        \includegraphics[width=\linewidth]{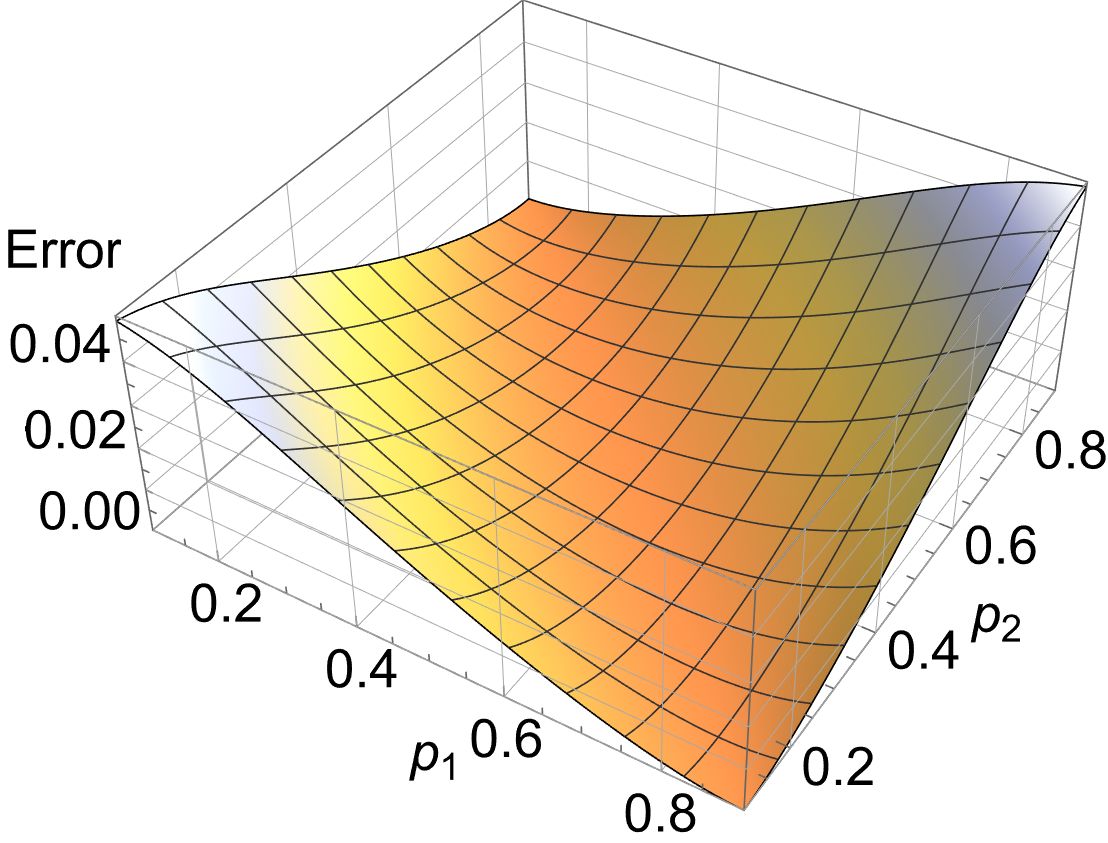}
        \caption{$\a=5\%$}
        \label{subfig:5_n1en2}
    \end{subfigure}

    \vspace{0.8\baselineskip}

    \begin{subfigure}[t]{0.48\textwidth}
        \centering
        \includegraphics[width=\linewidth]{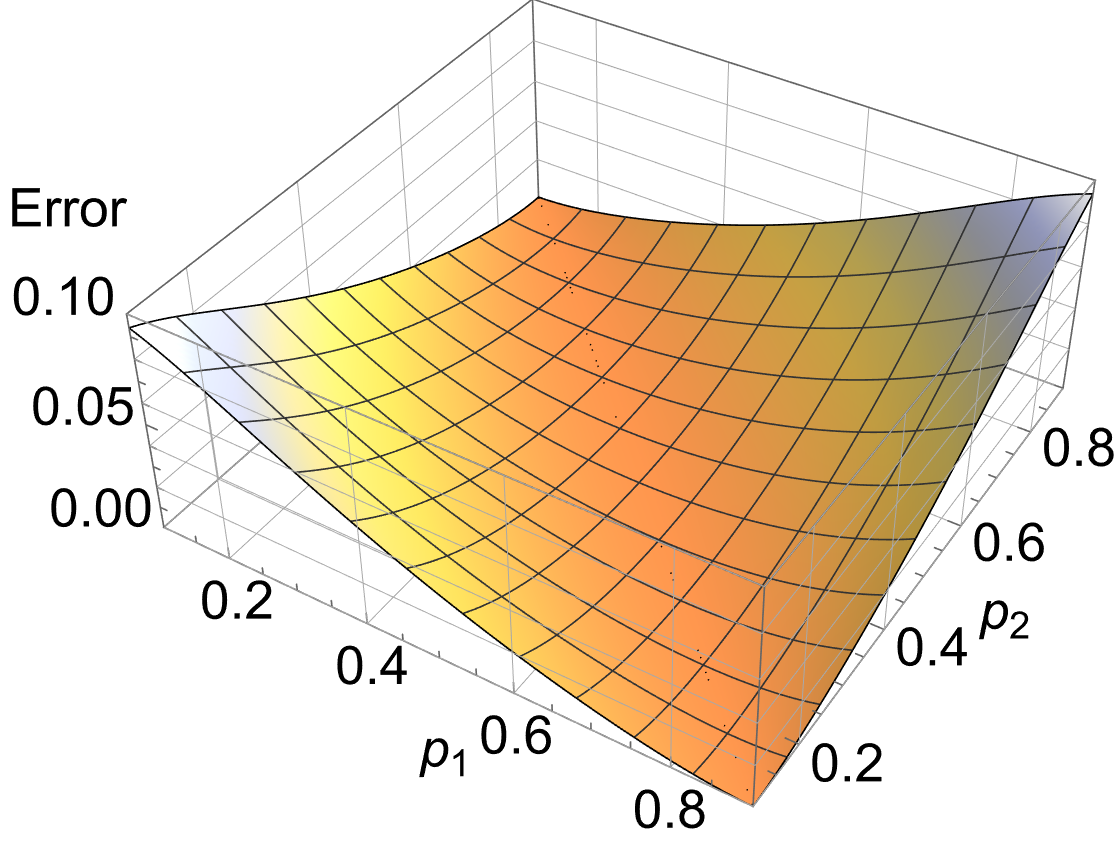}
        \caption{$\a=10\%$}
        \label{subfig:10_n1en2}
    \end{subfigure}
\caption[Coverage error over the parameter domain for $n_1=n_2$]{Coverage error
\eqref{eq:errorformula} of the \EO{} interval over the parameter domain
$\left(p_1,p_2\right)$, for three significance levels, with both sample sizes
fixed at $n_1=n_2=45$. The error vanishes along the opposite diagonal
$p_1+p_2=1$, which is the line \eqref{eq:valleyline} in the balanced case, and
rises toward the corners $(0,0)$ and $(1,1)$, where it approaches its supremum
$\a$. Note the vertical scales: the surfaces attain $0.010$, $0.048$ and $0.091$
respectively.}
\label{fig:Error-n1en2}
\end{figure}

Figures~\ref{fig:Error-n1en2}, \ref{fig:Error-n1gn2} and \ref{fig:Error-n2gn1}
exhibit exactly the structure predicted by Theorem~\ref{thm:mainError} and
Corollary~\ref{cor:ErrorProperties}, and it is worth checking the correspondence
point by point.

Each surface possesses a valley along which the error vanishes, and that valley is
the line \eqref{eq:valleyline}. For $n_1=n_2=45$ it is the opposite diagonal
$p_1+p_2=1$, which coincides with a diagonal of the plotted domain; accordingly
the surface in Figure~\ref{fig:Error-n1en2} descends to zero at two opposite
corners, and one may verify directly that at $\left(p_1,p_2\right)=(0.8,0.2)$ both
$V^{\ast}$ and $V$ equal $0.00711$, so that $\mathcal{R}=1$. For $n_1=60$ and
$n_2=30$ the valley is $p_1+2p_2=\tfrac32$, of slope $-\tfrac12$, and it runs
through the interior of the domain rather than corner to corner: in
Figure~\ref{fig:Error-n1gn2} the trough is visibly interior, and at
$\left(p_1,p_2\right)=(0.9,0.1)$ the error is $0.0171$ rather than zero. For
$n_1=30$ and $n_2=60$ the valley is $2p_1+p_2=\tfrac32$, of slope $-2$, and
Figure~\ref{fig:Error-n2gn1} is the mirror image of
Figure~\ref{fig:Error-n1gn2}, as the second symmetry of
Corollary~\ref{cor:ErrorProperties}$(c)$ requires.

Away from the valley the error rises in both directions, and the rise is governed
by \eqref{eq:excessIdentity}: the excess variance is exactly quadratic in the
signed deviation $\delta$, so the contours of the error are the level sets of
$\delta^{2}/V$. These are conics rather than pairs of parallel lines, because the
denominator $V$ itself varies over the square; the ascent is therefore steeper
toward the corners, where $V$ is small, than toward the edges. The maximum is
approached at $(0,0)$ and $(1,1)$, the two corners at which the true variance
degenerates while the least-favourable variance does not; at the other two corners
$V^{\ast}$ degenerates as well, and for $n_1=n_2$ the error there tends to zero,
consistently with those corners lying on the valley. Finally, every surface is
symmetric under reflection through the centre $\left(\tfrac12,\tfrac12\right)$, as
Corollary~\ref{cor:ErrorProperties}$(c)$ guarantees.

\begin{remark}
\label{rm:toowide}
The behaviour in the corners should be stated plainly, since it bears directly on
the comparisons of Section~\ref{sec:comparison}. When both true proportions are
small, or both large, the \EO{} interval is substantially too wide: at
$n_1=n_2=45$, $\a=5\%$ and $p_1=p_2=0.15$ its attained coverage is $99.4\%$, and
at $p_1=p_2=\tfrac19$ it is $99.8\%$. This is not a defect of the numerical
implementation but an exact consequence of \eqref{eq:errorformula}, and by
Corollary~\ref{cor:ErrorProperties}$(d)$ it does not improve with sample size.
Consequently the \EO{} interval cannot serve as a yardstick of correct width: an
interval that is narrower than the \EO{} interval in this region is not thereby
shown to be too narrow, and may well be closer to the nominal level. Establishing
which of two intervals is the more accurate requires their attained coverages to
be compared with each other and with the nominal level, which is the purpose of
the next subsection.
\end{remark}

\subsection{Exact coverage of the three intervals}
\label{subsec:exactcoverage}

The analysis above is exact in the reduction but approximate in the sampling
distribution. For the comparative assessment of
Section~\ref{sec:comparison} we require attained coverages that are exact in both
respects, and for all three procedures under consideration. These are available by
direct enumeration, since $\hp_1$ and $\hp_2$ are independent binomial
proportions on finite supports.

\begin{definition}
\label{def:exactcoverage}

For an interval procedure $M$ assigning to each observed pair
$\left(\hp_1,\hp_2\right)$ an interval
$\left[L_M\left(\hp_1,\hp_2\right),U_M\left(\hp_1,\hp_2\right)\right]$, the exact
coverage function, for given $n_1,n_2$ and $z$, is
\begin{multline}
\label{eq:exactcoverage}
C_M\left(p_1,p_2\right)
=\sum_{x_1=0}^{n_1}\sum_{x_2=0}^{n_2}
\binom{n_1}{x_1}p_1^{x_1}\left(1-p_1\right)^{n_1-x_1}
\binom{n_2}{x_2}p_2^{x_2}\left(1-p_2\right)^{n_2-x_2}\,
\\
\times\mathbf{1}\!\left\{L_M\!\left(\tfrac{x_1}{n_1},\tfrac{x_2}{n_2}\right)
\le p_1-p_2\le
U_M\!\left(\tfrac{x_1}{n_1},\tfrac{x_2}{n_2}\right)\right\},
\end{multline}
that is, the probability that the random interval produced by $M$ covers the true
difference. We write $C_{\EO}$, $C_{\mathrm{Wald}}$
and $C_{\mathrm{NC}}$ for the three procedures, where the \EO{} bounds are those
of Table~\ref{tab:CIresults}, the Wald bounds are
$\hp\pm z\sqrt{\widehat{\Var}\left[\hp\right]}$ with
$\widehat{\Var}\left[\hp\right]=\hp_1\left(1-\hp_1\right)/n_1+\hp_2\left(1-\hp_2\right)/n_2$,
and the Newcombe bounds are those of Method~10 of \cite{Newcombe1998}, namely
\begin{equation}
\label{eq:NewcombeCI}
L_{\mathrm{NC}}=\hp-\sqrt{\left(\hp_1-l_1\right)^{2}+\left(u_2-\hp_2\right)^{2}},
\qquad
U_{\mathrm{NC}}=\hp+\sqrt{\left(u_1-\hp_1\right)^{2}+\left(\hp_2-l_2\right)^{2}},
\end{equation}
with $\left[l_i,u_i\right]$ the univariate Wilson interval
\eqref{eq:WilsonCIOnePop} computed from $\hp_i$ and $n_i$.
\end{definition}

Expression \eqref{eq:exactcoverage} requires no approximation and is computable in
$O\!\left(n_1n_2\right)$ operations; the coverage \emph{error} of a procedure is
then $C_M\left(p_1,p_2\right)-(1-\a)$, which, unlike \eqref{eq:errordef}, may take
either sign. For the \EO{} interval the indicator has a simpler structure, because by Lemma~\ref{lem:ErrorInversion} it depends on $\left(x_1,x_2\right)$ only through the difference $x_1/n_1-x_2/n_2$; the exact coverage function, for given $n_1,n_2$ and $z$, is then the probability that this difference falls in the interval \eqref{eq:acceptance}, of which \eqref{eq:coverageEO} is the normal approximation. No
comparable simplification is available for the Wald and Newcombe intervals, whose
bounds depend on $\hp_1$ and $\hp_2$ separately.

For the Wald interval, however, one exact statement can be made without
enumeration, and it is the statement that the comparative discussion requires.

\begin{proposition}
\label{prop:WaldBias}
The Wald variance estimator satisfies, exactly and for all $n_1,n_2\ge1$,
\begin{equation}
\label{eq:Waldbias}
\mathbb{E}\left[\widehat{\Var}\left[\hp\right]\right]
=\Var\left[\hp\right]-\left(\frac{p_1\left(1-p_1\right)}{n_1^{2}}+\frac{p_2\left(1-p_2\right)}{n_2^{2}}\right)
\ <\ \Var\left[\hp\right],
\end{equation}
and in the balanced design $n_1=n_2=n$ this reduces to
$\mathbb{E}\left[\widehat{\Var}\left[\hp\right]\right]=\left(1-1/n\right)\Var\left[\hp\right]$.
\end{proposition}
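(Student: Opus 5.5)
The plan is a direct moment computation, one population at a time, using independence only to add the two contributions. Write $\hp_i=X_i/n_i$ with $X_i\sim\mathrm{Bin}(n_i,p_i)$. By linearity of expectation,
\begin{equation*}
\mathbb{E}\bigl[\widehat{\Var}[\hp]\bigr]=\sum_{i=1}^{2}\frac{\mathbb{E}[\hp_i]-\mathbb{E}[\hp_i^{2}]}{n_i},
\end{equation*}
so only the first two moments of each sample proportion are needed.

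For those moments we have $\mathbb{E}[\hp_i]=p_i$ and $\mathbb{E}[\hp_i^2]=\Var[\hp_i]+p_i^2=p_i(1-p_i)/n_i+p_i^2$. Therefore
\begin{equation*}
\mathbb{E}\bigl[\hp_i(1-\hp_i)\bigr]=p_i-p_i^2-\frac{p_i(1-p_i)}{n_i}=p_i(1-p_i)\Bigl(1-\frac{1}{n_i}\Bigr).
\end{equation*}
Dividing by $n_i$ and summing over $i$ gives $\Var[\hp]-\sum_i p_i(1-p_i)/n_i^2$, where $\Var[\hp]$ is the formula of Section~\ref{subsec:bivariate}. This is exactly \eqref{eq:Waldbias}. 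Independence of the two samples is not needed for the expectation itself; it enters only through the expression for $\Var[\hp]$.

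The strict inequality is the one point that needs care, since the correction term $\sum_i p_i(1-p_i)/n_i^2$ is only non-negative in general. It is positive unless both $p_i\in\{0,1\}$. In that degenerate case both sides of \eqref{eq:Waldbias} vanish, and the strict inequality fails. I would therefore state the strict inequality for parameters with $V>0$, that is, with at least one $p_i\in(0,1)$, matching the standing hypothesis $(p_1,p_2)\in(0,1)^2$ of Theorem~\ref{thm:mainError}.

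For the balanced case $n_1=n_2=n$, factor the common $1/n$ out of the correction term. This gives $\frac{1}{n}\bigl(p_1(1-p_1)+p_2(1-p_2)\bigr)/n=\Var[\hp]/n$, and hence $\mathbb{E}[\widehat{\Var}[\hp]]=(1-1/n)\Var[\hp]$.

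There is no real obstacle in this argument; the only delicate step is the boundary caveat for the strict inequality.
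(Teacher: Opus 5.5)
Your proof is correct and follows essentially the same route as the paper's: derive $\mathbb{E}[\hp_i(1-\hp_i)]=p_i(1-p_i)(1-1/n_i)$ from the first two binomial moments, divide by $n_i$, and sum over $i$. Your caveat on the strict inequality matches the paper's own qualification that the correction term is strictly positive only when at least one $p_i\in(0,1)$. Your remark that independence is needed only for the formula for $\Var[\hp]$, not for the expectation itself, slightly sharpens the paper's wording.
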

\begin{proof}
The proof of Proposition~\ref{prop:WaldBias} is relegated to
Appendix~\ref{app:WaldBias}.
\end{proof}

Identity \eqref{eq:Waldbias} shows that the plug-in variance on which the Wald
interval rests is systematically too small, by a relative amount that is exactly
$1/n$ in the balanced case. The Wald interval is therefore, on average, shorter
than the interval $\hp\pm z\sqrt{\Var\left[\hp\right]}$ that would attain the
nominal level under the normal approximation, and its coverage is correspondingly
depressed. This is a quantified statement about a specific mechanism, and it is
the only sense in which the present paper asserts that the Wald interval is ``too
narrow''; the assertion is not, and cannot be, established by observing that the
Wald bounds lie inside the bounds of a conservative interval. With
\eqref{eq:errorformula} for the \EO{} interval, \eqref{eq:exactcoverage} for all
three, and \eqref{eq:Waldbias} for the Wald interval in particular, the
comparative analysis of Section~\ref{sec:comparison} can be conducted on the basis
of attained coverage rather than width.

\section{Comparison and Discussion}
\label{sec:comparison}

Section~\ref{sec:error_analysis} supplied what a comparison of interval procedures
requires: a closed-form expression \eqref{eq:errorformula} for the coverage excess
of the \EO{} interval, an exact finite-sample coverage function
\eqref{eq:exactcoverage} applicable to any procedure whatever, and an exact
identity \eqref{eq:Waldbias} for the bias of the variance estimator on which the
Wald interval rests. We now use these to compare the \EO{} interval with the two
established alternatives. The comparison is conducted on the basis of attained
coverage, and its conclusions are more qualified --- and, we believe, more useful
--- than a summary verdict would be.

\subsection{What a comparison can and cannot establish}
\label{subsec:basisofcomparison}

It is worth being explicit about the standard of evidence, because the natural
informal comparison is misleading in both directions.

The three procedures produce intervals of different lengths, and it is tempting to
read length as accuracy. It is not. A narrower interval is more accurate only if
its coverage remains at the nominal level; if the coverage has fallen, the
narrowness is exactly the symptom. Conversely, an interval that is wider than
another is not thereby shown to be the correct one: it may simply be conservative,
and by Corollary~\ref{cor:ErrorProperties} the \EO{} interval demonstrably is.
No one of the three can serve as a yardstick for the others. The only quantity that
adjudicates between them is the attained coverage probability, compared with the
nominal level, and computed for each procedure separately at each parameter pair.
Every claim in what follows is therefore either a structural fact that holds for
every sample, or a statement about $C_M\left(p_1,p_2\right)$ as defined in
\eqref{eq:exactcoverage}. In particular, the plots of interval bounds in
Figures~\ref{fig:CICq} and \ref{fig:ECICw} below are presented as description, not
as evidence; they show what the procedures do, not which of them is right.

For reference, the Wald interval for the difference of two proportions is
\begin{equation}
\label{eq:WaldCI}
\hp-z\sqrt{\widehat{\Var}\left[\hp\right]}\ \le\ p\ \le\ \hp+z\sqrt{\widehat{\Var}\left[\hp\right]},
\qquad
\widehat{\Var}\left[\hp\right]=\ddfrac{\hp_1\left(1-\hp_1\right)}{n_1}+\ddfrac{\hp_2\left(1-\hp_2\right)}{n_2},
\end{equation}
obtained by substituting the observed proportions for the unknown parameters
inside the variance, and the Newcombe interval is Method~10 of \cite{Newcombe1998},
given in \eqref{eq:NewcombeCI}, which combines the univariate Wilson limits
\eqref{eq:WilsonCIOnePop} of the two proportions by a square-and-add rule.

\subsection{Structural properties}
\label{subsec:structural}

Before turning to coverage we record the properties that can be settled once and
for all, without reference to any particular parameter value. These are genuine
points of difference, and they are established rather than observed.

\begin{proposition}
\label{prop:structural}
For all $n_1,n_2\ge1$ and all $z>0$:
\begin{itemize}
\item[$(a)$] The \EO{} interval always lies in $[-1,1]$ and always contains the
point estimate $\hp$, by Corollary~\ref{cor:EOadmissible}. The Wald interval
\eqref{eq:WaldCI} possesses neither property in general: at $n_1=n_2=20$ and $p_1=p_2=0.05$, a configuration outside the grid of Table~\ref{tab:coveragesummary}, the Wald interval attains $0.9883$.
\item[$(b)$] The \EO{} interval is never degenerate: its length is strictly
positive for every possible sample. This includes $\hp=\pm1$, where $V^{\ast}$ vanishes: since $V^{\ast}(p)\sim(1-|p|)/\nmin$ as $|p|\to1$ while $(\hp-p)^{2}$ vanishes quadratically, the interval at $\hp=1$ extends at least down to $1-z^{2}/\nmin$. The Wald interval collapses to a single
point whenever both samples are homogeneous, that is at the four outcomes
$\hp_1,\hp_2\in\{0,1\}$.
\item[$(c)$] The \EO{} interval is equivariant under relabelling of the two
populations, $\mathrm{CI}(-\hp)=-\,\mathrm{CI}(\hp)$, and for each fixed
hypothesised difference $p$ its acceptance region \eqref{eq:acceptance} is an
interval symmetric about $p$. This is the precise sense --- and the only sense ---
in which the procedure is symmetric.
\item[$(d)$] The \EO{} interval uses the data only through the difference $\hp$,
by Remark~\ref{rm:CIresultsremmarks}, whereas both alternatives depend on $\hp_1$
and $\hp_2$ separately.
\item[$(e)$] Under the normal approximation the \EO{} interval never under-covers:
$\mathcal{R}\ge1$ in \eqref{eq:inflation} gives
$C_{\EO}\ge1-\a$ for every parameter pair. No such guarantee is available for
either alternative, and the size of the excess is known in advance in closed form
from \eqref{eq:errorformula}.
\end{itemize}
\end{proposition}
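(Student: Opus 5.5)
The plan is to treat the five parts separately. For each, the \EO{} half follows from results already established. The claims about the Wald interval are settled by explicit counterexamples or by direct evaluation of \eqref{eq:WaldCI}.

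For $(a)$ I would cite Corollary~\ref{cor:EOadmissible}$(a)$--$(b)$ directly. For the Wald half, containment of $\hp$ is automatic, so the failure to be exhibited is leaving $[-1,1]$. Take $\hp_1=1/n_1$, $\hp_2=0$. Then $\hp-z\sqrt{\widehat{\Var}[\hp]}<0$, and more to the point, small $n$ with $\hp$ near $\pm1$ gives an upper bound exceeding $1$. One concrete case is $n_1=n_2=20$, $x_1=19$, $x_2=0$: there $\hp=0.95$ and $z\sqrt{0.95\cdot0.05/20}\approx0.0955$, so $U>1$.

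The quoted coverage figure $0.9883$ at $p_1=p_2=0.05$ would be confirmed by enumerating \eqref{eq:exactcoverage}; it requires no proof beyond the computation.

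For $(b)$ I would argue from \eqref{eq:EOprofile}. The interval is $\{p:(\hp-p)^2\le z^2V^{\ast}(p)\}$, and $V^{\ast}>0$ on $(-1,1)$ by Lemma~\ref{lem:profile}.
\begin{itemize}
\item If $|\hp|<1$, then $p=\hp$ satisfies the inequality strictly. Continuity of $V^{\ast}$ then gives a neighbourhood of $\hp$ inside the interval, so the length is positive.
\item If $\hp=1$, I would use the edge branch $V^{\ast}(p)=p(1-p)/\nmin$ for $p>\hp_{\square}$. The inequality $(1-p)^2\le z^2p(1-p)/\nmin$ reduces to $1-p\le z^2p/\nmin$, i.e.\ $p\ge \nmin/(\nmin+z^2)$.
\item To justify ``at least down to $1-z^2/\nmin$'', note $\nmin/(\nmin+z^2)\le 1-z^2/\nmin$ fails in general. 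I would therefore state the bound as $p\ge\nmin/(\nmin+z^2)$ when this exceeds $\hp_{\square}$, and otherwise note that the interior branch takes over. In either case some $p<1$ is admitted, which is all that non-degeneracy needs.
\item The case $\hp=-1$ follows by the symmetry \eqref{eq:mirror}.
\end{itemize}
For Wald, $\widehat{\Var}[\hp]=0$ exactly when each $\hp_i\in\{0,1\}$, which gives the four outcomes, and the interval is then $\{\hp\}$.

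For $(c)$ I would cite the equivariance $\mathrm{CI}(-\hp)=-\mathrm{CI}(\hp)$ noted after Theorem~\ref{thm:mainCI}, which comes from the case interchange and the evenness of $\Rone,\Rtwo$, or alternatively from evenness of $V^{\ast}$ in \eqref{eq:EOprofile}. The symmetric acceptance region is Lemma~\ref{lem:ErrorInversion}. Part $(d)$ is Remark~\ref{rem:difference_only} and Remark~\ref{rm:CIresultsremmarks}. For the alternatives, dependence on $\hp_1,\hp_2$ separately would be shown by two samples with equal $\hp$ but different Wald (or Newcombe) variances. An example is $(\hp_1,\hp_2)=(0.5,0.5)$ versus $(0,0)$ at equal $n$: Wald gives a nondegenerate interval in the first case and the point $\{0\}$ in the second.

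For $(e)$ I would combine $\mathcal{R}\ge1$ with \eqref{eq:coverageEO} and the monotonicity of $\Phi$, which gives $C_{\EO}=2\Phi(z\mathcal{R})-1\ge 2\Phi(z)-1=1-\a$. Here $\mathcal{R}\ge1$ holds because $V\le V^{\ast}(p)$ by the definition \eqref{eq:Vstardef} as a maximum. The claim that no such guarantee holds for Wald is supported by Proposition~\ref{prop:WaldBias} together with any exhibited under-coverage.

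The main obstacle is $(b)$ at $\hp=\pm1$. There one must check which branch of $V^{\ast}$ governs the lower endpoint and reconcile this with the stated bound $1-z^2/\nmin$. I would first verify that the edge branch applies, and otherwise fall back on the weaker but sufficient positivity argument.
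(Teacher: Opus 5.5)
Your route is the paper's own. Its proof is likewise a chain of citations (Corollary~\ref{cor:EOadmissible}, Lemma~\ref{lem:profile}, Lemma~\ref{lem:ErrorInversion}, Theorem~\ref{thm:mainError}) plus inspection of the Wald formula, with $(b)$ justified only by the linear vanishing of $V^{\ast}$ at $\pm1$. Everything you actually prove is correct. The weak point is that you treat two assertions of the statement as things to be reconciled, when they are false and no proof can deliver them; you should say so outright. In $(a)$, the Wald interval $\hp\pm z\sqrt{\widehat{\Var}[\hp]}$ always contains $\hp$, so ``possesses neither property'' is wrong. Only the failure of $[-1,1]$-containment can be shown. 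Your $x_1=19$, $x_2=0$ example does this; the $\hp_1=1/n_1$ example, whose lower bound is merely negative, shows nothing.

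In $(b)$, the claim that at $\hp=1$ the interval reaches down to $1-z^2/\nmin$ fails for \emph{every} $n_1,n_2,z$ and in every branch, not merely ``in general''. So neither checking the edge branch nor falling back on the interior branch can recover it. To see this, note that $V^{\ast}$ is concave with $V^{\ast}(1)=0$. It has slope $-1/\nmin$ at $p=1$ on either branch (edge: $(1-2p)/\nmin$; balanced interior: $-2p/(\nmax+\nmin)$), and it is strictly concave near $1$. Hence $V^{\ast}(p)<(1-p)/\nmin$ for all $p<1$. At $p_0=1-z^2/\nmin$ (when $p_0\ge-1$; otherwise $L\ge-1>p_0$ trivially) this gives $z^2V^{\ast}(p_0)<z^4/\nmin^2=(1-p_0)^2$. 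So $p_0$ is excluded from the \EO{} interval at $\hp=1$, and $L>1-z^2/\nmin$. The paper's own numbers confirm this: at $n_1=n_2=20$, $\a=5\%$ it reports $L=\Lthree=0.8248$, whereas $1-z^2/20=0.8079$. The asymptotic $V^{\ast}(p)\sim(1-p)/\nmin$ overstates $V^{\ast}$, which is why the first-order bound points the wrong way.

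The correct replacement also removes your case split and covers the balanced design. There $\hp_{\square}=1$ and the edge branch is never active on $(-1,1)$, so your plan to ``first verify that the edge branch applies'' fails. The proof of Proposition~\ref{prop:ErrorBoundsOrder} shows the edge formula never exceeds the interior one, so $V^{\ast}(p)\ge|p|(1-|p|)/\nmin$ on all of $[-1,1]$. Therefore the interval at $\hp=1$ contains $\{p\in[0,1]:(1-p)^2\le z^2p(1-p)/\nmin\}=[\nmin/(\nmin+z^2),1]$, which has length $z^2/(\nmin+z^2)>0$. The case $\hp=-1$ follows by equivariance. With $(a)$ and $(b)$ corrected in this way, the rest of your proposal stands.
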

\begin{proof}
Parts $(a)$--$(d)$ were established in Corollary~\ref{cor:EOadmissible}, Lemma~\ref{lem:profile}, Lemma~\ref{lem:ErrorInversion}, the discussion following \eqref{eq:R2} and Remark~\ref{rm:CIresultsremmarks}; part $(b)$ follows from the linear vanishing of $V^{\ast}$ at $\pm1$ noted in the statement. The counts for the Wald interval follow by
direct enumeration of the $\left(n_1+1\right)\left(n_2+1\right)$ sample outcomes.
Part $(e)$ is Theorem~\ref{thm:mainError} together with the inequality
$V^{\ast}(p)\ge V\left(p_1,p_2\right)$ of Lemma~\ref{lem:profile}.
\end{proof}

Part $(e)$ is the principal structural advantage of the method, and it is worth
separating from the empirical questions taken up below. The \EO{} interval offers
a coverage guarantee together with an exact accounting of its cost. That
combination is unusual: conservative procedures are common, but conservative
procedures whose conservatism is available in closed form, as a function of the
parameters, are not.

\subsection{Interval bounds across the parameter space}
\label{subsec:widths}

We first describe how the three sets of bounds relate to one another, deferring all
questions of accuracy to Section~\ref{subsec:coverage}. Both sample sizes are set
to $n_1=n_2=20$ and the significance level is fixed at $5\%$; the bounds are
plotted against $\hp_1$ with $\hp_2$ held fixed at each of
$\{0.01,\,0.25,\,0.75,\,0.99\}$. The plots obtained by interchanging the roles of $\hp_1$ and $\hp_2$ are given as Figure~\ref{fig:CICp} in Appendix~\ref{app:CICp}.
\begin{figure}[htbp]
\centering
    \begin{subfigure}[t]{0.48\textwidth}
        \centering
        \includegraphics[width=\linewidth]{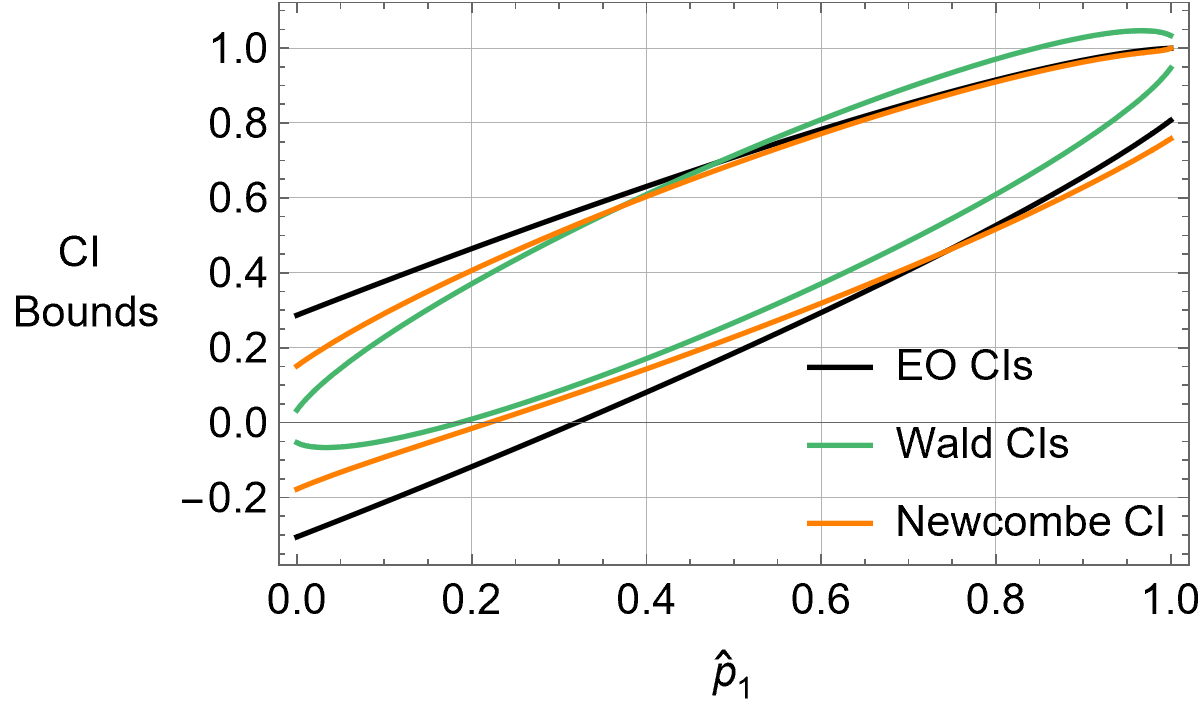}
        \caption{$\hp_2=0.01$}
        \label{subfig:CICq001}
    \end{subfigure}
    \hfill
    \begin{subfigure}[t]{0.48\textwidth}
        \centering
        \includegraphics[width=\linewidth]{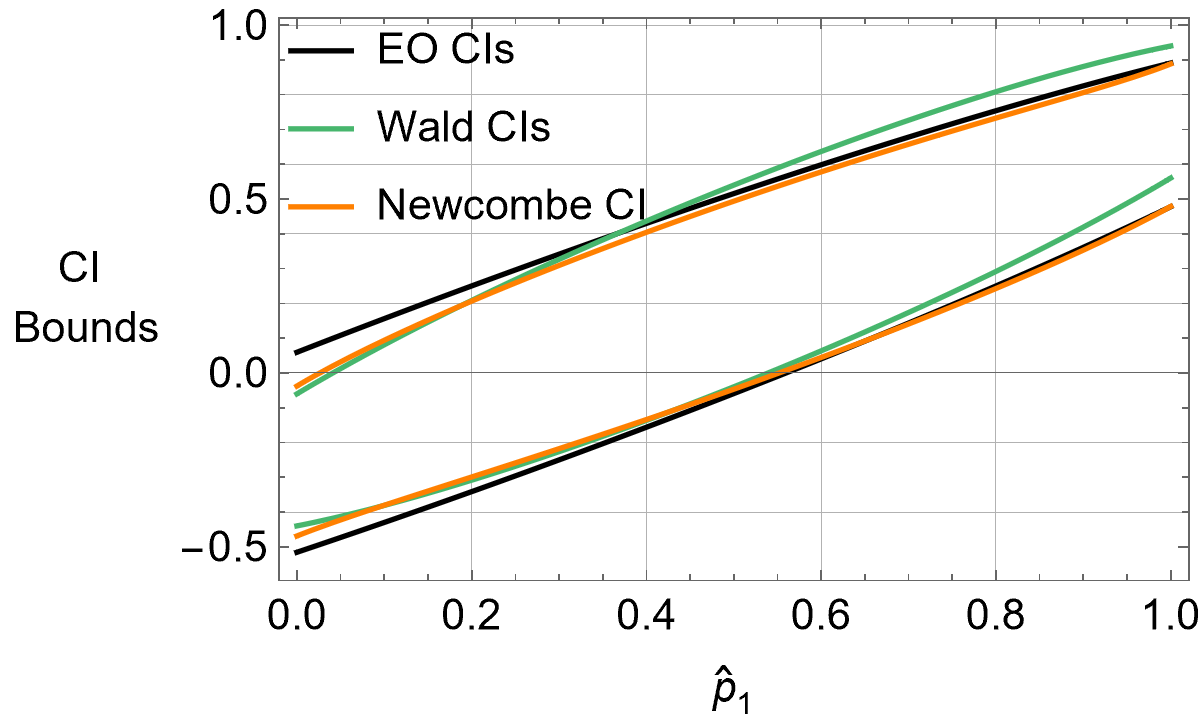}
        \caption{$\hp_2=0.25$}
        \label{subfig:CICq025}
    \end{subfigure}

    \vspace{0.8\baselineskip}

    \begin{subfigure}[t]{0.48\textwidth}
        \centering
        \includegraphics[width=\linewidth]{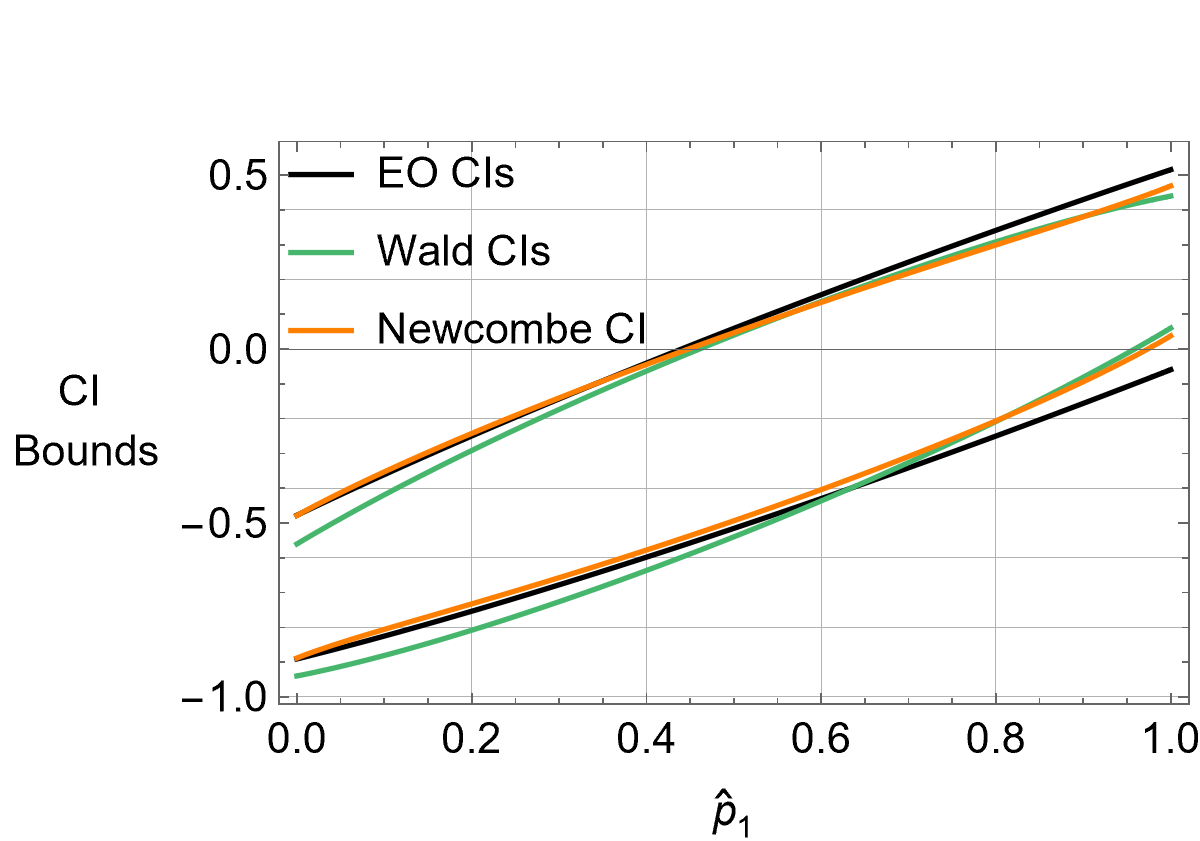}
        \caption{$\hp_2=0.75$}
        \label{subfig:CICq075}
    \end{subfigure}
    \hfill
    \begin{subfigure}[t]{0.48\textwidth}
        \centering
        \includegraphics[width=\linewidth]{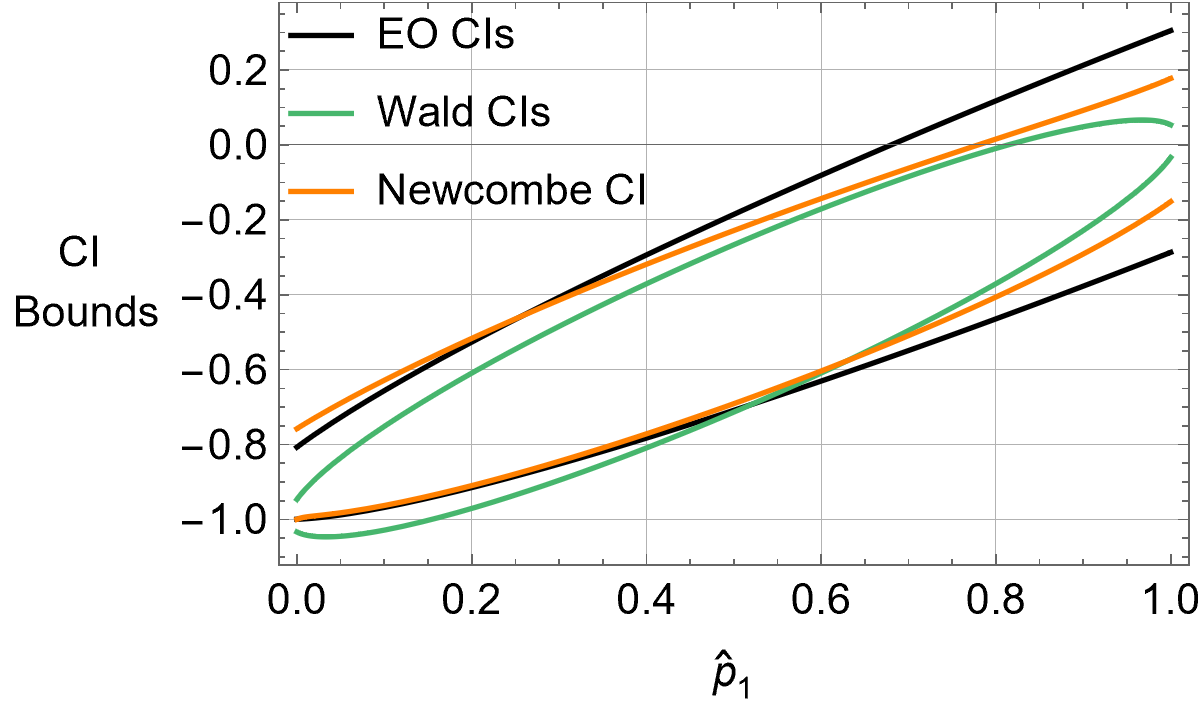}
        \caption{$\hp_2=0.99$}
        \label{subfig:CICq099}
    \end{subfigure}
\caption[Bounds of the \EO{}, Wald and Newcombe CIs against $\widehat{p}_1$]{Bounds of the
\EO{}, Wald and Newcombe intervals plotted against $\hp_1$ for four fixed values
of $\hp_2$, with $n_1=n_2=20$ and $\a=5\%$. The figure records the position of the
bounds only; the accuracy of the three procedures is assessed in
Table~\ref{tab:coveragepoints}.}
\label{fig:CICq}
\end{figure}

Two features of Figure~\ref{fig:CICq} are worth noting because they reappear in
the coverage analysis. First, the \EO{} bounds are smooth and monotone in $\hp_1$
across the whole range, including the two extreme panels, and they remain within
$[-1,1]$; the Wald bounds, by contrast, turn over near the ends of the range and
in the panel $\hp_2=0.01$ exceed unity. This is the behaviour predicted by
Proposition~\ref{prop:structural}$(a)$--$(b)$, and it is a statement about
admissibility, not about accuracy. Second, in the two central panels the three
sets of bounds are close together, which already suggests that the interesting
differences lie near the boundary of the parameter space rather than in the
interior.

The relationship between the bounds across the whole $\left(\hp_1,\hp_2\right)$
domain is displayed in Figure~\ref{fig:EOWaldNCorderingsdomain}, which partitions
the unit square according to the ordering of the endpoints.

\begin{figure}[htbp]
\centering
    \begin{subfigure}{\textwidth}
    \centering
    \includegraphics[width=0.86\linewidth]{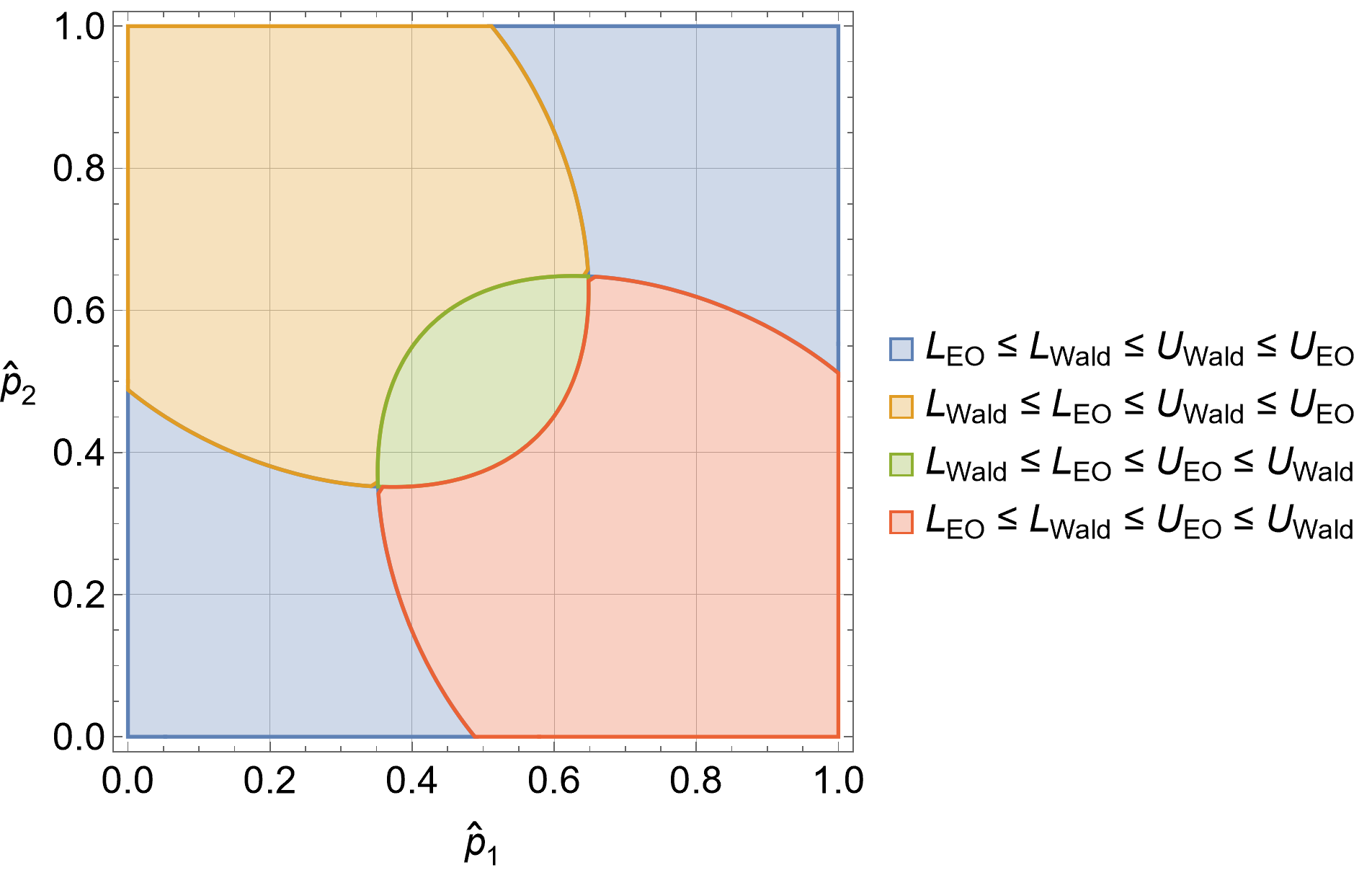}
    \caption{\EO{} against Wald}
    \label{subfig:EOW}
    \end{subfigure}

    \vspace{0.8\baselineskip}

    \begin{subfigure}{\textwidth}
    \centering
    \includegraphics[width=0.86\linewidth]{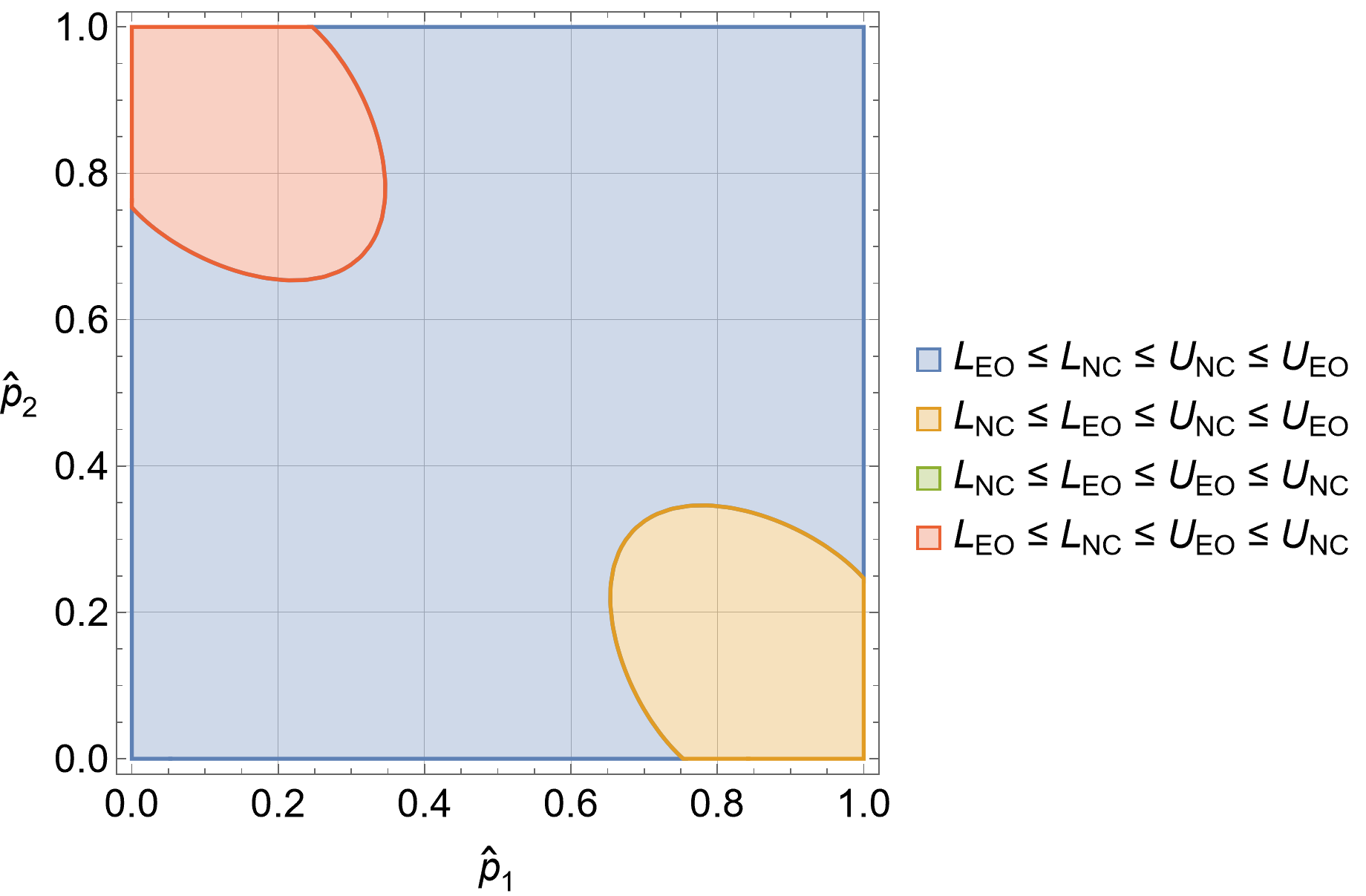}
    \caption{\EO{} against Newcombe}
    \label{subfig:EONC}
    \end{subfigure}
\caption[Orderings of the \EO{}, Wald and Newcombe bounds]{Orderings of the
endpoints of the \EO{} and Wald intervals, and of the \EO{} and Newcombe
intervals, over the $\left(\hp_1,\hp_2\right)$ domain, for $n_1=n_2=20$ and
$\a=5\%$. Here $L_{\mathrm{EO}},U_{\mathrm{EO}}$, $L_{\mathrm{Wald}},U_{\mathrm{Wald}}$
and $L_{\mathrm{NC}},U_{\mathrm{NC}}$ denote the lower and upper bounds of the
three procedures.}
\label{fig:EOWaldNCorderingsdomain}
\end{figure}

The two panels differ in character. Against Newcombe the picture is almost
one-directional: over the great majority of the square the Newcombe interval is
nested inside the \EO{} interval, and the two small lobes in which the nesting is
partial occur where one observed proportion is near zero and the other near one.
Against Wald the picture is genuinely four-fold: the Wald bounds may lie inside
the \EO{} bounds, outside them, or straddle them on either side, and all four
configurations occupy regions of positive area. This is the honest content of the
observation that the Wald interval behaves erratically --- its position relative to
a fixed reference interval is not a monotone function of the data --- and it is a
separate matter from whether it is too narrow, which the orderings alone cannot
settle.

Finally, Figure~\ref{fig:ECICw} superimposes the three intervals on the elliptical
region $\Sx$ itself, for four representative sample configurations.

\begin{figure}[htbp]
\centering
    \begin{subfigure}[t]{0.48\textwidth}
        \centering
        \includegraphics[width=\linewidth]{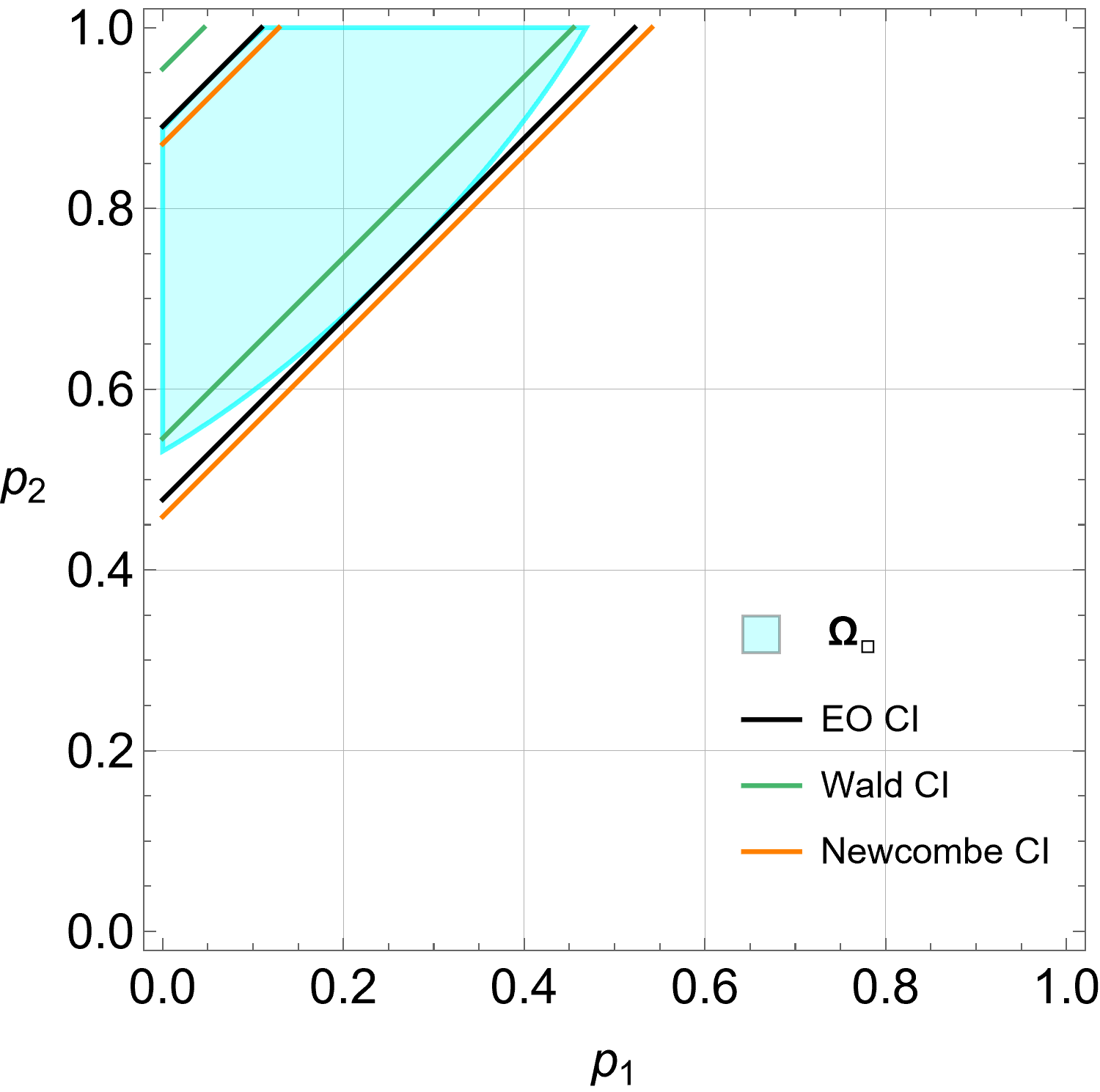}
        \caption{$\hp_1=0.15,\ \hp_2=0.90$ \ ($\hp=-0.75$)}
        \label{subfig:ECICwn075}
    \end{subfigure}
    \hfill
    \begin{subfigure}[t]{0.48\textwidth}
        \centering
        \includegraphics[width=\linewidth]{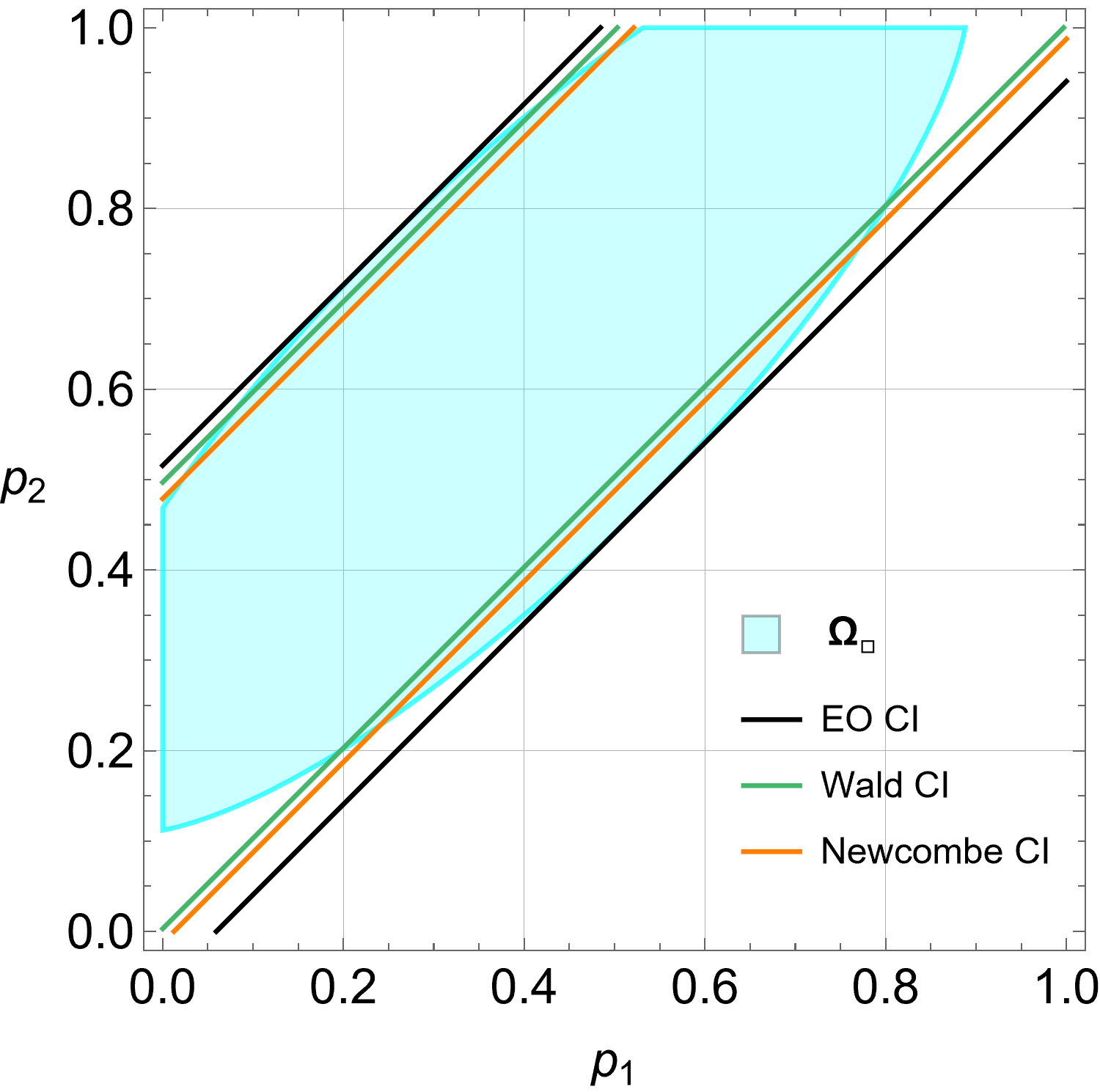}
        \caption{$\hp_1=0.65,\ \hp_2=0.90$ \ ($\hp=-0.25$)}
        \label{subfig:ECICwn025}
    \end{subfigure}

    \vspace{0.8\baselineskip}

    \begin{subfigure}[t]{0.48\textwidth}
        \centering
        \includegraphics[width=\linewidth]{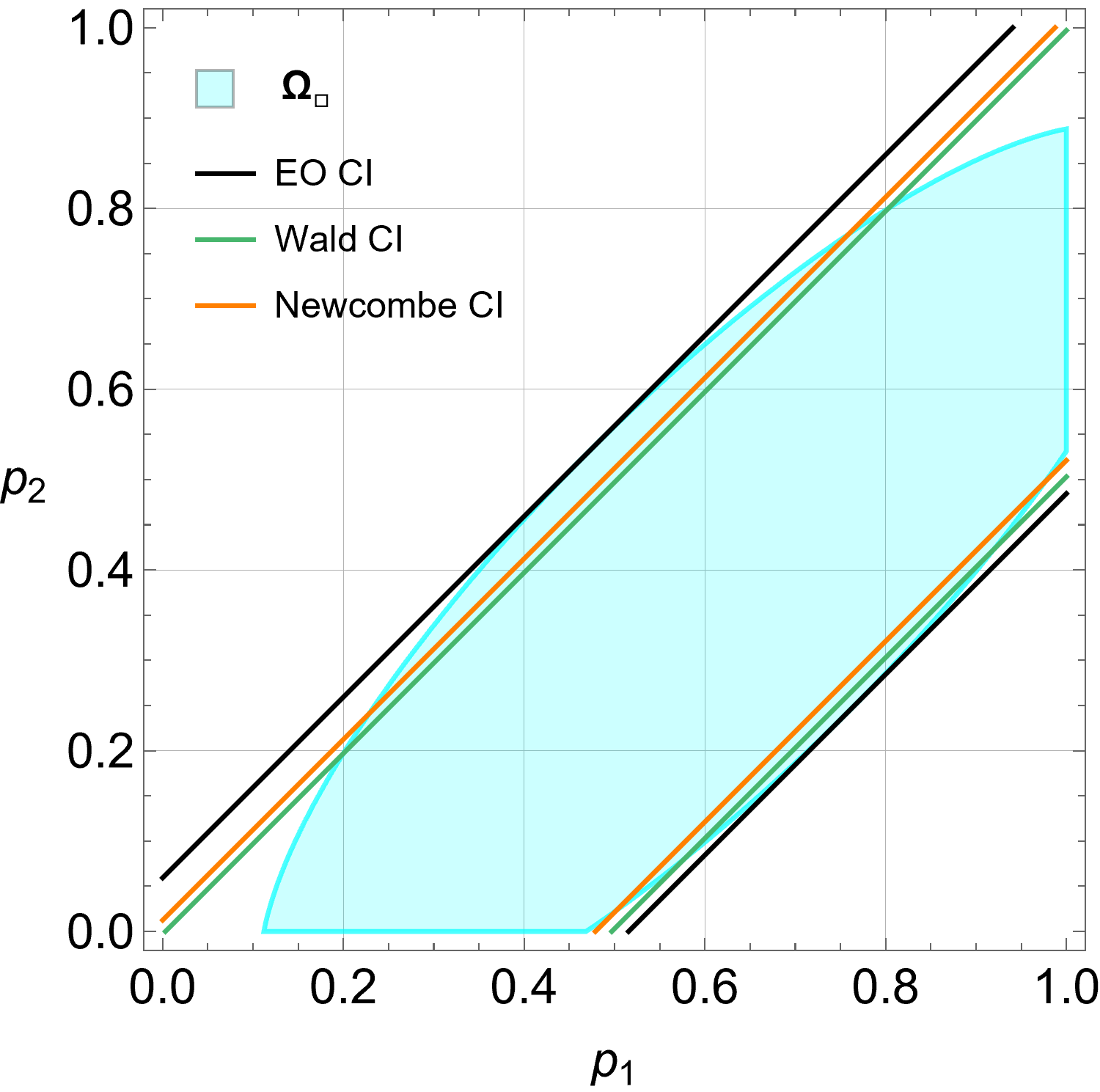}
        \caption{$\hp_1=0.90,\ \hp_2=0.65$ \ ($\hp=0.25$)}
        \label{subfig:ECICwp025}
    \end{subfigure}
    \hfill
    \begin{subfigure}[t]{0.48\textwidth}
        \centering
        \includegraphics[width=\linewidth]{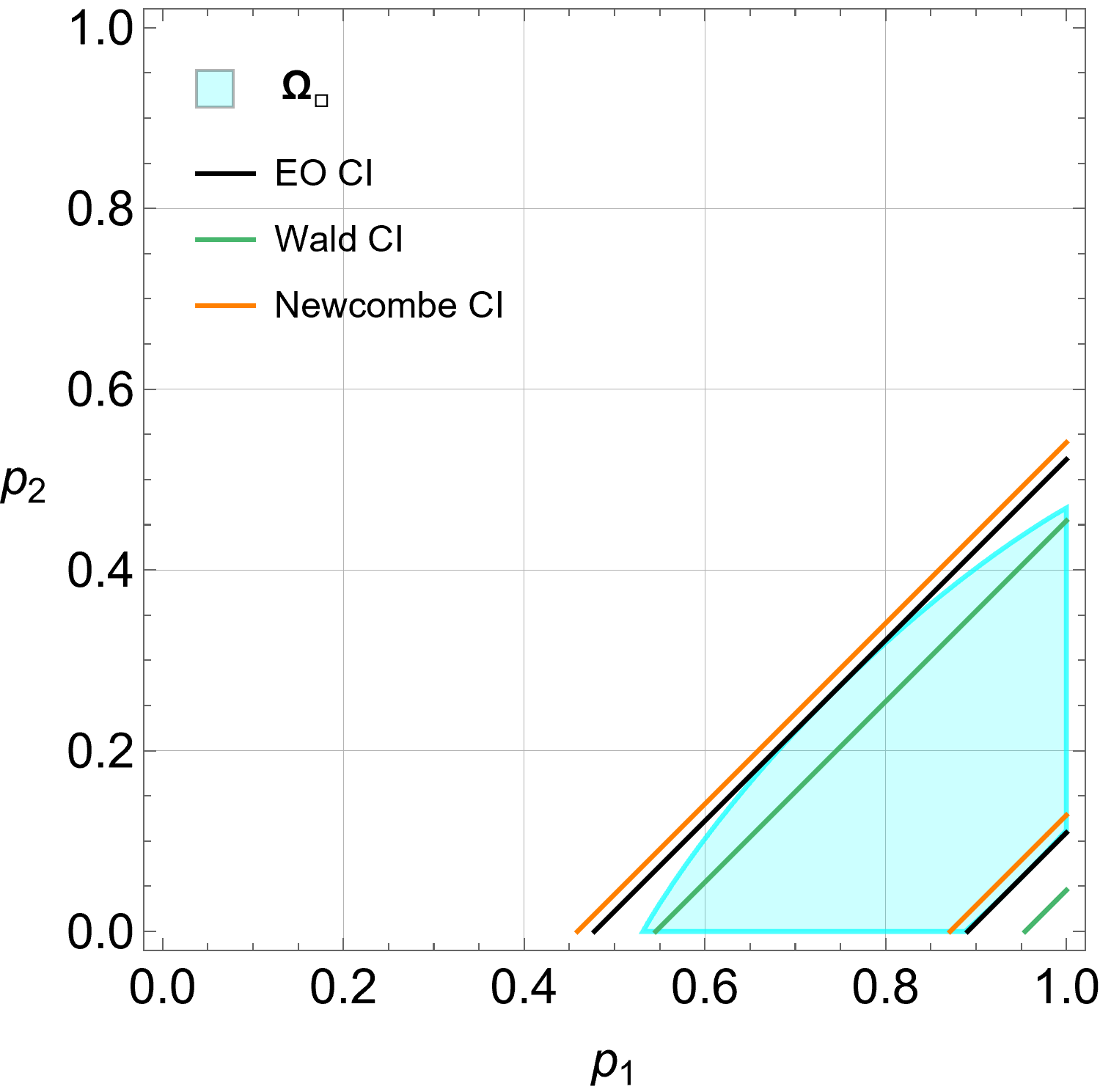}
        \caption{$\hp_1=0.90,\ \hp_2=0.15$ \ ($\hp=0.75$)}
        \label{subfig:ECICwp075}
    \end{subfigure}
\caption[The three intervals in relation to the set $\Sx$]{The \EO{}, Wald and
Newcombe bounds superimposed on the set $\Sx$, for four sample configurations with
$n_1=n_2=20$ and $\a=5\%$. The \EO{} bounds are the two lines that support
$\Sx$ exactly, by construction; the other two may cut across it. For
$\hp=0.25$ the \EO{} interval is $\left[-0.0594,\,0.5155\right]$ and for
$\hp=0.75$ it is $\left[0.4778,\,0.8908\right]$.}
\label{fig:ECICw}
\end{figure}

That the \EO{} bounds support $\Sx$ exactly, while the Wald and Newcombe bounds
may cut across it, is a restatement of the definition \eqref{eq:mainproblemCI} and
carries no evidential weight of its own: a line that cuts the ellipse excludes
some parameter pairs compatible with the data at the nominal level, but it may
nonetheless deliver coverage closer to the nominal level for the difference, which
is the quantity of interest. We now settle that question directly.

\subsection{Attained coverage}
\label{subsec:coverage}

We begin with the Wald interval, for which an exact analytic account is available.

By Proposition~\ref{prop:WaldBias} the plug-in variance underlying
\eqref{eq:WaldCI} is downward biased, exactly and for every parameter pair, with
$\mathbb{E}\bigl[\widehat{\Var}\left[\hp\right]\bigr]=\left(1-1/n\right)\Var\left[\hp\right]$
in the balanced design. Two consequences follow without approximation. First, by
Jensen's inequality applied to the concave square root,
\begin{equation}
\label{eq:Waldwidth}
\begin{gathered}
\mathbb{E}\left[z\sqrt{\widehat{\Var}\left[\hp\right]}\right]
\ \le\ z\sqrt{\mathbb{E}\bigl[\widehat{\Var}\left[\hp\right]\bigr]}
\ =\ z\sqrt{\Var\left[\hp\right]}\,\sqrt{1-b}
\ <\ z\sqrt{\Var\left[\hp\right]},
\\
b=\frac{\sum_{i}p_i\left(1-p_i\right)/n_i^{2}}{\Var\left[\hp\right]},
\end{gathered}
\end{equation}
so the Wald half-width is on average strictly smaller than the half-width
$z\sqrt{\Var\left[\hp\right]}$ that would attain the nominal level under the
normal approximation. Second, a first-order expansion of the coverage about that
ideal half-width gives the deficit $z\varphi(z)\,b$, where $\varphi$ is the
standard normal density; in the balanced case this is $z\varphi(z)/n$, equal to
$0.0057$ at $n=20$ and $0.0025$ at $n=45$ for $\a=5\%$. We record the expansion
only as an indication of the mechanism: it accounts for part but not all of the
deficit actually observed, the remainder arising from the variability of
$\widehat{\Var}$, from skewness, and from the lattice structure of the binomial.
The definitive statement is the enumerated coverage, to which we now turn.

Table~\ref{tab:coveragepoints} reports $C_M\left(p_1,p_2\right)$ from
\eqref{eq:exactcoverage} for the three procedures at $n_1=n_2=45$ and $\a=5\%$,
together with the inflation factor $\mathcal{R}$ of \eqref{eq:inflation} and the
normal-approximation prediction $2\Phi\left(z\mathcal{R}\right)-1$ for the \EO{}
interval.

\renewcommand{\arraystretch}{1.25}
\begin{table}[htbp]
\centering
\caption[Exact coverage of the three intervals]{Exact coverage
\eqref{eq:exactcoverage} of the \EO{}, Wald and Newcombe intervals at selected
parameter pairs, for $n_1=n_2=45$ and a nominal level of $95\%$. The column
$\mathcal{R}$ is the variance inflation factor \eqref{eq:inflation} and
``\EO{} (approx.)'' is the closed-form prediction \eqref{eq:coverageEO}.}
\label{tab:coveragepoints}
\begin{tabular}{|c|c|c|c|c|c|}
\hline
$\left(p_1,p_2\right)$ & $\mathcal{R}$ & \EO{} (approx.) & \EO{} (exact) & Wald & Newcombe\\
\hline
$\left(0.5,0.5\right)$ & $1.000$ & $0.9500$ & $0.9554$ & $0.9517$ & $0.9549$\\
$\left(0.7,0.3\right)$ & $1.000$ & $0.9500$ & $0.9503$ & $0.9433$ & $0.9498$\\
$\left(0.9,0.1\right)$ & $1.000$ & $0.9500$ & $0.9499$ & $0.9445$ & $0.9322$\\
\hline
$\left(0.4,0.4\right)$ & $1.021$ & $0.9545$ & $0.9596$ & $0.9424$ & $0.9514$\\
$\left(0.5,0.2\right)$ & $1.053$ & $0.9611$ & $0.9645$ & $0.9457$ & $0.9483$\\
$\left(0.3,0.3\right)$ & $1.091$ & $0.9675$ & $0.9716$ & $0.9452$ & $0.9495$\\
\hline
$\left(0.2,0.2\right)$ & $1.250$ & $0.9857$ & $0.9879$ & $0.9458$ & $0.9553$\\
$\left(0.8,0.8\right)$ & $1.250$ & $0.9857$ & $0.9879$ & $0.9458$ & $0.9553$\\
$\left(0.1,0.1\right)$ & $1.667$ & $0.9989$ & $0.9990$ & $0.9408$ & $0.9664$\\
\hline
\end{tabular}
\end{table}
\renewcommand{\arraystretch}{1}

The first block of Table~\ref{tab:coveragepoints} consists of pairs lying on the
zero-error line \eqref{eq:valleyline}, here $p_1+p_2=1$; the second of pairs
moderately off it; the third of pairs with both proportions extreme in the same
direction. The agreement between the third and fourth columns is close throughout,
confirming that \eqref{eq:coverageEO} captures the behaviour of the \EO{} interval
to within the lattice effects that the normal approximation necessarily omits.

Table~\ref{tab:coveragesummary} aggregates the same computation over a grid of
parameter pairs, which is the appropriate summary when no single configuration is
of special interest.

\renewcommand{\arraystretch}{1.25}
\begin{table}[htbp]
\centering
\caption[Summary of exact coverage over a parameter grid]{Summary of the exact
coverage \eqref{eq:exactcoverage} of the three procedures over a grid of parameter
pairs ---\ $p_1,p_2\in[0.25,0.75]$ in steps of $0.05$ for $n_1=n_2=20$, and $p_1,p_2\in[0.12,0.88]$ in steps of $0.04$ for $n_1=n_2=45$ --- at a nominal level of $95\%$. ``RMSE'' is the root mean squared deviation
of the attained coverage from the nominal level, and the last column gives the
proportion of grid points at which each procedure is the closest of the three to
nominal.}
\label{tab:coveragesummary}
\begin{tabular}{|l|l|c|c|c|c|c|}
\hline
Design & Method & Mean & Minimum & Maximum & RMSE & Closest\\
\hline
\multirow{3}{*}{$n_1=n_2=20$}
 & \EO{}      & $0.9627$ & $0.9335$ & $0.9832$ & $0.0181$ & $0.10$\\
 & Wald       & $0.9343$ & $0.9192$ & $0.9460$ & $0.0174$ & $0.00$\\
 & Newcombe   & $0.9464$ & $0.9295$ & $0.9575$ & $0.0055$ & $0.90$\\
\hline
\multirow{3}{*}{$n_1=n_2=45$}
 & \EO{}      & $0.9620$ & $0.9438$ & $0.9978$ & $0.0175$ & $0.20$\\
 & Wald       & $0.9431$ & $0.9161$ & $0.9512$ & $0.0080$ & $0.12$\\
 & Newcombe   & $0.9489$ & $0.9337$ & $0.9634$ & $0.0043$ & $0.68$\\
\hline
\end{tabular}
\end{table}
\renewcommand{\arraystretch}{1}

Three findings emerge, and we state them in the order of their reliability.

\emph{The Wald interval is too narrow, and this is now established rather than
asserted.} It falls below the nominal level at essentially every parameter pair
examined --- at $99\%$ of grid points for $n_1=n_2=45$ and at every grid point for
$n_1=n_2=20$ --- with a minimum attained coverage of $0.9161$ against a nominal
$0.95$. The finding is consistent with, and quantitatively anticipated by, the
exact variance identity \eqref{eq:Waldbias} and the width inequality
\eqref{eq:Waldwidth}. It should be added that the deficit is not uniform: at
$n_1=n_2=20$ and $p_1=p_2=0.05$ the Wald interval attains $0.9883$, over-covering
substantially, and this oscillation with the lattice is characteristic of the
method. The correct description is therefore that the Wald interval is
systematically but erratically too narrow, and that its erratic component is as
much an objection to it as its systematic one.

\emph{The \EO{} interval delivers its guarantee.} Its attained coverage is at or
above the nominal level at the great majority of parameter pairs, and the excess
behaves exactly as \eqref{eq:errorformula} predicts: negligible on the line
\eqref{eq:valleyline}, moderate nearby, and large when both proportions are
extreme in the same direction. It never produces an inadmissible or degenerate
interval. Where it does fall below nominal --- at $17\%$ of grid points for
$n_1=n_2=45$, the lowest value being $0.9438$ --- the shortfall is small and
attributable to the discreteness of the binomial rather than to the construction,
whose guarantee, by Proposition~\ref{prop:structural}$(e)$, is exact under the
normal approximation and not under the exact lattice distribution.

\emph{The Newcombe interval is the most accurate of the three in the interior.} It
has the smallest root mean squared deviation from the nominal level in every
design examined, and it is the closest of the three procedures to nominal at
between $68\%$ and $90\%$ of the grid points. This should be said plainly. The
\EO{} interval does not dominate it, and no claim of general superiority over
Newcombe's method is warranted by these computations.

\subsection{Where the \EO{} interval is inferior}
\label{subsec:limitations}

The candid corollary of Section~\ref{subsec:coverage} is that the conservatism
established in Section~\ref{sec:error_analysis} is a real cost, and that in a
substantial part of the parameter space it is the dominant one.

When both true proportions are small, or both large, the \EO{} interval is
markedly too wide. At $n_1=n_2=45$ and $\a=5\%$ its attained coverage is $0.988$
at $p_1=p_2=0.2$ and $0.999$ at $p_1=p_2=0.1$, against $0.946$ and $0.941$ for
Wald and $0.955$ and $0.966$ for Newcombe. In this region the \EO{} interval is
the least accurate of the three by the criterion of closeness to nominal, and it
is so by a wide margin. This is precisely the behaviour visible in the corners of
Figure~\ref{fig:Error-n1en2}, and it is not a numerical artefact: by
\eqref{eq:errorformula} it is the exact consequence of the inflation factor
$\mathcal{R}$ diverging as $V\left(p_1,p_2\right)\to0$ while $V^{\ast}$ does not.

Nor does the situation improve with more data. By
Corollary~\ref{cor:ErrorProperties}$(d)$ the factor $\mathcal{R}$ is invariant
under proportional scaling of the sample sizes, so the over-coverage persists in
the large-sample limit: for $p_1=p_2=q$ in a balanced design the \EO{} interval
converges to a procedure with coverage $2\Phi\bigl(z/\sqrt{4q(1-q)}\bigr)-1$,
which exceeds the nominal level for every $q\ne\tfrac12$ and does so by an amount
that no increase in $n$ will reduce. The \EO{} interval is a valid conservative
procedure; it is not an asymptotically efficient one. Any suggestion that it
dominates its competitors uniformly, or that its greater width relative to Wald or
Newcombe is by itself evidence that those intervals are too narrow, would be
inconsistent with the analysis of Section~\ref{sec:error_analysis} and is
disclaimed here.

\subsection{Practical guidance}
\label{subsec:guidance}

Taken together, the structural results of Section~\ref{subsec:structural} and the
coverage computations of Section~\ref{subsec:coverage} support a differentiated
recommendation rather than a ranking.

\begin{remark}
\label{rm:whentouse}
The \EO{} interval is the appropriate choice when a coverage guarantee is worth
more than length. This is the situation in confirmatory and regulatory settings,
in non-inferiority and equivalence testing where the consequence of an
under-covering interval is asymmetric, and wherever an interval must be reported
that cannot be permitted to exclude the truth more often than advertised. It is
also the natural choice when admissibility matters --- when bounds outside
$[-1,1]$ or a zero-width interval would be unacceptable --- and when the analyst
wishes to know in advance, from \eqref{eq:errorformula}, how much conservatism is
being purchased at the parameter values thought plausible.

The Newcombe interval remains preferable when the object is an interval whose
coverage tracks the nominal level as closely as possible and no guarantee is
required; the computations above show it to be the most accurate of the three in
the interior of the parameter space.

The Wald interval is not recommended for either purpose. Its variance estimator is
biased downward by the exact amount \eqref{eq:Waldbias}, its coverage falls below
the nominal level at essentially every parameter pair examined, and it can return
bounds outside the parameter space or of zero length.

Finally, the \EO{} interval should be used with caution when both proportions are
expected to be extreme in the same direction, since by
Section~\ref{subsec:limitations} its over-coverage is then severe and does not
diminish with sample size. In that regime the inflation factor $\mathcal{R}$ is
computable in advance from \eqref{eq:inflation} and provides a direct diagnostic:
values of $\mathcal{R}$ appreciably above unity signal that the interval will be
conservative by the amount \eqref{eq:errorformula}, and the analyst may decide
accordingly.
\end{remark}

The broader lesson of this comparison concerns the framework rather than the
verdict. Lemma~\ref{lem:profile} showed that the \EO{} interval is the score
interval with the nuisance direction eliminated by maximisation, whereas the
Newcombe and Miettinen--Nurminen constructions eliminate it by combination and by
constrained estimation respectively \citep{Newcombe1998,Miettinen1985}, and the
Wald interval by plug-in substitution. The three methods therefore differ in
exactly one design decision, and their comparative behaviour --- guaranteed
conservatism, close tracking of the nominal level, and systematic under-coverage
respectively --- is traceable to that decision alone. Seen in this light the
\EO{} interval is not a competitor to be ranked against the others but the extreme
point of a family, the member that trades length for an unconditional guarantee.
That framing, and the exact accounting of the trade that
Section~\ref{sec:error_analysis} makes possible, is what the present construction
contributes; we turn in Section~\ref{sec:conclusions} to what it suggests for
further work.

\section{Conclusions}
\label{sec:conclusions}

This paper extends Wilson's score interval to two independent populations by
retaining his variational characterisation and enlarging its feasible set. The normal approximation for the difference of two proportions defines an ellipse
in the plane, and the \EOlong{} (\EO) interval is the range of $p_1-p_2$ over the
portion of that ellipse lying in the unit square. Solving the programme in closed form gives the six-case description
of Theorem~\ref{thm:mainCI}, and the geometry proves unexpectedly economical: the locus traced by the centre of the ellipse, the line containing its two
extreme points in the direction $(1,-1)$, the set of least-favourable parameter
pairs, and the locus of exact coverage all lie on one and the same line
\eqref{eq:centreline}, and the threshold separating the six cases is the range of
the difference along the portion of that line inside the parameter space.

The conceptual centre of the construction is Lemma~\ref{lem:profile}: the \EO{}
interval is the set of hypothesised differences satisfying Wilson's inequality
with the variance replaced by its largest value consistent with the hypothesis.
The method therefore differs from the Wald and Newcombe intervals in exactly one
design decision --- how the nuisance direction is removed --- and its behaviour
follows from that decision alone. Two consequences are immediate. Under the
normal approximation its conservatism is guaranteed, since the least-favourable
variance dominates the true one; and it is measurable, the coverage excess being
given exactly by \eqref{eq:errorformula}. That excess vanishes on the line \eqref{eq:valleyline},
is bounded above by $\a$, and, by Corollary~\ref{cor:ErrorProperties}$(d)$, is
invariant under proportional scaling of the sample sizes.

The comparison of Section~\ref{sec:comparison} rests on attained coverage rather
than length, computed exactly from \eqref{eq:exactcoverage}. It establishes three
things. The Wald interval is too narrow: its variance estimator is biased downward
by the exact amount \eqref{eq:Waldbias}, and it falls below the nominal level at
essentially every parameter pair examined, with a minimum of $0.9161$ against a
nominal $0.95$. The \EO{} interval delivers its guarantee, never producing
inadmissible or degenerate bounds, and its excess coverage behaves precisely as
\eqref{eq:errorformula} predicts. And the Newcombe interval tracks the nominal
level most closely in the interior of the parameter space, so the \EO{} interval
does not dominate it. The \EO{} interval is best understood not as a competitor to
be ranked but as the guarantee-first member of a family: it purchases assurance
with length, and the price is known in advance. It is correspondingly suited to
confirmatory and regulatory settings, and correspondingly ill-suited to parameter
regions in which both proportions are extreme, where its over-coverage is severe
and does not diminish with sample size.

Three extensions suggest themselves. The profiling device is not specific to the
difference: any linear functional of $(p_1,p_2)$ may be handled by the same
support-function argument, and non-linear functionals such as the relative risk or
the odds ratio require only that the corresponding profiled variance be computed.
The construction generalises to $k>2$ populations, where the feasible set becomes
an ellipsoid and the profiled variance a maximisation over an affine slice of the
unit cube. Finally, the conservatism identified here is a consequence of
maximising over the nuisance direction; intermediate procedures, in which the
maximisation is restricted to a data-dependent subset of that direction, would
trade part of the guarantee for a shorter interval and merit study.

\backmatter

\bmhead{Acknowledgements}
The author thanks the referee for a careful and constructive report.

\section*{Statements and Declarations}
\textbf{Competing interests.} The author has no competing interests to declare
that are relevant to the content of this article.

\textbf{Funding.} No funding was received for conducting this study.

\textbf{Data availability.} No datasets were generated or analysed during the
current study. All numerical results reported in the paper are obtained by
deterministic evaluation of the closed-form expressions and by exact
enumeration of the binomial support; the code used is available from the author
on request.

\begin{appendices}
\section{Proofs}
\renewcommand{\thesubsection}{\thesection.\roman{subsection}}

Throughout this appendix we use the equivalent representation
\eqref{eq:Omega_variance_form} of the set $\Sz$, namely
\begin{equation*}
(p_1,p_2)\in\Sz
\iff
\bigl(\hp-p_1+p_2\bigr)^2\le z^2V(p_1,p_2),
\qquad
V(p_1,p_2)=\ddfrac{p_1(1-p_1)}{n_1}+\ddfrac{p_2(1-p_2)}{n_2},
\end{equation*}
together with the symmetries \eqref{eq:mirror} and \eqref{eq:pointreflection}, the
first of which permits us to assume $\hp\ge0$ whenever convenient. We write
$S=n_1+n_2$, $P=n_1n_2$ and $N=S+z^2$, and we abbreviate
$\nmin=\min(n_1,n_2)$ and $\nmax=\max(n_1,n_2)$.

\subsection{A univariate root-location lemma}
\label{app:rootlocation}

The following elementary fact is used repeatedly, here and in the appendices to
Sections~\ref{sec:the_EO_CI} and \ref{sec:error_analysis}. For $\hpi\in[0,1]$
it is due to \citet{ONeill2021}; the extension to negative $\hpi$ is
required because the bounds of Definition~\ref{def:CIboundsLU} are evaluated at
the observed difference $\hp$, which ranges over $[-1,1]$.

\begin{lemma}
\label{lem:rootlocation}
Let $\nu>0$, $z>0$ and let $\hpi\le1$ be real. Put
\begin{equation*}
q(t)=\left(1+\frac{z^{2}}{\nu}\right)t^{2}-\left(2\hpi+\frac{z^{2}}{\nu}\right)t+\hpi^{2},
\qquad
W(\hpi,\nu)=\left\{t\in\mathbb{R}:\ \left(\hpi-t\right)^{2}\le\ddfrac{z^{2}t(1-t)}{\nu}\right\}
=\left\{t:q(t)\le0\right\}.
\end{equation*}
Then:
\begin{itemize}
\item[$(a)$] $q$ has real roots if and only if
$z^{2}+4\nu\hpi\left(1-\hpi\right)\ge0$, and in that case both roots lie in
$[0,1]$, so that $W(\hpi,\nu)$ is a closed subinterval of $[0,1]$ whose
endpoints are obtained from \eqref{eq:WilsonCIOnePop} on replacing $n$ by $\nu$
and $\hp$ by $\hpi$.
\item[$(b)$] If moreover $\hpi\in[0,1]$, the discriminant is at least
$z^{4}/\nu^{2}>0$, so the roots are distinct and the relative interior of
$W(\hpi,\nu)$ is a non-empty open subinterval of $(0,1)$.
\item[$(c)$] If $0<\nu'\le\nu$ then $W(\hpi,\nu)\subseteq W(\hpi,\nu')$.
\end{itemize}
\end{lemma}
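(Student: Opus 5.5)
The proof of Lemma~\ref{lem:rootlocation} follows the footnote argument to \eqref{eq:WilsonCIOnePop}, checked so that it does not use $\hpi\ge0$ except in part $(b)$. Throughout, write $a=1+z^{2}/\nu>0$ and $b=-(2\hpi+z^{2}/\nu)$.

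For part $(a)$, expand $b^{2}-4a\hpi^{2}$ directly. This gives the discriminant $z^{2}\bigl(z^{2}+4\nu\hpi(1-\hpi)\bigr)/\nu^{2}$, which is non-negative exactly under the stated condition; this settles the ``if and only if''. Next record the endpoint values $q(0)=\hpi^{2}\ge0$ and $q(1)=a+b+\hpi^{2}=(1-\hpi)^{2}\ge0$, both valid for every real $\hpi$. A convex parabola that is non-negative at $0$ and $1$ has both real roots in $[0,1]$ provided one root lies in $[0,1]$. So it remains to rule out both roots being below $0$ or both above $1$.

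The main obstacle is this last point, since the vertex $(2\nu\hpi+z^{2})/(2(\nu+z^{2}))$ may lie outside $[0,1]$ when $\hpi<0$. I would avoid the vertex and argue from the product of the roots and from the original form of the inequality. The product of the roots is $\hpi^{2}/a\ge0$. More directly, any root $t$ satisfies $(\hpi-t)^{2}=z^{2}t(1-t)/\nu$. The left side is non-negative, so $t(1-t)\ge0$, that is $t\in[0,1]$. This handles every root at once and shows $W(\hpi,\nu)=\{t:q(t)\le0\}$ is contained in $[0,1]$. Since $a>0$, $W$ is the closed interval between the roots, and solving the quadratic gives \eqref{eq:WilsonCIOnePop} with $n\to\nu$ and $\hp\to\hpi$. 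The hypothesis $\hpi\le1$ is not needed for this step.

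For part $(b)$, if $\hpi\in[0,1]$ then $4\nu\hpi(1-\hpi)\ge0$, so the discriminant is at least $z^{4}/\nu^{2}>0$ and the two roots are distinct. By $(a)$ the roots lie in $[0,1]$, so the open interval between them is a non-empty subinterval of $(0,1)$.

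For part $(c)$, take $t\in W(\hpi,\nu)$. The argument of $(a)$ gives $t(1-t)\ge0$, so $z^{2}t(1-t)/\nu\le z^{2}t(1-t)/\nu'$ whenever $\nu'\le\nu$. Hence $(\hpi-t)^{2}\le z^{2}t(1-t)/\nu'$, that is $t\in W(\hpi,\nu')$. The only point needing care is the sign of $t(1-t)$, which membership itself supplies. No root formulae are required.
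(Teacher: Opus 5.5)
Your proof is correct. For part $(a)$ it takes a genuinely different and shorter route than the paper.

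The paper uses the classical footnote argument. It records $q(0)=\hpi^{2}\ge0$ and $q(1)=(1-\hpi)^{2}\ge0$, and then shows that the vertex $v=(2\nu\hpi+z^{2})/(2(\nu+z^{2}))$ lies in $[0,1]$. That step needs a case split on the sign of $\hpi$: for $\hpi<0$ it invokes reality of the roots, $z^{2}\ge4\nu|\hpi|(1+|\hpi|)>2\nu|\hpi|$, to get $v>0$. It also uses the hypothesis $\hpi\le1$ to get $v\le1$.

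You instead read the location off the un-expanded form $q(t)=(\hpi-t)^{2}-z^{2}t(1-t)/\nu$. Then $q(t)\le0$ forces $t(1-t)\ge0$. This buys several things:
\begin{itemize}
\item[$\bullet$] It gives $W(\hpi,\nu)\subseteq[0,1]$ in one line, for every real $\hpi$, even when $W$ is empty.
\item[$\bullet$] It shows that the hypothesis $\hpi\le1$ is superfluous.
\item[$\bullet$] It makes $(c)$ independent of $(a)$.
\end{itemize}

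Your $(b)$ is also leaner than the paper's. The paper checks separately when a root can equal $0$ or $1$. As you observe, the open interval between two distinct points of $[0,1]$ lies in $(0,1)$ automatically. Strict negativity of $q$ there, which is used later in Appendix~\ref{app:Sx_non-empty_propersubset}, is likewise immediate from convexity.

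The ``obstacle'' you name does not actually arise. The vertex is the midpoint of the roots, so it can leave $[0,1]$ only when the roots are non-real. Your bypass makes this moot in any case. The paper's route buys only continuity with the classical Wilson argument; yours is strictly more economical.

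One sentence in your scaffolding is false as stated and should be dropped. A convex parabola that is non-negative at $0$ and $1$ need not have both roots in $[0,1]$ merely because one root is there: $(t-1)(t-2)$ is a counterexample. So the reduction to ``both roots below $0$ or both above $1$'' is incomplete. Nothing depends on it, since your direct argument places every root in $[0,1]$ at once.
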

\begin{proof}
$(a)$ The discriminant of $q$ is
\begin{equation*}
\left(2\hpi+\frac{z^{2}}{\nu}\right)^{2}-4\left(1+\frac{z^{2}}{\nu}\right)\hpi^{2}
=\frac{4z^{2}\hpi\left(1-\hpi\right)}{\nu}+\frac{z^{4}}{\nu^{2}}
=\frac{z^{2}}{\nu^{2}}\Bigl(z^{2}+4\nu\hpi\left(1-\hpi\right)\Bigr),
\end{equation*}
which gives the stated criterion. Suppose the roots are real. The leading
coefficient is positive, and for every real $\hpi$ one has the two identities
\begin{equation*}
q(0)=\hpi^{2}\ge0,
\qquad
q(1)=1+\frac{z^{2}}{\nu}-2\hpi-\frac{z^{2}}{\nu}+\hpi^{2}=\left(1-\hpi\right)^{2}\ge0 .
\end{equation*}
The vertex is $v=\bigl(2\nu\hpi+z^{2}\bigr)\big/\bigl(2(\nu+z^{2})\bigr)$. If
$\hpi\ge0$ then $v\ge0$ trivially; if $\hpi<0$ then reality of the roots
forces $z^{2}\ge4\nu\left|\hpi\right|\left(1+\left|\hpi\right|\right)>2\nu\left|\hpi\right|$,
whence $v>0$. Also $v\le1$, since this is equivalent to
$2\nu\left(\hpi-1\right)\le z^{2}$, which holds because $\hpi\le1$. A convex
parabola that is non-negative at both endpoints of $[0,1]$ and attains its minimum
in $[0,1]$ has both of its real roots in $[0,1]$.

$(b)$ For $\hpi\in[0,1]$ we have $\hpi\left(1-\hpi\right)\ge0$, so the
discriminant is at least $z^{4}/\nu^{2}>0$ and the roots are distinct. The open
interval between them is contained in $(0,1)$: a root can equal $0$ only if
$q(0)=0$, that is $\hpi=0$, in which case the roots are $0$ and
$z^{2}/(\nu+z^{2})<1$; and a root can equal $1$ only if $q(1)=0$, that is
$\hpi=1$, in which case the roots are $\nu/(\nu+z^{2})>0$ and $1$.

$(c)$ Any $t\in W(\hpi,\nu)$ lies in $[0,1]$ by $(a)$, so $t(1-t)\ge0$ and
$z^{2}t(1-t)/\nu'\ge z^{2}t(1-t)/\nu\ge\left(\hpi-t\right)^{2}$.
\end{proof}

\subsection{Proof of Lemma \ref{lem:diagonal_sections}}
\label{app:diagonal_sections}
\begin{proof}
Both parts are exact algebraic reductions rather than approximations.

\emph{Part $(a)$.} Substituting $p_2=1-p_1$ into \eqref{eq:Omega_variance_form},
the left-hand side becomes
\begin{equation*}
\bigl(\hp-p_1+(1-p_1)\bigr)^{2}=\bigl(1+\hp-2p_1\bigr)^{2}
=4\left(\frac{1+\hp}{2}-p_1\right)^{2},
\end{equation*}
while the variance term becomes
\begin{equation*}
V\bigl(p_1,1-p_1\bigr)
=\frac{p_1(1-p_1)}{n_1}+\frac{(1-p_1)\bigl(1-(1-p_1)\bigr)}{n_2}
=p_1(1-p_1)\,\frac{n_1+n_2}{n_1n_2}.
\end{equation*}
Hence membership of $\Sz$ along this line is equivalent to
\begin{equation*}
4\left(\hpi-p_1\right)^{2}\le z^2p_1(1-p_1)\frac{n_1+n_2}{n_1n_2}
\iff
\left(\hpi-p_1\right)^{2}\le \frac{z^2p_1(1-p_1)}{n_{\ast}},
\end{equation*}
with $\hpi=(1+\hp)/2$ and $n_{\ast}=4n_1n_2/(n_1+n_2)$ as in \eqref{eq:nstar}.
Since $\hp\in[-1,1]$ we have $\hpi\in[0,1]$, so the section is exactly
$W(\hpi,n_{\ast})$ and Lemma~\ref{lem:rootlocation}$(a)$--$(b)$ applies: the
section is a closed subinterval of $[0,1]$ of strictly positive length, with
endpoints obtained from \eqref{eq:WilsonCIOnePop} on replacing $(n,\hp)$ by
$(n_{\ast},\hpi)$, and with relative interior a non-empty open subinterval of
$(0,1)$. Expanding those endpoints and clearing denominators returns, after
elementary simplification, the pair
\begin{gather*}
r_1=\frac{(n_1+n_2)z^2+4n_1n_2(1+\hp)-z\sqrt{(n_1+n_2)\bigl((n_1+n_2)z^2+4n_1n_2(1-\hp^2)\bigr)}}{2\bigl((n_1+n_2)z^2+4n_1n_2\bigr)},
\\
r_2=\frac{(n_1+n_2)z^2+4n_1n_2(1+\hp)+z\sqrt{(n_1+n_2)\bigl((n_1+n_2)z^2+4n_1n_2(1-\hp^2)\bigr)}}{2\bigl((n_1+n_2)z^2+4n_1n_2\bigr)},
\end{gather*}
so that $0\le r_1<r_2\le1$.

The same conclusion may be read directly from the coefficient form. Substituting
$p_2=1-p_1$ into \eqref{eq:ellipseprob} gives $q(p_1)\le0$ with
\begin{equation*}
q(p_1)=\left(A-B+C\right)p_1^{2}+\left(B-2C+D-E\right)p_1+\left(C+E+F\right),
\end{equation*}
and, writing $K=(n_1+n_2)z^{2}/(n_1n_2)$, one computes
$A-B+C=4+K$, $B-2C+D-E=-\bigl(4(1+\hp)+K\bigr)$ and $C+E+F=(1+\hp)^{2}$, so that
\begin{equation*}
q(p_1)=\left(4+K\right)p_1^2-\left(4(1+\hp)+K\right)p_1+(1+\hp)^2 .
\end{equation*}
Its leading coefficient is positive, its discriminant equals
\begin{equation*}
\left(4(1+\hp)+K\right)^{2}-4\left(4+K\right)(1+\hp)^{2}
=K\Bigl(K+4\left(1-\hp^{2}\right)\Bigr)>0,
\end{equation*}
its vertex $\bigl(4(1+\hp)+K\bigr)\big/\bigl(2(4+K)\bigr)$ lies in $[0,1]$, and it
satisfies the pleasing pair of identities
\begin{equation*}
q(0)=(1+\hp)^2\ge0,
\qquad
q(1)=(4+K)-\bigl(4(1+\hp)+K\bigr)+(1+\hp)^{2}=(1-\hp)^2\ge0 .
\end{equation*}
A convex parabola that is non-negative at both endpoints of $[0,1]$ and attains
its minimum inside $[0,1]$ has both of its real roots in $[0,1]$.

\emph{Part $(b)$.} Substituting $p_1=p_2=s$ into the coefficient form
\eqref{eq:ellipseprob} gives
\begin{equation*}
(A+B+C)s^2+(D+E)s+F
=\frac{(n_1+n_2)z^2}{n_1n_2}\,s^2-\frac{(n_1+n_2)z^2}{n_1n_2}\,s+\hp^2
=\frac{(n_1+n_2)z^2}{n_1n_2}\,s(s-1)+\hp^2,
\end{equation*}
where we used $A+B+C=z^2\left(\frac{1}{n_1}+\frac{1}{n_2}\right)$ and
$D+E=-z^2\left(\frac{1}{n_1}+\frac{1}{n_2}\right)$. Membership of $\Sz$ is
therefore equivalent to
\begin{equation*}
s(1-s)\ \ge\ \frac{\hp^2n_1n_2}{z^2(n_1+n_2)}=\frac{\hp^2n_{\ast}}{4z^2}.
\end{equation*}
The left-hand side is a concave parabola with maximum $1/4$ attained at $s=1/2$,
so the section is non-empty if and only if $\hp^2n_{\ast}/(4z^2)\le1/4$, that is,
if and only if $\left|\hp\right|\le z/\sqrt{n_{\ast}}$; and in that case it is the
closed interval centred at $1/2$ with half-length
$\sqrt{1/4-\hp^{2}n_{\ast}/(4z^{2})}\le1/2$, which is contained in $[0,1]$.
Finally $n_{\ast}\le n_1+n_2$, with equality if and only if $n_1=n_2$, since
$4n_1n_2\le(n_1+n_2)^2$ by the arithmetic--geometric mean inequality.
\end{proof}

\subsection{Proof of Property \ref{prty:convex_compact}}
\label{app:convex_compact}
\begin{proof}
Write the quadratic form of \eqref{eq:ellipseprob} as
$Q(x)=x^{\top}A_2x+b^{\top}x+F$ with
\begin{equation*}
A_2=\begin{pmatrix} A & B/2\\ B/2 & C\end{pmatrix}
=\begin{pmatrix} 1+z^2/n_1 & -1\\ -1 & 1+z^2/n_2\end{pmatrix},
\qquad
b=(D,E)^{\top},
\end{equation*}
so that $\Sz=\{x:Q(x)\le0\}$. The leading entry satisfies $A=1+z^{2}/n_1>0$, and
\begin{equation*}
\det A_2=AC-\frac{B^{2}}{4}
=\left(1+\frac{z^{2}}{n_1}\right)\left(1+\frac{z^{2}}{n_2}\right)-1
=\frac{z^{2}\left(n_1+n_2+z^{2}\right)}{n_1n_2}>0,
\end{equation*}
so $A_2$ is positive definite by Sylvester's criterion. Consequently $Q$ is a
strictly convex function, and $\Sz$, being a sublevel set of a convex function, is
convex; moreover $Q(x)\to+\infty$ as $\left\|x\right\|\to\infty$, so $\Sz$ is
bounded, and it is closed because $Q$ is continuous. Hence $\Sz$ is compact and
convex. Completing the square gives
$Q(x)=\left(x-x_c\right)^{\top}A_2\left(x-x_c\right)-k$ with
$x_c=-\tfrac12A_2^{-1}b$ the centre \eqref{eq:centre} and
$k=x_c^{\top}A_2x_c-F$. By Property~\ref{prty:nonempty_proper}$(b)$
the set $\Sx\subseteq\Sz$ has non-empty interior, so $k>0$; hence $\Sz$ has
non-empty interior and is symmetric about $x_c$, the defining inequality
depending on $x-x_c$ only through a quadratic form. (Equivalently
$k=-\Delta/\det A_2>0$ by Property~\ref{prty:determinant}, whose proof uses only
the positive-definiteness established in the preceding paragraph.)

Finally, $[0,1]^{2}$ is compact and convex, so $\Sx=\Sz\cap[0,1]^{2}$ is an
intersection of compact convex sets and is therefore compact and convex; being
convex, it is connected.
\end{proof}

\subsection{Proof of Property \ref{prty:nonempty_proper}}
\label{app:Sx_non-empty_propersubset}
\begin{proof}
\emph{Part $(a)$: non-emptiness.} The observed pair $(\hp_1,\hp_2)$ lies in
$[0,1]^2$ and satisfies
\begin{equation*}
\bigl(\hp-\hp_1+\hp_2\bigr)^2-z^2V(\hp_1,\hp_2)=-z^2V(\hp_1,\hp_2)\le0,
\end{equation*}
since $V\ge0$ on $[0,1]^2$; hence $(\hp_1,\hp_2)\in\Sx$.

\emph{Part $(b)$: non-empty interior.} The witness just exhibited is insufficient
for this purpose, because $V(\hp_1,\hp_2)=0$ whenever $\hp_1,\hp_2\in\{0,1\}$, in
which case the defining inequality holds with equality and $(\hp_1,\hp_2)$ is a
boundary point of $\Sz$. We argue instead along the opposite diagonal. By
Lemma~\ref{lem:diagonal_sections}$(a)$ the section of $\Sz$ by the line
$p_1+p_2=1$ is $W\bigl((1+\hp)/2,n_{\ast}\bigr)$, whose relative interior is a
non-empty open subinterval $\mathcal{J}\subset(0,1)$ by
Lemma~\ref{lem:rootlocation}$(b)$. For $p_1\in\mathcal{J}$ the point
$(p_1,1-p_1)$ satisfies the defining inequality of $\Sz$ \emph{strictly} and lies
in the open square $(0,1)^{2}$; since the defining function is continuous, a whole
neighbourhood of that point lies in $\Sz$, and shrinking the neighbourhood so that
it remains inside $(0,1)^{2}$ places it inside $\Sx$. Hence $\Sx$ has non-empty
interior, and it contains an open segment of the opposite diagonal.

\emph{Part $(c)$: proper inclusion.} By the symmetry \eqref{eq:mirror} we may
assume $\hp\ge0$. Observe first that any point at which the defining inequality of
$\Sz$ holds strictly is an interior point of $\Sz$; consequently, if such a point
lies on the boundary of the unit square, then $\Sz$ contains points outside
$[0,1]^2$ and the inclusion $\Sx\subseteq\Sz$ is strict.

Since $\hp\in[0,1]$, Lemma~\ref{lem:rootlocation}$(b)$ applied with
$\hpi=\hp$ and $\nu=\nmax$ furnishes a non-empty open interval
$\mathcal{I}\subset(0,1)$ with
\begin{equation}
\label{eq:strict_wilson}
(\hp-s)^2<\frac{z^2s(1-s)}{\nmax}
\qquad\text{for every }s\in\mathcal{I}.
\end{equation}
Fix any $s\in\mathcal{I}$ and consider the two points
\begin{equation*}
P_{\mathrm{b}}=(s,0)\in(0,1)\times\{0\},
\qquad
P_{\mathrm{r}}=(1,1-s)\in\{1\}\times(0,1),
\end{equation*}
lying on the bottom and right edges of the unit square respectively. Both have
coordinate difference equal to $s$, so the left-hand side of
\eqref{eq:Omega_variance_form} equals $(\hp-s)^2$ at each of them, while
\begin{equation*}
V(P_{\mathrm{b}})=\frac{s(1-s)}{n_1}\ge\frac{s(1-s)}{\nmax},
\qquad
V(P_{\mathrm{r}})=\frac{(1-s)s}{n_2}\ge\frac{s(1-s)}{\nmax},
\end{equation*}
both inequalities using $s(1-s)>0$ and $n_i\le\nmax$. Combining with
\eqref{eq:strict_wilson} gives $(\hp-s)^2<z^2V(P_{\mathrm{b}})$ and
$(\hp-s)^2<z^2V(P_{\mathrm{r}})$, so both points belong to the interior of $\Sz$.
Since they lie on the boundary of $[0,1]^2$, every neighbourhood of them meets the
complement of the unit square; hence $\Sz$ contains points with $p_2<0$ and points
with $p_1>1$, none of which belongs to $\Sx$. Therefore $\Sx\neq\Sz$, and since
$\Sx=\Sz\cap[0,1]^2\subseteq\Sz$ by construction, $\Sx\subset\Sz$.
\end{proof}

\subsection{Proof of Corollary \ref{cor:Sz_geometry}}
\label{app:Sz_geometry}
\begin{proof}
The first bullet is Lemma~\ref{lem:diagonal_sections}$(a)$, whose section lies
entirely in $[0,1]^2$ and hence in $\Sx$.

For the second and third bullets, retain the notation of the proof of
Property~\ref{prty:nonempty_proper} and assume first $\hp\ge0$.
Interiority of $P_{\mathrm{b}}=(s,0)$ supplies an $\varepsilon>0$ with
$(s,-\varepsilon')\in\Sz$ for all $0<\varepsilon'<\varepsilon$; since
$s\in(0,1)$, such points lie in $(0,1)\times(-\infty,0)$, which establishes the
second bullet in its first alternative. Interiority of $P_{\mathrm{r}}=(1,1-s)$
supplies analogously points $(1+\varepsilon',1-s)\in\Sz$ with $1-s\in(0,1)$, lying
in $(1,\infty)\times(0,1)$, which establishes the third bullet in its second
alternative. For $\hp\le0$ the symmetry \eqref{eq:mirror} maps these two
protrusions across the main diagonal, sending them into $(-\infty,0)\times(0,1)$
and $(0,1)\times(1,\infty)$ respectively, which are the remaining alternatives. In
either case the disjunctions asserted in the two bullets hold, with the
attributions stated there.

The fourth bullet requires only the observation that membership of $\Sz$ forces
$V(p_1,p_2)\ge0$, because the left-hand side of \eqref{eq:Omega_variance_form} is
a square. Now $t\mapsto t(1-t)$ is strictly negative for $t\notin[0,1]$ and zero
for $t\in\{0,1\}$. Hence if both $p_1\notin[0,1]$ and $p_2\notin[0,1]$ then
$V(p_1,p_2)<0$; and if exactly one coordinate lies outside $[0,1]$ while the other
lies in $\{0,1\}$, then again $V(p_1,p_2)<0$. Therefore every point of $\Sz$
either lies in $[0,1]^2$, or has exactly one coordinate outside $[0,1]$ and the
other strictly inside $(0,1)$, which is precisely the asserted containment
\begin{equation*}
\Sz \subseteq [0,1]^2 \cup
\big( (0,1)\times(-\infty,0) \big) \cup
\big( (-\infty,0)\times(0,1) \big) \cup
\big( (0,1)\times(1,\infty) \big) \cup
\big( (1,\infty)\times(0,1) \big).
\end{equation*}
The inclusion is automatically proper, since $\Sz$ is bounded by
Property~\ref{prty:convex_compact} while the right-hand side is not.
\end{proof}

\subsection{Proof of Property \ref{prty:determinant}}
\label{app:determinant}
\begin{proof}
Write the conic \eqref{eq:ellipseprob} in matrix form, with
\begin{equation*}
M=\begin{pmatrix}
A & B/2 & D/2\\
B/2 & C & E/2\\
D/2 & E/2 & F
\end{pmatrix},
\qquad
\Delta=\det M .
\end{equation*}
It is convenient to set $u=\dfrac{z^2}{2n_1}$ and $v=\dfrac{z^2}{2n_2}$, so that
\begin{equation*}
A=1+2u,\quad C=1+2v,\quad \frac{B}{2}=-1,\quad
\frac{D}{2}=-(\hp+u),\quad \frac{E}{2}=\hp-v,\quad F=\hp^2 .
\end{equation*}
Expanding along the first row,
\begin{align*}
\Delta&=A\left(CF-\tfrac{E^2}{4}\right)-\tfrac{B}{2}\left(\tfrac{B}{2}F-\tfrac{E}{2}\tfrac{D}{2}\right)+\tfrac{D}{2}\left(\tfrac{B}{2}\tfrac{E}{2}-C\tfrac{D}{2}\right)
\\
&=(1+2u)\,v\bigl(2\hp^2+2\hp-v\bigr)+\bigl(\hp(u-v)-uv\bigr)-(\hp+u)\bigl(u+v+2v\hp+2uv\bigr),
\end{align*}
where we used $CF-E^{2}/4=(1+2v)\hp^{2}-(\hp-v)^{2}=v\left(2\hp^{2}+2\hp-v\right)$,
$\tfrac{B}{2}F-\tfrac{E}{2}\tfrac{D}{2}=-\hp^{2}+(\hp-v)(\hp+u)=\hp(u-v)-uv$ and
$\tfrac{B}{2}\tfrac{E}{2}-C\tfrac{D}{2}=u+v+2v\hp+2uv$. Expanding each product,
the terms linear in $\hp$ cancel identically, the terms in $\hp^{2}$ collapse to
$4uv\hp^{2}$, and the constant terms sum to
$-(u+v)^{2}-2uv(u+v)$, leaving
\begin{equation*}
\Delta=4uv\hp^2-(u+v)\bigl(u+v+2uv\bigr).
\end{equation*}
Substituting $u+v=\dfrac{z^2(n_1+n_2)}{2n_1n_2}$, $uv=\dfrac{z^4}{4n_1n_2}$ and
$u+v+2uv=\dfrac{z^2(n_1+n_2+z^2)}{2n_1n_2}$ yields
\begin{equation*}
\Delta=\frac{z^4\hp^2}{n_1n_2}-\frac{z^4(n_1+n_2)(n_1+n_2+z^2)}{4n_1^2n_2^2}
=-\frac{z^4\bigl((n_1+n_2)(n_1+n_2+z^2)-4n_1n_2\hp^2\bigr)}{4n_1^2n_2^2},
\end{equation*}
which is \eqref{eq:Delta}. Since $\hp^2\le1$ and
$(n_1+n_2)^2-4n_1n_2=(n_1-n_2)^2$,
\begin{equation*}
\Delta\le-\frac{z^4\bigl((n_1+n_2)(n_1+n_2+z^2)-4n_1n_2\bigr)}{4n_1^2n_2^2}
=-\frac{z^4\bigl((n_1-n_2)^2+(n_1+n_2)z^2\bigr)}{4n_1^2n_2^2}<0,
\end{equation*}
the final inequality being strict because $z>0$ and $n_1+n_2>0$. Thus
$\Delta\ne0$ and the conic is non-degenerate, and $B^2-4AC<0$ was established in
Section~\ref{subsec:bivariate}, so it is of elliptic type. That it is a real
rather than an imaginary ellipse follows without appeal to any classification
criterion: by Property~\ref{prty:nonempty_proper}$(a)$ the solid
region $\Sz$ is non-empty, while the quadratic form is positive definite so that
$Q(x)\to+\infty$ at infinity; by continuity the level set $\{Q=0\}$ is therefore
non-empty. Hence $\Sz$ is a real, non-degenerate ellipse together with its
interior.
\end{proof}

\subsection{Proof of Properties \ref{prty:centre}, \ref{prty:axes} and \ref{prty:rotation}, and of Corollary \ref{cor:rotation_range}}
\label{app:conic_reductions}
\begin{proof}
All three properties follow from the standard reduction of a non-degenerate
central conic, applied to the matrix $M$ of Appendix~\ref{app:determinant} and to
its leading $2\times2$ block $A_2$ of Appendix~\ref{app:convex_compact}. Two
auxiliary quantities are needed. First,
$\det A_2=z^{2}N/P>0$, consistent with the discriminant
$B^2-4AC=-4\det A_2$ of Section~\ref{subsec:bivariate}. Second, the eigenvalues of
$A_2$ are
\begin{equation*}
\lambda_{\pm}=\frac{A+C}{2}\pm\sqrt{\frac{(A-C)^2}{4}+\frac{B^{2}}{4}}
=\frac{2n_1n_2+z^2(n_1+n_2)\pm\sqrt{4n_1^2n_2^2+(n_1-n_2)^2z^4}}{2n_1n_2},
\end{equation*}
where we used $B^{2}/4=1$,
$A+C=\bigl(2n_1n_2+z^2(n_1+n_2)\bigr)/(n_1n_2)$ and
$A-C=z^2(n_2-n_1)/(n_1n_2)$.

\emph{Centre.} The centre solves
$\nabla\bigl(Ap_1^2+Bp_1p_2+Cp_2^2+Dp_1+Ep_2+F\bigr)=0$, that is the linear system
$2Ap_1+Bp_2+D=0$, $Bp_1+2Cp_2+E=0$, whence by Cramer's rule
\begin{equation*}
p_1^c=\frac{BE-2CD}{4AC-B^2},
\qquad
p_2^c=\frac{BD-2AE}{4AC-B^2}.
\end{equation*}
Substituting the coefficients and using $4AC-B^2=4z^2N/P$,
\begin{align*}
BE-2CD&=2z^2\left(\frac{1}{n_1}+\frac{1}{n_2}\right)+\frac{4\hp z^2}{n_2}+\frac{2z^4}{n_1n_2}
=\frac{2z^2N+4\hp z^2 n_1}{n_1n_2},
\\
BD-2AE&=2z^2\left(\frac{1}{n_1}+\frac{1}{n_2}\right)+\frac{2z^4}{n_1n_2}-\frac{4\hp z^2}{n_1}
=\frac{2z^2N-4\hp z^2 n_2}{n_1n_2},
\end{align*}
and dividing by $4AC-B^2$ gives
$p_1^c=\tfrac12+n_1\hp/N$ and $p_2^c=\tfrac12-n_2\hp/N$, which is
\eqref{eq:centre}. In particular
\begin{equation*}
n_2p_1^c+n_1p_2^c=\frac{n_2}{2}+\frac{n_1n_2\hp}{N}+\frac{n_1}{2}-\frac{n_1n_2\hp}{N}=\frac{n_1+n_2}{2},
\end{equation*}
so the centre lies on the line $\ell$ of \eqref{eq:centreline} for every value of
$\hp$, as asserted in Remark~\ref{rem:centreline}.

\emph{Axis lengths.} For a real non-degenerate central conic the semi-axis
associated with the eigenvalue $\lambda$ of $A_2$ equals
$\sqrt{-\Delta/\left(\lambda\det A_2\right)}$, the major semi-axis corresponding
to the smaller eigenvalue $\lambda_{-}$ and the minor to $\lambda_{+}$; the axis
lengths are twice these. Writing
$X=(n_1+n_2)^2-4n_1n_2\hp^2+(n_1+n_2)z^2$, so that $-\Delta=z^4X/(4P^{2})$ by
\eqref{eq:Delta}, and
$D_{\pm}=2n_1n_2+z^2(n_1+n_2)\pm\sqrt{4n_1^2n_2^2+(n_1-n_2)^2z^4}$, so that
$\lambda_{\pm}=D_{\pm}/(2P)$, we obtain
\begin{equation*}
\mathrm{Major}=2\sqrt{\frac{-\Delta}{\lambda_{-}\det A_2}}
=2\sqrt{\frac{z^4X}{4P^{2}}\cdot\frac{2P}{D_{-}}\cdot\frac{P}{z^{2}N}}
=2\sqrt{\frac{z^2X}{2ND_{-}}} .
\end{equation*}
Rationalising with the identity
\begin{equation*}
D_{-}D_{+}=\bigl(2P+z^2S\bigr)^2-\bigl(4P^{2}+(n_1-n_2)^2z^4\bigr)
=4Pz^{2}S+z^{4}\bigl(S^{2}-(n_1-n_2)^{2}\bigr)=4Pz^2N
\end{equation*}
gives
\begin{equation*}
\mathrm{Major}=2\sqrt{\frac{z^{2}XD_{+}}{2N\cdot4Pz^{2}N}}=\frac{\sqrt{X\,D_{+}}}{\sqrt{2P}\,N},
\end{equation*}
which is the stated expression; interchanging the roles of $\lambda_{-}$ and
$\lambda_{+}$ yields $\mathrm{Minor}=\sqrt{X\,D_{-}}\big/\bigl(\sqrt{2P}\,N\bigr)$.
Setting $n_2=n_1$ gives $D_{+}=4n_1^2+2n_1z^2$ and $D_{-}=2n_1z^2$, and the two
expressions reduce to those displayed after Property~\ref{prty:axes}; in that case
$\mathrm{Major}^2=\bigl(4n_1(1-\hp^2)+2z^2\bigr)/(2n_1+z^2)\le2$, with equality if
and only if $\hp=0$, and $\mathrm{Minor}/\mathrm{Major}=z/\sqrt{2n_1+z^2}$. The
bound by $\sqrt2$ uses $n_1=n_2$ and does not extend to unequal allocation.

\emph{Rotation angle.} The principal axes of $A_2$ make an angle $\theta$ with the
horizontal satisfying $\tan(2\theta)=B/(A-C)$, that is,
\begin{equation*}
\tan(2\theta)=\frac{-2}{z^2(n_2-n_1)/(n_1n_2)}=\frac{2n_1n_2}{(n_1-n_2)z^2}.
\end{equation*}
The major axis is the eigendirection of $\lambda_{-}$; solving
$(A-\lambda_{-})x+(B/2)y=0$ with $B/2=-1$ gives $y=(A-\lambda_{-})x$, so
$\tan\theta=A-\lambda_{-}$, and
\begin{equation*}
A-\lambda_{-}=\frac{A-C}{2}+\sqrt{\frac{(A-C)^2}{4}+1}>0,
\end{equation*}
whence $\theta\in(0,\pi/2)$ in every case. Setting $d=z^2(n_1-n_2)/(n_1n_2)$, so
that $A-C=-d$, this slope equals $-d/2+\sqrt{d^2/4+1}$, the positive root of
$t^2+dt-1=0$. That root is consistent with the value of $\tan(2\theta)$ obtained
above, since $t^{2}+dt-1=0$ gives $1-t^{2}=dt$ and hence
$\tan(2\theta)=2t/(1-t^{2})=2/d$. When $n_1>n_2$ we have $d>0$, hence
$\tan(2\theta)>0$; as $2\theta\in(0,\pi)$ this forces $2\theta\in(0,\pi/2)$, so
the principal branch of the arctangent applies and $\theta\in(0,\pi/4)$. When
$n_2>n_1$ we have $d<0$, hence $\tan(2\theta)<0$ and $2\theta\in(\pi/2,\pi)$, so
$\pi$ must be added to the principal value and $\theta\in(\pi/4,\pi/2)$. When
$n_1=n_2$ we have $A=C$, so $2\theta=\pi/2$ and $\theta=\pi/4$. This proves both
Property~\ref{prty:rotation} and Corollary~\ref{cor:rotation_range}.
\end{proof}

\subsection{Proof of Proposition \ref{prop:SzCenterDomain} and of Corollary \ref{cor:centre_in_square}}
\label{app:SzCenterDomain}
\begin{proof}
Set
\begin{equation*}
x=p_1^c-\tfrac12=\frac{n_1\hp}{N},
\qquad
y=p_2^c-\tfrac12=-\frac{n_2\hp}{N},
\end{equation*}
as given by \eqref{eq:centre}, and put $w_1=n_1/N$, $w_2=n_2/N$, so that
$x=w_1\hp$ and $y=-w_2\hp$ with
\begin{equation}
\label{eq:weights}
w_1>0,\qquad w_2>0,\qquad w_1+w_2=\frac{n_1+n_2}{N}<1,
\end{equation}
the last inequality being strict because $z^2>0$.

\emph{Necessity.} First, $xy=-w_1w_2\hp^2\le0$, with equality if and only if
$\hp=0$, in which case $x=y=0$ and the centre is
$\left(\tfrac12,\tfrac12\right)$; thus
$\bigl(p_1^c-\tfrac12\bigr)\bigl(p_2^c-\tfrac12\bigr)<0$ whenever $\hp\ne0$.
Second, since $x$ and $y$ then have strictly opposite signs,
\begin{equation*}
\left|x-y\right|=\left|x\right|+\left|y\right|=(w_1+w_2)\left|\hp\right|<1,
\end{equation*}
using \eqref{eq:weights} and $\left|\hp\right|\le1$; equivalently, and more
informatively,
\begin{equation*}
p_1^c-p_2^c=x-y=(w_1+w_2)\hp=\hp\,\frac{n_1+n_2}{n_1+n_2+z^2},
\end{equation*}
which is the shrinkage identity \eqref{eq:centre_difference}. Hence
$(p_1^c,p_2^c)\in T\cup\{(\tfrac12,\tfrac12)\}$: the conditions $xy<0$ and
$\left|x-y\right|<1$ describe exactly the interiors of the two triangles named in
the statement, since in centred coordinates $\{x>0>y,\ x-y<1\}$ is the open
triangle with vertices $(0,0)$, $(1,0)$, $(0,-1)$ and $\{x<0<y,\ y-x<1\}$ its
reflection.

\emph{Sharpness over the positive reals.} Conversely, let $(x_0,y_0)$ satisfy
$x_0>0>y_0$ and $x_0-y_0<1$, and put $a=x_0>0$, $b=-y_0>0$, so that $a+b<1$.
Choose $\hp=1$, which is attainable, fix any $N>0$, and set
\begin{equation*}
n_1=aN,\qquad n_2=bN,\qquad z^2=N-n_1-n_2=N(1-a-b)>0 .
\end{equation*}
These are admissible: $n_1,n_2>0$, and $z=z_{\a/2}$ ranges over $(0,\infty)$ as
$\a$ ranges over $(0,1)$. With this choice $w_1=n_1/N=a$ and $w_2=b$, whence
$x=w_1\hp=x_0$ and $y=-w_2\hp=y_0$. The case $x_0<0<y_0$ follows by the symmetry
\eqref{eq:mirror}, and $\left(\tfrac12,\tfrac12\right)$ is attained at $\hp=0$.
Hence every point of $T\cup\{(\tfrac12,\tfrac12)\}$ is attained and the
containment cannot be improved.

\emph{Sharpness over the positive integers.} If $n_1$ and $n_2$ are required to be
integers, the construction above succeeds precisely when $N$ may be chosen so that
$aN$ and $bN$ are both positive integers, which is possible if and only if $a/b$
is rational: writing $a/b=p/q$ in lowest terms and $\lambda=a/p$, the choice
$N=k/\lambda$ for a positive integer $k$ gives $n_1=kp$ and $n_2=kq$. The set of
points of $T$ with rational coordinate ratio is dense in $T$, so the attainable
centres form a dense subset.

\emph{Proof of Corollary \ref{cor:centre_in_square}.} The centre of a non-degenerate ellipse lies in the region it bounds, so
$(p_1^c,p_2^c)\in\Sz$ always, and therefore $(p_1^c,p_2^c)\in\Sx$ if and only if
$(p_1^c,p_2^c)\in[0,1]^2$. It remains to
intersect $T\cup\{(\tfrac12,\tfrac12)\}$ with the unit square. Consider
$T^{+}=\left\{(X,Y):X>\tfrac12>Y,\ X-Y<1\right\}$, where $X=x+\tfrac12$ and
$Y=y+\tfrac12$ are the original coordinates. Its intersection with $[0,1]^2$
requires $X\in\left(\tfrac12,1\right]$ and $Y\in\left[0,\tfrac12\right)$; for such
points $X-Y\le1$ holds automatically, with equality only at $(1,0)$, which the
strict inequality $X-Y<1$ excludes. Hence
\begin{equation*}
T^{+}\cap[0,1]^2=\left(\left(\tfrac12,1\right]\times\left[0,\tfrac12\right)\right)\setminus\left\{(1,0)\right\},
\end{equation*}
and by reflection
\begin{equation*}
T^{-}\cap[0,1]^2=\left(\left[0,\tfrac12\right)\times\left(\tfrac12,1\right]\right)\setminus\left\{(0,1)\right\}.
\end{equation*}
Adjoining the point $\left(\tfrac12,\tfrac12\right)$, which lies in the square,
yields the region asserted in Corollary~\ref{cor:centre_in_square}. Since both
punctured rectangles lie in the upper-left and lower-right quadrants of the unit
square, the centre can never lie in the closed lower-left or upper-right quadrant
other than at the midpoint.
\end{proof}

In the remaining proofs we retain the abbreviations $S$, $P$ and $N$ introduced
at the head of this appendix, and we write
\begin{equation*}
\rho=\frac{z}{2}\sqrt{\frac{\Xi}{PS}},
\qquad
\tau=\hp+\rho,
\qquad
\sigma=\hp-\rho,
\end{equation*}
with $\Xi$ as in \eqref{eq:Xi}. Since $\Xi>0$ by \eqref{eq:Delta_bound} and
$z>0$, we have $\rho>0$ and $\sigma<\tau$. The root-location facts used below are
those of Lemma~\ref{lem:rootlocation}.
\subsection{Proof of Lemma \ref{lem:profile}}
\label{app:profile}
\begin{proof}
\emph{Closed form for $V^{\ast}$.} Fix $p\in[-1,1]$ and parametrise the feasible
segment by $p_1=s$, $p_2=s-p$, so that $s$ ranges over
$I_p=\left[\max(0,p),\min(1,1+p)\right]$, a non-empty compact interval. On it,
\begin{equation*}
f(s)=\frac{s(1-s)}{n_1}+\frac{(s-p)\left(1-s+p\right)}{n_2}
\end{equation*}
is a strictly concave quadratic with
\begin{equation*}
f'(s)=\left(\frac{1}{n_1}+\frac{1}{n_2}\right)\left(1-2s\right)+\frac{2p}{n_2},
\qquad\text{so}\qquad
s^{\ast}=\frac{1}{2}+\frac{n_1p}{n_1+n_2}
\end{equation*}
is the unconstrained maximiser, with corresponding second coordinate
$s^{\ast}-p=\tfrac12-n_2p/(n_1+n_2)$. Thus the maximising pair moves from the
centre of the square along the direction $(n_1,-n_2)$, exactly as does the centre
of the ellipse in \eqref{eq:centre}. Now $s^{\ast}\le1$ if and only if
$p\le S/(2n_1)$, and $s^{\ast}\ge p$ if and only if $p\le S/(2n_2)$; combining
these two with their mirror images for $p<0$, the unconstrained maximiser lies in
$I_p$ if and only if $\left|p\right|\le S/(2\nmax)=\hp_{\square}$. In that case
\begin{equation*}
V^{\ast}(p)=f(s^{\ast})
=\frac{1}{n_1}\left(\frac14-\frac{n_1^{2}p^{2}}{S^{2}}\right)
+\frac{1}{n_2}\left(\frac14-\frac{n_2^{2}p^{2}}{S^{2}}\right)
=\frac{S}{4P}-\frac{p^{2}}{S},
\end{equation*}
which is the first branch of \eqref{eq:Vstar}. If instead $p>\hp_{\square}$, then
by concavity the constrained maximum is attained at the endpoint of $I_p$ nearest
$s^{\ast}$. If $n_1=\nmax$ this endpoint is $s=1$, giving the pair $(1,1-p)$ and
$V^{\ast}(p)=p(1-p)/n_2=p(1-p)/\nmin$; if $n_2=\nmax$ it is $s=p$, giving the pair
$(p,0)$ and $V^{\ast}(p)=p(1-p)/n_1=p(1-p)/\nmin$. Either way the value is
$p(1-p)/\nmin$, and the maximiser lies on the edge of the unit square associated
with the larger sample size. The case $p<-\hp_{\square}$ follows from the symmetry
$V(p_1,p_2)=V(1-p_1,1-p_2)$, which maps the line $p_1-p_2=p$ onto
$p_1-p_2=-p$ and shows that $V^{\ast}$ is even.

\emph{Regularity.} $V$ is jointly concave and the constraint set in
\eqref{eq:Vstardef} is convex and varies affinely with $p$, so the partial maximum
$V^{\ast}$ is concave; positivity on $(-1,1)$ and $V^{\ast}(\pm1)=0$ are read off
\eqref{eq:Vstar}. Continuity and differentiability need only be checked at
$p=\pm\hp_{\square}$, and by evenness only at $p=\hp_{\square}$. The two branches
agree there, since
\begin{equation*}
\frac{S}{4P}-\frac{\hp_{\square}^{2}}{S}
=\frac{S}{4}\left(\frac{1}{P}-\frac{1}{\nmax^{2}}\right)
=\frac{S\left(\nmax-\nmin\right)}{4\nmax^{2}\nmin}
=\frac{\hp_{\square}\left(1-\hp_{\square}\right)}{\nmin},
\end{equation*}
using $1-\hp_{\square}=\left(\nmax-\nmin\right)/(2\nmax)$. Their derivatives also
agree there: the first branch gives $-2\hp_{\square}/S=-1/\nmax$, and the second
gives $\left(1-2\hp_{\square}\right)/\nmin=-1/\nmax$. Hence $V^{\ast}\in C^{1}$.

\emph{Representation \eqref{eq:EOprofile}.} The set $\Sx$ is non-empty by
Property~\ref{prty:nonempty_proper}, and it is compact and convex,
hence connected; the map $g(p_1,p_2)=p_1-p_2$ is continuous and linear, so
$g(\Sx)$ is a non-empty compact interval, necessarily
$\left[\min_{\Sx}g,\max_{\Sx}g\right]$. Therefore $p$ belongs to the \EO{}
interval if and only if there exists $(p_1,p_2)\in\Sx$ with $p_1-p_2=p$, that is,
if and only if there exists $(p_1,p_2)\in[0,1]^{2}$ with $p_1-p_2=p$ and
$\left(\hp-p\right)^{2}\le z^{2}V(p_1,p_2)$. Since the left-hand side does not
depend on which such pair is chosen, this holds if and only if
$\left(\hp-p\right)^{2}\le z^{2}V^{\ast}(p)$, which is \eqref{eq:EOprofile}.
Finally, $p\mapsto\left(\hp-p\right)^{2}-z^{2}V^{\ast}(p)$ is convex, being the
sum of a convex quadratic and $-z^{2}$ times a concave function, so its sublevel
set is indeed an interval, in agreement with the left-hand side.
\end{proof}

\subsection{Proof of Proposition \ref{prop:WilsonIdentification}}
\label{app:WilsonIdentification}
\begin{proof}
\emph{The pairs $\left[\Lone,\Uone\right]$ and $\left[\Ltwo,\Utwo\right]$.}
Substituting the second branch of \eqref{eq:Vstar} into
$\left(\hp-p\right)^{2}\le z^{2}V^{\ast}(p)$ gives, for $p>0$,
$\left(\hp-p\right)^{2}\le z^{2}p(1-p)/\nmin$, whose solution set is
$W\!\left(\hp,\nmin\right)$; its endpoints are, by
Lemma~\ref{lem:rootlocation}$(a)$, exactly $\Lone$ and $\Uone$ of
Definition~\ref{def:CIboundsLU}, and lie in
$[0,1]$. For $p<0$ the same substitution gives
$\left(\hp-p\right)^{2}\le z^{2}\left(-p\right)\left(1+p\right)/\nmin$; writing
$p=-t$ this reads $\left(-\hp-t\right)^{2}\le z^{2}t(1-t)/\nmin$, that is,
$t\in W\!\left(-\hp,\nmin\right)$, so the solution set is
$-W\!\left(-\hp,\nmin\right)\subseteq[-1,0]$, with endpoints
\begin{equation*}
-\,U^{W}\!\left(-\hp,\nmin\right)
=\frac{\nmin\hp-z^{2}/2}{\nmin+z^{2}}-\frac{z\sqrt{z^{2}-4\nmin\hp\left(1+\hp\right)}}{2\left(\nmin+z^{2}\right)}=\Ltwo,
\qquad
-\,L^{W}\!\left(-\hp,\nmin\right)=\Utwo .
\end{equation*}

The edge interpretation is a direct computation. On the edge $p_1=1$ one has
$V(1,p_2)=p_2(1-p_2)/n_2$; writing $t=1-p_2=p_1-p_2$ and using
$p_2(1-p_2)=t(1-t)$, membership of $\Sz$ reads
$\left(\hp-t\right)^{2}\le z^{2}t(1-t)/n_2$, so the range of the objective over
$\Sz\cap\{p_1=1\}$ is $W\!\left(\hp,n_2\right)$. On the edge $p_2=0$ one has
$V(p_1,0)=p_1(1-p_1)/n_1$ and $p_1-p_2=p_1$, giving the range
$W\!\left(\hp,n_1\right)$. By Lemma~\ref{lem:rootlocation}$(c)$, the union of the two
is $W\!\left(\hp,\nmin\right)=\left[\Lone,\Uone\right]$. The edges $p_1=0$ and
$p_2=1$ are treated identically and give
$-W\!\left(-\hp,n_2\right)$ and $-W\!\left(-\hp,n_1\right)$, whose union is
$-W\!\left(-\hp,\nmin\right)=\left[\Ltwo,\Utwo\right]$.

\emph{The pair $\left[\Lthree,\Uthree\right]$.} Substituting the first branch of
\eqref{eq:Vstar} gives
\begin{equation*}
\left(\hp-p\right)^{2}\le z^{2}\left(\frac{S}{4P}-\frac{p^{2}}{S}\right)
\iff
\frac{N}{S}\,p^{2}-2\hp\,p+\hp^{2}-\frac{z^{2}S}{4P}\le0 ,
\end{equation*}
a convex quadratic in $p$ whose roots are
\begin{equation*}
\frac{S}{N}\left[\hp\pm\sqrt{\hp^{2}\left(1-\frac{N}{S}\right)+\frac{Nz^{2}}{4P}}\right]
=\frac{S}{N}\left[\hp\pm\sqrt{\frac{z^{2}}{4PS}\left(SN-4P\hp^{2}\right)}\right]
=\frac{S}{N}\left(\hp\pm\rho\right),
\end{equation*}
where we used $1-N/S=-z^{2}/S$ and $\Xi=SN-4P\hp^{2}$. Since
$\frac{S}{N}\rho=\frac{z}{2N}\sqrt{S\Xi/P}$, these are exactly $\Lthree$ and
$\Uthree$; in particular
\begin{equation}
\label{eq:LU3tau}
\Lthree=\frac{S\sigma}{N},\qquad \Uthree=\frac{S\tau}{N}.
\end{equation}

That $\left[\Lthree,\Uthree\right]$ is also the range of $p_1-p_2$ over the whole
ellipse follows from the support-function computation. By Property~\ref{prty:convex_compact} the set $\Sz$ is
$\left\{x:\left(x-x_c\right)^{\top}A_2\left(x-x_c\right)\le k\right\}$ with
$x_c=(p_1^c,p_2^c)$ as in \eqref{eq:centre}, $A_2$ the leading $2\times2$ block of
Appendix~\ref{app:convex_compact} and $k=-\Delta/\det A_2>0$. With
$c=(1,-1)^{\top}$ and $\det A_2=z^{2}N/P$,
\begin{equation*}
A_2^{-1}c=\frac{z^{2}}{\det A_2}\begin{pmatrix}1/n_2\\ -1/n_1\end{pmatrix}
=\frac{1}{N}\begin{pmatrix}n_1\\ -n_2\end{pmatrix},
\qquad
c^{\top}A_2^{-1}c=\frac{S}{N},
\qquad
k=\frac{z^{2}\Xi}{4PN},
\end{equation*}
so that the extreme values of $c^{\top}x$ are
$c^{\top}x_c\pm\sqrt{k\,c^{\top}A_2^{-1}c}
=\hp\frac{S}{N}\pm\frac{z}{2N}\sqrt{\frac{S\Xi}{P}}$, using the shrinkage identity
\eqref{eq:centre_difference}. These coincide with $\Lthree$ and $\Uthree$, whose
midpoint is therefore $p_1^c-p_2^c$. The extremising points are
\begin{equation}
\label{eq:xpm}
x^{\pm}=x_c\pm\sqrt{\frac{k}{c^{\top}A_2^{-1}c}}\;A_2^{-1}c
=\left(\frac12+\frac{n_1\tau_{\pm}}{N},\ \frac12-\frac{n_2\tau_{\pm}}{N}\right),
\qquad \tau_{+}=\tau,\quad \tau_{-}=\sigma,
\end{equation}
so that they too lie along the direction $(n_1,-n_2)$, in agreement with
Lemma~\ref{lem:profile}.
\end{proof}

\subsection{Proof of Proposition \ref{prop:orderLUCI}}
\label{app:orderLUCI}
\begin{proof}
The inequalities $\Lone\le\Uone$, $\Ltwo\le\Utwo$ and $\Lthree\le\Uthree$ are
immediate from Definition~\ref{def:CIboundsLU}, the radicands being non-negative
by hypothesis.

\emph{The central inequalities $\Utwo\le0\le\Lone$.} By
Proposition~\ref{prop:WilsonIdentification},
$\left[\Lone,\Uone\right]=W\!\left(\hp,\nmin\right)\subseteq[0,1]$ and
$\left[\Ltwo,\Utwo\right]=-W\!\left(-\hp,\nmin\right)\subseteq[-1,0]$, and the
claim follows. It is worth recording an independent verification that does not
appeal to the edge representation. Writing
$f(\hp)=\sqrt{z^{2}+4\nmin\hp(1-\hp)}+\sqrt{z^{2}-4\nmin\hp(1+\hp)}$, one has
$\Lone-\Utwo=\left(2z^{2}-zf(\hp)\right)/\left(2(\nmin+z^{2})\right)$. Putting
$u=4\nmin\hp$ and $v=4\nmin\hp^{2}\ge0$, the two radicands are $z^{2}-v+u$ and
$z^{2}-v-u$, whence
\begin{equation*}
f(\hp)^{2}=2\left(z^{2}-v\right)+2\sqrt{\left(z^{2}-v\right)^{2}-u^{2}}
\le4\left(z^{2}-v\right)\le4z^{2},
\end{equation*}
so $f(\hp)\le2z$, with equality if and only if $u=v=0$, that is $\hp=0$. Hence
$\Lone-\Utwo\ge0$.

\emph{The outer inequalities $\Lthree\le\Ltwo$ and $\Uone\le\Uthree$.} By
Proposition~\ref{prop:WilsonIdentification}, $\Ltwo$ is the minimum of $p_1-p_2$
over the section of $\Sz$ by one of the edges $\{p_1=0\},\{p_2=1\}$, and the
hypothesis that the corresponding radicand is non-negative is precisely the
statement that this section is non-empty. A minimum taken over a non-empty subset
of $\Sz$ cannot fall below the minimum over $\Sz$, which is $\Lthree$; hence
$\Lthree\le\Ltwo$. The same argument applied to maxima, with the section by
$\{p_1=1\}$ or $\{p_2=0\}$, gives $\Uone\le\Uthree$. Chaining the inequalities
yields the stated ordering; if only one outer pair is real, the corresponding
sub-chain is obtained by deleting the missing terms.
\end{proof}

\subsection{Proof of Lemma \ref{lem:switching}}
\label{app:switching}
\begin{proof}
\emph{The switching rule \eqref{eq:switchingrule}.} Write
$\psi(p)=\left(\hp-p\right)^{2}-z^{2}V^{\ast}(p)$, so that by
Lemma~\ref{lem:profile} the \EO{} interval is $\{\psi\le0\}$ and $\psi$ is convex.
Let $\psi_0,\psi_{+},\psi_{-}$ denote the three quadratics obtained by using,
respectively, the first branch of $V^{\ast}$, the second branch on $p>0$ and the
second branch on $p<0$, each extended to all of $\mathbb{R}$; by
Proposition~\ref{prop:WilsonIdentification} their root pairs are
$\left(\Lthree,\Uthree\right)$, $\left(\Lone,\Uone\right)$ and
$\left(\Ltwo,\Utwo\right)$, and $\psi$ agrees with $\psi_0$ on
$\left[-\hp_{\square},\hp_{\square}\right]$, with $\psi_{+}$ on
$\left[\hp_{\square},1\right]$ and with $\psi_{-}$ on
$\left[-1,-\hp_{\square}\right]$.

Suppose first $\left|\Uthree\right|\le\hp_{\square}$. Then $\Uthree$ lies in the
middle branch, $\psi(\Uthree)=\psi_0(\Uthree)=0$, and for
$p\in\left(\Uthree,\hp_{\square}\right]$ we have $\psi=\psi_0>0$; convexity of
$\psi$ then forces $\psi>0$ on $\left(\Uthree,1\right]$, so $U=\Uthree$. Suppose
next $\Uthree>\hp_{\square}$. If also $\Lthree>\hp_{\square}$ then $\psi_0>0$ on
the whole middle branch, so $\{\psi\le0\}$ is contained in the right branch and
$U=\Uone$; if $\Lthree\le\hp_{\square}<\Uthree$ then
$\psi(\hp_{\square})=\psi_0(\hp_{\square})\le0$ and the largest root of $\psi$
lies in the right branch, again $U=\Uone$. Reality of $\Uone$ in both cases
follows because $\psi_{+}$ takes a non-positive value at or beyond
$\hp_{\square}$. Finally, if $\Uthree<-\hp_{\square}$ then
$\Lthree\le\Uthree<-\hp_{\square}$, so $\psi_0>0$ on the middle branch and
$\{\psi\le0\}$ lies wholly in the left branch, whence $U=\Utwo$. The three
statements for $L$ are proved identically, or deduced from the equivariance
$\hp\mapsto-\hp$.

\emph{Translation into conditions on $(\hp,z)$.} By \eqref{eq:LU3tau},
$\Uthree\le\hp_{\square}$ is equivalent to $\tau\le N/(2\nmax)$, and
$\Uthree\ge-\hp_{\square}$ to $\tau\ge-N/(2\nmax)$; symmetrically for $\Lthree$
and $\sigma$. Equivalently, by \eqref{eq:xpm}, these are exactly the conditions
$x^{+}\in[0,1]^{2}$ and $x^{-}\in[0,1]^{2}$, since $x^{\pm}\in[0,1]^{2}$ if and
only if $\left|\tau_{\pm}\right|\le N/(2n_1)$ and
$\left|\tau_{\pm}\right|\le N/(2n_2)$, that is
$\left|\tau_{\pm}\right|\le N/(2\nmax)$.

Set $c_1=\frac{N}{2\nmax}-\hp$ and $c_2=\frac{N}{2\nmax}+\hp$. A direct
computation, obtained by clearing denominators and using $\Xi=SN-4P\hp^{2}$ and
$S+z^{2}=N$, gives the identity
\begin{equation}
\label{eq:keyidentity}
4PS\left(c_1^{2}-\rho^{2}\right)
=\frac{N}{\nmax}\Bigl[\nmin\left(S-2\nmax\hp\right)^{2}-S\left(\nmax-\nmin\right)z^{2}\Bigr],
\end{equation}
and the same identity with $\hp$ replaced by $-\hp$ for $c_2$. Consequently, when
$\nmax>\nmin$,
\begin{equation*}
\rho^{2}\le c_1^{2}\iff z\le T_{-}:=\frac{\sqrt{\nmin}\left|S-2\nmax\hp\right|}{\sqrt{\nmax^{2}-\nmin^{2}}},
\qquad
\rho^{2}\le c_2^{2}\iff z\le T_{+}:=\frac{\sqrt{\nmin}\left|S+2\nmax\hp\right|}{\sqrt{\nmax^{2}-\nmin^{2}}},
\end{equation*}
while when $\nmax=\nmin$ the right-hand side of \eqref{eq:keyidentity} is
$\nmin N\left(S-2\nmax\hp\right)^{2}/\nmax\ge0$, so $\rho^{2}\le c_1^{2}$ always
holds, and symmetrically for $c_2$; this is the content of the conventions
\eqref{eq:Rconventions}, and comparison with \eqref{eq:R1}--\eqref{eq:R2} shows
that $\left\{T_{-},T_{+}\right\}=\left\{\Rone,\Rtwo\right\}$, with $T_{-}=\Rone$
and $T_{+}=\Rtwo$ when $\hp\ge0$ and the assignment reversed when $\hp<0$.

We now dispose of the sign conditions. Since $\rho>0$, the inequality
$\tau\le\frac{N}{2\nmax}$ holds if and only if $c_1\ge0$ and $z\le T_{-}$, and the
inequality $\tau\ge-\frac{N}{2\nmax}$ holds if and only if $c_2\ge0$ or
$z\ge T_{+}$. Three observations complete the translation.

$(\alpha)$ If $\hp\le\hp_{\square}$ then $c_1\ge\frac{S+z^{2}}{2\nmax}-\hp_{\square}=\frac{z^{2}}{2\nmax}>0$,
and if $\hp\ge-\hp_{\square}$ then likewise $c_2>0$. Hence for
$\left|\hp\right|\le\hp_{\square}$ both sign conditions are automatic, and
$x^{+}\in[0,1]^{2}$ if and only if $z\le T_{-}$, while $x^{-}\in[0,1]^{2}$ if and
only if $z\le T_{+}$.

$(\beta)$ If $\hp>\hp_{\square}$, then $x^{+}\notin[0,1]^{2}$ always. Indeed, put
$w=2\nmax\hp-S\in\left(0,\nmax-\nmin\right]$; then $c_1=\left(z^{2}-w\right)/(2\nmax)$
and $T_{-}=\kappa w$ with $\kappa=\sqrt{\nmin/\left(\nmax^{2}-\nmin^{2}\right)}$, so
feasibility would require both $z^{2}\ge w$ and $z\le\kappa w$, hence
$\sqrt{w}\le\kappa w$, that is $w\ge\kappa^{-2}=\left(\nmax^{2}-\nmin^{2}\right)/\nmin
=\left(\nmax-\nmin\right)S/\nmin$. Since $S>\nmin$, this contradicts
$w\le\nmax-\nmin$. Symmetrically, $x^{-}\notin[0,1]^{2}$ whenever
$\hp<-\hp_{\square}$.

$(\gamma)$ If $\hp<-\hp_{\square}$, put $\widetilde w=-\left(S+2\nmax\hp\right)\in\left(0,\nmax-\nmin\right]$,
so $c_2=\left(z^{2}-\widetilde w\right)/(2\nmax)$ and $T_{+}=\kappa\widetilde w$. Since
$\widetilde w\le\nmax-\nmin\le\kappa^{-2}$, we have $\kappa\widetilde w\le\sqrt{\widetilde w}$,
so the disjunction ``$c_2\ge0$ or $z\ge T_{+}$'' reduces to $z\ge T_{+}$.
Consequently, for $\hp<-\hp_{\square}$, $x^{+}\in[0,1]^{2}$ if and only if
$T_{+}\le z\le T_{-}$, that is $\Rone\le z\le\Rtwo$. Symmetrically, for
$\hp>\hp_{\square}$, $x^{-}\in[0,1]^{2}$ if and only if $\Rone\le z\le\Rtwo$.

Collecting $(\alpha)$--$(\gamma)$ and recalling the identification of $T_{\pm}$
with $\Rone,\Rtwo$ according to the sign of $\hp$ gives the translation asserted in the lemma and used in Appendix~\ref{app:mainCI}.

\emph{Reality of the named bound.} In the case $U=\Uone$ we must check that
$z^{2}+4\nmin\hp\left(1-\hp\right)\ge0$; this is automatic for $\hp\ge0$. For
$\hp<0$ the case $U=\Uone$ arises only when $z>T_{-}=\Rtwo$, and with
$u=\left|\hp\right|$ the algebraic identity
\begin{equation}
\label{eq:perfectsquare}
\left(2\nmax u+S\right)^{2}-4u\left(1+u\right)\left(\nmax^{2}-\nmin^{2}\right)
=\left(2\nmin u+S\right)^{2}
\end{equation}
gives
$\Rtwo^{2}-4\nmin u\left(1+u\right)=\nmin\left(2\nmin u+S\right)^{2}/\left(\nmax^{2}-\nmin^{2}\right)>0$,
so $z^{2}>\Rtwo^{2}>4\nmin u(1+u)$ and the radicand is strictly positive. In the
case $U=\Utwo$, which by the above requires $\hp<-\hp_{\square}$, the radicand
$z^{2}-4\nmin\hp\left(1+\hp\right)\ge z^{2}>0$ since $\hp\in[-1,0]$. The
statements for $L$ follow by symmetry, and $\Lthree,\Uthree$ are always real.
\end{proof}

\subsection{Proof of Theorem \ref{thm:mainCI}}
\label{app:mainCI}
\begin{proof}
\emph{Identification of the six cases.} By Lemma~\ref{lem:switching} the interval
is determined by the position of $\Uthree$ and $\Lthree$ relative to
$\pm\hp_{\square}$, and by parts $(\alpha)$--$(\gamma)$ of its proof these
positions translate as follows, where we use $T_{-}=\Rone$, $T_{+}=\Rtwo$ for
$\hp\ge0$ and $T_{-}=\Rtwo$, $T_{+}=\Rone$ for $\hp<0$:
\begin{center}
\begin{tabular}{lll}
\emph{range of $\hp$} & \emph{$U=\Uthree$ iff} & \emph{$L=\Lthree$ iff}\\[0.4ex]
$\hp>\hp_{\square}$ & never & $\Rone\le z\le\Rtwo$\\
$0\le\hp\le\hp_{\square}$ & $z\le\Rone$ & $z\le\Rtwo$\\
$-\hp_{\square}\le\hp<0$ & $z\le\Rtwo$ & $z\le\Rone$\\
$\hp<-\hp_{\square}$ & $\Rone\le z\le\Rtwo$ & never
\end{tabular}
\end{center}
Outside these ranges the corresponding bound is $\Uone$ or $\Utwo$, respectively
$\Lone$ or $\Ltwo$, according to \eqref{eq:switchingrule}. Two further facts are
needed to name the outer bound correctly.

First, for $\hp<-\hp_{\square}$ and $z\le\Rone$ we have $U=\Utwo$ rather than
$\Uone$, and consistently the pair $\left[\Lone,\Uone\right]$ is empty. Indeed,
with $u=\left|\hp\right|>\hp_{\square}$, the companion of
\eqref{eq:perfectsquare} is
\begin{equation*}
\left(2\nmax u-S\right)^{2}-4u\left(1+u\right)\left(\nmax^{2}-\nmin^{2}\right)
=4\nmin^{2}u^{2}-4\left(2\nmax-\nmin\right)Su+S^{2}=:\phi(u),
\end{equation*}
a convex quadratic in $u$ satisfying
\begin{equation*}
\phi\!\left(\hp_{\square}\right)=\frac{S^{2}}{\nmax^{2}}\left(\nmin+3\nmax\right)\left(\nmin-\nmax\right)\le0,
\qquad
\phi(1)=\left(7\nmax+9\nmin\right)\left(\nmin-\nmax\right)\le0 .
\end{equation*}
By convexity $\phi\le0$ on $\left[\hp_{\square},1\right]$, whence
$\Rone^{2}\le4\nmin u\left(1+u\right)$ and therefore
$z^{2}\le4\nmin\left|\hp\right|\left(1+\left|\hp\right|\right)$, so that the
radicand of $\Lone,\Uone$ is non-positive. Symmetrically, for
$\hp>\hp_{\square}$ and $z\le\Rone$ the pair $\left[\Ltwo,\Utwo\right]$ is empty
and $L=\Lone$.

Second, the two outer pairs are never simultaneously empty, since emptiness of
$\left[\Lone,\Uone\right]$ requires $\hp<0$ and emptiness of
$\left[\Ltwo,\Utwo\right]$ requires $\hp>0$; and by \eqref{eq:perfectsquare} both
are non-empty whenever $z\ge\Rtwo$.

Assembling the table entries from the display above now gives exactly the six rows
of Table~\ref{tab:CIresults}. For $\hp>\hp_{\square}$ the three regimes
$0<z\le\Rone$, $\Rone<z<\Rtwo$ and $z\ge\Rtwo$ give
$\left(\Lone,\Uone\right)$, $\left(\Lthree,\Uone\right)$ and
$\left(\Ltwo,\Uone\right)$, that is cases $(1)$, $(5)$ and $(2)$. For
$0\le\hp\le\hp_{\square}$ the regimes $0<z<\Rone$, $\Rone\le z<\Rtwo$ and
$z\ge\Rtwo$ give cases $(6)$, $(5)$ and $(2)$. The ranges $\hp<0$ follow by the
equivariance $\hp\mapsto-\hp$, giving cases $(3)$, $(4)$ and $(2)$ respectively.

\emph{Disjointness and exhaustiveness.} Fix $\hp$. If
$\left|\hp\right|>\hp_{\square}$ then $\Rone>0$ and the three regimes
$\left(0,\Rone\right]$, $\left(\Rone,\Rtwo\right)$, $\left[\Rtwo,\infty\right)$
partition $(0,\infty)$. If $0<\left|\hp\right|<\hp_{\square}$ the partition is
$\left(0,\Rone\right)$, $\left[\Rone,\Rtwo\right)$, $\left[\Rtwo,\infty\right)$,
again exhaustive and disjoint. If $\hp=\pm\hp_{\square}$ then $\Rone=0$, the first
regime is empty, and cases $(5)$ or $(4)$ together with $(2)$ cover $(0,\infty)$;
note that at $\hp=\hp_{\square}$ case $(1)$ requires $\hp>\hp_{\square}$ and case
$(6)$ requires $\hp<\hp_{\square}$, so neither competes. If $\hp=0$ then
$\Rone=\Rtwo$, the middle regime is empty, and cases $(6)$ and $(2)$ cover
$(0,\infty)$. Finally, in the balanced design $\nmax=\nmin$ one has
$\hp_{\square}=1$, so cases $(1)$ and $(3)$ are vacuous, case $(6)$ covers
$-1<\hp<1$ with $\Rone=\infty$, and the conventions \eqref{eq:Rconventions} place
$\hp=1$ in case $(5)$ and $\hp=-1$ in case $(4)$, both with $\Rone=0$ and
$\Rtwo=\infty$. That this assignment is the correct one is verified directly: at
$\hp=1$ and $n_1=n_2=n$ one has $\Xi=2nz^{2}$, hence $\rho=z^{2}/(2n)$ and
$\sigma=1-z^{2}/(2n)$, so by \eqref{eq:xpm}
$x^{-}=\left(\tfrac12+\tfrac{n-z^{2}/2}{2n+z^{2}},\ \tfrac12-\tfrac{n-z^{2}/2}{2n+z^{2}}\right)\in[0,1]^{2}$,
giving $L=\Lthree=\left(2n-z^{2}\right)/\left(2n+z^{2}\right)$, whereas
$\Lone=n/\left(n+z^{2}\right)$; the two differ, and $\Lthree$ is the correct
bound. At the same configuration $\tau=1+z^{2}/(2n)$ yields $x^{+}=(1,0)$ and
$\Uthree=1=\Uone$, so the upper bound is unambiguous.

\emph{Consistency with the Karush--Kuhn--Tucker conditions.} Although the argument
above is self-contained, it is worth recording that it agrees with the classical
first-order analysis \citep{Kuhn1951,Karush1939,Kjeldsen2000}, which the objective
and constraints render both necessary and sufficient here: the objective is
linear, the feasible set $\Sx$ is the intersection of the sublevel set of a
positive-definite quadratic form with a polytope and hence convex, and it has
non-empty interior, so Slater's condition holds. For the minimisation of
$p_1-p_2$ the stationarity and complementary-slackness conditions read
\begin{align*}
1+\mu_1\left(2Ap_1+Bp_2+D\right)-\mu_2+\mu_3&=0,\\
-1+\mu_1\left(Bp_1+2Cp_2+E\right)-\mu_4+\mu_5&=0,\\
\mu_1\left(Ap_1^{2}+Bp_1p_2+Cp_2^{2}+Dp_1+Ep_2+F\right)&=0,\\
\mu_2\left(-p_1\right)=\mu_3\left(p_1-1\right)=\mu_4\left(-p_2\right)=\mu_5\left(p_2-1\right)&=0,
\end{align*}
with $A,\dots,F$ as in \eqref{eq:ellipseprob} and $\mu_i\ge0$. Dual feasibility
admits exactly five families of solutions, distinguished by which constraint of
the unit square is active: the four edge solutions, with active constraint $p_1\le1$, $p_1\ge0$, $p_2\ge0$
or $p_2\le1$, on which the objective attains the values
$L^{W}(\hp,n_2)$, $-U^{W}(-\hp,n_2)$, $L^{W}(\hp,n_1)$ and $-U^{W}(-\hp,n_1)$
respectively; by Lemma~\ref{lem:rootlocation}$(c)$ only the two edges governed by
$\nmin$ can furnish the optimum, and their values are $\Lone$ and $\Ltwo$,
attained at $\left(1,1-\Lone\right)$ or $\left(\Lone,0\right)$ and at
$\left(0,-\Ltwo\right)$ or $\left(1+\Ltwo,1\right)$ according as $n_2$ or $n_1$
equals $\nmin$; together with the interior solution $\mu_2=\dots=\mu_5=0$ and
optimiser $x^{-}$ of \eqref{eq:xpm}, for which
$\mu_1=\sqrt{PS}\big/\left(z\sqrt{\Xi}\right)$ is well defined for all inputs
because $\Xi>0$. Imposing primal feasibility on these five candidates selects, in
each region of the $(\hp,z)$ domain, exactly the bound identified above. The
maximisation is obtained either by the same procedure or by the equivariance
$\hp\mapsto-\hp$. This completes the proof.
\end{proof}

\subsection{Proof of Corollary \ref{cor:EOadmissible}}
\label{app:EOadmissible}
\begin{proof}
$(a)$ Taking $p=\hp$ in \eqref{eq:EOprofile} gives
$\left(\hp-\hp\right)^{2}=0\le z^{2}V^{\ast}(\hp)$, which holds because
$V^{\ast}\ge0$ on $[-1,1]$; hence $\hp$ belongs to the interval.

$(b)$ The representation \eqref{eq:EOprofile} restricts $p$ to $[-1,1]$ by
construction, and at $p=\pm1$ we have $V^{\ast}(\pm1)=0$ by
Lemma~\ref{lem:profile}, so the defining inequality becomes
$\left(\hp\mp1\right)^{2}\le0$, satisfied only when $\hp=\pm1$; in that case it is
satisfied with equality, so the endpoint is attained.

$(c)$ Each endpoint equals $g\left(p_1^{\ast},p_2^{\ast}\right)$ for an optimiser
$\left(p_1^{\ast},p_2^{\ast}\right)\in\Sx\subseteq[0,1]^{2}$, exhibited explicitly
in the proof of Theorem~\ref{thm:mainCI}: either the interior point $x^{\pm}$ of
\eqref{eq:xpm} or one of the four edge points listed there. Since $g(\Sx)$ is the
whole interval, by Lemma~\ref{lem:profile}, every interior value is likewise
attained.

$(d)$ Monotonicity in $z$ and the two limits are immediate from
\eqref{eq:EOprofile}: the right-hand side $z^{2}V^{\ast}(p)$ is increasing in $z$
for each fixed $p$, so the sublevel set grows with $z$; as $z\downarrow0$ it
shrinks to $\{p:\left(\hp-p\right)^{2}\le0\}=\{\hp\}$, and as $z\to\infty$ it
increases to $\{p:V^{\ast}(p)>0\}\cup\{\hp\}$, whose closure is $[-1,1]$ since
$V^{\ast}>0$ on $(-1,1)$. For continuity, note that
$(p,\hp,z)\mapsto\left(\hp-p\right)^{2}-z^{2}V^{\ast}(p)$ is jointly continuous
and, for fixed $(\hp,z)$, is a convex function of $p$ that is strictly negative at
$p=\hp$ whenever $\left|\hp\right|<1$; the endpoints of the sublevel set of a
convex function with a strict interior point depend continuously on the parameters,
whence $L$ and $U$ are continuous. In particular the two candidate bounds agree on
the common boundary of any two adjacent regions of Table~\ref{tab:CIresults}, so
the strict and weak inequalities separating those regions may be exchanged without
altering any computed interval. Finally, the convention \eqref{eq:Rconventions} is exactly what preserves
continuity at $\left|\hp\right|=1$ in the balanced design: letting $\hp\uparrow1$
with $n_1=n_2$ keeps the configuration in case $(6)$, so the limiting bounds are
$\left(\Lthree,\Uthree\right)$ evaluated at $\hp=1$, and the convention
reproduces them by placing $\hp=1$ in case $(5)$, where $\Uone=\Uthree$.
\end{proof}

\subsection{Proof of Lemma \ref{lem:ErrorInversion}}
\label{app:ErrorInversion}
\begin{proof}
By Lemma~\ref{lem:profile}, the \EO{} interval computed from $\hp$ is
\begin{equation*}
\Bigl\{q\in[-1,1]:\ \left(\hp-q\right)^{2}\le z^{2}V^{\ast}(q)\Bigr\},
\end{equation*}
so a fixed $p\in[-1,1]$ belongs to it precisely when
$\left(\hp-p\right)^{2}\le z^{2}V^{\ast}(p)$. The quantity $V^{\ast}(p)$ is
determined by $p$, $n_1$ and $n_2$ alone and is independent of the data; the
inequality is therefore a statement about $\hp$ only, and, being a quadratic
inequality in $\hp$ with unit leading coefficient, it holds exactly on the
interval \eqref{eq:acceptance}. Since $V^{\ast}(p)\ge0$, that interval is
non-empty and is symmetric about $p$.

The identification of its endpoints with Definition~\ref{def:ErrorBoundsLU} is a
substitution. If $\left|p\right|>\hp_{\square}$, then
$V^{\ast}(p)=\left|p\right|\left(1-\left|p\right|\right)/\nmin$ by
\eqref{eq:Vstar}, and $p\mp z\sqrt{V^{\ast}(p)}$ are $\Lsone$ and $\Usone$. If
$\left|p\right|\le\hp_{\square}$, then
\begin{equation*}
z\sqrt{V^{\ast}(p)}
=z\sqrt{\frac{\nmax+\nmin}{4\nmax\nmin}-\frac{p^{2}}{\nmax+\nmin}}
=\frac{z}{2}\sqrt{\frac{\left(\nmax+\nmin\right)^{2}-4\nmax\nmin p^{2}}{\nmax\nmin\left(\nmax+\nmin\right)}},
\end{equation*}
so that $p\mp z\sqrt{V^{\ast}(p)}$ are $\Lstwo$ and $\Ustwo$.

We remark, for the reader who wishes to reconcile this with a direct inversion of
Table~\ref{tab:CIresults}, that inverting the six cases separately reproduces the
same interval. In that route each case contributes a sub-interval of $\hp$-values,
obtained by inverting both the bound inequality and the case condition, and the
sub-intervals must then be united; the thresholds that arise are precisely the
landmark values of the partition of Figure~\ref{fig:CIsregionshpz}, namely
$\Rone\left(\pm1\right)=\sqrt{\nmin\left(\nmax-\nmin\right)/\left(\nmax+\nmin\right)}$,
$\Rone(0)=\Rtwo(0)=\sqrt{\nmin\left(\nmax+\nmin\right)/\left(\nmax-\nmin\right)}$
and
$\Rtwo\left(\pm1\right)=\left(3\nmax+\nmin\right)\sqrt{\nmin/\left(\nmax^{2}-\nmin^{2}\right)}$.
The union of the six sub-intervals is \eqref{eq:acceptance}; the case analysis is
therefore consistent with the lemma but superfluous, since the event
$\left\{p\in\mathrm{CI}\left(\hp\right)\right\}$ never needed to be decomposed
according to which case of Table~\ref{tab:CIresults} the observed $\hp$ falls in.
\end{proof}

\subsection{Proof of Proposition \ref{prop:ErrorBoundsOrder}}
\label{app:ErrorBoundsOrder}
\begin{proof}
The inequalities $\Lsone\le p\le\Usone$ and $\Lstwo\le p\le\Ustwo$ are immediate,
since both pairs are of the form $p\mp z\sqrt{\,\cdot\,}$ with a non-negative
radicand. It therefore suffices to prove
\begin{equation*}
\frac{\left|p\right|\left(1-\left|p\right|\right)}{\nmin}
\ \le\
\frac{\nmax+\nmin}{4\nmax\nmin}-\frac{p^{2}}{\nmax+\nmin}
\qquad\text{for all } p\in[-1,1],
\end{equation*}
that is, that the edge branch of \eqref{eq:Vstar} never exceeds the interior
branch. This is a structural fact rather than a computation: writing $S=\nmax+\nmin$,
the right-hand side is the unconstrained maximum of $V$ over the whole line
$\left\{\left(q_1,q_2\right):q_1-q_2=p\right\}$, attained at
$\left(\tfrac12+n_1p/S,\ \tfrac12-n_2p/S\right)$ by the proof of
Lemma~\ref{lem:profile}, whereas the left-hand side is the maximum of the same
function over the sub-segment of that line lying in $[0,1]^{2}$. A maximum over a
subset cannot exceed the maximum over the whole set.

For completeness we verify the inequality directly. Setting
$u=\left|p\right|$ and clearing denominators, the claim is
\begin{equation*}
4\nmax\,u\left(1-u\right)S\le S^{2}-4\nmax\nmin u^{2},
\end{equation*}
that is, $0\le S^{2}-4\nmax Su+4\nmax\left(S-\nmin\right)u^{2}
=S^{2}-4\nmax Su+4\nmax^{2}u^{2}=\left(S-2\nmax u\right)^{2}$,
using $S-\nmin=\nmax$. The expression is a perfect square, hence non-negative,
and vanishes exactly when $u=S/\left(2\nmax\right)=\hp_{\square}$. This proves
the ordering, and identifies the case of equality as
$\left|p\right|=\hp_{\square}$, the point at which the two branches of
\eqref{eq:Vstar} meet.
\end{proof}

\subsection{Proof of Theorem \ref{thm:mainError}}
\label{app:mainError}
\begin{proof}
\emph{The excess identity \eqref{eq:excessIdentity}.} Write $S=n_1+n_2$ and
$P=n_1n_2$, and set $u_i=p_i-\tfrac12$, so that $p=p_1-p_2=u_1-u_2$ and
$p_i\left(1-p_i\right)=\tfrac14-u_i^{2}$. Then
\begin{equation*}
V\left(p_1,p_2\right)
=\frac{\tfrac14-u_1^{2}}{n_1}+\frac{\tfrac14-u_2^{2}}{n_2}
=\frac{1}{P}\left[\frac{S}{4}-\left(n_2u_1^{2}+n_1u_2^{2}\right)\right],
\end{equation*}
while, on the interior branch,
\begin{equation*}
V^{\ast}(p)=\frac{S}{4P}-\frac{p^{2}}{S}
=\frac{1}{P}\left[\frac{S}{4}-\frac{P\left(u_1-u_2\right)^{2}}{S}\right].
\end{equation*}
Subtracting,
\begin{equation*}
P\,S\left[V^{\ast}(p)-V\left(p_1,p_2\right)\right]
=S\left(n_2u_1^{2}+n_1u_2^{2}\right)-P\left(u_1-u_2\right)^{2}.
\end{equation*}
Expanding the right-hand side with $S=n_1+n_2$ and $P=n_1n_2$,
\begin{align*}
S\left(n_2u_1^{2}+n_1u_2^{2}\right)-P\left(u_1-u_2\right)^{2}
&=n_1n_2u_1^{2}+n_2^{2}u_1^{2}+n_1^{2}u_2^{2}+n_1n_2u_2^{2}
-n_1n_2u_1^{2}+2n_1n_2u_1u_2-n_1n_2u_2^{2}\\
&=n_2^{2}u_1^{2}+2n_1n_2u_1u_2+n_1^{2}u_2^{2}
=\left(n_2u_1+n_1u_2\right)^{2}=\delta\left(p_1,p_2\right)^{2},
\end{align*}
which is \eqref{eq:excessIdentity}. Dividing by $PSV$ and adding one gives the
stated expression for $\mathcal{R}^{2}$. Since the right-hand side is a square,
$V^{\ast}(p)\ge V\left(p_1,p_2\right)$ and hence $\mathcal{R}\ge1$; on the outer
branch the same inequality holds by Lemma~\ref{lem:profile}, because
$\left(p_1,p_2\right)$ is one of the competitors in the maximisation
\eqref{eq:Vstardef} defining $V^{\ast}$.

\emph{The coverage formula \eqref{eq:coverageEO}.} By
Lemma~\ref{lem:ErrorInversion} the event
$\left\{\left(p_1,p_2\right)\in\Sr\right\}$ coincides with
$\bigl\{\left|\hp-p\right|\le z\sqrt{V^{\ast}(p)}\bigr\}$. Under the normal
approximation $\hp\sim N\left(p,V\right)$ with $V>0$, the standardised variable
$\left(\hp-p\right)/\sqrt{V}$ is standard normal, so
\begin{equation*}
\mathbb{P}\left(\left(p_1,p_2\right)\in\Sr\right)
=\mathbb{P}\!\left(\left|\frac{\hp-p}{\sqrt{V}}\right|\le z\sqrt{\frac{V^{\ast}(p)}{V}}\right)
=2\Phi\left(z\mathcal{R}\right)-1 .
\end{equation*}

\emph{The error formula \eqref{eq:errorformula}.} By
\eqref{eq:Omega_variance_form}, membership of $\Sz$ is the inequality
$\left(\hp-p_1+p_2\right)^{2}\le z^{2}V\left(p_1,p_2\right)$, that is
$\left|\hp-p\right|\le z\sqrt{V}$; and since $\left(p_1,p_2\right)\in[0,1]^{2}$ by
hypothesis, membership of $\Sz$ and of $\Sx=\Sz\cap[0,1]^{2}$ are the same event.
Hence
\begin{equation*}
\mathbb{P}\left(\left(p_1,p_2\right)\in\Sx\right)
=\mathbb{P}\!\left(\left|\frac{\hp-p}{\sqrt{V}}\right|\le z\right)=2\Phi(z)-1=1-\a ,
\end{equation*}
exactly, under the normal approximation. Subtracting from
\eqref{eq:coverageEO} gives \eqref{eq:errorformula}.
\end{proof}

\subsection{Proof of Corollary \ref{cor:ErrorProperties}}
\label{app:ErrorProperties}
\begin{proof}
$(a)$ Since $\mathcal{R}\ge1$ and $\Phi$ is increasing,
$\mathrm{Error}=2\left[\Phi\left(z\mathcal{R}\right)-\Phi(z)\right]\ge0$; and
since $\Phi<1$,
$\mathrm{Error}<2\left[1-\Phi(z)\right]=\a$. For
$\left(p_1,p_2\right)\in(0,1)^{2}$ we have $V>0$, so $\mathcal{R}$ is finite and
the second inequality is strict. As $\left(p_1,p_2\right)\to(0,0)$ we have
$V\to0$ while $p\to0$ and hence
$V^{\ast}(p)\to\left(\nmax+\nmin\right)/\left(4\nmax\nmin\right)>0$, so
$\mathcal{R}\to\infty$ and $\Phi\left(z\mathcal{R}\right)\to1$, giving
$\mathrm{Error}\to\a$; the corner $(1,1)$ is identical. At the corners $(0,1)$ and
$(1,0)$ both $V$ and $V^{\ast}$ vanish and the limit of $\mathcal{R}$ is
direction-dependent: approaching $(0,1)$ along
$\left(\varepsilon_1,1-\varepsilon_2\right)$ gives
$\mathcal{R}^{2}\to\left(\varepsilon_1+\varepsilon_2\right)\big/
\left(\varepsilon_1\nmin/n_1+\varepsilon_2\nmin/n_2\right)$, which equals $1$ for
every direction when $n_1=n_2$ and lies in $\left[1,\nmax/\nmin\right]$ otherwise.

$(b)$ On the interior branch, \eqref{eq:excessIdentity} gives
$\mathcal{R}=1$ if and only if $\delta\left(p_1,p_2\right)=0$, that is
$n_2p_1+n_1p_2=\left(n_1+n_2\right)/2$; dividing by
$\left(n_1+n_2\right)/2$ puts this in the form \eqref{eq:valleyline} with the
stated $\b_1,\b_2$, which satisfy $\b_1+\b_2=2$ and $\b_1/\b_2=n_2/n_1$. The line
passes through $\left(\tfrac12,\tfrac12\right)$ and has slope $-n_2/n_1$; when
$n_1=n_2$ it is $p_1+p_2=1$. Its intersection with the unit square is exactly the
set of least-favourable pairs, parametrised by
$\left(\tfrac12+n_1p/S,\ \tfrac12-n_2p/S\right)$ for
$\left|p\right|\le\hp_{\square}$, as in the proof of Lemma~\ref{lem:profile}. On
the outer branch $\left|p\right|>\hp_{\square}$ the maximiser in
\eqref{eq:Vstardef} lies on an edge of the unit square, so no interior pair
attains $V^{\ast}$ and the error is strictly positive there.

$(c)$ Under $\left(p_1,p_2\right)\mapsto\left(1-p_1,1-p_2\right)$ the difference
$p$ changes sign, and $V^{\ast}$ is even by Lemma~\ref{lem:profile}; also
$p_i\left(1-p_i\right)$ is invariant, so $V$ is unchanged. Hence $\mathcal{R}$,
and therefore the error, is invariant. Directly,
$\delta\left(1-p_1,1-p_2\right)=n_2\left(\tfrac12-p_1\right)+n_1\left(\tfrac12-p_2\right)=-\delta\left(p_1,p_2\right)$,
which enters \eqref{eq:excessIdentity} squared. The second symmetry is immediate
from the fact that $V$, $V^{\ast}$ and $\hp_{\square}$ are invariant under the
simultaneous interchange of the two populations.

$(d)$ Put $n_1=c_1N$, $n_2=c_2N$. Then $\delta=N\left[c_2\left(p_1-\tfrac12\right)+c_1\left(p_2-\tfrac12\right)\right]$,
so $\delta^{2}=N^{2}\delta_0^{2}$ with $\delta_0$ free of $N$; also
$n_1n_2\left(n_1+n_2\right)=c_1c_2\left(c_1+c_2\right)N^{3}$ and $V=v_0/N$ with
$v_0=p_1\left(1-p_1\right)/c_1+p_2\left(1-p_2\right)/c_2$ free of $N$. Hence
\begin{equation*}
\mathcal{R}^{2}=1+\frac{N^{2}\delta_0^{2}}{c_1c_2\left(c_1+c_2\right)N^{3}\cdot v_0/N}
=1+\frac{\delta_0^{2}}{c_1c_2\left(c_1+c_2\right)v_0},
\end{equation*}
which is independent of $N$; the same conclusion holds on the outer branch, where
$V^{\ast}$ and $V$ are both proportional to $N^{-1}$. Consequently the error
\eqref{eq:errorformula} is likewise independent of $N$ and does not vanish in the
large-sample limit unless $\delta_0=0$. In the balanced case $c_1=c_2$ with
$p_1=p_2=q$ one has $p=0$, $V^{\ast}=1/(2n)$ and $V=2q(1-q)/n$, whence
$\mathcal{R}^{2}=\left[4q(1-q)\right]^{-1}$.
\end{proof}

\subsection{Proof of Proposition \ref{prop:WaldBias}}
\label{app:WaldBias}
\begin{proof}
Let $X_i\sim\mathrm{Bin}\left(n_i,p_i\right)$ and $\hp_i=X_i/n_i$, so that
$\mathbb{E}\left[\hp_i\right]=p_i$ and
$\mathbb{E}\bigl[\hp_i^{2}\bigr]=\Var\left[\hp_i\right]+p_i^{2}
=p_i\left(1-p_i\right)/n_i+p_i^{2}$. Then
\begin{equation*}
\mathbb{E}\left[\hp_i\left(1-\hp_i\right)\right]
=p_i-\frac{p_i\left(1-p_i\right)}{n_i}-p_i^{2}
=p_i\left(1-p_i\right)\left(1-\frac{1}{n_i}\right).
\end{equation*}
Dividing by $n_i$ and summing over $i=1,2$, and using the independence of the two
samples,
\begin{equation*}
\mathbb{E}\left[\widehat{\Var}\left[\hp\right]\right]
=\sum_{i=1}^{2}\frac{p_i\left(1-p_i\right)}{n_i}\left(1-\frac{1}{n_i}\right)
=\Var\left[\hp\right]-\sum_{i=1}^{2}\frac{p_i\left(1-p_i\right)}{n_i^{2}},
\end{equation*}
which is \eqref{eq:Waldbias}; the subtracted term is strictly positive whenever
$p_i\in(0,1)$ for at least one $i$. When $n_1=n_2=n$ the common factor
$\left(1-1/n\right)$ may be taken outside the sum, giving
$\mathbb{E}\bigl[\widehat{\Var}\left[\hp\right]\bigr]=\left(1-1/n\right)\Var\left[\hp\right]$.
\end{proof}

\newpage
\section{Additional Figures}
\renewcommand{\thesubsection}{\thesection.\roman{subsection}}

\subsection{Comparison of the \EO{}, Wald and Newcombe intervals against \texorpdfstring{$\widehat{p}_2$}{hatp2}}
\label{app:CICp}
\noindent
\begin{figure}[H]
\centering
    \begin{subfigure}[t]{0.48\textwidth}
        \centering
        \includegraphics[width=\linewidth]{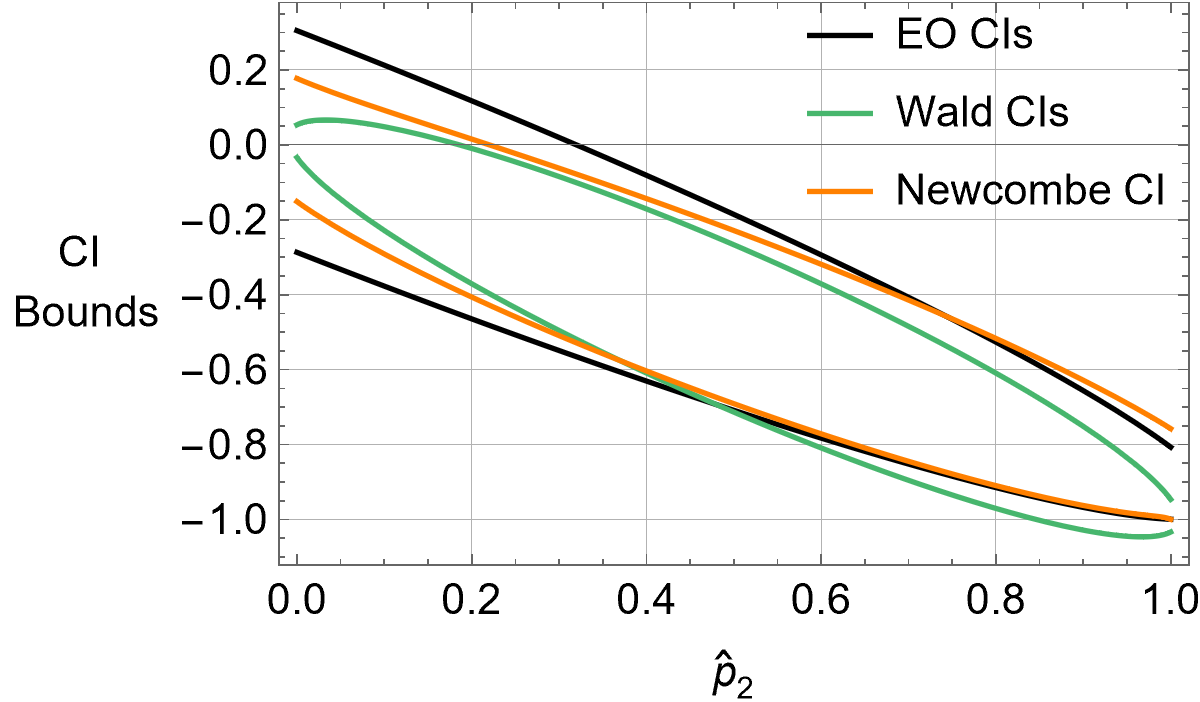}
        \caption{$\hp_1=0.01$}
        \label{subfig:CICp001}
    \end{subfigure}
    \hfill
    \begin{subfigure}[t]{0.48\textwidth}
        \centering
        \includegraphics[width=\linewidth]{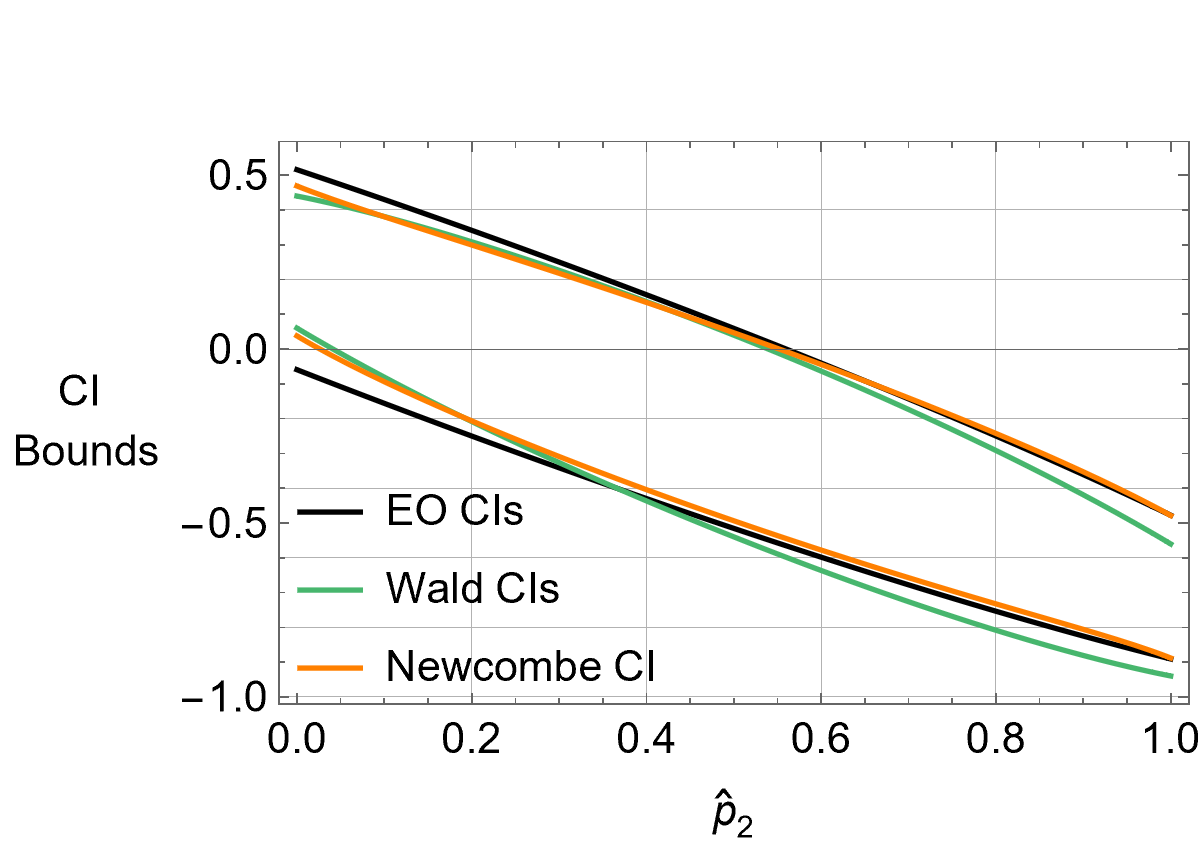}
        \caption{$\hp_1=0.25$}
        \label{subfig:CICp025}
    \end{subfigure}

    \vspace{0.8\baselineskip}

    \begin{subfigure}[t]{0.48\textwidth}
        \centering
        \includegraphics[width=\linewidth]{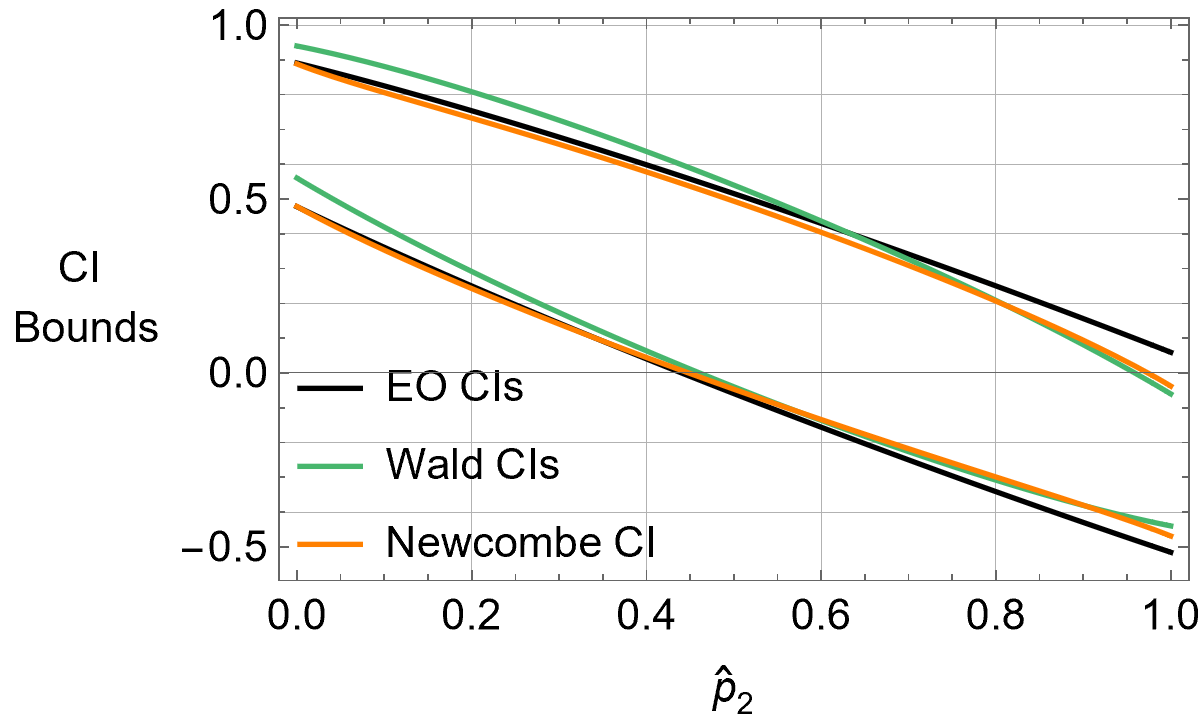}
        \caption{$\hp_1=0.75$}
        \label{subfig:CICp075}
    \end{subfigure}
    \hfill
    \begin{subfigure}[t]{0.48\textwidth}
        \centering
        \includegraphics[width=\linewidth]{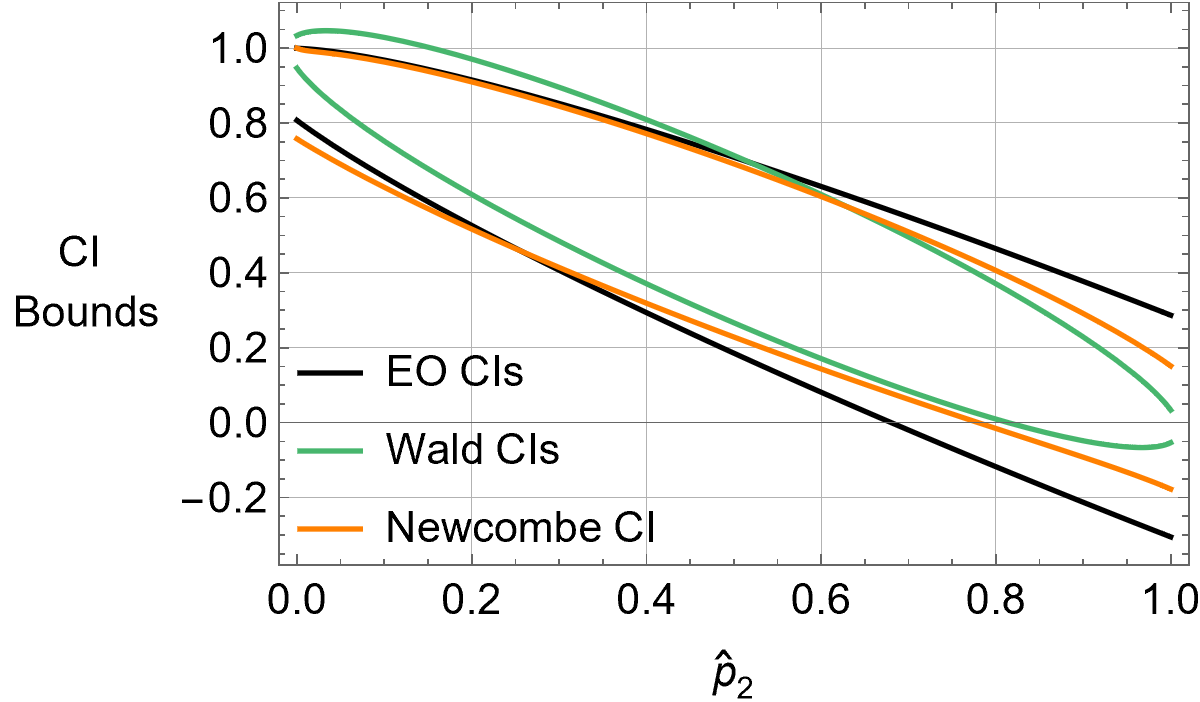}
        \caption{$\hp_1=0.99$}
        \label{subfig:CICp099}
    \end{subfigure}
\caption[Comparison of the \EO{}, Wald and Newcombe CIs against $\widehat{p}_2$]{Bounds of
the \EO{}, Wald and Newcombe intervals plotted against $\hp_2$ for four fixed
values of $\hp_1$, with $n_1=n_2=20$ and $\a=5\%$. These are the counterparts of
Figure~\ref{fig:CICq}, obtained by interchanging the roles of the two observed
proportions. Widths alone are not evidence of accuracy; the attained coverages of
the three procedures are compared in Section~\ref{sec:comparison} using
\eqref{eq:exactcoverage}.}
\label{fig:CICp}
\end{figure}

\newpage
\subsection{Coverage error when \texorpdfstring{$n_1>n_2$}{n1gn2}}
\label{app:Error-n1gn2}
\noindent
\begin{figure}[H]
\centering
    \begin{subfigure}[t]{0.48\textwidth}
        \centering
        \includegraphics[width=\linewidth]{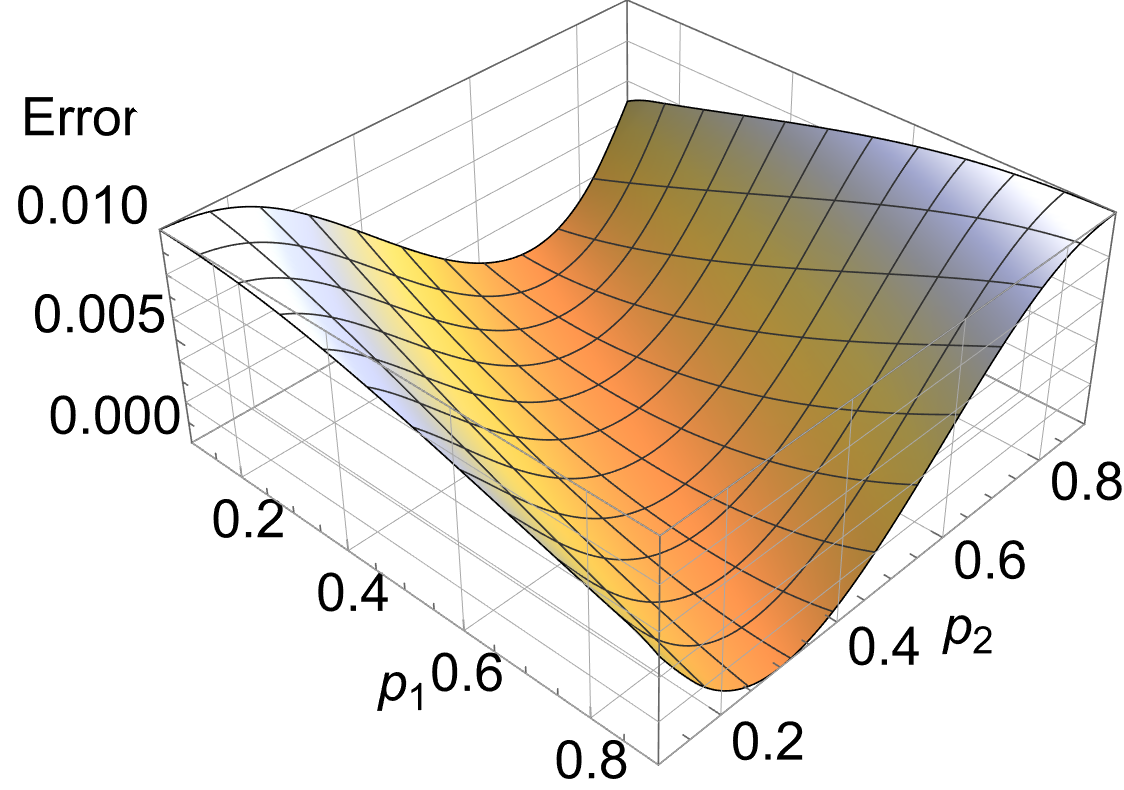}
        \caption{$\a=1\%$}
        \label{subfig:1_n1gn2}
    \end{subfigure}
    \hfill
    \begin{subfigure}[t]{0.48\textwidth}
        \centering
        \includegraphics[width=\linewidth]{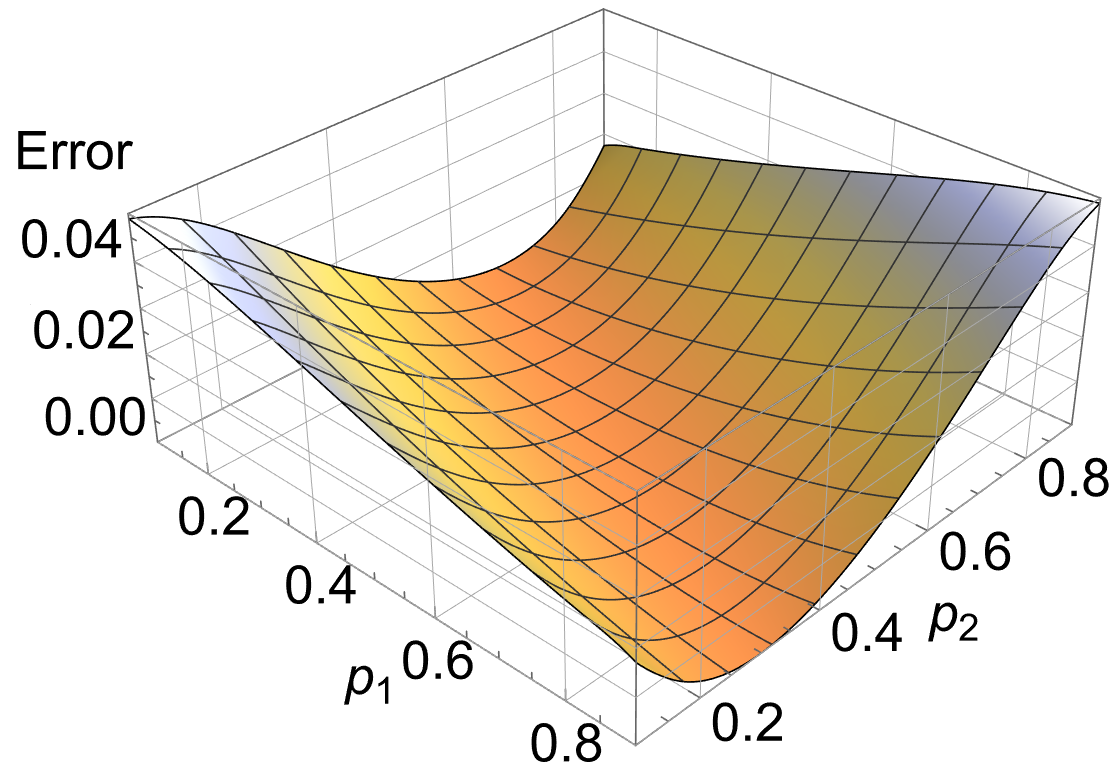}
        \caption{$\a=5\%$}
        \label{subfig:5_n1gn2}
    \end{subfigure}

    \vspace{0.8\baselineskip}

    \begin{subfigure}[t]{0.48\textwidth}
        \centering
        \includegraphics[width=\linewidth]{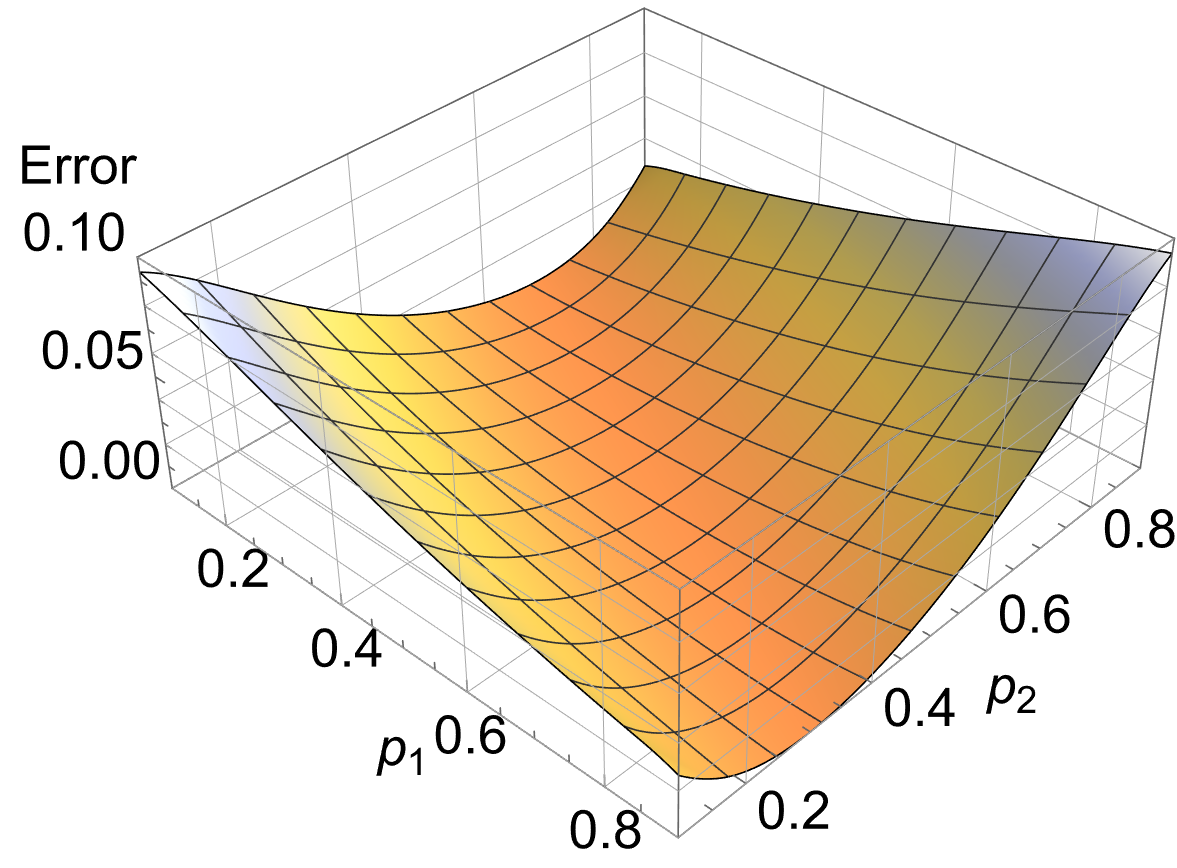}
        \caption{$\a=10\%$}
        \label{subfig:10_n1gn2}
    \end{subfigure}
\caption[Coverage error over the parameter domain for $n_1>n_2$]{Coverage error
\eqref{eq:errorformula} of the \EO{} interval over the parameter domain
$\left(p_1,p_2\right)$, for three significance levels, with $n_1=60$ and
$n_2=30$. The zero-error valley is the line $p_1+2p_2=\tfrac32$ of
\eqref{eq:valleyline}, of slope $-\tfrac12$; unlike the balanced case it runs
through the interior of the domain rather than corner to corner, so that the error
remains strictly positive at the corners $(p_1,p_2)$ with $p_1$ large and $p_2$
small.}
\label{fig:Error-n1gn2}
\end{figure}

\newpage
\subsection{Coverage error when \texorpdfstring{$\mathbf{n_1<n_2}$}{n1ln2}}
\label{app:Error-n2gn1}
\noindent
\begin{figure}[H]
\centering
    \begin{subfigure}[t]{0.48\textwidth}
        \centering
        \includegraphics[width=\linewidth]{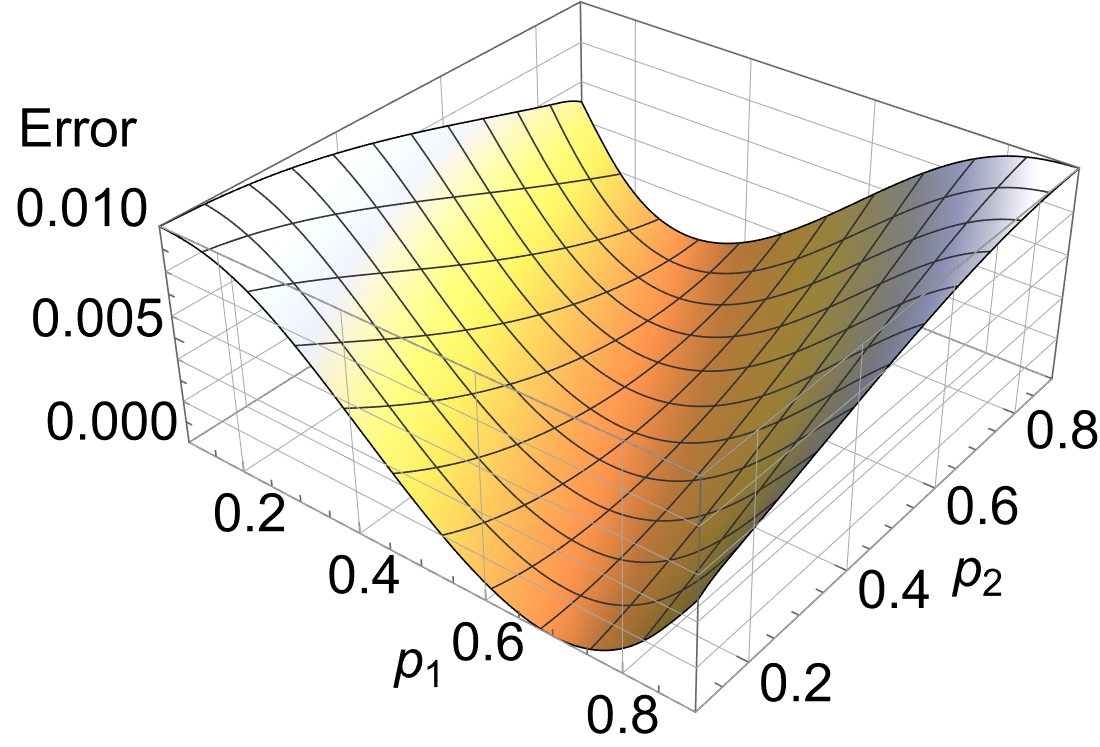}
        \caption{$\a=1\%$}
        \label{subfig:1_n2gn1}
    \end{subfigure}
    \hfill
    \begin{subfigure}[t]{0.48\textwidth}
        \centering
        \includegraphics[width=\linewidth]{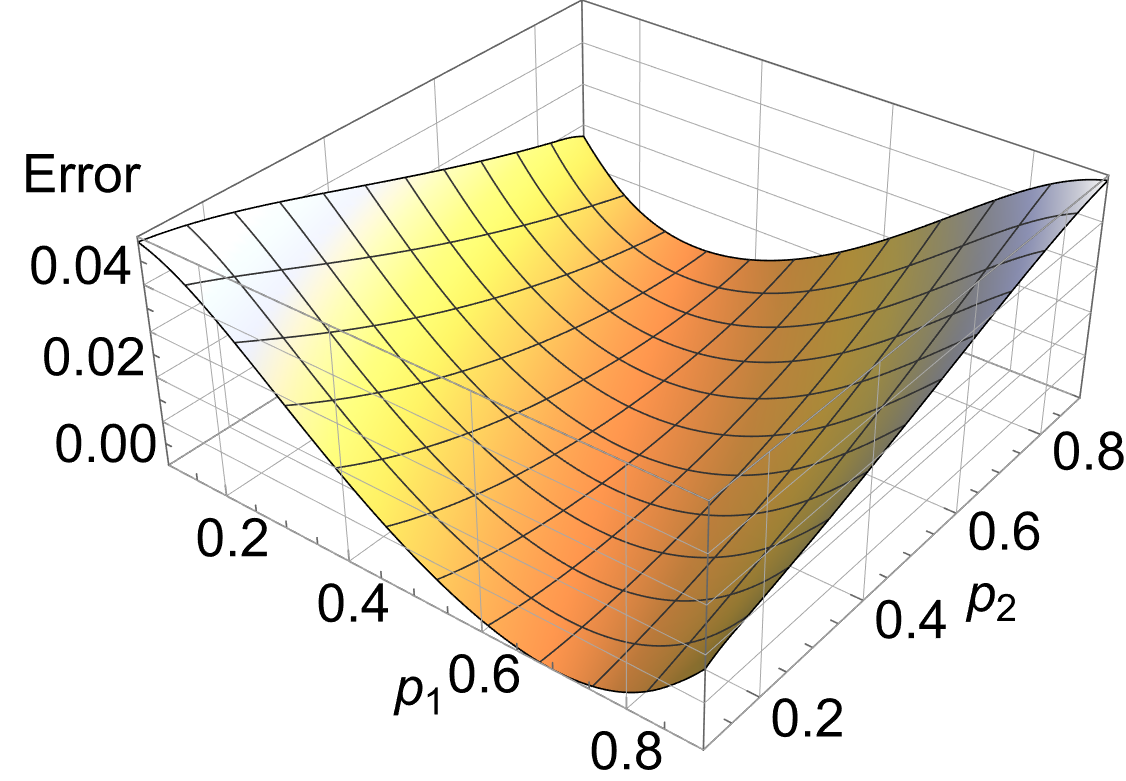}
        \caption{$\a=5\%$}
        \label{subfig:5_n2gn1}
    \end{subfigure}

    \vspace{0.8\baselineskip}

    \begin{subfigure}[t]{0.48\textwidth}
        \centering
        \includegraphics[width=\linewidth]{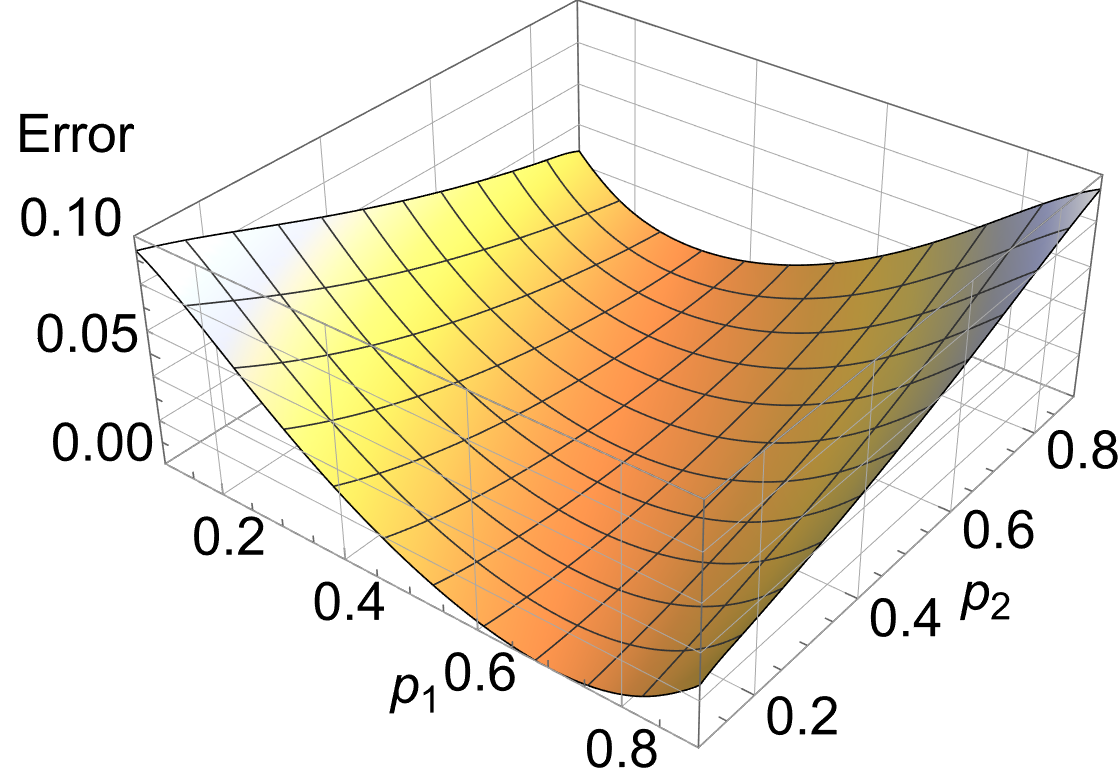}
        \caption{$\a=10\%$}
        \label{subfig:10_n2gn1}
    \end{subfigure}
\caption[Coverage error over the parameter domain for $n_1<n_2$]{Coverage error
\eqref{eq:errorformula} of the \EO{} interval over the parameter domain
$\left(p_1,p_2\right)$, for three significance levels, with $n_1=30$ and
$n_2=60$. The zero-error valley is the line $2p_1+p_2=\tfrac32$, of slope $-2$.
These surfaces are the mirror images of those in
Figure~\ref{fig:Error-n1gn2}, as required by the interchange symmetry of
Corollary~\ref{cor:ErrorProperties}$(c)$.}
\label{fig:Error-n2gn1}
\end{figure}
\newpage

\end{appendices}

\bibliography{References}

\end{document}